\newtheorem{theorem}{Theorem}[section]
\newtheorem{proposition}[theorem]{Proposition}
\newtheorem{corollary}[theorem]{Corollary}
\newtheorem{lemma}[theorem]{Lemma}
\newtheorem{claim}[theorem]{Claim}
\newtheorem{fact}[theorem]{Fact}
\newtheorem{definition}[theorem]{Definition}
\newtheorem{observation}[theorem]{Observation}
\newcommand{\gam}{\gamma}
\newcommand{\Gam}{\Gamma}
\newcommand{\kap}{\kappa}
\newcommand{\lam}{\lambda}
\newcommand{\Lam}{\Lambda}
\newcommand{\gO}{\Omega}
\newcommand{\PP}{\mathbb{P}}
\newcommand{\cA}{\mathcal{A} }
\newcommand{\cC}{\mathcal{C} }
\newcommand{\cG}{\mathcal{G} }
\newcommand{\cH}{\mathcal{H} }
\newcommand{\cP}{\mathcal{P} }
\newcommand{\cV}{\mathcal{V} }
\newcommand{\cW}{\mathcal{W} }
\newcommand{\bT}{\mathbf{T}}
\newcommand{\beq}[1]{\begin{equation}\label{#1}}
	\newcommand{\enq}[0]{\end{equation}}
\newcommand{\gd}[0]{\delta }
\newcommand{\gk}{\kappa}
\newcommand{\nin}[0]{\noindent}
\newcommand{\ov}[0]{\overline}
\newcommand{\sub}[0]{\subseteq}
\newenvironment{subproof}[1][\proofname]{
  
  \begin{proof}[#1]
}{
  \end{proof}
}
\newcommand{\E}{\mathbb{E}}
\newcommand{\dist}{{\textrm{dist}}}
\newcommand{\al}{\alpha}
\newcommand{\ka}{\kappa}
\newcommand{\lex}{<_{\textrm{lex}}}
\begin{document}

\title{On the number of Antichains in $\{0,1,2\}^n$}

\author[M. Jenssen]{Matthew Jenssen}
\address{Department of Mathematics, King's College London}
\email{matthew.jenssen@kcl.ac.uk}

\author[J. Park]{Jinyoung Park}
\address{Department of Mathematics, Courant Institute of Mathematical Sciences, New York University}
\email{jinyoungpark@nyu.edu}

\author[M. Sarantis]{Michail Sarantis}
\address{Institute of Discrete Mathematics, Graz University of Technology}
\email{msarantis@tugraz.at}

\begin{abstract}
We provide precise asymptotics for the number of antichains in the poset $\{0,1,2\}^n$, answering a question of Sapozhenko. 
Finding improved estimates for this number was also a problem suggested by Noel, Scott, and Sudakov, who obtained 
asymptotics for the logarithm of the number. Key ingredients for the proof include a graph-container lemma to bound the number of expanding sets in a class of irregular graphs, isoperimetric inequalities for generalizations of the Boolean lattice, and methods from statistical physics based on the cluster expansion. 
\end{abstract}

\maketitle

\section{Introduction}\label{sec.intro}

An \textit{antichain} of a poset $P$ is a subset of $P$ whose elements are pairwise incomparable. We denote by $\al(P)$ the total number of antichains in $P$. In 1897, Dedekind \cite{Dedekind} asked for the number of antichains in the Boolean lattice $\{0,1\}^n$, which later became known as \textit{Dedekind's problem}.

Since the middle layer of $\{0,1\}^n$ is an antichain, we have the simple lower bound $\alpha(\{0,1\}^n) \ge 2^{\binom{n}{\lfloor n/2\rfloor}}$. Kleitman \cite{Kleitman1969OnDP} and subsequently Kleitman and Markowsky \cite{Kleitman1975OnDP} proved this bound is asymptotically correct on the logarithmic scale, namely $\log\alpha(\{0,1\}^n)\leq \left(1+O(\log n/n)\right)\binom{n}{\lfloor n/2\rfloor}$. (Throughout the paper, $\log$ denotes $\log_2$ and $\ln$ denotes the natural logarithm.) This result was subsequently recovered by Pippenger \cite{Pippenger1999EntropyAE} (with a weaker error term) and Kahn \cite{Kahn2002Entropy} using (distinct) entropy tools. Korshunov \cite{korshunov} was the first to estimate $\alpha(\{0,1\}^n)$ itself up to a factor of $(1+o(1))$; the proof is technical, and it was later simplified (while still being involved) by Sapozhenko \cite{Sapozhenko1989}. Sapozhenko's proof was an early application of  what is now known as the \textit{graph container method}.

Note that the Boolean lattice $\{0,1\}^n$ is the Cartesian product of $n$ chains on two elements. Sapozhenko~\cite{Sapozhenko_proceedings} raised the question of extending the container method used in the solution of Dedekind’s problem to the poset $\{0,1,2\}^n$, that is, the product of $n$ chains on three elements. He pointed out that the principal difficulty lies in the strong irregularity of this poset, in contrast with the Boolean lattice.  The question of improving estimates on $\al(\{0,1,2\}^n)$ was also asked by Noel, Scott and Sudakov \cite[Section 7]{NSS}. In their work, they proved supersaturation results for comparable pairs in $\{0,1,2\}^n$ and combined them with a version of the graph container method to estimate $\log \alpha(\{0,1,2\}^n)$ up to a factor of $(1+O(\sqrt{(\log n)/n}))$. In recent years, there has been a renewed interest in enumerating antichains in $\{0,\ldots,t-1\}^n$, the product of $n$ chains of length $t$, see e.g. \cite{Carroll2012CountingAA, falgasravry2023dedekinds, park2023notenumberantichainsgeneralizations, Pohoata2021}. All of these results yield asymptotics on the logarithmic scale, that is, an estimate of $\log \alpha(\{0,\ldots,t-1\}^n)$ up to a multiplicative error of $(1+o(1))$. Our main result provides an estimate for $\alpha(\{0,1,2\}^n)$ itself up to a multiplicative error of $(1+o(1))$. We also describe in detail the typical structure of an antichain in $\{0,1,2\}^n$.
  
  We note that our methods can be used to obtain precise asymptotics for other posets including the collection $\cV(q,n)$ of subspaces of $\mathbb F^n_q$ ordered by set inclusion, which was another problem suggested in \cite{NSS}; this is a straightforward application of the current methods so we do not pursue the details here.

We now briefly explain why Sapozhenko's approach for estimating $\alpha(\{0,1\}^n)$ doesn't easily extend to $\alpha(\{0,1,2\}^n)$. Let $\Sigma$ be a bipartite graph with parts $X$ and $Y$, and let $d_X$ be the minimum degree of vertices in $X$ and $D_Y$ be the maximum degree of vertices in $Y$. A key step in Sapozhenko's approach is to reduce the problem of bounding $\alpha(\{0,1\}^n)$ to bounding the number of independent sets in the bipartite graphs induced on two consecutive layers of the Boolean lattice. In this setting, Sapozhenko's argument crucially assumes
\beq{Sap.cond} d_X \ge D_Y.\enq Note that any two consecutive layers of $\{0,1\}^n$ form a biregular bipartite graph, and it satisfies the above degree condition (choosing $X$ to be the smaller layer).
On the other hand, the bipartite graphs induced on two consecutive layers of $\{0,1,2\}^n$ do not necessarily satisfy the above degree condition. 
As Sapozhenko highlights \cite{Sapozhenko_proceedings}, the irregular structure of $\{0,1,2\}^n$ is the main obstacle for the application of his method. One of the main contributions of our work is to prove a container type lemma which goes beyond the degree assumption in \eqref{Sap.cond}. It therefore may be of independent interest, both for future applications  and as progress toward a container lemma for graphs under optimal (minimal) degree assumptions.

\subsection{Main results and strategy}
We start with some basic definitions. For brevity, we henceforth let $[t]^n=\{0, 1, \ldots, t-1\}^n$ denote the poset that is the product of $n$ chains on $t$ elements. Equivalently, it is the poset of all $n$-tuples $(x_1,\dots,x_n)$ of integers with $0\leq x_i\leq t-1$ for all $i$, equipped with the partial order $x\le y$ iff  $x_i\leq y_i$ for all $i$. For $v \in [t]^n$ and $i\in [n]$, write $v_i$ for the $i$-th coordinate of $v$. For $k \in \{0,1, \dots,(t-1)n\}$, $L_k(t,n)$ denotes the \textit{$k$-th layer} of $[t]^n$, that is,
\[L_k(t,n):=\left\{v \in [t]^n: \sum_{i \in [n]} v_i=k\right\}.\]
Write $\ell_k(t,n)$ for the size of $L_k(t,n)$, and $L_{[a,b]}(t,n):= \cup_{a \le k \le b} L_k(t,n)$. We let $\ell(t,n)=\max_k \ell_k(t,n)$, the maximum size of a layer in $[t]^n$.
We will often suppress the dependency on $t$ or $n$ (or both) from this notation, if the context is clear. 

We now discuss our main results. Henceforth we let  $\ell_n=\ell(3,n)$, that is, the size of the (unique) middle layer of $[3]^n$.

\begin{theorem}\label{thm:asymp_1}
        \begin{align*}
            \alpha([3]^n)=\left(1+e^{-\gO(n)}\right)2^{\ell_n}\exp{(T_1+T_2)}
        \end{align*}
        where
        $$T_1=2\sum_{{0\leq k<n/2}} {n \choose 2k+1}{2k+1 \choose k}2^{-(n-k)}$$
        and
        $$T_2=\sum_{{0\leq k<n/2}}\binom{n}{2k+1}\binom{2k+1}{k}\left(n^2-3(k+1)n+\frac{k(9k+17)}{4}\right)2^{-2(n-k)}\, .$$
\end{theorem}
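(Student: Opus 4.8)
The plan is to prove the upper and lower bounds on $\alpha([3]^n)$ separately, with the main work concentrated on the upper bound via the container method and cluster expansion.

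\textbf{Reduction to the middle layers.} The poset $[3]^n$ has a unique middle layer $L_n(3,n)$ of size $\ell_n$, and any subset of that layer is an antichain, giving the trivial lower bound $2^{\ell_n}$. The first step is to show that, up to a $(1+e^{-\Omega(n)})$ factor, the only antichains that matter are ``perturbations'' of subsets of the middle layer: using the isoperimetric/supersaturation machinery for $[3]^n$ (the isoperimetric inequalities referenced in the abstract), I would show that an antichain $A$ can be split as $A = (A \cap L_n) \cup A^-$ where $A^-$ lives essentially in the layers $L_{n-1}$ and $L_{n+1}$ (contributions from layers further from the middle, or large deviations, are exponentially negligible). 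This is the standard ``defect'' setup: antichains are in bijective-up-to-small-error correspondence with pairs (a large subset $S$ of the middle layer, a small ``external'' set on adjacent layers that is independent from $S$ in the relevant comparability graph), and one must control the number of such configurations. The container lemma announced in the introduction (the one going beyond the degree condition \eqref{Sap.cond}) is exactly what bounds the number of small expanding external sets in the irregular bipartite graphs between consecutive layers of $[3]^n$.

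\textbf{Cluster expansion to extract $T_1$ and $T_2$.} Once the problem is localized near the middle layer, I would set up a polymer model: the partition function counting antichains near $L_n$ is, up to $2^{\ell_n}$, a sum over configurations of ``defects'' (elements of $L_{n\pm1}$ included in the antichain, together with the elements of $L_n$ they force us to exclude). Each single element $v \in L_{n-1}$ (resp.\ $L_{n+1}$) has $\lceil$ roughly $n-k\rceil$-many up-neighbors (resp.\ down-neighbors) in the middle layer when $v$ has a certain coordinate profile; including $v$ costs a factor $2^{-(\text{number of forced exclusions})}$. Summing the single-polymer weights over all $v$ in the two adjacent layers, grouped by the combinatorial type of $v$ (parametrized by $k$, the number of coordinate-pairs of a given kind — this is where $\binom{n}{2k+1}\binom{2k+1}{k}2^{-(n-k)}$ comes from, doubled because of the two layers), gives precisely $T_1$. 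The term $T_2$ is the second-order term of the cluster expansion: it collects the pairwise interactions between polymers (pairs of adjacent-layer elements whose down/up-shadows in $L_n$ overlap, or are both present) together with the second-order Taylor correction $\binom{w}{2}$-type terms from expanding $\log(1+\cdot)$; carefully enumerating overlapping pairs by their joint coordinate type yields the polynomial $n^2 - 3(k+1)n + \tfrac{k(9k+17)}{4}$ with weight $2^{-2(n-k)}$. The convergence of the cluster expansion — i.e.\ that the tail beyond $T_1+T_2$ is $e^{-\Omega(n)}$ — follows from a Kotecký–Preiss / Koteck\'y--Preiss-type criterion, using that each polymer of ``size'' $s$ has weight exponentially small in $s$ and that the middle layer neighborhoods are small relative to $\ell_n$.

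\textbf{Main obstacle.} The hardest part is establishing the container/expansion estimate in the irregular regime: bounding the number of independent (external) sets in the consecutive-layer bipartite graphs of $[3]^n$ when the Sapozhenko degree condition \eqref{Sap.cond} fails, and doing so with an error term good enough ($e^{-\Omega(n)}$ relative) that it does not swamp $T_2$, which itself is only polynomially-times-exponentially small. This requires the new container lemma for expanding sets in irregular graphs together with sharp isoperimetry in $[3]^n$, and the bookkeeping to make sure every contribution not captured by $T_1+T_2$ is genuinely $e^{-\Omega(n)}$. The enumeration of polymer types and their interactions for $T_2$ is routine but delicate; the analytic crux is the uniform convergence of the cluster expansion in this non-translation-invariant, irregular setting.
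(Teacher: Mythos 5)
Your proposal follows essentially the same strategy as the paper: reduce to the three central layers $L_{[n-1,n+1]}$ via isoperimetry and a container lemma, encode defects in $L_{n\pm 1}$ as polymers with weight $2^{-|\partial(\cdot)|}$, verify Koteck\'y--Preiss, and identify $T_1$ and $T_2$ as the size-$1$ and size-$2$ cluster contributions respectively (the paper's Theorems~\ref{thm.reduction} and~\ref{thm.central} together with Proposition~\ref{prop:cluster_1}). The one step you gloss over that the paper treats with some care is \emph{how} the reduction to three layers is iterated --- via the Hamm--Kahn embedding trick and a two-layer-vs-three-layer polymer comparison giving error $e^{-\Omega(n^2)}$ --- but this does not change the overall architecture, which you have correctly identified.
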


We note that Falgas-Ravry, R\"aty, and Tomon \cite{falgasravry2023dedekinds} showed that (for any $t$)
\begin{equation}\label{eq:FRT_conj}\log_2\alpha([t]^n)\ge \left(1+2^{-\Theta(n)}\right)\cdot {\ell(t,n)},\end{equation}
and posited that ``it might be reasonable to conjecture that this lower bound is closer to optimal" (than their upper bound). \Cref{thm:asymp_2} confirms this (tentative) conjecture in a strong form for $t=3$. Indeed, by computing the asymptotics of $T_1$ and showing that $T_2=o(T_1)$, we obtain the following corollary.

\begin{theorem}\label{thm:asymp_2} 
    $$\al([3]^n)=2^{\ell_n}\exp\left[(1+o(1))\sqrt{\frac{1+2\sqrt{2}}{2\sqrt{2}\pi}}n^{-1/2}\left(\frac{1+2\sqrt{2}}{2}\right)^n\right].$$
\end{theorem}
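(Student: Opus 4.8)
The plan is to deduce \Cref{thm:asymp_2} from \Cref{thm:asymp_1} by computing the precise asymptotics of $T_1$ and checking that $T_2$, together with the multiplicative error $1+e^{-\gO(n)}$, is negligible by comparison. Concretely, I will show that $T_1=(1+o(1))\sqrt{\tfrac{1+2\sqrt2}{2\sqrt2\,\pi}}\,n^{-1/2}\bigl(\tfrac{1+2\sqrt2}{2}\bigr)^n$ and that $T_2=e^{-\gO(n)}T_1$. Since $T_1\to\infty$, this yields
\[
\alpha([3]^n)=2^{\ell_n}\exp\bigl(T_1+T_2+O(e^{-\gO(n)})\bigr)=2^{\ell_n}\exp\bigl((1+o(1))T_1\bigr),
\]
which is exactly the assertion of \Cref{thm:asymp_2}.

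To evaluate $T_1$ I would first pass to the summation variable $j=2k+1$ and insert the central binomial estimate $\binom{2k+1}{k}=(1+O(1/k))\,\tfrac{2\cdot 4^{k}}{\sqrt{\pi k}}$ from Stirling's formula; this rewrites the summand of $T_1$ as $(1+O(1/k))\,\tfrac{2^{1-n}}{\sqrt{\pi k}}\binom{n}{2k+1}8^{k}$, so that $T_1=(1+o(1))\,2^{2-n}\sum_{k}\binom{n}{2k+1}\tfrac{8^{k}}{\sqrt{\pi k}}$. Were the slowly varying factor $1/\sqrt{\pi k}$ absent, the sum would be exactly $\sum_{k}\binom{n}{2k+1}8^{k}=\tfrac{1}{4\sqrt2}\bigl((1+2\sqrt2)^n-(1-2\sqrt2)^n\bigr)$ (the odd part of $(1+2\sqrt2)^n$, up to the factor $(2\sqrt2)^{-1}$), with the term $(1-2\sqrt2)^n$ negligible since $|1-2\sqrt2|<1+2\sqrt2$. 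The weights $\binom{n}{2k+1}8^{k}$ are, up to normalisation, the mass function of a binomial random variable, hence concentrated on a window $|k-k^\ast|=O(\sqrt n\log n)$ about $k^\ast=\tfrac{\sqrt2}{1+2\sqrt2}\,n$; on that window $1/\sqrt{\pi k}=(1+o(1))/\sqrt{\pi k^\ast}$, while the contribution of the complementary range (and of the $O(1/k)$ Stirling error, and of the boundedly many small $k$) is $e^{-\gO(n)}$ times the total, by a Chernoff bound. Replacing $1/\sqrt{\pi k}$ by $1/\sqrt{\pi k^\ast}=(1+o(1))\sqrt{\tfrac{1+2\sqrt2}{\sqrt2\,\pi}}\,n^{-1/2}$ throughout and simplifying constants then gives the claimed formula for $T_1$.

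For $T_2$, crude bounds suffice: $n^2-3(k+1)n+\tfrac{k(9k+17)}{4}=O(n^2)$, and using $\binom{2k+1}{k}\le 2^{2k+1}$ together with $2^{-2(n-k)}=2^{-2n}\cdot 4^{k}$ shows the summand of $T_2$ is at most $O(n^{2})\,2^{-2n}\binom{n}{2k+1}16^{k}$; since $\sum_{k}\binom{n}{2k+1}16^{k}=\tfrac18\bigl(5^n-(-3)^n\bigr)$, this gives $T_2=O\bigl(n^{2}(5/4)^n\bigr)$. As $\tfrac54<\tfrac{1+2\sqrt2}{2}$, we get $T_2/T_1=O(n^{5/2}\rho^n)$ for some $\rho\in(0,1)$, hence $T_2=e^{-\gO(n)}T_1=o(T_1)$, as needed. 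The only genuinely delicate step is the Laplace-type estimate for $T_1$: one has to justify pulling the factor $1/\sqrt{\pi k}$ out of the sum at the peak $k^\ast$ with only a multiplicative $(1+o(1))$ loss, which requires both the concentration of the binomial weights and the smoothness of $k\mapsto 1/\sqrt k$ on the relevant $O(\sqrt n)$ scale; everything else is routine bookkeeping.
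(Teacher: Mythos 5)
Your proposal is correct, and its skeleton is the same as the paper's: deduce \Cref{thm:asymp_2} from \Cref{thm:asymp_1} by extracting the asymptotics of $T_1$ and showing $T_2=o(T_1)$, with the prefactor $1+e^{-\gO(n)}$ absorbed since $T_1\to\infty$. Where you genuinely diverge is in how the two key estimates are obtained. For $T_1$, the paper writes $T_1=n2^{1-n}\,{}_2F_1\left(\tfrac12-\tfrac n2,1-\tfrac n2,2;8\right)$ and invokes a cited asymptotic expansion of the Gauss hypergeometric function (with a footnote justifying the boundary case of that formula), whereas you run an elementary Laplace-type argument: Stirling for $\binom{2k+1}{k}=(1+O(1/k))\tfrac{2\cdot4^k}{\sqrt{\pi k}}$, the exact evaluation $\sum_k\binom{n}{2k+1}8^k=\tfrac{1}{4\sqrt2}\bigl((1+2\sqrt2)^n-(1-2\sqrt2)^n\bigr)$, and concentration of the binomial weights at $k^\ast=\tfrac{\sqrt2}{1+2\sqrt2}n$ to pull out the factor $1/\sqrt{\pi k}$; I checked the constants and they reproduce $\sqrt{\tfrac{1+2\sqrt2}{2\sqrt2\pi}}$ correctly. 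For $T_2$, the paper appeals to \Cref{prop:cluster_1}\eqref{eq:cluster_4} (proved via the cluster-weight bound $\sum_u 2^{-2d_u}\le 2^{-n/2}\sum_u 2^{-d_u}$), while you bound the explicit sum directly by $O(n^2(5/4)^n)=e^{-\gO(n)}T_1$, which is equally valid and self-contained. The trade-off: your route avoids the external hypergeometric asymptotics (and its edge-case caveat) at the cost of having to justify the peak approximation carefully — which you flag as the delicate step. One small wording issue there: with a window $|k-k^\ast|=O(\sqrt n\log n)$, Chernoff gives a relative tail of $e^{-\gO(\log^2 n)}$, not $e^{-\gO(n)}$ (and the factor $1/\sqrt{\pi k}$ can inflate small-$k$ tail terms by at most $O(\sqrt n)$); this is still amply $o(1)$, so the $(1+o(1))$ conclusion is unaffected, but the exponent claimed for the tail should be corrected if written up.
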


At a high level, we follow Sapozhenko's approach for Dedekind's problem: \Cref{thm:asymp_1} will follow from a series of results that detail the structure of a typical antichain of $[3]^n$. First, we will prove that almost all antichains are contained in the three ``central" layers $L_{[n-1, n+1]}$ (\Cref{thm.reduction}). We will then show that almost all antichains in these layers have a very specific structure (\Cref{thm.central}). This allows us to count them and get the asymptotic expression of \Cref{thm:asymp_1}. 

    \begin{theorem}\label{thm.reduction}
        \[\alpha([3]^n)=\left(1+e^{-\gO(n^2)}\right)\alpha(L_{[n-1, n+1]}).\]
    \end{theorem}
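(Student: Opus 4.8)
The plan is to show that the number of antichains meeting the complement $[3]^n \setminus L_{[n-1,n+1]}$ is negligible compared to $\alpha(L_{[n-1,n+1]})$, and then combine this with a lower bound $\alpha(L_{[n-1,n+1]}) \ge 2^{\ell_n}$ (which holds trivially, since $L_n$ is an antichain of size $\ell_n$). So it suffices to prove that the number of antichains $\cA$ of $[3]^n$ with $\cA \not\subseteq L_{[n-1,n+1]}$ is at most $e^{-\Omega(n^2)} \cdot 2^{\ell_n}$, i.e. at most $2^{\ell_n - \Omega(n^2)}$. Since an antichain is an independent set in the comparability graph of $[3]^n$, and any antichain $\cA$ decomposes as $\cA = (\cA \cap L_{[n-1,n+1]}) \cup (\cA \setminus L_{[n-1,n+1]})$, I would first fix the ``outer part'' $\cB := \cA \setminus L_{[n-1,n+1]}$ and bound the number of ways to extend it within the central layers; the extension must be an antichain in $L_{[n-1,n+1]}$ all of whose elements are incomparable to every element of $\cB$. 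The key point is that any nonempty $\cB \subseteq [3]^n \setminus L_{[n-1,n+1]}$ that is part of an antichain ``blocks off'' a large up-set or down-set of the central layers: a single vertex $v$ with $\sum_i v_i \le n-2$ is comparable to (hence forbids) all vertices above it in $L_{[n-1,n+1]}$, and by the isoperimetric/Kruskal–Katona–type inequalities for $[3]^n$ advertised in the abstract, the number of such vertices — i.e. the number of elements of $L_{[n-1,n+1]}$ above a given level-$(n-2)$ or lower vertex — is $\Omega(\ell_n/n^{O(1)})$, in fact a constant proportion of $\ell_n$ after accounting for the union over all of $\cB$. Thus the central extension lives in a sublattice of size at most $\ell_n - c\,\ell_n$ for some constant $c$... wait, that only saves a constant factor in the exponent, not $\Omega(n^2)$, so the bound must be pushed further.

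To get the full $e^{-\Omega(n^2)}$ saving I would instead argue as follows. Partition the antichains not contained in $L_{[n-1,n+1]}$ according to the ``highest'' layer index $n+1 < k \le 2n$ (or symmetrically the lowest, $0 \le k < n-1$; by the up–down symmetry of $[3]^n$ it suffices to treat one side and multiply by $2$) that $\cA$ meets. Fixing such a $k$ and a nonempty set $S = \cA \cap L_k$, the rest of the antichain must avoid everything comparable to $S$; in particular it avoids the entire up-set generated by $S$, and since $k \ge n+2$, the ``shadow'' of $S$ down into the central layers is, by the isoperimetric inequality for $[3]^n$, of size at least (roughly) $\ell_n \cdot (1 - O(\cdot))$ whenever $|S|$ is not too small, and even for $|S| = 1$ the level-$k$ vertex forbids a positive fraction of $L_n$ of order $\ell_k / \ell_n$ times a combinatorial factor. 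The crucial numerical input is that the layers $L_k$ for $|k - n| \ge 2$ are themselves exponentially (indeed $e^{-\Omega(n)}$) smaller than $\ell_n$, so that the count of available choices for $S$ is small, and moreover each forbidden central region has size $\Omega(n^2)$ — here $n^2$ (rather than just $n$) enters because forbidding a vertex at distance $\ge 2$ from the middle kills, via the isoperimetric inequality, not just a single ``column'' but a two-parameter family of order $n^2$ central vertices; combined with the binomial/Stirling estimates on $\ell_n$ versus $\ell_{n\pm 2}$, this yields the stated error term. I would organize this as: (i) reduce to counting antichains whose support meets a fixed outer layer $L_k$; (ii) for each such $\cA$, apply a container/encoding argument to the central part, using that it must be an independent set in $L_{[n-1,n+1]}$ disjoint from a large forbidden set $N^{\uparrow}(S)$; (iii) sum the bound $2^{|L_{[n-1,n+1]} \setminus N^{\uparrow}(S)|}$ over the $e^{O(\ell_{n-2})} = e^{o(n^2)}$ choices of the outer part; (iv) conclude via $\ell_n - |N^{\uparrow}(S)| \le \ell_n - \Omega(n^2)$.

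The main obstacle, I expect, is step (ii)/(iii): bounding the number of \emph{outer} parts $\cB = \cA \setminus L_{[n-1,n+1]}$ that can occur. A priori $\cB$ ranges over antichains of the outer poset $[3]^n \setminus L_{[n-1,n+1]}$, and that number could itself be comparable to $2^{\ell_{n-2}}$, which is $2^{\Omega(\ell_n/\mathrm{poly}(n))}$ — far larger than the $e^{O(n^2)}$ budget we want. So a naive ``fix $\cB$, then extend'' union bound is too lossy; one genuinely needs the container method for irregular graphs (the lemma the authors flag as their main technical contribution) to show that the outer antichains themselves cluster into few ``containers,'' each of which already forces a $\Theta(n^2)$-sized forbidden region in the center. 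Equivalently, the right statement is not ``few outer configurations'' but ``every outer configuration, together with the container structure, costs $\Omega(n^2)$ in the exponent, and the number of containers is subexponential in $n^2$.'' Making this trade-off quantitatively match — i.e. verifying that (number of relevant containers) $\times$ $2^{-(\text{forbidden central mass})}$ is $e^{-\Omega(n^2)} \cdot 2^{\ell_n}$ — is where the isoperimetric inequalities for $[3]^n$ and the container lemma must be invoked in tandem, and is the technical heart of the argument.
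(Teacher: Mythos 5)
Your high-level plan (``fix the outer part $\cB = \cA \setminus L_{[n-1,n+1]}$, then bound the extensions inside the central layers'') is a genuinely different decomposition from the paper's, and the obstacle you flag in your last paragraph is the exact reason the paper does \emph{not} take this route. You correctly observe that the number of possible outer parts $\cB$ is a priori as large as $2^{\Theta(\ell_{n-2})}$, which swamps any $e^{O(n^2)}$ budget. You then gesture at a hoped-for repair (``the number of containers is subexponential in $n^2$ and each costs $\Omega(n^2)$ in the exponent''), but this is not worked out, and I do not see how to make it work: the containers of Lemma~\ref{lem:container.graph} are parametrized by sizes $g$ up to $\Theta(\ell_n)$ and the container family has size up to $|Y|\cdot 2^{g - \Omega(\cdot)}$, so the raw number of containers for the outer antichain is itself far larger than $e^{O(n^2)}$. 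The containers are designed to beat $2^{g}$, not to be few in absolute terms, so they do not give you the ``subexponential-in-$n^2$'' count you need. This is the genuine gap.

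The paper avoids the issue entirely by not splitting the antichain into inner/outer parts. Instead it peels off layers one at a time: Lemma~\ref{lem:3to2} shows that for any fixed ``above'' part $X$, $\alpha(L^X_{[i-2,i]}) = (1+e^{-\Omega(n^2)})\alpha(L^X_{[i-1,i]})$, i.e.\ adding the bottom of three layers costs only a multiplicative $(1+e^{-\Omega(n^2)})$, \emph{uniformly in $X$}. Summing over all antichains $X$ in the layers above and iterating then yields Theorem~\ref{thm.reduction} with no need to ever count the number of outer configurations. The mechanism for the $e^{-\Omega(n^2)}$ per-step saving is also different from what you describe: one compares two polymer models on $L^X_{n-1}$ (Lemmas~\ref{lem:Xi-alpha} and following) whose cluster expansions agree term-by-term on all clusters of size $< n/2$, because a polymer $A \subseteq L_{n-1}$ has nonempty interior only if $|A| \ge n/2$; the Koteck\'y--Preiss tail bound with $g(\Gamma) = \Omega(n^2)$ for $\|\Gamma\| \ge n/2$ then gives the error. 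Finally, the reduction from an arbitrary layer index $i$ to the middle-layer case uses the Hamm--Kahn embedding trick (Claim~\ref{cl:cl}), an ingredient absent from your outline. In short: your isoperimetric intuition about outer vertices killing central mass is not wrong, but it is pointed at the wrong quantity; the paper's key observation is that the comparison is a \emph{ratio} that is uniformly close to $1$ conditioned on the outer part, which sidesteps the need to enumerate outer parts at all.
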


    We say a set of vertices in a graph $G$ is \textit{$2$-linked} if it is connected in $G^2$ (the "square" of $G$, i.e., the graph where every two vertices of $G$ are adjacent if they have distance at most $2$ in $G$). A set $W\subseteq V(G)$ is called a \textit{2-linked component} if $W$ is a connected component of the graph $G^2$. We say that a set $S\subseteq [3]^n$ is $2$-linked if it is $2$-linked in the Hasse diagram of $[3]^n$ ($x,y$ are adjacent if $y<x$ and there doesn't exist $z$ such that $y<z<x$).

    \begin{theorem}\label{thm.central} Let $\cA_0$ be the collection of the antichains $I$ of $[3]^n$ that satisfy the following properties:
\begin{enumerate}[(i)]
    \item $I \sub L_{[n-1, n+1]}$;
    \item every 2-linked component of $I \backslash L_n$ has size at most 2.
\end{enumerate}
Then
\begin{align*}
            \alpha(L_{[n-1, n+1]})&=\left(1+e^{-\gO(n)}\right)|\cA_0|
        \end{align*}
    and, with $T_1, T_2$ as in \Cref{thm:asymp_1},
    $$\alpha(L_{[n-1, n+1]})=\left(1+e^{-\gO(n)}\right)2^{\ell_n}\exp{(T_1+T_2)}.$$
    \end{theorem}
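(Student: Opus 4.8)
The plan is to prove the two asymptotic identities in turn, deriving the second from the first together with an explicit count of $\cA_0$. For the first identity, $\alpha(L_{[n-1,n+1]}) = (1+e^{-\Omega(n)})|\cA_0|$, I would set up a container/transfer argument on the bipartite "layer graphs" between $L_{n-1}$ and $L_n$, and between $L_n$ and $L_{n+1}$. An antichain $I \subseteq L_{[n-1,n+1]}$ decomposes as $I = (I \cap L_{n-1}) \cup (I \cap L_n) \cup (I \cap L_{n+1})$; the antichain condition says exactly that $I \cap L_{n-1}$ is an independent set in the bipartite graph against $I \cap L_n$ (via the Hasse relation), and likewise for $L_n$ versus $L_{n+1}$. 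The point of condition (ii) is that the "bad" antichains — those whose trace off the middle layer contains a $2$-linked component of size $\ge 3$ — are exponentially rare. Here is where I would invoke the container-type lemma that the introduction advertises as going beyond the degree condition \eqref{Sap.cond}: it bounds the number of expanding sets in the relevant irregular bipartite graphs. The heuristic is that a $2$-linked component of size $s$ in $I \setminus L_n$ must be "blocked" from the middle layer, and the number of ways to do the blocking decays geometrically in $s$ relative to the entropy gained, so the total contribution of antichains violating (ii) is a $e^{-\Omega(n)}$ fraction. I would organize this as: (a) fix the trace $J = I \setminus L_n$ on the outer layers; (b) bound the number of antichains $I$ with $I \setminus L_n = J$ by $2^{\ell_n - |N(J)|}$ roughly, where $N(J) \subseteq L_n$ is the shadow/up-shadow; (c) sum over $J$ using the container bound on the number of $J$ with a given shadow size, and separate the sum according to whether $J$ has a large $2$-linked component.

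For the isoperimetric input, the key quantitative fact I need is that a $2$-linked set in $L_{n-1} \cup L_{n+1}$ of size $s$ has shadow of size at least roughly $s$ plus a gain that grows with $s$ — more precisely, that a single vertex blocks about $\ell_n/\ell_{n\pm 1}$-proportionally many middle-layer vertices while overlaps between the shadows of $2$-linked vertices are controlled. This is exactly the "isoperimetric inequalities for generalizations of the Boolean lattice" the abstract mentions, so I would cite the relevant earlier lemma. Combining the container count (at most $\binom{\ell_{n\pm1}}{s} \le (e\ell/s)^s$ choices, refined by the container lemma to something like $\ell^{O(s/n)}$ many "fingerprints") with the entropy deficit $2^{-|N(J)|}$ and the isoperimetric lower bound on $|N(J)|$ yields a geometric series in $s$ that is dominated by its $s \le 2$ terms; the $s=1$ and $s=2$ terms are precisely what $\cA_0$ retains.

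For the second identity I would directly enumerate $\cA_0$. An element of $\cA_0$ is determined by: a subset $M \subseteq L_n$ (the middle-layer part, $2^{\ell_n}$ choices before corrections), together with a collection of $2$-linked components of size $1$ or $2$ in $L_{n-1} \cup L_{n+1}$, each of which must be disjoint in shadow from $M$ and from each other (so that $I$ stays an antichain) and each of which carries an independent-set constraint. Writing $N_1$ for the number of ways to place a single outer vertex (it forbids its shadow of middle-layer vertices from lying in "below/above" relations — effectively it contributes a factor counting the vertex times $2^{-(\text{shadow size})}$ once we factor out the free $2^{\ell_n}$) and $N_2$ for size-$2$ components, an inclusion–exclusion / independence-polynomial expansion gives $|\cA_0| = 2^{\ell_n}\bigl(1 + e^{-\Omega(n)}\bigr)\exp(T_1' + T_2')$ where $T_1'$ counts single vertices and $T_2'$ counts pairs and the second-order self-interaction of singles. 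The combinatorial identities $\sum_k \binom{n}{2k+1}\binom{2k+1}{k}$ come from counting vertices in $L_{n\pm1}$: a vertex in $L_{n-1}$ (say) is an $n$-tuple over $\{0,1,2\}$ summing to $n-1$, and grouping by the number of coordinates equal to $1$ gives exactly the $\binom{n}{2k+1}\binom{2k+1}{k}$-type sums, while the powers $2^{-(n-k)}$ and $2^{-2(n-k)}$ are the shadow sizes (number of middle-layer neighbors of such a vertex is $2^{\#\{i:v_i=1\}}\cdot(\text{something})$, giving $n-k$ in the exponent). I would verify that the leading term $T_1'$ agrees with $T_1$ and that the pair term plus self-interaction equals $T_2$ by matching the polynomial factor $n^2 - 3(k+1)n + \tfrac{k(9k+17)}{4}$ against the explicit count of: (number of middle vertices comparable to a given outer vertex) choose 2, plus (number of $2$-linked partners of a given outer vertex) times (their joint shadow size), with signs from inclusion–exclusion.

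The main obstacle I expect is step (b)–(c) of the first identity: proving the container bound for the number of "fingerprints" in the irregular bipartite layer graphs, and pairing it correctly with the isoperimetric deficit so that the geometric series actually converges and is dominated by the $s\le 2$ terms. In the Boolean-lattice case Sapozhenko's argument leans on $d_X \ge D_Y$; here $L_{n-1}$ and $L_n$ have wildly varying degrees, so the naive container bound is too weak and one must use the refined lemma (the paper's headline container result) together with a careful choice of which layer plays the role of the "small side." Getting the error term to be genuinely $e^{-\Omega(n)}$ rather than merely $o(1)$ requires that the per-vertex entropy gain $2^{-(\text{shadow size})}$ decays exponentially in $n$ — which it does, since a typical outer vertex has $\Theta(n)$ middle-layer neighbors — but near the "balanced" outer vertices (those with $\approx n/2$ coordinates equal to $1$) the shadow is exponentially large in $n$, and one has to check these dominate the count in exactly the way that produces the stated $T_1, T_2$. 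That bookkeeping, rather than any single hard inequality, is where the real work lies.
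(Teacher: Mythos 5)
Your plan is, in substance, the paper's own: the decomposition by the trace $J=I\setminus L_n$ with the exact factor $2^{\ell_n-|\partial(J)|}$ is precisely the polymer-model identity $2^{\ell_n}\Xi_C=\alpha(L_{[n-1,n+1]})$; discarding traces containing a $2$-linked component of size $\ge 3$ corresponds to the truncated weights $w'_C$ with $2^{\ell_n}\Xi'_C=|\cA_0|$; and your ``inclusion--exclusion / independence-polynomial expansion'' giving $\exp(T_1+T_2)$ is the cluster expansion of $\ln\Xi_C$ truncated at clusters of size $2$. The difference is purely in how the two $e^{-\Omega(n)}$ error terms are controlled: the paper verifies the Koteck\'y--Preiss condition with tailored functions $f,g$, using $|N^+(A)|\ge \tfrac{n}{2}|A|-|A|^2$ for small polymers, $|N^+(A)|\ge c'n|A|$ for medium ones, and the container lemma for huge ones; the resulting tail bounds simultaneously give $\Xi_C=(1+e^{-\Omega(n)})\Xi'_C$ and the $e^{-\Omega(n)}$ bound on clusters of size $\ge 3$. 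Your ``geometric series dominated by $s\le 2$'' is exactly this step, and it is where the actual work lies, so your proposal is the right plan with the hard estimate deferred rather than supplied.

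Two concrete points need repair. First, the degree structure: a vertex $v\in L_{n-1}$ with $k$ coordinates equal to $2$ has exactly $d_v=n-k$ neighbours in $L_n$, i.e.\ linearly many; your parenthetical formula $2^{\#\{i:v_i=1\}}\cdot(\cdot)$ for the number of middle-layer neighbours, and the later claim that near ``balanced'' outer vertices ``the shadow is exponentially large in $n$,'' are false (only the \emph{number} of such vertices is exponential), although you do land on the correct exponent $n-k$. Second, in matching $T_2$ you must account for all three kinds of size-$2$ cluster contributions: ordered same-layer pairs of outer vertices with intersecting shadows, including the self-pair $u=v$ (Ursell factor $-\tfrac12$, the self-pairs producing $-\sum_u 2^{-2d_u}$); cross-layer pairs $u\in L_{n-1}$, $v\in L_{n+1}$ with $u<v$ (again $-\tfrac12$); and genuine two-element $2$-linked polymers within one layer (weight $2^{-d_u-d_v+1}$, Ursell $+1$). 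Your description (``comparable middle vertices choose 2, plus $2$-linked partners times joint shadow'') does not clearly cover the cross-layer and self-pair terms, and without them the polynomial $n^2-3(k+1)n+\tfrac{k(9k+17)}{4}$ will not come out.
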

Given an antichain $I \sub [3]^n$, call the vertices of $I\backslash L_n$ the \textit{defect vertices} of $I$.  
Let $X_n$ be the random variable that counts the number of defect vertices in a uniformly randomly chosen antichain in $[3]^n$.
Let $\mu_n=\E X_n$ and $\sigma_n^2={\rm Var}(X_n).$ We prove a central limit theorem for $X_n$.

\begin{theorem}\label{thm:CLT}
    The random variable $\tilde X_n:=\frac{X_n-\mu_n}{\sigma_n}$
    converges in distribution to the standard normal $N(0,1)$ as $n \rightarrow \infty.$ Furthermore, 
    $$\mu_n=(1+o(1)){T_1} \hspace{1cm} \text{and} \hspace{1cm} \sigma_n^2=(1+o(1)){T_1}.$$
\end{theorem}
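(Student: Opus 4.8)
The plan is to view $X_n$, on the overwhelmingly likely event that the random antichain lies in the structured family $\cA_0$ of \Cref{thm.central}, as the particle number of a polymer model, to use the cluster expansion to obtain an analytic formula for the associated generating function in a fixed complex neighbourhood of $1$, and to read off the limiting characteristic function of $\tilde X_n$. By \Cref{thm.reduction} and \Cref{thm.central} we have $\alpha([3]^n)=(1+e^{-\Omega(n)})|\cA_0|$, and an antichain $I\in\cA_0$ is encoded by the pair $(D,S)$ with $D=I\setminus L_n$ its set of defect vertices and $S=I\cap L_n$: here $D$ runs over all antichains of $L_{n-1}\cup L_{n+1}$ whose $2$-linked components (each of which, being an antichain, lies in a single layer) all have size $\le 2$, and given $D$, the set $S$ runs over all subsets of $L_n$ avoiding the set $N(D)$ of $L_n$-vertices comparable to some vertex of $D$. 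With $\beta(D):=|N(D)|$ this gives $\sum_{I\in\cA_0}z^{X_n(I)}=2^{\ell_n}\,\Xi(z)$ where $\Xi(z):=\sum_D z^{|D|}2^{-\beta(D)}$; since two $2$-separated components of $D$ are automatically incomparable with disjoint $L_n$-shadows, $\beta$ is additive over components and $\Xi(z)$ is exactly the partition function of the polymer model whose polymers are the $2$-linked components (sizes $1$ or $2$), with fugacity $w(\gamma;z)=z^{|\gamma|}2^{-\beta(\gamma)}$ and hard-core incompatibility ``being $2$-linked''.

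A single defect vertex of $L_{n-1}\cup L_{n+1}$ has an $L_n$-shadow of size $\ge(n+1)/2$, so every polymer has weight $\le 2^{-(n-1)/2}$ and, being of size $\le 2$, the Koteck\'y--Preiss criterion holds with room to spare, uniformly for $z$ in a fixed disc $|z-1|\le\rho$; hence $F_n(z):=\log\Xi(z)=\sum_{\Gamma}\varphi(\Gamma)\prod_{\gamma\in\Gamma}w(\gamma;z)$ converges and is analytic there. Writing $g_n(s):=F_n(e^s)=\sum_{\Gamma}\varphi(\Gamma)e^{s\|\Gamma\|}\prod_{\gamma\in\Gamma}2^{-\beta(\gamma)}$ with $\|\Gamma\|:=\sum_{\gamma\in\Gamma}|\gamma|$, its derivatives at $0$ are $g_n^{(j)}(0)=\sum_{\Gamma}\varphi(\Gamma)\,\|\Gamma\|^{\,j}\prod_{\gamma\in\Gamma}2^{-\beta(\gamma)}$; the single-singleton clusters contribute $\sum_{v\in L_{n-1}\cup L_{n+1}}2^{-\beta(v)}$ to each of these, and this sum equals $T_1$ exactly (a vertex of $L_{n-1}$ with $a$ coordinates equal to $2$ has $\beta=n-a$ and there are $\binom{n}{2a+1}\binom{2a+1}{a}$ such vertices, the factor $2$ in $T_1$ accounting for $L_{n+1}$), while all remaining clusters (those with a size-$2$ polymer or with $\ge 2$ polymers) contribute a total of order $T_2+\bigl(\sum_\gamma 2^{-\beta(\gamma)}\bigr)^2=o(T_1)$, using $T_2=o(T_1)$ and $\sum_\gamma 2^{-\beta(\gamma)}=(1+o(1))T_1$. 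Thus $g_n'(0)=g_n''(0)=(1+o(1))T_1$ and $g_n^{(j)}(0)=O(T_1)$ for each fixed $j$.

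Next I compute $\mu_n$ and $\sigma_n^2$ directly. We have $\mu_n=\sum_{v\notin L_n}\pr(v\in I)$, and comparing (via \Cref{thm.reduction} and \Cref{thm.central}) the number of antichains through $v$ with $\alpha([3]^n)$ gives $\pr(v\in I)=(1+o(1))2^{-\beta(v)}$ uniformly for $v\in L_{n-1}\cup L_{n+1}$ and $\pr(v\in I)\le 2^{-\Omega(n^2)}$ for $v$ outside the three central layers, whence $\mu_n=(1+o(1))T_1$; likewise $\sigma_n^2=\sum_v\pr(v\in I)\bigl(1-\pr(v\in I)\bigr)+\sum_{v\neq v'}{\rm Cov}(\mathbf 1_{v\in I},\mathbf 1_{v'\in I})$, where the first sum is $(1+o(1))T_1$ and the covariance sum is $o(T_1)$ since it vanishes for far-apart $v,v'$ and is $O(2^{-\beta(v)}2^{-(n+1)/2})$ for each of the $O(n^2)$ vertices $v'$ within bounded distance of a given $v$. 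For the distributional statement, put $\theta:=t/\sigma_n\to 0$; since $|e^{i\theta X_n}|=1$, the antichains outside $\cA_0$ contribute at most $e^{-\Omega(n)}$, so $\E[e^{i\theta X_n}]=2^{\ell_n}\Xi(e^{i\theta})/\alpha([3]^n)+O(e^{-\Omega(n)})=e^{g_n(i\theta)-g_n(0)}(1+o(1))+o(1)$, and Taylor-expanding $g_n$ (legitimate as $\theta\to 0$ and $g_n$ is analytic) gives $g_n(i\theta)-g_n(0)=i\theta\, g_n'(0)-\tfrac{t^2}{2}\,\tfrac{g_n''(0)}{\sigma_n^2}+O(\theta^3 T_1)= i\theta\,g_n'(0)-\tfrac{t^2}{2}(1+o(1))+o(1)$, using $\sigma_n^2=(1+o(1))T_1\to\infty$. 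Finally, $\mu_n=g_n'(0)+o(1)$: this holds because $\pr(\cA_0)+\pr\bigl(I\in L_{[n-1,n+1]}\setminus\cA_0\bigr)=1-e^{-\Omega(n^2)}$ while $\E\bigl[X_n\mid I\in L_{[n-1,n+1]}\setminus\cA_0\bigr]=g_n'(0)+O(1)$ (deleting a bounded-size component changes the defect count of the rest by $O(1)$ in expectation), so $\theta(g_n'(0)-\mu_n)\to 0$, and hence $\E[e^{it\tilde X_n}]=e^{-i\theta\mu_n}\E[e^{i\theta X_n}]\to e^{-t^2/2}$, which with L\'evy's continuity theorem proves the CLT.

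The main obstacle is the interplay between the polymer model --- which only describes $X_n$ on $\cA_0$ --- and the unconditional law. On the imaginary axis this is harmless because $|e^{i\theta X_n}|=1$, but it is exactly why the means, variances and the centering have to be pinned down by the direct, structural arguments above rather than by differentiating the characteristic function off the unit circle, where atypical antichains, though exponentially rare, still carry enough mass of $X_n$ to spoil the estimate; in particular one must verify that the $\Theta(T_1^2)$-scale contributions to $\E[X_n^2]$ and to $\mu_n^2$ cancel to leave the claimed $\Theta(T_1)$ variance. The second point requiring care is the bookkeeping in the cluster expansion: checking Koteck\'y--Preiss uniformly in a complex disc, and then identifying the leading term of $g_n',g_n''$ with $T_1$ while absorbing every size-$2$-polymer and multi-polymer contribution --- which is where the explicit shape of $T_2$ enters --- into $o(T_1)$.
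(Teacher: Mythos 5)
Your route (restrict to $\cA_0$, expand $\log$ of the truncated partition function at $z$ on the unit circle, and finish with L\'evy continuity) is a characteristic-function cousin of the paper's cumulant argument, but as written it has two real problems. First, a bookkeeping error: you bound the contribution of clusters with at least two polymers to $g_n^{(j)}(0)$ by $\bigl(\sum_\gamma 2^{-\beta(\gamma)}\bigr)^2$ and declare this $o(T_1)$ while simultaneously using $\sum_\gamma 2^{-\beta(\gamma)}=(1+o(1))T_1$; since $T_1$ grows exponentially, that square is $\Theta(T_1^2)\gg T_1$, so the step is false as stated. The correct estimate must exploit incompatibility — in a two-polymer cluster the second polymer is confined to the $O(n^{O(1)})$ vertices near the first, giving a bound of order $T_1\cdot n^{O(1)}2^{-\Omega(n)}$ — which is exactly how \Cref{prop:cluster_1} handles sizes $2$ and $3$. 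This is fixable, but the fix is the missing content.

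The more serious gap is the transfer from the $\cA_0$-conditioned law to the unconditional one. Because you truncated the polymer model to size-$\le 2$ polymers, your $g_n$ is the cumulant generating function of $X_n$ \emph{conditioned} on $\cA_0$; you then need $\mu_n=g_n'(0)+o(\sigma_n)$ and $\sigma_n^2=(1+o(1))g_n''(0)$. But $\PP(I\in L_{[n-1,n+1]}\setminus\cA_0)$ is only $e^{-\Omega(n)}$ while $X_n$ can be as large as $3^n$ and $T_1\approx 1.914^n$, so ``rare event times worst case'' does not control $\E[X_n\mathbf{1}_{I\notin\cA_0}]$ or the corresponding variance term. The ingredients you assert to close this — that $\pr(v\in I)=(1+o(1))2^{-\beta(v)}$ uniformly, that covariances of far-apart defect indicators vanish (they only decay, and there are $3^{\Theta(n)}$ such pairs), and that $\E[X_n\mid I\in L_{[n-1,n+1]}\setminus\cA_0]=g_n'(0)+O(1)$ — are each nontrivial and are left unproved; they require tail control on the defect count of atypical middle-layer antichains, i.e.\ on large polymers. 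The paper avoids the issue by never truncating: with all $2$-linked subsets as polymers, $\E e^{tY_n}=\tilde\Xi_t/\Xi_C$ holds exactly, the Koteck\'y--Preiss condition for the tilted weights at real $t\le 1/n^3$ (\Cref{prop:polym_aux}, resting on the container lemma \Cref{lem:containers}) justifies differentiating the cluster expansion, all cumulants come out as $(1+o(1))T_1$ (\Cref{prop:cumulant}), and only then does one pass from $Y_n$ to $X_n$ via \Cref{thm.reduction}, whose $e^{-\Omega(n^2)}$ error does beat $3^n$. Without some substitute for that large-polymer tail bound, your centering and variance identification do not go through.
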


 \subsection{Connection to high-dimensional partitions and a Ramsey-type problem}
 Beyond being a natural generalization of Dedekind's problem, antichains in $[t]^n$ have received considerable attention because of their connections to several other combinatorial problems. We give a couple of examples here. 
 An $n$-dimensional integer partition of an integer $t$ is a $t\times t\dots\times t$, $n$-dimensional tensor $A$ of non-negative integers that sum up to $t$, satisfying $A_{i_1,\dots,i_{k},\dots,i_n}\ge A_{i_1,\dots,i_{k}+1,\dots,i_n}$ for every possible $i_1, \ldots, i_n$ and all $1\leq k \leq n$. Let $P_n(t)$ be the number of $t\times t\dots\times t$ $n$-dimensional partitions with entries in $\{0,1,\dots, t\}$. A straightforward bijection \cite[Observation 2.5]{Moshkovitz2012RamseyTI} shows $P_{n-1}(t)=\alpha([t]^n)$.
    
    Another connection is given by the following Ramsey-theoretic problem of `Erd\H{o}s-Szekeres type', first defined and studied by Fox, Pach, Sudakov and Suk in \cite{Fox2011ErdsSzekerestypeTF}. For any sequence of positive integers $x_1<x_2<\ldots<{x_{l+k-1}}$, we say that the $k$-tuples $(x_i,x_{i+1}, \ldots, x_{i+k-1})$ $(i=1,2,\ldots, l)$ form a \textit{monotone path} of length $l$.
    Let $M_k(t,n)$ be the smallest integer $M$ such that any coloring of the $k$-element subsets of $\{1,2,\ldots,M\}$ with $n$ colors contains a monochromatic monotone path of length $t$.
    Fox et al~\cite{Fox2011ErdsSzekerestypeTF} established upper and lower bounds for $M_3(t,n)$ and suggested closing the gap between these bounds as an interesting open question. Moshkovitz and Shapira \cite{Moshkovitz2012RamseyTI} connected this Ramsey-type question with the antichains of $[t]^n$ by showing that
    \begin{align}\label{eq:MSid}
    M_3(t,n)=\alpha([t]^n)+1\end{align}
    and used it to obtain the bounds
    $$2^{\frac{2}{3}t^{n-1}/\sqrt n} \leq M_3(t,n) \leq 2^{2t^{n-1}}.$$
    Moshkovitz and Shapira further asked whether the relation
    \begin{align}\label{eq:MSqu}
    M_3(t,n)=2^{(1+o_n(1)){\ell(t,n)}}
    \end{align}
    is true as $n\rightarrow\infty$,
    where we recall that $\ell(t,n)$ is the size of a largest layer of the poset $[t]^n$.    
    Note that the lower bound in~\eqref{eq:MSqu} is clear from~\eqref{eq:MSid} by taking subsets of a largest layer in $[t]^n$. 
    The question of Moshkovitz-Shapira was positively answered by Falgas-Ravry et al \cite{falgasravry2023dedekinds}. We note that Anderson \cite{Anderson1967Primitive} proved that
    \begin{equation}\label{eqn:anderson}
            {\ell(t,n)}=(1+o_n(1))t^{n}\sqrt{\frac{6}{\pi(t^2-1)n}}.
    \end{equation}
    In the language of the current section, our results provide a precise estimate for $M_3(3,n)$ and $P_{n-1}(3)$. For example, we have the following immediate corollary to \Cref{thm:asymp_1}.
\begin{corollary}\label{thm:ramsey}
        With $\ell_n$, $T_1$, and $T_2$ as in \Cref{thm:asymp_1},
        $$M_3(3,n)=\left(1+e^{-\gO(n)}\right)2^{\ell_n}\exp{(T_1+T_2)}.$$
\end{corollary}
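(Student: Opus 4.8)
The plan is simply to combine \Cref{thm:asymp_1} with the Moshkovitz--Shapira identity~\eqref{eq:MSid}. Setting $t=3$ in~\eqref{eq:MSid} gives $M_3(3,n)=\alpha([3]^n)+1$, and \Cref{thm:asymp_1} already supplies the asymptotic expression $\alpha([3]^n)=\left(1+e^{-\gO(n)}\right)2^{\ell_n}\exp(T_1+T_2)$ for $\alpha([3]^n)$. So all that remains is to check that adding the constant $1$ does not change the form of the estimate.

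For this, observe that $T_1,T_2\ge 0$, whence $2^{\ell_n}\exp(T_1+T_2)\ge 2^{\ell_n}$. By Anderson's formula~\eqref{eqn:anderson} (with $t=3$) we have $\ell_n=(1+o(1))\,3^n\sqrt{6/(8\pi n)}$, which grows faster than any polynomial in $n$; in particular $2^{-\ell_n}=e^{-\gO(n)}$ (indeed far smaller). Therefore
\[
M_3(3,n)=\alpha([3]^n)+1=\left(1+O\!\left(2^{-\ell_n}\right)\right)\alpha([3]^n)=\left(1+e^{-\gO(n)}\right)2^{\ell_n}\exp(T_1+T_2),
\]
which is the claimed identity. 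There is no genuine obstacle here: once \Cref{thm:asymp_1} is in hand the corollary is immediate, and the only point meriting a remark is that $[3]^n$ has super-polynomially (in fact doubly-exponentially) many antichains, so the additive term in the Ramsey identity is negligible on the relevant scale.
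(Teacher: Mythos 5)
Your proof is correct and takes essentially the same route as the paper: the paper simply notes that the corollary follows immediately from \Cref{thm:asymp_1} via the Moshkovitz--Shapira identity~\eqref{eq:MSid}. Your additional remark that the $+1$ is absorbed into the error term because $2^{\ell_n}$ is doubly-exponential is a correct (and worth making explicit) observation.
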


\subsection{Methods} In this section we highlight the main methods used in this paper. We refer the reader to \Cref{sec:prelim} for the terms not defined in this section.

\subsubsection{Graph containers and cluster expansion} 
Sapozhenko's graph container method, also pioneered in the work of Kleitman and Winston \cite{kleitman1982number}, has become a very powerful tool in the study of independent sets in graphs, e.g.
\cite{balogh2025maximal, balogh2021independent, galvin2006slow, kahn2022number, park2022note, potukuchi2021enumerating}, and beyond \cite{balogh2024intersecting, krueger2024lipschitz}.
The first author and Perkins \cite{hypercube} used the container method in conjunction with polymer models and the cluster expansion method from statistical physics, to obtain a detailed description of independent sets in the hypercube. This framework has since been used to answer many enumerative and algorithmic questions in combinatorics, e.g., \cite{balogh2021independent, collares2025counting, davies2021proof, geisler2025counting, JMP1, jenssen2022independent, jenssen2023approximately, jenssen2023evolution, kronenberg2022independent, li2023number}.
Our work is based in the same framework, and at a high level follows the blueprint of a recent paper of the first two authors and Malekshahian \cite{JMP1} on Dedekind's problem. Here the application of the container method is more challenging as we have to go beyond the usual condition in \eqref{Sap.cond}; to the best of our knowledge, up until now, the host graphs of all the other applications satisfy \eqref{Sap.cond}. The classical container families are parametrized by the sizes of the ``closure"  and the neighborhood of vertex subsets. On the other hand, the degree constraints and the irregularity of $[3]^n$ lead us to consider a new characterization of families of containers, which are additionally parametrized by the number of edges from the neighborhood which ``escape'' the closure. We note that our methods could be extended to obtain finer structural characterizations than Theorems \ref{thm.central} and \ref{thm:CLT}, and to count antichains of a fixed size, similar to the results in, e.g., \cite{JMP1, hypercube, jenssen2022independent}. We believe that a version of our main container lemma (\Cref{lem:container.graph}) should hold for the bipartite graphs induced by two adjacent layers of $[t]^n$ for all fixed $t$, despite the significant irregularity of these graphs. Identifying the minimal conditions under which a container lemma for irregular graphs holds would be of independent interest, as it would greatly expand the applicability of the method.

\subsubsection{Isoperimetric inequalities}
Vertex-expansion is the key structural property necessary for proving graph container lemmas in this paper. In the early works of Korshunov and Sapozhenko on the Boolean lattice and the hypercube \cite{korshunov, Sapozhenko1989}, the required isoperimetric estimates follow immediately from a characterization of sets with minimal expansion \cite{bezrukov1985minimization, katona, kruskal}. More precisely, Kruskal-Katona's Theorem states that over all sets of given size $m$ in a layer of the Boolean lattice, the minimal downwards (upwards, resp.) expansion is achieved by the first (last, resp.) $m$ elements of the layer ordered lexicographically. This extremal result can be easily translated to a quantitative estimate of the expansion. On the other hand, despite the existence of a Kruskal-Katona-type theorem for $[t]^n$ (in fact, any product of chains) by Clements and Lindstr{\"o}m \cite{clements1969generalization}, its translation to a quantitative result is far from immediate. For $[3]^n$, partial results for the expansion of individual layers were obtained in \cite{NSS}. Although our main theorems concern $[3]^n$, we provide isoperimetric estimates that hold for any $[t]^n$ with $t$ fixed. We believe they are of independent interest,  and may prove useful for generalizing our approach to $\alpha([t]^n)$. A key ingredient of our proof is an estimate for the expansion of layers via a detailed Local Limit Theorem due to Esseen \cite{Esseen_1945}. We provide a different, simpler combinatorial proof for $t=3$ in the main body of the paper, and defer the probabilistic arguments (that work for every fixed $t$) to the appendix.

\subsection{Organization} \Cref{sec:prelim} briefly provides the necessary framework and tools from probability and statistical physics that will be used. The isoperimetric inequalities for $[t]^n$ are proved in \Cref{sec:isoper}.
\Cref{sec:containers} discusses our container lemma for a class of irregular graphs of which two consecutive layers of $[3]^n$ are a special case, along with a necessary variant required for treating a poset with three layers. The rest of the paper uses these tools to prove our main theorems: in \Cref{sec:reduction} we show it suffices to consider only the three central layers (\Cref{thm.reduction}) and in \Cref{sec:central} we solve the counting problem in the three central layers (\Cref{thm.central}). Finally, \Cref{sec:CLT} is concerned with the structure of typical antichains in the central layers, proving the Central Limit Theorem for the defects (\Cref{thm:CLT}). \Cref{sec:computations} contains all the deferred computations of terms from the cluster expansion, in particular, \Cref{thm:asymp_2} will follow from the computations.

\section{Preliminaries}\label{sec:prelim}
\subsection{Tools from probability}\label{sec:prob}
For a random variable $X$, the \textit{cumulant generating function} of $X$ is the natural logarithm of its moment generating function, that is, $h_X(t)\coloneqq \ln \mathbb{E} [e^{tX}].$ The $\ell^{\text{th}}$ cumulant of $X$ is then 
\[ \kappa_{\ell}(X)\coloneqq \left. \frac{\partial^{\ell} h_X(t)}{\partial t^{\ell}}\right|_{t=0}. \]
The first two cumulants are $\kappa_1(X)=\mathbb{E}[X]$ and $\kappa_2(X)=\text{Var}(X)$. Note that for constants $a, b>0$ and $\ell\ge 2$ we have
\beq{cumlin}\kappa_\ell ((X-a)/b)=\kappa_\ell(X)/b^\ell. \enq
We will use the following well-known property of cumulants (see, e.g. \cite{janson1988normal}). 
\begin{fact}\label{cumulantfact}
    Let $(X_n)_{n\geq 1}$ be a sequence of real-valued random variables such that $\kappa_1(X_n)\rightarrow 0,$ $\kappa_2(X_n)\rightarrow 1$ and $\kappa_\ell(X_n)\rightarrow 0$ for all $\ell\geq 3$ as $n \rightarrow \infty$. Then the sequence $(X_n)_{n\geq 1}$ converges in distribution to a standard Gaussian.
\end{fact}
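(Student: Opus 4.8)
The plan is to prove this by the method of moments. First I would recall the moment--cumulant relations: whenever a random variable $X$ has all moments finite ---which is implicit here, since the cumulants $\kappa_\ell(X_n)$ are assumed to exist--- one has, for every $m \ge 1$,
\[
\mathbb{E}[X^m] \;=\; \sum_{\pi \in \Pi_m} \prod_{B \in \pi} \kappa_{|B|}(X),
\]
where $\Pi_m$ is the set of partitions of $\{1,\dots,m\}$ into nonempty blocks. In particular $\mathbb{E}[X^m]$ is a fixed universal polynomial $P_m$, with nonnegative integer coefficients, in $\kappa_1(X),\dots,\kappa_m(X)$.

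Next I would pass to the limit coordinatewise. Fix $m \ge 1$. Since $P_m$ depends on only the first $m$ cumulants, and by hypothesis $\kappa_1(X_n)\to 0$, $\kappa_2(X_n)\to 1$, and $\kappa_\ell(X_n)\to 0$ for $3 \le \ell \le m$, continuity of the polynomial $P_m$ gives $\mathbb{E}[X_n^m] \to P_m(0,1,0,\dots,0)$. In this specialization a partition $\pi$ contributes a nonzero term only if all of its blocks have size exactly $2$, so $P_m(0,1,0,\dots,0)$ equals the number of perfect matchings of $\{1,\dots,m\}$, namely $(m-1)!!$ for $m$ even and $0$ for $m$ odd. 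These are exactly the moments of a standard Gaussian $Z\sim N(0,1)$; hence $\mathbb{E}[X_n^m]\to\mathbb{E}[Z^m]$ for every $m$.

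Finally I would invoke the Fr\'echet--Shohat theorem (the method of moments): if the moments of $X_n$ converge to those of a distribution that is uniquely determined by its moments, then $X_n$ converges in distribution to that law. The standard Gaussian is moment-determinate by Carleman's criterion ---from $\mathbb{E}[Z^{2k}]=(2k-1)!!$ and Stirling's formula one gets $\bigl(\mathbb{E}[Z^{2k}]\bigr)^{-1/(2k)} = \Theta(k^{-1/2})$, so $\sum_k \bigl(\mathbb{E}[Z^{2k}]\bigr)^{-1/(2k)}=\infty$--- and therefore $X_n \Rightarrow N(0,1)$.

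The argument is entirely standard, so there is no genuine obstacle; the two points meriting a sentence of care are the existence of all moments of $X_n$ (needed for the moment--cumulant identity, and guaranteed by the standing assumption that the cumulants are well-defined) and the moment-determinacy of the Gaussian (the classical Carleman criterion). One could instead show directly that the cumulant generating functions $h_{X_n}(t)=\ln\mathbb{E}[e^{tX_n}]$ converge to $t^2/2$ near the origin and then apply the continuity theorem for moment generating functions, but that route requires a uniform-in-$n$ lower bound on the radius of convergence, so I would favor the moment-based proof above.
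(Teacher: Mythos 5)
Your proof is correct and takes the same route as the paper: the method of moments via the moment--cumulant relations. The paper merely sketches this in one sentence (citing Billingsley), and your argument fills in the details---the moment--cumulant partition formula, the limit computation showing $\mathbb{E}[X_n^m]\to\mathbb{E}[Z^m]$, and the Fr\'echet--Shohat plus Carleman step for moment-determinacy of the Gaussian---accurately.
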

\nin We note that this is simply a restatement of the method of moments (see, e.g., \cite[Section 30]{billingsley_1995}) in the language of cumulants: if $M_X(t)$ is the moment generating function of $X$, the moments can be recovered from the equation $M_X(t)=e^{h_X(t)}$ in the ring of formal power series. From this, it follows that each moment is determined by a finite number of cumulants, which leads to \Cref{cumulantfact}.

\subsection{Polymer models and cluster expansion} \label{sec:polymer}

Let $\cP$ be a finite set, called the set of \textit{polymers}. Let $w: \cP\rightarrow \mathbb{C}$ a weight function and $\sim$ a symmetric and antireflexive relation on $\cP$ which we refer to as the \textit{compatibility} relation. We also say two polymers $\gamma$ and $\gamma'$ are incompatible if $\gamma \nsim \gamma'$. We refer to the triple $(\mathcal{P}, \sim, w)$ as a \textit{polymer model}.

A set of pairwise compatible polymers is called a \textit{polymer configuration}. Let $\Omega=\Omega(\mathcal{P},\sim)$ be the collection of all polymer configurations, including the empty set of polymers. We define the \textit{partition function} $\Xi=\Xi(\cP, \sim, w)$ as
\begin{equation*}
    \Xi= \sum\limits_{\Lam \in \Omega} \prod\limits_{\gamma \in \Lam} w(\gamma),
\end{equation*}
where by convention, we take the contribution from the empty set to be $1$. 

Let $\Gamma=(\gamma_1, \ldots, \gamma_k)$ be a sequence of polymers that are not necessarily distinct. The \emph{incompatibility graph} of $\Gamma$ is the graph $G_\Gamma$ on vertex set $[k]$ where $i$ is adjacent to $j$ if and only if $\gamma_i\nsim \gamma_j$.
We call $\Gamma$ a \emph{cluster} if $G_\Gamma$ is connected.
We define the \emph{size} of $\Gamma$ to be $\|\Gamma\|\coloneqq \sum_{i=1}^k |\gamma_i|$. We define the weight of a cluster $\Gamma$ to be 
\begin{align}\label{eq:wclusterdef}
    w(\Gamma)=\phi(G_\Gamma) \prod\limits_{{\gamma} \in \Gamma} w(\gamma),
\end{align}
where $\phi$ is the Ursell function of a graph $G=(V,E)$, defined as
\beq{eq:Ursell}
    \phi(G):=\frac{1}{|V|!} \sum\limits_{\substack{E' \subseteq E: \\ (V, E') \text{ connected}}} (-1)^{|E'|}.
\enq
Let $\mathcal{C}=\mathcal{C}(\mathcal{P},\sim)$ be the set of all clusters. Then the \emph{cluster expansion} of $\ln \Xi$ is the formal power series
\begin{equation}\label{eqclusterexp}
   \ln \Xi =\sum\limits_{\Gamma \in \mathcal{C}} w(\Gamma).
\end{equation}
This turns out to be the multivariate Taylor series of $\ln \Xi$ - see \cite{dobrushin, ScoS05}. In order to use the cluster expansion computationally, it is essential to verify its convergence. A powerful tool that gives a sufficient condition for convergence, as well as tail bounds, is the so-called \textit{Koteck\'y-Preiss condition}. For any function $g:\cP\rightarrow[0,\infty)$ and cluster $\Gam\in \cC$, define $g(\Gam) := \sum_{\gam\in\Gam}g(\gam)$. We say that a polymer $\gam$ is incompatible with a cluster $\Gam$ if there exists some $\gam'\in \Gam$ such that $\gam\nsim \gam'$, and we denote this by $\Gam\nsim A$.

\begin{theorem} [Koteck\'y and Preiss \cite{kp}]\label{thm.KP}
Let $f,$ $g: \cP \rightarrow [0, \infty)$ be two functions. If, for all $\gam \in \cP$,
\begin{equation}\label{KPbound1}
    \sum\limits_{{\gam'} \nsim {\gam}} |w({\gam}')|e^{f({\gam}')+g({\gam}')} \leq f({\gam}),
\end{equation}
then the cluster expansion~\eqref{eqclusterexp} converges absolutely. Furthermore, for all polymers ${\gamma}$,
\beq{kpbounds}    \sum\limits_{\substack{\Gam \in \cC \\ \Gam \nsim {\gam}}} |w(\Gam)|e^{g(\Gam)}\leq f({\gam}). \enq
\end{theorem}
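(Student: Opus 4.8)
The plan is to follow the now-standard route to the Kotecký--Preiss criterion: dominate the Ursell weights by a sum over spanning trees, recognise the resulting bound on $\sum_{\Gamma\nsim\gamma}|w(\Gamma)|e^{g(\Gamma)}$ as a sum over polymer-decorated rooted trees, and solve the latter by an induction whose single step is exactly the hypothesis \eqref{KPbound1}.

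The one external combinatorial input I would use is the \emph{tree-graph (Penrose) inequality}: for a connected graph $G$ on vertex set $[k]$,
$$ |\phi(G)|\ \le\ \frac{1}{k!}\,\#\bigl\{\text{spanning trees of }G\bigr\}, $$
which follows from Penrose's identity writing $(-1)^{k-1}k!\,\phi(G)$ as a non-negative sum over a sub-family of the spanning trees of $G$ (alternatively one just cites it). Fix $\gamma\in\cP$ and define a sequence by $a_0(\gamma):=\sum_{\gamma'\nsim\gamma}|w(\gamma')|\,e^{g(\gamma')}$ and, for $N\ge 1$,
$$ a_N(\gamma)\ :=\ \sum_{\gamma'\nsim\gamma}|w(\gamma')|\,e^{g(\gamma')}\exp\!\bigl(a_{N-1}(\gamma')\bigr). $$
Unwinding this recursion, $\lim_{N\to\infty}a_N(\gamma)$ is the sum, over all finite rooted trees whose root carries a polymer $\gamma'\nsim\gamma$ and whose every non-root vertex $v$ carries a polymer $\gamma_v$ incompatible with the polymer on its parent, of the weight $\frac{1}{|\mathrm{Aut}(\tau)|}\prod_{v}|w(\gamma_v)|\,e^{g(\gamma_v)}$. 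Bounding $|\phi(G_\Gamma)|$ by the spanning-tree count of $G_\Gamma$, noting that a cluster $\Gamma\nsim\gamma$ has a spanning tree that can be rooted at a vertex whose polymer is incompatible with $\gamma$, and matching the $1/k!$ from $\phi$ on a $k$-polymer cluster against the number of labellings of a tree shape, one obtains
$$ \sum_{\Gamma\in\cC,\,\Gamma\nsim\gamma}|w(\Gamma)|\,e^{g(\Gamma)}\ \le\ \lim_{N\to\infty}a_N(\gamma). $$

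It remains to show $\lim_N a_N(\gamma)\le f(\gamma)$, which I would get from the bound $a_N(\gamma)\le f(\gamma)$ for all $N\ge 0$, by induction on $N$. Indeed, for every $N$,
$$ a_N(\gamma)\ \le\ \sum_{\gamma'\nsim\gamma}|w(\gamma')|\,e^{g(\gamma')}\exp\!\bigl(f(\gamma')\bigr)\ \le\ \sum_{\gamma'\nsim\gamma}|w(\gamma')|\,e^{f(\gamma')+g(\gamma')}\ \le\ f(\gamma), $$
where the first inequality uses the inductive hypothesis $a_{N-1}(\gamma')\le f(\gamma')$ when $N\ge 1$ and the trivial bound $0\le f(\gamma')$ when $N=0$, and the last inequality is precisely \eqref{KPbound1}. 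This establishes the tail bound \eqref{kpbounds}. Absolute convergence of $\ln\Xi=\sum_\Gamma w(\Gamma)$ then follows at once: since $\sim$ is antireflexive, every cluster containing a polymer $\gamma^*$ is incompatible with $\gamma^*$, so, using $g\ge 0$,
$$ \sum_{\Gamma\in\cC}|w(\Gamma)|\ \le\ \sum_{\gamma^*\in\cP}\ \sum_{\Gamma\nsim\gamma^*}|w(\Gamma)|\,e^{g(\Gamma)}\ \le\ \sum_{\gamma^*\in\cP}f(\gamma^*)\ <\ \infty, $$
the last step using \eqref{kpbounds} and the finiteness of $\cP$.

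I expect the real work to sit in the middle step --- the exact combinatorial dictionary between clusters (ordered tuples weighted by $\phi(G_\Gamma)$) and rooted polymer-decorated tree shapes --- where the $1/k!$ in $\phi$, the spanning-tree count, the choice of the rooting vertex, and the automorphism factor $1/|\mathrm{Aut}(\tau)|$ all have to be reconciled so that the recursion for $a_N$ genuinely dominates the cluster sum. The tree-graph inequality and the induction itself are routine once this dictionary is set up.
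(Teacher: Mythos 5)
The paper does not actually prove this statement: it is quoted verbatim from Kotecký--Preiss \cite{kp} and used as a black box, so there is no internal proof to compare against. Judged on its own terms, your sketch is a correct outline of one of the standard proofs of the theorem (the tree-graph route, as in Ueltschi's and Friedli--Velenik's treatments, rather than the original Kotecký--Preiss induction over finite sub-families of polymers): the Penrose/tree-graph inequality $|\phi(G_\Gamma)|\le \#\{\text{spanning trees}\}/k!$, the relaxation to incompatibility constraints along tree edges only, the choice of a root incompatible with $\gamma$, and the induction $a_N(\gamma)\le f(\gamma)$ driven exactly by \eqref{KPbound1} are all sound, and your final deduction of absolute convergence correctly uses antireflexivity of $\sim$ (so every cluster containing $\gamma^*$ is incompatible with $\gamma^*$), $g\ge 0$, and finiteness of $\cP$. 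The one place where you defer the work --- the dictionary matching $\frac{1}{k!}\sum_{\text{labeled trees},\ \text{root}}$ against the rooted-shape sum with factor $1/|\mathrm{Aut}|$ produced by expanding $\exp(a_{N-1})$ --- does check out: summing over the $k$ possible roots and using that a rooted shape on $k$ vertices has $(k-1)!/|\mathrm{Aut}|$ labelings, and then an orbit--stabilizer argument to pass from shape automorphisms to automorphisms of the decorated tree, reproduces precisely the weights generated by your recursion, with monotone convergence handling the limit $N\to\infty$. So there is no genuine gap, only standard bookkeeping left implicit; if you wanted to submit this as a full proof you would need to write out that middle step, and you should also note that your argument establishes absolute convergence and the tail bound \eqref{kpbounds} as stated, while the identification of the convergent series with $\ln\Xi$ is the separate formal-power-series fact the paper already records at \eqref{eqclusterexp}.
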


\section{Isoperimetry}\label{sec:isoper}

\Cref{sec:isoper.tn} collects a few classical isoperimetric results for $[t]^n$. In \Cref{sec:isoper.3n}, we provide some additional isoperimetric inequalities for $[t]^n$ that will be used for the proof of our main results. As mentioned earlier, we will only need the results for $t=3$, but keep the statements general in case they prove useful elsewhere. First, we introduce some additional notation.

Recall that $L_k=L_k(t,n)$ denotes the $k$-th layer of $[t]^n$, and we will often suppress the dependence on $t$ or $n$ (or both) if the context is clear. For $k\in \{0,\ldots,(t-1)n-1\}$ and $v\in L_k$, we write $N^+(v)$ for the set of neighbors (in the Hasse diagram of $[t]^n$) of $v$ in $[t]^n$ that belong to the layer $L_{k+1}$, and $N^+(S):=\cup_{v \in S} N^+(v)$ for any $S \sub [t]^n$. For $i\in \{1, \ldots, (t-1)n\}$ and $v\in L_k$ we similarly define $N^-(v)$ and $N^-(S)$. Finally, we denote by $S^j$ $(j \in \{0, \ldots, t-1\}$) the set of elements of $S$ whose first coordinate is $j$.

\subsection{Classical results for $[t]^n$}\label{sec:isoper.tn}

    \begin{lemma}[Anderson \cite{anderson1968divisors}]\label{lem:logconcave} The sizes $\ell_j$ of the layers $L_j$ of $[t]^n$ form a log-concave sequence. That is, for any $j \in [(t-1)n-1]$,
    \[\ell_j^2 \ge \ell_{j-1}\ell_{j+1}.\]
    \end{lemma}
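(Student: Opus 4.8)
The plan is to deduce log-concavity from the fact that the sequence $(\ell_j)_j$ is the sequence of coefficients of a polynomial with only real (negative) roots. Observe that the generating function of the layer sizes is
\[
\sum_{j=0}^{(t-1)n} \ell_j x^j \;=\; \bigl(1 + x + x^2 + \cdots + x^{t-1}\bigr)^n,
\]
since choosing a vertex $v \in [t]^n$ with $\sum_i v_i = j$ amounts to choosing, for each coordinate, an exponent in $\{0,1,\dots,t-1\}$ so that the exponents sum to $j$. So I would first record this identity, which is immediate from expanding the product.

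Next, factor $1 + x + \cdots + x^{t-1} = \frac{x^t - 1}{x - 1}$; its roots are the $t$-th roots of unity other than $1$, which come in conjugate pairs on the unit circle and are in particular nonzero. Hence $\bigl(1+x+\cdots+x^{t-1}\bigr)^n$ is, after the substitution $x \mapsto -x$ (or directly), a polynomial whose roots are all complex of modulus $1$ — which is not quite real-rootedness. The cleaner route: $1 + x + \cdots + x^{t-1}$ evaluated at real $x$ — actually I would instead invoke Newton's inequalities via the standard fact that a product of polynomials each of whose coefficient sequences is log-concave with no internal zeros is again log-concave (this is a classical result; see e.g.\ the theory of Pólya frequency sequences). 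Each factor $1 + x + \cdots + x^{t-1}$ has the constant coefficient sequence $(1,1,\dots,1)$, which is trivially log-concave with no internal zeros, so its $n$-fold product $(\ell_j)_j$ is log-concave with no internal zeros; the latter gives exactly $\ell_j^2 \ge \ell_{j-1}\ell_{j+1}$ for $1 \le j \le (t-1)n - 1$.

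Alternatively, and perhaps most self-containedly, one can give a direct combinatorial injection: to prove $\ell_j^2 \ge \ell_{j-1}\ell_{j+1}$ it suffices to exhibit an injection from $L_{j-1}(t,n) \times L_{j+1}(t,n)$ into $L_j(t,n) \times L_j(t,n)$. Given a pair $(u,v)$ with $\sum u_i = j-1$ and $\sum v_i = j+1$, compare $u$ and $v$ coordinatewise; since $\sum(v_i - u_i) = 2 > 0$, there is a coordinate where $v_i > u_i$, and one can transfer a unit from $v$ to $u$ in a canonical (e.g.\ lexicographically first valid) coordinate to land in $L_j \times L_j$, with the transferred coordinate recording enough information to invert the map. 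I expect the main obstacle, should one take the combinatorial route, to be verifying injectivity cleanly — choosing the coordinate to modify in a way that is genuinely reversible requires some care, since after the swap we must be able to recognize which coordinate was altered and in which direction. The generating-function argument avoids this entirely and is the one I would present, citing the stated reference of Anderson~\cite{anderson1968divisors}.
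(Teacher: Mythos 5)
Your main argument is correct, and it is worth noting that the paper itself gives no proof of this lemma at all: it is stated with a citation to Anderson, so any self-contained argument is "a different route" almost by definition. Your generating-function identity $\sum_j \ell_j x^j = (1+x+\cdots+x^{t-1})^n$ is exactly right, and you were also right to abandon the real-rootedness/Newton's-inequalities idea — for $t\ge 3$ the factor $1+x+\cdots+x^{t-1}$ has non-real roots, so that route genuinely does not apply. The pivot to the classical fact that the convolution of nonnegative log-concave sequences with no internal zeros is again log-concave with no internal zeros (equivalently, closure of $\mathrm{PF}_2$ sequences under convolution) is a standard and correct way to finish, since each factor contributes the sequence $(1,1,\dots,1)$; this is essentially the textbook proof of log-concavity of the Whitney numbers of a product of chains, and it is at least as transparent as deferring to Anderson's original paper. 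Your fallback combinatorial injection is, as you yourself flag, not actually carried out — the "lexicographically first valid coordinate" transfer is not obviously invertible, and making a genuinely injective map from $L_{j-1}\times L_{j+1}$ into $L_j\times L_j$ takes more care (the known injective proofs are closer in spirit to the normalized matching property the paper also quotes) — but since you present it only as an alternative and rest the proof on the generating-function argument, the proposal stands.
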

Recall that a bipartite graph with parts $X, Y$ has the \textit{normalized matching property} if
    \beq{eq:nmp}\frac{|N(A)|}{|A|}\geq \frac{|Y|}{|X|} ~\text{ for every $A\subseteq X$}.\enq
(Note that the choice of $X$ or $Y$ is not important in the above definition, as the above implies $|N(B)|/|B| \ge |X|/|Y|$ for every $B \sub Y$.)

A graded poset is said to have the normalized matching property if the bipartite graph induced by the comparability graph on any two distinct layers has the normalized matching property.
    
    \begin{lemma} [Anderson \cite{anderson1968divisors} and Harper \cite{harper1974morphology}]
        The poset $[t]^n$ has the normalized matching property.
    \end{lemma}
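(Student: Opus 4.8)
The plan is to reduce the assertion to a statement about two consecutive layers and then prove that statement by induction on $n$.

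\medskip
\noindent\emph{Reduction to consecutive layers.} By the symmetry of the normalized matching property recorded in the parenthetical after \eqref{eq:nmp}, it suffices to check that for all $a<b$ and all $A\sub L_a$ we have $|N(A)|/|A|\ge \ell_b/\ell_a$, where $N(A)=\{y\in L_b:\ y\ge x\text{ for some }x\in A\}$. Since $[t]^n$ is graded, every $y\ge x$ lies on a saturated chain from $x$, so $N(A)$ coincides with the $(b-a)$-fold iterated up-shadow of $A$. Hence, writing $A_a=A$ and $A_{i+1}=N^+(A_i)$, a telescoping argument reduces everything to the single inequality
\begin{equation}\tag{$\star$}
|N^+(A')|\,\ell_k\ \ge\ |A'|\,\ell_{k+1}\qquad\text{for all $k$ and all $A'\sub L_k$,}
\end{equation}
that is, to the normalized matching property between the consecutive layers $L_k$ and $L_{k+1}$.

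\medskip
\noindent\emph{Reformulation as a fractional matching.} It is enough to construct, for each $k$, a nonnegative weighting $w$ of the edges of the bipartite graph between $L_k$ and $L_{k+1}$ with all $L_k$-row sums equal to $1/\ell_k$ and all $L_{k+1}$-column sums equal to $1/\ell_{k+1}$: given such a $w$, for every $A'\sub L_k$ we get $|A'|/\ell_k=\sum_{e\cap A'\ne\emptyset}w(e)\le\sum_{v\in N^+(A')}\sum_{e\ni v}w(e)=|N^+(A')|/\ell_{k+1}$, which is $(\star)$. We build $w$ by induction on $n$; the base case $n=1$ is trivial, since each layer of a chain is a single vertex and the unique edge gets weight $1$.

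\medskip
\noindent\emph{Inductive step, and the main obstacle.} Write $[t]^n=[t]^{n-1}\times[t]$ and let $a_m$ and $b_k=\sum_{j=0}^{t-1}a_{k-j}$ denote the layer sizes of $[t]^{n-1}$ and $[t]^n$ respectively. The bipartite graph between $L_k$ and $L_{k+1}$ of $[t]^n$ decomposes, as an edge set, into: (i) for $j=0,\dots,t-1$, a copy $H_j$ of the bipartite graph between the consecutive layers $L_{k-j},L_{k-j+1}$ of $[t]^{n-1}$ (the edges joining vertices whose last coordinate equals $j$); and (ii) for $j=0,\dots,t-2$, a perfect matching $M_j$ between the bottom-layer slice with last coordinate $j$ and the top-layer slice with last coordinate $j+1$ (both of size $a_{k-j}$). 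Using the inductive weightings $g_m$ on the $H_j$, we look for $w$ of the form $w|_{H_j}=d_j\,g_{k-j}$ and $w|_{M_j}\equiv c_j$. The row- and column-sum conditions become an (over-determined) linear system in the $d_j,c_j$; a short computation shows it is consistent, with $d_0=a_{k+1}/b_{k+1}$, $\ d_{j+1}-d_j=a_{k-j}\bigl(1/b_{k+1}-1/b_k\bigr)$, and $c_j=1/b_k-d_j/a_{k-j}$ (the identity $a_{k+1}+b_k-b_{k+1}=a_{k-t+1}$ is what makes the system close up, and slices of size $0$ are simply omitted). Nonnegativity of the $d_j$ is immediate, as $d_j$ is affine in $S_j:=\sum_{i<j}a_{k-i}\in[0,b_k]$ with both endpoint values nonnegative. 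The substantive point — the crux of the whole lemma — is nonnegativity of the $c_j$: unwinding, $c_j\ge 0$ for $j\le t-2$ is equivalent to
\[\frac{\sum_{i=k-j}^{k}a_{i+1}}{\sum_{i=k-j}^{k}a_i}\ \le\ \frac{\sum_{i=k-t+1}^{k}a_{i+1}}{\sum_{i=k-t+1}^{k}a_i}.\]
Both sides are weighted averages, with weights $a_i$, of the ratios $r_i:=a_{i+1}/a_i$ — the left one over the sub-range $[k-j,k]$ and the right one over $[k-t+1,k]$. By \Cref{lem:logconcave} the layer sizes of $[t]^{n-1}$ form a log-concave sequence, so $r_i$ is nonincreasing; hence passing from the sub-range to the full range blends in only terms $r_i\ge\max_{i\in[k-j,k]}r_i$, which can only increase the average. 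This gives $c_j\ge 0$ and completes the induction. Everything outside this step is formal; the only real work is the bookkeeping of which bipartite pieces join which slices and the mediant inequality above.
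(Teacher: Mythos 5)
The paper does not prove this lemma at all — it is quoted from Anderson and Harper — so any proof you give is necessarily a different route; what you have written is, in substance, a correct reconstruction of the classical product-theorem argument (Harper's ``normalized flows'' / the Hsieh--Kleitman-style fact that a product of normalized-matching posets with log-concave rank numbers is again normalized matching), specialized to $[t]^n=[t]^{n-1}\times[t]$. Your reduction to consecutive layers, the observation that a doubly-normalized nonnegative edge weighting implies \eqref{eq:nmp}, the slice-plus-matching decomposition, the explicit solution $d_0=a_{k+1}/b_{k+1}$, $d_{j+1}-d_j=a_{k-j}(1/b_{k+1}-1/b_k)$, $c_j=1/b_k-d_j/a_{k-j}$ (closing up via $a_{k+1}+b_k-b_{k+1}=a_{k-t+1}$), and the reduction of $c_j\ge 0$ to the weighted-average comparison that follows from \Cref{lem:logconcave} are all correct; I checked the algebra. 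What your route buys is a self-contained proof whose only external input is the (also merely cited) log-concavity lemma, and it makes transparent exactly where log-concavity is used; what the paper's citation buys is brevity. Two small bookkeeping points you should patch in a full writeup, neither a real gap: (i) your dismissal ``slices of size $0$ are simply omitted'' does not literally cover the case where a slice is nonempty but its internal graph $H_j$ is empty (namely $k-j=(t-1)(n-1)$, the top layer of $[t]^{n-1}$); there the row constraint forces $c_j=1/b_k$, which your formulas do deliver because $d_j=0$ in that case, but this needs saying; (ii) at the bottom boundary ($k-t+1\le -1$) the right-hand ratio is not literally a weighted average of the $r_i$, since the term $i=-1$ has weight $a_{-1}=0$ yet contributes $a_0>0$ to the numerator — this only increases the right-hand side, so the inequality survives, but the ``blend in larger terms'' sentence should be adjusted to account for it.
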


    Recall that the lexicographic order, $\lex$, on $[t]^n$ is defined as follows: $a\lex b$ iff there is an $i$ such that $a_j=b_j$ for all $j<i$ and $a_i<b_i$. 

    \begin{definition}\label{def:compression}
    Let $S\subseteq L_k$ for some $k$. The \emph{compression} of $S$, denoted by $C(S)$, is the set containing the first $|S|$ elements of $L_k$ according to the lexicographic order. Denote by $L(S)$ the last $|S|$ elements of $L_k$ in the lexicographic order.
    \end{definition}
    The next theorem is an analogue of the Kruskal-Katona Theorem \cite{katona, kruskal} for $[t]^n$.
\begin{proposition}[Clements-Lindstr\"om \cite{clements1969generalization}]\label{prop:CL}
            For any layer $L_k$ and any $S\subseteq L_k$
            $$N^-(C(S))\subseteq C(N^-(S)) \text{   and   } N^+(L(S))\subseteq L(N^+(S)).$$
            Consequently, $|N^-(S)|\geq |N^-(C(S))|$ and $|N^+(S)|\geq |N^+(L(S))|$.
    \end{proposition}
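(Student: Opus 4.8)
The plan is to reduce the two inclusions to a single one, settle that one by induction on $n$ using compression operators, and treat the resulting ``compressed'' sets through the decomposition of a layer by first coordinate. \emph{Reduction.} The map $\sigma\colon [t]^n\to[t]^n$, $\sigma(v)=\bigl((t-1)-v_1,\dots,(t-1)-v_n\bigr)$, is an order-reversing involution of the Hasse diagram: it sends $L_k$ to $L_{(t-1)n-k}$, interchanges $N^-$ and $N^+$, and reverses the lexicographic order, so $\sigma(C(X))=L(\sigma(X))$ for every $X$ inside a layer. Applying $\sigma$ to $N^-(C(T))\subseteq C(N^-(T))$ with $T=\sigma(S)$ yields $N^+(L(S))\subseteq L(N^+(S))$; hence it suffices to prove $N^-(C(S))\subseteq C(N^-(S))$, and the ``consequently'' clause is then immediate since $|C(X)|=|X|$ gives $|N^-(S)|=|C(N^-(S))|\ge|N^-(C(S))|$ and symmetrically for $N^+$. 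We also record one fact, proved by a short induction on $n$: the lower shadow of a lexicographic initial segment of a layer is again a lexicographic initial segment of the layer below. Indeed, grouping a set $X$ in a layer of $[t]^n$ by first coordinate and deleting that coordinate identifies $X^j$ with a subset of $[t]^{n-1}$, and the part of $N^-(X)$ with first coordinate $j$ is then identified with $X^{j+1}\cup N^-(X^j)$ (the union taken in $[t]^{n-1}$); for an initial segment each $X^j$ is a full layer of $[t]^{n-1}$, an initial segment, or empty, and the lower shadow of a full layer is the full layer below it, so the sections of $N^-(X)$ again have initial-segment shape. In particular $N^-(C(S))$ is always a lexicographic initial segment, so the inclusion $N^-(C(S))\subseteq C(N^-(S))$ is equivalent to the size bound $|N^-(C(S))|\le|N^-(S)|$.

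\emph{Compressions.} For $1\le p<q\le n$ let $C_{p,q}$ replace each $v\in S$ with $v_p>0$, $v_q<t-1$, and $v-e_p+e_q\notin S$ by $v-e_p+e_q$, thus pushing weight from an earlier to a later coordinate --- the direction favoured by lexicographically small sets. Each $C_{p,q}$ preserves the layer and $|S|$, and the sum of lexicographic ranks of the elements strictly decreases whenever $C_{p,q}$ moves something, so iterating all the $C_{p,q}$ from $S$ terminates at a \emph{compressed} set $S^{*}$ (fixed by every $C_{p,q}$) with $|S^{*}|=|S|$. It then suffices to establish: (1) for every $p<q$, $N^-\bigl(C_{p,q}(S)\bigr)\subseteq C_{p,q}\bigl(N^-(S)\bigr)$, whence $|N^-(C_{p,q}(S))|\le|N^-(S)|$ and, iterating, $|N^-(S^{*})|\le|N^-(S)|$; and (2) for a compressed set $S^{*}$ one has $|N^-(S^{*})|\ge|N^-(C(S^{*}))|=|N^-(C(S))|$. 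Chaining (1), (2) and the fact that $N^-(C(S))$ is an initial segment gives $|N^-(C(S))|\le|N^-(S)|$, hence the desired inclusion.

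\emph{The obstacles.} Part (1) is the crux, and the step I expect to be hardest. Writing an element of $N^-(C_{p,q}(S))$ as $u-e_i$ with $u\in C_{p,q}(S)$, one splits into cases by whether $i\in\{p,q\}$ and by which clause in the definition of $C_{p,q}$ put $u$ into $C_{p,q}(S)$; in each case one must either exhibit a $C_{p,q}$-preimage of $u-e_i$ inside $N^-(S)$, or check that $u-e_i\in C_{p,q}(N^-(S))$ directly. This is essentially a two-dimensional computation in the $(p,q)$-plane, and carrying out all the cases cleanly is the main work. For part (2): a compressed $S^{*}$ has all of its first-coordinate sections compressed in $[t]^{n-1}$, so the inductive hypothesis applies section by section; the claim then reduces to the numerical fact that among all distributions of $|S^{*}|$ among the consecutive layers of $[t]^{n-1}$ as section sizes, the ``greedy'' distribution --- the one realised by $C(S^{*})$ --- minimises $\sum_j\bigl|X^{j+1}\cup N^-(X^j)\bigr|$, which uses the inductive hypothesis together with the log-concavity of the layer sizes of $[t]^{n-1}$ (\Cref{lem:logconcave}). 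This is the second point that requires care, though I expect it to be less delicate than (1).
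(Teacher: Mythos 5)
The paper does not prove this proposition --- it is cited as a classical theorem of Clements and Lindström --- so your argument is a reconstruction from scratch. The reduction via the order-reversing involution $\sigma$ is correct, as is the reduction of the first inclusion to the size bound $|N^-(C(S))|\le|N^-(S)|$ (using that $N^-(C(S))$ is itself a lex-initial segment). The proof breaks at step (1): the claimed compression lemma $N^-(C_{p,q}(S))\subseteq C_{p,q}(N^-(S))$, and even its weaker consequence $|N^-(C_{p,q}(S))|\le|N^-(S)|$, are false once $t\ge 3$. Take $t=3$, $n=2$, $S=\{(2,0)\}\subseteq L_2$, and $(p,q)=(1,2)$. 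The vertex $v=(2,0)$ has $v_1>0$, $v_2<2$, and $v-e_1+e_2=(1,1)\notin S$, so the compression replaces it: $C_{1,2}(S)=\{(1,1)\}$. But $N^-((2,0))=\{(1,0)\}$ has size $1$, while $N^-((1,1))=\{(0,1),(1,0)\}$ has size $2$, so $|N^-(C_{1,2}(S))|=2>1=|N^-(S)|$ and the inclusion certainly fails.

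The underlying obstruction is that $|N^-(v)|$ equals the number of nonzero coordinates of $v$, and the move $v\mapsto v-e_p+e_q$ can turn a zero coordinate (when $v_q=0$) into a nonzero one, enlarging the shadow even while it shrinks the lexicographic rank. In $\{0,1\}^n$ this cannot happen --- the move applies only when $v_p=1$ and $v_q=0$, and then it preserves the multiset of coordinate values --- which is precisely why the $(i,j)$-compression works for Kruskal--Katona but does not carry over naively to $[t]^n$ with $t\ge 3$. With (1) gone, the chain $|N^-(C(S))|\le |N^-(S^*)|\le |N^-(S)|$ in your plan collapses. This is the heart of why Clements--Lindström is harder than Kruskal--Katona; the actual proof proceeds by the section-by-section induction on $n$ that you begin in your opening paragraph, combined with a more delicate exchange argument across the sections, rather than by two-coordinate compressions.
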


\subsection{Isoperimetric inequalities for $[t]^n$}\label{sec:isoper.3n}

A key lemma for our proof is an estimate on the ratio between the sizes of two consecutive layers in $[t]^n$ (\Cref{lem:layer_exp}). The proof for general $t$ uses probabilistic tools, and we present it in \Cref{sec:app_isop}. For the special case of $t=3$, which is enough for the purpose of the present paper, the statement also follows from a (simpler) combinatorial argument, and we present it here.

A set of pairwise comparable elements of a poset is called a \textit{chain}. A chain $C=\{x_1<\dots<x_r\}$ is \textit{saturated} if $x_{i+1}$ covers $x_i$ for every $i$. Let $P$ be a graded poset with $m$ layers. A \textit{symmetric chain decomposition} (SCD) of $P$ is a partition of the elements of $P$ into saturated chains $C_1,...,C_M$, such that for all $1\leq s\leq M$, if $C_s=\{x_1<\dots<x_r\}$ with $x_1\in L_i$ and $x_r\in L_j$, then $i+j=m$.

We use the SCD construction for $[t]^n$ due to Tsai \cite{Tsai2019}, which extends Greene-Kleitman's SCD construction for $\{0,1\}^n$ \cite{Kleitman1976OnJCP}. For each $x=(x_1,\ldots, x_n) \in [t]^n=\{0,1,\ldots,t-1\}^n$, we define the corresponding "bracket configuration" as follows: consider $n$ consecutive blocks where each block consists of $t-1$ blanks. We fill in the blanks in the $i$th block from the left, first with $x_i$ right brackets ')' followed by $t-x_i-1$ left brackets '(' (see \Cref{fig:bracket}).

\begin{figure}[h!]
    \centering
    \includegraphics[width=9cm]{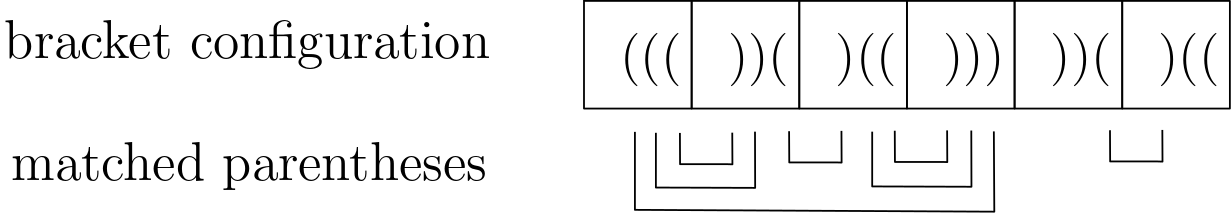}
    \caption{The bracket configuration of $x=(0,2,1,3,2,1) \in [4]^6$. Its bracket structure $B(x)$ is $(((\ | \ ))(\ | \ )((\ | \ )))\ | \ **(\ | \ )**$.}
    \label{fig:bracket}
\end{figure}

Given a bracket configuration, we match brackets in the natural way: scanning from left to right, we match a right bracket with the closest unmatched left bracket to its left. The ``bracket structure'' $B(x)$ of $x$ is defined by taking the bracket configuration of $x$ and replacing unmatched brackets with the symbol `$\ast$'. For example, if $x=(0,2,1,3,2,1)\in [4]^6$, then the corresponding bracket configuration and bracket structure are shown in \Cref{fig:bracket}. 

For each $x \in [t]^n$, define the equivalence class $C_x=\{y \in [t]^n:B(y)=B(x)$\}. For example, for the above $x$, $C_x=\{(0,2,1,3,0,1), (0,2,1,3,1,1), (0,2,1,3,2,1), (0,2,1,3,2,2), (0,2,1,3,2,3)\}$. {Tsai \cite{Tsai2019} showed that $\cC:=\{C_x:x \in [t]^n\}$ is a SCD of $[t]^n$. Observe that, if $\mathcal C$ is a SCD of $[t]^n$, then $|\mathcal C|=\ell_m(t,n)$ where $m:=\lfloor\frac{(t-1)n}{2}\rfloor$ (so $L_m(t,n)$ is a middle layer of $[t]^n$),} since each point in a middle layer is contained in exactly one chain in $\cC$.
\begin{lemma}\label{lem:layer_exp} 
There exists $c>0$ for which the following holds: for any integer $t \ge 2$ there exists $n_t>0$ such that for all $n\geq n_t$ and $j \in [1,m]$,
    $$\frac{\ell_j(t,n)}{\ell_{j-1}(t,n)}\geq 1+\frac{c}{t^2n}.$$
\end{lemma}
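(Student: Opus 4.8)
The plan is to use Tsai's symmetric chain decomposition $\cC$ of $[t]^n$ to count layer sizes, and then bound, for each $j \in [1,m]$, the ratio $\ell_j/\ell_{j-1}$ from below by controlling how many chains of $\cC$ reach up to layer $L_j$ but not down to layer $L_{j-1}$ — equivalently, chains whose bottom element lies in $L_{j-1}$ (and which therefore contribute to $L_{j-1}$ but whose presence in $L_j$ is "extra" relative to... wait, let me reconsider). Actually: every chain $C \in \cC$ is symmetric, so a chain with bottom in $L_i$ and top in $L_{2m-i}$ (taking $2m = (t-1)n$ for even parity; the odd case is analogous with $m = \lfloor (t-1)n/2\rfloor$) passes through $L_{j-1}$ iff $i \le j-1$, and through $L_j$ iff $i \le j$ and $2m-i \ge j$, i.e. $i \le j$ and $i \le 2m-j$. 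Since $j \le m \le 2m-j$, a chain passes through $L_j$ iff its bottom is in $L_i$ with $i \le j$, and through $L_{j-1}$ iff $i \le j-1$. Hence

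First I would write

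$$\ell_j = \sum_{i=0}^{j} a_i, \qquad \ell_{j-1} = \sum_{i=0}^{j-1} a_i,$$

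where $a_i$ is the number of chains in $\cC$ whose bottom element lies in $L_i(t,n)$. Therefore $\ell_j - \ell_{j-1} = a_j$, and the desired inequality is equivalent to
$$a_j \ge \frac{c}{t^2 n}\, \ell_{j-1},$$
so it suffices to lower bound $a_j$ (the number of chains in Tsai's SCD with bottom in $L_j$) and upper bound $\ell_{j-1} \le \ell_m = |\cC|$ (using that the middle layer is the largest). So the goal reduces to: \emph{for every $1 \le j \le m$, at least a $\Theta(1/(t^2 n))$ fraction of the chains in Tsai's SCD have their bottom element in layer $L_j$.}

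Next I would translate "bottom of chain $C_x$ lies in $L_j$" into a statement about bracket structures. A chain $C_x$ is determined by its bracket structure $B(x)$, which records the positions and orientations of the \emph{matched} brackets and marks the $\ast$'s (unmatched positions); the free parameter along the chain is how many of the unmatched positions (all of which are a block of left-leaning $\ast$'s followed... actually the unmatched brackets form a string of $*$'s that correspond to some $\ge$-pattern within blocks) get "activated." Concretely, if a bracket structure has $u$ unmatched slots, the chain has $u+1$ elements, ranging over $u+1$ consecutive layers, and the layer of the bottom element is $m - \lceil u/2 \rceil$ or similar (its rank is determined by the matched right-brackets only). So $a_j$ counts bracket structures with a prescribed number $u$ of unmatched slots AND a prescribed count of matched right-brackets placing the bottom in $L_j$. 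I would then produce an explicit injection (or a counting comparison) showing $a_j$ is comparable to $a_{j-1}$, say $a_j \ge a_{j-1}(1 - O(t/n))$ type control is too weak; instead I want $a_j \ge \ell_{j-1}/(\text{poly})$, so I would argue $a_j \ge c\, a_{m}/n \ge c\, \ell_{j-1}/(t^2 n \cdot \text{something})$... For $t = 3$ specifically this is cleanest: bracket structures in $[3]^n$ have a simple form (each of the $n$ blocks has $2$ slots, filled as $))$, $)($, or $((,$ with possible $\ast$'s), and $a_j$ can be computed via a central binomial / ballot-type count, from which the bound $a_j \ge c\,\ell_{j-1}/n$ (even with no $t$ dependence, consistent with $t=3$) is a standard local-CLT-flavoured estimate on these counts.

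\textbf{The main obstacle} will be obtaining the $a_j \ge \Theta(\ell_{j-1}/(t^2 n))$ bound \emph{uniformly over all $j \in [1,m]$}, not just near the middle: when $j$ is small, $\ell_{j-1}$ is tiny and $a_j$ is a substantial fraction of it (in fact $a_j/\ell_{j-1} = (\ell_j - \ell_{j-1})/\ell_{j-1}$, and for small ranks the layer sizes grow by more than a constant factor, so the inequality is easy there), while for $j$ near $m$ the ratio $\ell_j/\ell_{j-1}$ is close to $1$ and one needs the genuinely quantitative estimate that it is still bounded below by $1 + c/(t^2 n)$ — this is exactly the regime where the gap between consecutive layer sizes is delicate and requires either (i) the log-concavity of $(\ell_j)$ (\Cref{lem:logconcave}) together with a single anchored estimate of, say, $\ell_m/\ell_{m-1}$ or $\ell_{m}/\ell_{m - cn}$, or (ii) a direct computation of $a_m$ (the number of maximal-length, i.e. shortest — actually the chains through the middle split by parity) as a fraction of $|\cC| = \ell_m$. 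I expect the cleanest route for $t = 3$ is: use log-concavity to reduce to bounding $\ell_m / \ell_{m-1}$ from below by $1 + c/n$ (since if $\ell_j/\ell_{j-1} \ge \ell_{j+1}/\ell_j$ for all $j$ by log-concavity... wait, that's backwards — log-concavity gives $\ell_j/\ell_{j-1} \ge \ell_{j+1}/\ell_j$, so the ratios are decreasing, hence $\ell_j/\ell_{j-1} \ge \ell_m/\ell_{m-1}$ for all $j \le m$), and then compute $\ell_m/\ell_{m-1}$ directly for $[3]^n$: $\ell_m = \ell_n$ is a known central-trinomial-type quantity and $\ell_{m-1} = \ell_{n-1}$, and the ratio $\ell_n/\ell_{n-1} = 1 + \Theta(1/n)$ follows from their explicit asymptotics (or from Anderson's estimate \eqref{eqn:anderson} applied with care at the boundary layer, or from a short generating-function / saddle-point computation). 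Thus the whole lemma for $t=3$ collapses to: (a) invoke log-concavity to get monotone ratios, (b) compute $\ell_{n}/\ell_{n-1} \ge 1 + c/n$ explicitly. I would present exactly this two-line reduction plus the explicit central-coefficient estimate, and note that the general-$t$ version (deferred to the appendix) needs the Esseen local limit theorem precisely because the explicit formula is unavailable.
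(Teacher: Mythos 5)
Your final reduction is exactly the paper's: log-concavity of $(\ell_j)$ makes the ratios $\ell_j/\ell_{j-1}$ nonincreasing, so it suffices to prove $\ell_m/\ell_{m-1}\geq 1+c/(t^2n)$, and for $t=3$ this is $\ell_n/\ell_{n-1}$. You also correctly observe that, via the symmetric chain decomposition, $\ell_j-\ell_{j-1}$ equals the number $a_j$ of chains whose bottom lies in $L_j$; at $j=n$ (for $t=3$) these are precisely the singleton chains, i.e.\ the fully matched bracket configurations — this is the same identification the paper uses. However, you never actually establish the core quantitative estimate $\ell_n-\ell_{n-1}=\Omega(\ell_n/n)$, and the routes you gesture at are not adequate as stated. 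Anderson's estimate \eqref{eqn:anderson} gives only $\ell_n=(1+o(1))\cdot 3^n\sqrt{3/(4\pi n)}$; since $\ell_{n-1}$ shares the same leading-order asymptotic, the $(1+o(1))$ error term is far too coarse to extract $\ell_n/\ell_{n-1}\geq 1+c/n$ — you would need a second-order expansion, which is not what Anderson provides. "A short generating-function / saddle-point computation" or "central-coefficient estimates" could work, but that is essentially the appendix's general-$t$ argument (a local limit theorem with explicit corrections), which you defer to rather than carry out.

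The paper's actual $t=3$ argument, which you do not reach, is a short combinatorial count that sidesteps analysis entirely: it lower-bounds $a_n$, the number of singleton chains, by translating fully matched bracket configurations into Motzkin paths (up/level/down steps for coordinates $0/1/2$) and observing that every Motzkin path of length $n$ traveling from $(1,1)$ to $(n-1,1)$ without dropping below height $1$ gives rise to a fully matched configuration, so $a_n\geq M_{n-2}=\Theta(3^n n^{-3/2})$. Combining this with $\ell_n=\Theta(3^n n^{-1/2})$ (here Anderson's estimate \emph{is} sufficient, since one only needs the order of magnitude of the maximal layer) yields $\ell_n/\ell_{n-1}\geq 1+M_{n-2}/\ell_n=1+\Theta(1/n)$. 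In short: same skeleton, correct setup, but the decisive Motzkin-path lower bound on singleton chains is the missing step, and the substitutes you propose in its place either do not work (Anderson alone) or are left as a sketch of the harder general-$t$ analytic argument.

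Separately, your first thread — trying to show $a_j\geq \frac{c}{t^2n}\cdot|\mathcal C|$ uniformly in $j$ — is a genuine dead end (it fails badly for small $j$, where $a_j$ is tiny compared to $|\mathcal C|=\ell_m$), and you should delete it rather than leave it as a discarded alternative; the log-concavity reduction makes it unnecessary anyway.
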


As mentioned, here we only prove the statement for $t=3$, deferring the proof for general $t$ to \Cref{sec:app_isop}.

\begin{proof}  For $t=3$, we have $m=n$, and since $\frac{\ell_{j+1}}{\ell_j}\le \frac{\ell_j}{\ell_{j-1}}$ for any $j \ge 0$ by the log-concavity in \Cref{lem:logconcave}, it is enough to prove the lemma for $j=n$.

We will use Tsai's SCD $\cC$ that we just described. Specifically when $t=3$, for each $x=(x_i)\in [3]^n= \{0, 1,2\}^n$, we have $n$ blocks and we place `$(($', `$)($', `$))$' in block $i$ if $x_i=0,1,2$ respectively. Furthermore, recall that $|\cC|=\ell_n$.

This chain decomposition naturally defines an injection $\varphi: L_{n-1}\to L_n$ where $\varphi(x)$ is the unique $y\in L_n$ such that $y\in C_x$. To lower bound the ratio $\ell_n/\ell_{n-1}$ we will lower bound the number of elements $x\in L_n$ \emph{not} in the image of $\varphi$. These are precisely the elements $x\in L_n$ such that $C_x$ is a singleton which certainly occurs if 
\[\text{all of the brackets in the bracket configuration of $x$ are matched}\]
(i.e. $B(x)$ contains no `$\ast$' symbol). Let us call such an $x$ `fully matched'. To count the number of fully matched $x\in L_n$ we note that they naturally lie in bijection with a set of so-called `Motzkin paths'.

A Motzkin path of length $n$ is a path in the plane integer lattice $\mathbb{Z}^2$ starting at $(0,0)$ and terminating at $(n,0)$ consisting of up steps $(1, 1)$, down steps $(1, -1)$ and horizontal steps $(1,0)$, which never passes below the $\mathrm{x}$-axis. We let $M_n$ denote the number of Motzkin paths of length $n$ (also known as the $n$th Motzkin number). 
By associating `$(($' with an up step, `$)($' with a horizontal step, and `$))$' with a down step, the set of fully matched $x\in L_n$ is seen to be in bijection with the following set of Motzkin paths of length $n$: those that always take an up step after hitting the  $\mathrm{x}$-axis (including the first step). Indeed, if we think of writing the bracket configuration of  $x \in [3]^n$ one step at a time, then every time we hit the $\mathrm{x}$-axis, all parentheses are matched. Then, if the next step is horizontal, the left parentheses of $)($ will not be matched. The number of such paths is clearly lower bounded by the number of Motzkin paths that travel from $(1,1)$ to $(n-1,1)$ without ever passing below the line $y=1$. The number of paths of this latter type is clearly $M_{n-2}$. We conclude that the number of fully matched $x\in L_n$ is at least $M_{n-2}$ and so 
\[
\ell_n\geq \ell_{n-1}+ M_{n-2}.
\]
The asymptotics of $M_n$ are well-known (see e.g.~\cite{Anderson1967Primitive}) in particular
\[
M_{n-2}=\Theta(3^n/n^{3/2})\, .
\]
The result follows by recalling that $\ell_{n-1}\leq \ell_n=\ell(3,n)\stackrel{\eqref{eqn:anderson}}{=}\Theta(3^n/n^{1/2})$.
\end{proof}

\begin{proposition}\label{prop:isoperim_up}
        Let $c, n_t$ be as in \Cref{lem:layer_exp}. For any $i< m=\lfloor\frac{(t-1)n}{2}\rfloor$, the following holds.  If $S\subseteq L_i(t,n)$, then for all $n\geq n_t$,
        \begin{enumerate}[(a)]
            \item $|N^+(S)|\geq \frac{1}{2}|S|(n-|S|)$;
            \item $|N^+(S)|\geq \left(1+\frac{c}{t^2n}\right)|S|$;
            \item there is a constant $c'>0$ such that if  $|S|\leq n^4,$ then $|N^+(S)|\geq c'n|S|/t$.
        \end{enumerate}
    \end{proposition}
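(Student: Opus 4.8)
The plan is to prove the three bounds more or less independently, reducing each to the combinatorial tools already collected in Section \ref{sec:isoper.tn} — namely the Clements–Lindström compression theorem (Proposition \ref{prop:CL}), the normalized matching property, log-concavity of layer sizes (Lemma \ref{lem:logconcave}), and the layer-expansion estimate (Lemma \ref{lem:layer_exp}).

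First I would handle (b), which is the easiest. By Proposition \ref{prop:CL} we have $|N^+(S)| \ge |N^+(L(S))|$, so it suffices to prove the bound when $S$ is a final segment $L(S)$ of $L_i$ in the lexicographic order. For a final segment, $N^+(L(S))$ is itself (contained in, and by Proposition \ref{prop:CL} in fact realized as) a final segment of $L_{i+1}$; one then compares the two final segments layer-by-layer. The cleanest route is probably to note that the bipartite graph between $L_i$ and $L_{i+1}$ has the normalized matching property, so $|N^+(S)|/|S| \ge \ell_{i+1}/\ell_i \ge 1 + c/(t^2 n)$ for $i < m$ directly by Lemma \ref{lem:layer_exp} (using log-concavity to pass from the middle-layer ratio to all ratios $\ell_{i+1}/\ell_i$ with $i<m$, exactly as in the proof of Lemma \ref{lem:layer_exp}). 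That gives (b) immediately with no compression argument at all.

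For (a) and (c) I would pass to compressed sets via Proposition \ref{prop:CL} and then estimate the up-shadow of an initial/final segment of a layer directly. For (a), observe that for a single vertex $v \in L_i$ with $i < m$, the number of coordinates that can be increased is essentially $n$ minus the number already at the top value $t-1$; summing a greedy disjointification over the vertices of the compressed set should produce a term of order $|S|(n - |S|)$ — the $-|S|$ accounting for overlaps/saturation among the first $|S|$ lex-elements. Concretely, one wants to exhibit, for a final segment $S = L(S)$, a collection of at least $\tfrac12 |S|(n-|S|)$ distinct elements of $L_{i+1}$ obtained by raising coordinates; tracking distinctness is where the lexicographic structure is used. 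For (c), the regime $|S| \le n^4$ is "small," so overlaps are negligible and one expects $|N^+(S)| \ge c' n |S| / t$ to follow from the fact that a typical vertex in a layer below the middle has $\Omega(n/t)$ coordinates that are strictly below $t-1$ (so $\Omega(n/t)$ up-neighbors), combined with a counting/compression argument showing the up-neighborhoods cannot overlap too much when $|S|$ is polynomially bounded. Again Proposition \ref{prop:CL} lets us assume $S$ is a lex-segment, and then a direct injection argument (map a pair $(v, \text{coordinate raised})$ to the resulting vertex, and bound the multiplicity of each image) should close it.

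The main obstacle will be part (a): unlike (b), the normalized matching property alone does not give the $(n-|S|)$ factor, and unlike (c) we cannot treat overlaps as negligible since $|S|$ may be as large as a full layer. The delicate point is controlling, for a lexicographic final segment $L(S)$ of size up to $\Theta(n-|S|)$-ish, how many coordinate-raises collide; I expect this to require a careful but elementary analysis of the lex order on $[t]^n$ (or a clean reduction to the known $t=2$ isoperimetry plus the block structure of $[t]^n$), rather than any new machinery. If the direct lex analysis proves cumbersome, a fallback is to deduce (a) from (b) and (c) by a dyadic/bootstrapping argument on $|S|$, using (c) in the small regime and (b) to propagate, though getting the exact constant $\tfrac12$ that way may need a little care.
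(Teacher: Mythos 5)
Your handling of part (b) matches the paper exactly: normalized matching plus Lemma~\ref{lem:layer_exp} (and log-concavity to pass from the middle-layer ratio to all ratios with $i<m$) gives $|N^+(S)|/|S|\ge \ell_{i+1}/\ell_i\ge 1+c/(t^2n)$ with no compression needed. That part is fine.

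For part (a) you have the right intuition (degree roughly $n$ minus saturated coordinates, overlaps of order $|S|^2$) but you are over-engineering the overlap control, and you have misidentified (a) as the hard part. No compression, no lex analysis, and no ``greedy disjointification'' are needed. The paper's argument is two lines and works for \emph{arbitrary} $S\subseteq L_i$: (i) for $i<m$, every $x\in L_i$ has $|N^+(x)|=n-|\{j:x_j=t-1\}|>n/2$; and (ii) any two distinct $x,y\in L_i$ share at most one common up-neighbor (the bipartite graph on two adjacent layers of $[t]^n$ is $C_4$-free, i.e.\ the $\Delta=1$ hypothesis used throughout Section~\ref{sec:containers}). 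Then the Bonferroni bound $|N^+(S)|\ge \sum_{x\in S}|N^+(x)|-\sum_{x\ne y}|N^+(x)\cap N^+(y)|\ge \tfrac{n}{2}|S|-\binom{|S|}{2}\ge\tfrac12|S|(n-|S|)$ finishes. Your fallback of deducing (a) from (b) and (c) by bootstrapping cannot work: (b) is far too weak (a $(1+O(1/n))$ expansion factor cannot propagate to an $\Omega(n-|S|)$ factor), and (c) only covers $|S|\le n^4$.

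For part (c) your outline (compress via Clements--Lindström, then a double-counting / bounded-multiplicity argument) is the right shape, but the crux you leave unresolved is exactly where the work is. The constant multiplicity bound does not come for free from ``$|S|$ is polynomially bounded''; the paper gets it by showing that a lex-final segment of size at most $n^4$ is contained in $S'=\{x\in L_i:x_j=t-1\ \forall j\le k\}$ with $k=\lfloor i/(t-1)\rfloor-5$. From this containment, every $x\in S$ has $\sum_{j>k}x_j\le 6(t-1)$, hence at least $n-k-6\ge n/3$ up-neighbors, while every $y\in N^+(S)$ has at most $6t$ coordinates $j>k$ with $y_j>0$ and so at most $6t$ down-neighbors in $S$. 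That explicit structural fact is what bounds the multiplicity; your proposal gestures at it but does not supply it.
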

\begin{proof}
(a) Note that for any $x \in [t]^n$, $|N^+(x)|=n-|\{i: x_i=t-1\}|$. So if $i<m$, then any $x \in L_i$ satisfies $|N^+(x)|> n/2$. Also noting that $|N^+(x) \cap N^+(y)| \le 1$ for any distinct $x,y \in L_i$,
\[|N^+(S)|\ge \sum_{x \in S} |N^+(x)|-\sum_{\substack{x,y \in S \\ x \ne y}} |N^+(x) \cap N^+(y)|\ge \frac{n}{2}|S|-{|S| \choose 2}\ge \frac{1}{2}(n|S|-|S|^2).\]

\nin (b) By the normalized matching property \eqref{eq:nmp} and \Cref{lem:layer_exp}, ${|N^+(S)|}/{|S|}\ge {|L_{i+1}|}/{|L_i|} \ge 1+\frac{c}{t^2 n}.$

\nin (c)  By \Cref{prop:CL}, we may assume that $S$ comprises of the last $|S|$ elements of $L_i=L_i(t,n)$ in the lexicographic order. Let $k=\max\{\lfloor \frac{i}{t-1}\rfloor-5,0\}~(\le n/2)$ and note that 
\[
S'=\{x\in L_i : x_j=t-1 \text{ for all } j\leq k \}
\]
is a final segment of the lexicographic ordering on $L_i$. We will show that $|S'|\geq |S|$ so that $S\subseteq S'$. If $k=0$, then $S'=L_i$ and so $|S'|\geq |S|$ trivially. Assume then that $k>0$ i.e. $k=\lfloor \frac{i}{t-1}\rfloor-5$, and note that
\begin{align}\label{eq:ktineq}
i-6(t-1)\leq k(t-1)\leq i-5(t-1)\, .
\end{align}
Let $S''$ denote the set of $x\in S'$ for which $x_j=1$ for exactly $i-k(t-1)~(>0)$ values of $j>k$ and note that
\[
|S'|\geq |S''|= \binom{n-k}{i-k(t-1)}\geq \binom{n/2}{5(t-1)}\geq n^4 \geq |S|
\]
where for the second inequality we used that $k\leq n/2$ and $i-k(t-1)\geq 5(t-1)$ by~\eqref{eq:ktineq}. 

Suppose now that $x\in S$. Since $S\subseteq S'$, the first $k$ coordinates of $x$ are equal to $t-1$. In particular,
\[
i=\sum_{j\geq 1} x_j = k(t-1) + \sum_{j>k} x_j\, .
\]
Since $i-k(t-1)\leq 6(t-1)$ by~\eqref{eq:ktineq}, we conclude that $\sum_{j>k} x_j \leq 6(t-1)$. In particular, $x$ has at least $n-k-6\geq n/3$ neighbours in $L_{i+1}$ since $x_j=t-1$ for at most $6$ values of $j>k$. On the other hand, if $y$ is such a neighbour, then $\sum_{j>k} y_j \leq 6(t-1)+1<6t$. In particular, $y_j>0$ for at most $6t$ values of $j>k$, therefore 
$y$ has at most $6t$ neighbours in $S$. It follows that
\[
\frac{|N^{+}(S)|}{|S|}\geq \frac{n/3}{6t}
\]
as desired. 
\end{proof}

\section{Graph containers}\label{sec:containers}

In this section, we prove a graph container lemma for bipartite graphs with certain local degree conditions. We first introduce 
some basic definitions and notation. For a graph $H$, $E(H)$ and $V(H)$ denote the set of edges and vertices of $H$ respectively. For $u, v \in V(H)$, $\dist_H(u,v)$ (or simply $\dist(u,v)$ if the host graph is clear) denotes the length of a shortest path in $H$ between $u$ and $v$. For an integer $k$, $H^k$ denotes the $k$-th power of $H$, that is, $V(H^k)=V(H)$ and $u, v \in H^k$ are adjacent iff $u \ne v$ and $\dist_H(u,v) \le k$. For $U \sub V(H)$, $H[U]$ denotes the induced subgraph of $H$ on $U$.  We say $A \sub V(H)$ is \textit{$k$-linked} if $H^k[A]$ is connected. For $A,B \sub V(H)$, $\nabla(A,B):=\{\{a,b\} \in E(H):a \in A, b \in B\}$ and $\ov{A}:=V(H) \setminus A$. The \textit{closure} of $A$, denoted by $[A]$, is $[A]:=\{v\in V(H):N(v)\subseteq N(A)\}$.

Throughout the section, we assume that $\Sigma$ is a bipartite graph on the bipartition $X \cup Y$.  
 Write $d(v)$ or $d_v$ for the degree of $v$ in $\Sigma$. Let $\gd \in (0,1]$ and $d$ and $\Delta$ be integers. In our applications, $d$ is usually large, and asymptotic notation in this section holds as $d \rightarrow \infty$ where implicit constants (only) depend on $\delta$ and $\Delta$. We assume that $\Sigma$ satisfies the following properties.
\beq{eq:deg} d(v) \in [\gd d, d] \quad \forall v \in X \cup Y;\enq
\beq{eq:dv.vs.dw} d(v)\ge d(w) \quad \text{if } v \in X, w \in Y \text{ and } v \sim w;\enq
\[|N(v) \cap N(w)| \le \Delta \quad \text{if } v \ne w.\]
Notice that in $[3]^n$, both $L_{n-2} \cup L_{n-1}$ and $L_{n-1} \cup L_{n}$\footnote{ For a set $S\subseteq [3]^n$, we abuse notation somewhat and identify $S$ with the subgraph of the Hasse diagram of $[3]^n$ induced by the set $S$.} satisfy the above properties for $\gd=1/2-o(1)$, $d=n$ and $\Delta=1$.

Given a graph $\Sigma$ as above, $v \in X$ and integers $a$ and $g$, set
\[\cG(a,g):=\{A \sub X: A \text{ is $2$-linked}, |[A]|=a, |N(A)|=g\}.\]
The main contribution of this section is the following version of the graph container lemma for the class of graphs described above. 

\begin{lemma}\label{lem:container.graph}
There exists a constant $\gamma>0$ for which the following holds. If $g-a\gg \frac{ g\log^3d}{d^2}$ and $g-a \gg \log^2 d$, then
\[|\cG(a,g)|\le |Y|\cdot2^{g-\gamma(g-a)/\log d}.\]   
\end{lemma}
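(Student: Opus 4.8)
The plan is to follow the classical Sapozhenko/Kleitman--Winston container strategy, but carefully tracking the ``escaping edges'' parameter that the introduction promised, since the host graph is irregular. The core idea is the standard two-phase argument: given a $2$-linked set $A \in \cG(a,g)$, we first build a small ``fingerprint'' $F \subseteq A$ (of size roughly $(g-a)/\log d$, up to constants) by a greedy/maximum-degree selection procedure, and then show that the pair $(F, $ auxiliary data$)$ determines a small family of supersets from which $A$ can be recovered. Concretely, I would run the Kleitman--Winston selection algorithm on $N(A) \subseteq Y$: order the vertices of $Y$ by current degree into the ``remaining'' ground set, repeatedly pick the max-degree available vertex, and branch according to whether it lies in $N(A)$; keeping a record of which picked vertices land in $N(A)$ gives the fingerprint, and the key point is that the closure operation $[\,\cdot\,]$ lets us reconstruct a container set $C \supseteq A$ with $|C| \approx a$ and $|N(C)| \approx g$ from the fingerprint alone.

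The main quantitative engine is a counting/entropy bound on the number of fingerprints. Here the degree hypotheses \eqref{eq:deg}, \eqref{eq:dv.vs.dw} and the $|N(v)\cap N(w)|\le\Delta$ condition enter: because every vertex of $X$ that is ``approximately inside'' the closure has almost all of its $\ge \gd d$ neighbors inside $N(A)$, the set $N(A)\setminus N([A])$ — the edges that escape — has size controlled by $g-a$ via the bounded-codegree condition (each vertex of $X \setminus [A]$ with a neighbor in $N(A)$ must have $\gtrsim$ one neighbor outside, and codegrees $\le \Delta$ prevent overcounting). This is exactly the new parametrization: I would introduce a third parameter $b \approx |\nabla(N(A), \overline{[A]})|$ counting escaping edges, prove $\cG(a,g) = \bigcup_b \cG(a,g,b)$ with $b \lesssim (g-a)$, bound $|\cG(a,g,b)|$ for each $b$ by a fingerprint argument of length $O((b + (g-a))/\log d) = O((g-a)/\log d)$, and sum over the $O(\mathrm{poly}(d))$-many or so choices of $b$ (absorbed into the constants). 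The hypotheses $g - a \gg g\log^3 d / d^2$ and $g-a \gg \log^2 d$ are precisely what is needed to ensure (i) the fingerprint is genuinely shorter than $g-a$ so that $2^{-\gamma(g-a)/\log d}$ beats the $\binom{g}{\le |F|}$-type factor counting fingerprint positions, and (ii) error terms from the irregularity (ratios like $\log d / \gd d$, and the $g\log^3 d/d^2$ slack) are lower order.

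For the step-by-step execution I would: (1) fix $A \in \cG(a,g)$ and set $G = N(A)$, $G_0 = N([A])\subseteq G$; show $|G \setminus G_0| = O((g-a))$ using bounded codegree and the min-degree bound (each escaping edge is ``charged'' to a vertex of $\overline{[A]}\cap N(G)$, and such a vertex has $\ge \gd d - \Delta|[A]\cap\text{something}|$... — more simply, $\sum_{v\notin[A]}|N(v)\setminus G| \ge$ (number of such $v$ with a neighbor in $G$), while codegree $\Delta$ bounds how many of them share neighbors). (2) Run the ordered max-degree algorithm restricted cleverly so that after processing, the undetermined vertices of $Y$ form a set $W$ with $|W| = g + O(b)$ but the vertices of $X$ that could still be in $A$ form a container of size $a + o(g-a)$; record the fingerprint $F$, $|F| \le C(g-a)/\log d$. (3) Count: number of $(A)$ with given fingerprint is $1$ (closure reconstructs the container, and the number of $A$ inside a fixed container of $N$-size $g$ is at most $2^g$, but we need the sharper $2^{g - \gamma(g-a)/\log d}$, which comes from the container actually having $|N(\text{container})| \le g - \Omega(g-a)$, wait — rather the container has closure-size close to $a$ hence $2^{a + o(g-a)} \le 2^{g - \gamma(g-a)/\log d}$). (4) Multiply by the number of fingerprints, $\le |Y| \cdot \binom{g}{|F|} \cdot (\text{branch choices}) \le |Y| \cdot 2^{O((g-a)\log\log d/\log d)}$, and check this is absorbed. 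I expect the main obstacle to be step (2)–(3): making the selection algorithm interact correctly with the \emph{irregular} degrees so that the reconstructed container simultaneously has small closure and the fingerprint length bound holds — in the regular case $d_X \ge D_Y$ makes the max-degree vertex argument clean, and here one must instead exploit \eqref{eq:dv.vs.dw} locally and eat a $\log d$ (rather than constant) loss in the exponent, which is the source of the $\gamma/\log d$ rate rather than $\gamma$.
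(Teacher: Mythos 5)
You have correctly identified the key new idea of this lemma — partitioning $\cG(a,g)$ by an ``escaping edges'' parameter $\kappa=|\nabla(N(A),\overline{[A]})|$ and running a container argument in each piece — and the overall Sapozhenko-style framework. However, there are two compensating errors in your quantitative sketch, and the crucial structural inequality is missing.

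First, your claimed bound $\kappa \lesssim g-a$ is false, and the reason you give (bounded codegree) is not what makes it work. In fact $\kappa$ can be as large as $\Theta\bigl(d(g-a)\bigr)$: take $A$ a single vertex of degree $d$; then $[A]=A$, $g-a=d-1$, and each of the $d$ neighbours can send up to $d-1$ edges outside $[A]$, giving $\kappa=\Theta(d^2)=\Theta\bigl(d(g-a)\bigr)$. The correct statement (the paper's Proposition~\ref{prop:t.kappa}) is $\kappa \le d(g-a)$, and its proof is a double-counting argument that crucially uses the degree comparison $d_x\ge d_y$ from \eqref{eq:dv.vs.dw}, not the codegree condition:
\[
g-t=a=\sum_{x\in[A]}\sum_{y\sim x}\frac{1}{d_x}\le \sum_{x\in[A]}\sum_{y\sim x}\frac{1}{d_y}=g-\sum_{y\in N(A)}\sum_{x\sim y,\,x\notin[A]}\frac{1}{d_y}\le g-\kappa/d.
\]
Relatedly, the set you name $N(A)\setminus N([A])$ in step~(1) is empty (since $A\subseteq[A]$ forces $N([A])=N(A)$); you want the edge set $\nabla(N(A),\overline{[A]})$, not a vertex set.

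Second, your claim that the fingerprint length is $O\bigl((b+(g-a))/\log d\bigr)$ is also off: the approximation cost in fact scales like $\kappa/d$ rather than $\kappa$, because the $\kappa$ escaping edges are spread over $Y$-vertices of degree $\Theta(d)$. (In the paper's $\varphi$- and $\psi$-approximation lemmas, $\kappa$ always appears divided by $d$ or $d^2$, e.g.\ in $|S''\setminus[A]|\le\kappa/(\delta d-\psi)$.) Your two errors cancel to give the right final answer $O((g-a)/\log d)$, which masks the fact that neither step was actually justified. Moreover the ``$O(\mathrm{poly}(d))$-many choices of $b$'' is not correct — there are up to $\kappa\le d(g-a)$ choices, which can be super-polynomial in $d$; the union is instead absorbed using the hypothesis $g-a\gg\log^2 d$, so that $d(g-a)\cdot 2^{-\gamma'(g-a)/\log d}\le 2^{-\gamma(g-a)/\log d}$.

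Finally, and most substantively, you never specify a selection procedure — the phrase ``run the ordered max-degree algorithm restricted cleverly'' in step~(2) is exactly where the content lies, as you yourself acknowledge. The paper does not run Kleitman--Winston; it builds a $\varphi$-approximation $F'$ of $N(A)$ (Lemma~\ref{lem:phi}) and then refines it to a $\psi$-approximation $(S,F)$ (Lemma~\ref{lem:psi}), with the cost of each stage controlled by $\kappa$ rather than by $t$ as in the classical biregular setting. This concrete scheme, together with Proposition~\ref{prop:SF.facts}(c) ($|S|\le|F|+O(\kappa\psi/d^2)$, again using \eqref{eq:dv.vs.dw}), is what yields the case analysis and the final $2^{g-\gamma't/\log d}$ bound per $\kappa$. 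Your proposal recognises the blueprint but leaves the load-bearing steps unproved, so as written it is a plan rather than a proof.
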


The crucial difference between this result and previous results of a similar kind, is the lack of assumption \eqref{Sap.cond} in our setting. To handle this obstacle, we introduce a key parameter $\kappa(A):=|\nabla(N(A), \ov{[A]})|$ and partition $\cG(a,g)$ according to the value of $\kappa$. Given an integer $\gk$, define a subfamily $\cG(a,g,\gk)$ of $\cG(a,g)$ as follows.
\[\cG(a,g,\gk):=\{A \sub X: A \text{ is $2$-linked}, |[A]|=a, |N(A)|=g, |\nabla(N(A), \ov{[A]})|=\gk\}.\]
In order to obtain \Cref{lem:container.graph}, we first prove the following lemma. 

\begin{lemma}\label{lem:container.graph'} There exists a constant $\gamma'>0$ for which the following holds. If $\kappa> 0$ and $g-a\gg \frac{g\log^3d}{d^2}$, then
\[|\cG(a,g, \gk)|\le|Y| \cdot 2^{g-\gamma'(g-a)/\log d}.\]    
\end{lemma}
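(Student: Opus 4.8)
The plan is to run a Kleitman--Winston style container algorithm, but with the extra bookkeeping needed because we do not have the degree condition \eqref{Sap.cond}. Fix a total order on $X$. Given a $2$-linked set $A \in \cG(a,g,\gk)$, we build a ``fingerprint'' $F \subseteq A$ together with side information that determines a small container set. The key idea, following Sapozhenko, is: we want to expose a subset $F$ of $A$ of controlled size, such that knowing $F$, knowing $N(A)$ (which we will recover) and knowing $[A] \supseteq A$ pins $A$ down. We recover $N(A)$ by a greedy maximum-degree selection process inside $Y$: repeatedly, among vertices of $Y$ not yet ruled out and not yet chosen, pick the available vertex $y$ of largest degree that is adjacent to the ``currently undetermined'' part of $A$; if $y \in N(A)$ we may add a vertex of $A \cap N(y)$ to the fingerprint. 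The standard Kleitman--Winston counting shows that after exposing $t$ fingerprint vertices we have cut down the number of candidates for $A$ by roughly a $2^{-c t d}$ factor, where $c = c(\delta)>0$, because each fingerprint vertex $v$ ``kills'' its $\ge \delta d$ neighbours in $Y$, each of which in turn had $\le \Delta$ common neighbours with the others, so $\gtrsim d$ new vertices of $X$ get excluded per step.

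The crux is to control the number of fingerprint vertices that suffice, and this is exactly where $\kappa = |\nabla(N(A),\ov{[A]})| > 0$ enters. After the fingerprint process terminates, the unexposed part of $A$ lies in some set $A'$ with $[A'] \subseteq [A]$; we must argue that once $|A \setminus F|$ is small the pair $(F$, the recovered $N(A))$ determines $A$ up to a $2^{g - \gamma'(g-a)/\log d}$-size family. I would split into two regimes. In the ``bulk'' regime, we run the selection process on $Y$ until the set of still-undetermined $X$-vertices has small closure-size; by a pigeonhole/entropy argument, since $|[A]| = a$ but $|N(A)| = g$ with $g - a$ large, there must be $\Omega((g-a)/\log d)$ fingerprint vertices whose neighbourhoods are ``spread out'' — i.e. each contributes $\Omega(d/\log d)$ genuinely new excluded vertices even accounting for overlaps — giving the claimed savings. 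The hypothesis $g - a \gg g\log^3 d / d^2$ is what guarantees enough such productive steps can be found before the process degenerates: it ensures the total edge-deficit $\kappa$ (or the vertex-deficit $g-a$) is large compared to the $O(g \log^3 d/d^2)$ worst-case inefficiency of the greedy selection.

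To make the role of $\kappa$ precise I would argue as follows. Every vertex $y \in N(A)$ has all but $\le \kappa$ of its edges (summed over $y$) going into $[A]$; so $\sum_{y \in N(A)} d_{\ov{[A]}}(y) = \kappa$. Meanwhile $|N(A)| - |[A]| = g - a$: since each vertex of $[A]$ ``uses up'' degree entirely inside $N(A)$ while vertices of $\ov{[A]}$ adjacent to $N(A)$ absorb only $\kappa$ edges total, a double-counting of $\nabla(N(A), [A])$ versus $\nabla(N(A), \ov{[A]})$ forces the selection process to expose at least $\gamma'(g-a)/\log d$ fingerprint vertices before $N(A)$ is fully revealed — because each exposed vertex can account for at most $O(d \log d)$ of the discrepancy. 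Summing over the at most $|Y|$ choices of starting vertex and at most $\binom{g}{\le \gamma'(g-a)/\log d}$ choices for where in the order the fingerprint sits, and then $2^g$ for the final ambiguity within $N(A)$, multiplied by the savings factor $2^{-c\,(\text{number of fingerprint vertices})\,d} \le 2^{-\gamma'(g-a)/\log d \cdot d} $... wait, I should be careful: the savings per fingerprint vertex is a $2^{-\Omega(d)}$ factor on the candidate count, but we also pay $\binom{g}{t}$ to encode the fingerprint, so the net is $2^{g}\cdot 2^{-\gamma'(g-a)/\log d}$ after optimizing $t \asymp (g-a)/\log d$; the $\log d$ loss is the usual one from encoding which vertices form the fingerprint.

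\textbf{Main obstacle.} The hard part is the accounting that ties $\kappa > 0$ (equivalently $g - a$ large) to a guaranteed number of \emph{productive} fingerprint steps in an irregular graph: in the biregular Sapozhenko setting every selected vertex of $N(A)$ automatically shrinks the candidate pool by $\ge d - O(\Delta t)$, but here a vertex $y \in N(A)$ of small degree (only $\delta d$) that sits in a heavily-overlapping region could be nearly useless, and \eqref{eq:dv.vs.dw} — which says $X$-vertices dominate their $Y$-neighbours in degree — must be invoked exactly to prevent the process from being forced to pick such useless $y$'s early. Managing this requires choosing the greedy rule (maximum degree in $Y$, scanning $A$ in order) so that unproductive steps are provably rare, quantified by the $g\log^3 d / d^2$ slack. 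I expect roughly half the proof to be devoted to this degree-bookkeeping lemma, with the outer container/counting argument being comparatively routine.
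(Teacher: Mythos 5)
Your proposal takes a genuinely different route — a one-pass Kleitman--Winston greedy algorithm with a fingerprint $F\subseteq A$ — from the paper's two-stage Sapozhenko scheme of $\phi$-approximations (\Cref{lem:phi}) followed by $\psi$-approximations (\Cref{lem:psi}) and a final case split in \Cref{subsec:lem4.2}. However, your proposal contains a genuine gap that you flag yourself: the ``degree-bookkeeping lemma'' that ties $\kappa>0$ to a guaranteed number of \emph{productive} fingerprint steps is named as the main obstacle and never carried out, and it is exactly this step that the paper's machinery (\Cref{prop:t.kappa}, \Cref{prop:SF.facts}, and the cost estimates in \eqref{container.cost}--\eqref{eq:sg}) is designed to substitute for. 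Concretely, you assert a savings of roughly $2^{-ctd}$ after exposing $t$ fingerprint vertices, then retract to a savings of $2^{-\gamma'(g-a)/\log d}$ ``after optimizing'', but no argument is supplied for why each fingerprint step must prune $\Omega(d/\log d)$ candidate $X$-vertices rather than degenerate on an unlucky vertex; in an irregular graph without \eqref{Sap.cond} that is precisely the difficulty. There is also a mechanical confusion in the algorithm: you say a fingerprint vertex $v\in A\subseteq X$ ``kills its $\ge\delta d$ neighbours in $Y$'', but eliminating $Y$-vertices does not directly exclude candidates for $A\subseteq X$; you would have to argue separately (via the common-neighbour bound $\Delta$) that this excludes $\gtrsim d$ vertices of $X$, and you would further need \eqref{eq:dv.vs.dw} to ensure the greedy rule cannot be forced to waste steps on low-degree $y$'s, which is left unaddressed. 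By contrast, the paper sidesteps the need for such a per-step productivity guarantee: it approximates $N(A)$ and $[A]$ jointly by a pair $(S,F)$, bounds the number of such pairs by $|Y|\,e^{o(t/\log d)}$ using $\kappa \le td$ (\Cref{prop:t.kappa}), and then — in the case $g-f$ large — specifies $A$ as a subset of $S$ where $s\le g-\delta' t/(2\log d)$ follows from \Cref{prop:SF.facts}(c), which is in turn the one place \eqref{eq:dv.vs.dw} is used. To make your route rigorous you would have to prove an analogous quantitative lemma, and it is not evident that a one-pass greedy on $Y$ admits one without essentially reconstructing the $\phi/\psi$-approximation apparatus.
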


To show that \Cref{lem:container.graph} follows from \Cref{lem:container.graph'}, we first show that we can control $\kappa$ using $t:=|N(A)|-|[A]|$ by the degree assumptions on $\Sigma$. (Notice that now  parameter $t$ has a different meaning from the earlier sections; the choice of the letter $t$ is in accordance with related works, e.g. \cite{Galvin2019, jenssen2024refined, JMP1}.)

\begin{proposition}\label{prop:t.kappa} For any $A \sub X$,
\beq{eq:t.kappa} |\nabla(N(A), \ov{[A]})|\le d(|N(A)|-|[A]|).\enq
\end{proposition}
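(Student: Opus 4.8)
The plan is to reduce \eqref{eq:t.kappa} to a pure degree inequality and then exploit the domination hypothesis \eqref{eq:dv.vs.dw}, rather than only the two-sided bound \eqref{eq:deg}. First I would record that $[A]\subseteq X$: since $N(A)\subseteq Y$, any vertex $v$ with $N(v)\subseteq N(A)$ has its entire (nonempty, by \eqref{eq:deg}) neighbourhood in $Y$ and hence lies in $X$; in particular every $v\in[A]$ satisfies $N(v)\subseteq N(A)$, so all edges at $v$ go to $N(A)$. Counting the edges incident to $N(A)$ and splitting $X=[A]\cup(X\setminus[A])$ then gives
\[
|\nabla(N(A),\overline{[A]})|=\sum_{y\in N(A)}d(y)-|\nabla(N(A),[A])|=\sum_{y\in N(A)}d(y)-\sum_{v\in[A]}d(v),
\]
so \eqref{eq:t.kappa} is equivalent to the inequality $\sum_{v\in[A]}(d-d(v))\le\sum_{y\in N(A)}(d-d(y))$ between two sums of nonnegative terms (nonnegative by \eqref{eq:deg}).

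To prove this last inequality I would charge the degree deficiency of $[A]$ onto $N(A)$. Writing $d-d(v)=\frac{d-d(v)}{d(v)}\,|N(v)|$, summing over $v\in[A]$, and exchanging the order of summation (valid since $N(v)\subseteq N(A)$ for every $v\in[A]$), we obtain
\[
\sum_{v\in[A]}\bigl(d-d(v)\bigr)=\sum_{y\in N(A)}\ \sum_{\substack{v\in[A]:\,v\sim y}}\frac{d-d(v)}{d(v)}.
\]
For any edge $vy$ with $v\in[A]\subseteq X$ and $y\in N(A)\subseteq Y$, \eqref{eq:dv.vs.dw} gives $d(v)\ge d(y)$, and since $x\mapsto \frac{d}{x}-1$ is decreasing on $(0,\infty)$ this yields $\frac{d-d(v)}{d(v)}\le\frac{d-d(y)}{d(y)}$. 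Hence the inner sum is at most $\frac{d-d(y)}{d(y)}\,\bigl|\{v\in[A]:v\sim y\}\bigr|\le\frac{d-d(y)}{d(y)}\,d(y)=d-d(y)$, and summing over $y\in N(A)$ completes the argument.

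The main obstacle to watch for is that the naive estimate $\sum_{v\in[A]}d(v)\ge\delta d\,|[A]|$ wastes the factor $\delta$ and is too weak; the sharp bound genuinely requires \eqref{eq:dv.vs.dw}, which is exactly what lets the deficiency at each $v\in[A]$ be absorbed by its neighbours in $N(A)$. (Running the same computation with the weights $1/d(v)$ in place of $\frac{d-d(v)}{d(v)}$ shows in addition that $|[A]|\le|N(A)|$, so the right-hand side of \eqref{eq:t.kappa} is nonnegative; equivalently Hall's condition holds for $[A]$ into $N(A)$, and one could alternatively fix a matching $[A]\hookrightarrow N(A)$ and use $d-d(v)\le d-d(y)$ for matched pairs $vy$.)
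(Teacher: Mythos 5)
Your proof is correct, and rests on the same essential idea as the paper's: double-count along the edges incident to $N(A)$ and use the degree domination \eqref{eq:dv.vs.dw} to transfer the degree deficiency from $[A]$ to $N(A)$. The packaging differs slightly. The paper reweights the identity $|[A]|=\sum_{x\in[A]}\sum_{y\sim x}1/d_x$ via $1/d_x\le 1/d_y$ and arrives directly at $|[A]|\le |N(A)|-\kappa/d$. You instead first rewrite $\kappa$ exactly as $\sum_{y\in N(A)}d(y)-\sum_{v\in[A]}d(v)$ (exploiting that $[A]\subseteq X$ and $N([A])\subseteq N(A)$, so every edge at $[A]$ lands in $N(A)$), reducing the statement to the degree-deficiency inequality $\sum_{v\in[A]}(d-d(v))\le\sum_{y\in N(A)}(d-d(y))$, which you then prove by charging with the weight $(d-d(\cdot))/d(\cdot)$ across edges. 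Both routes use only \eqref{eq:dv.vs.dw} and the upper bound $d(\cdot)\le d$ from \eqref{eq:deg}; your version makes more explicit which degree sums are being compared, and your closing remark (that the same charging with weight $1/d(\cdot)$ yields $|[A]|\le|N(A)|$) is exactly the first half of the paper's chain of inequalities.
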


\begin{proof} We use $a, g$, and $t$ for $|[A]|, |N(A)|$, and $|N(A)|-|[A]|$, respectively. Then
    \[\begin{split}(g-t=)~a=\sum_{x \in [A]}\sum_{y \sim x}\frac{1}{d_x}\stackrel{\eqref{eq:dv.vs.dw}}{\le} \sum_{x \in [A]}\sum_{y \sim x} \frac{1}{d_y}=\sum_{y \in N(A)}\sum_{\substack{x \sim y \\ x \in [A]}}\frac{1}{d_y}&=\sum_{y \in N(A)}\sum_{x \sim y}\frac{1}{d_y}-\sum_{y \in N(A)}\sum_{\substack{x \sim y \\ x \notin [A]}}\frac{1}{d_y}\\
    &\stackrel{\eqref{eq:deg}}{\le} g-\frac{1}{d}|\nabla(N(A), \ov{[A]})|=g-\gk/d.\qedhere\end{split}\]
\end{proof}

\begin{proof}[Derivation of \Cref{lem:container.graph} from \Cref{lem:container.graph'}]
\Cref{prop:t.kappa} gives $\gk \le td$, so by \Cref{lem:container.graph'},
    \[|\cG(a,g)|=\sum_{\gk\le td}|\cG(a,g,\gk)|\le \sum_{\kappa \le td}|Y|\cdot 2^{g-\gamma' t/\log d} \le  |Y|\cdot td\cdot 2^{g-\gamma't/\log d}.\]
    The last expression is at most $|Y|\cdot2^{g-\gamma t/\log d}$ for a suitable constant $\gamma$ since $t\gg \log ^2 d$ by assumption.    
\end{proof}

We will prove \Cref{lem:container.graph'} in \Cref{subsec:lem4.2}. For the rest of the paper, we use $G=G(A)$ for $N(A)$ following the convention in the literature, often suppressing its dependence on $A$.

\subsection{Preliminaries for the proof of \Cref{lem:container.graph'}} Write $d_Z(v)$ for $|N(v) \cap Z|$ for $Z \sub V(\Sigma)$.

\begin{definition}
        Given $A \sub X$, let $G^\phi=\{v\in G:d_{[A]}(v)>\phi\}$. A $\phi$-approximation for $A$ is an $F'\subseteq Y$ such that
        \begin{equation}\label{eqn:phi_1}            G^\phi\subseteq F'\subseteq G; \text{ and}
        \end{equation}        \begin{equation}\label{eqn:phi_2}
            N(F')\supseteq[A].
        \end{equation}
    \end{definition}

    \begin{lemma}\label{lem:phi} For any $\phi \in [1, \gd d-1]$, there is a family $\cV=\cV(a,g,\gk,\phi)$ with 
\[|\cV|\le |Y|\exp\left[O_{\delta, \Delta}\left(\frac{g\log^2 d}{\phi d}+\frac{\gk\log^2d}{d(\gd d-\phi)}+\frac{\gk\log^2 d}{\phi d}\right)\right]\]
        such that each $A\in\cG(a,g,\gk)$ has a $\phi$-approximation in $\cV$.
    \end{lemma}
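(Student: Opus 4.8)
\textbf{Proof strategy for \Cref{lem:phi}.} The plan is to build the approximating family $\cV$ by a greedy/algorithmic selection procedure, in the spirit of the Kleitman--Winston and Sapozhenko container arguments, but carefully tracking the extra parameter $\kappa$. Fix $A \in \cG(a,g,\gk)$ and write $G=N(A)$, $[A]$ for its closure. The goal is to specify a small ``fingerprint'' $S \subseteq Y$ (of controlled size) from which one can reconstruct a set $F' = F'(S)$ satisfying \eqref{eqn:phi_1} and \eqref{eqn:phi_2}. We select $S$ by running a greedy algorithm on an auxiliary ordering of $X \cup Y$: maintain a growing set of ``chosen'' vertices of $G$; at each step, among the vertices of $G^\phi$ not yet ``covered'' (in the sense that they are not yet forced to lie in $F'$ by the current partial information), pick the one that is first in a fixed order and has maximum degree into the current candidate set, add it to $S$, and update. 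The key point of the degree condition \eqref{eq:dv.vs.dw} together with $|N(v)\cap N(w)|\le\Delta$ is that each chosen vertex of high degree $d_{[A]}(v) > \phi$ eliminates roughly $\phi/\Delta$ candidates from consideration, so the number of greedy steps needed to pin down all of $G^\phi$ is $O(g\log d/\phi)$ up to the usual logarithmic factors coming from iterating the ``halving'' estimate.

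The main technical content — and the place where the new parameter $\kappa$ enters — is the separate handling of the reconstruction of $[A]$ from $F'$, i.e.\ guaranteeing \eqref{eqn:phi_2}. Since we are not assuming \eqref{Sap.cond}, we cannot simply argue that $F' \supseteq G^\phi$ already determines enough of $[A]$; edges from $N(A)$ that ``escape'' the closure, counted precisely by $\kappa = |\nabla(N(A),\ov{[A]})|$, are exactly what obstructs this. I would split the fingerprint into two parts: $S_1$, handling $G^\phi$ as above, contributing $\exp[O(g\log^2 d/(\phi d))]$ to the count; and $S_2$, a second greedy selection that certifies the membership of those vertices of $[A]$ whose neighborhoods are only ``barely'' covered. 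A vertex $x \in [A]$ has all of $N(x) \subseteq G$, but if many of its neighbors lie in $G \setminus G^\phi$ (low-degree-into-$[A]$ vertices), we need extra information to recover $x$; the relevant vertices of $G\setminus G^\phi$ are incident to escaping edges, and there are at most $\kappa$ such edges. Selecting $S_2$ greedily among these low-degree neighbors, each pick of a vertex $v \in G \setminus G^\phi$ with $d_{[A]}(v)$ somewhere in $[1,\phi]$ kills a controlled number of candidate closure-vertices, and the number of steps is governed by $\kappa$ divided by the typical such degree. Balancing this against the two regimes — a vertex $v$ incident to escaping edges has $d_{[A]}(v) \le d_v - 1$ and the ``escaping degree'' $d_{\ov{[A]}}(v) \ge 1$, so using \eqref{eq:deg} its degree into $[A]$ is between a constant fraction of $\gd d - \phi$ and $\phi$ in the two sub-cases — yields the two terms $\frac{\gk\log^2 d}{d(\gd d - \phi)}$ and $\frac{\gk \log^2 d}{\phi d}$ respectively. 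Multiplying the $|Y|$ choices for an initial ``seed'' vertex (the standard factor accounting for which $2$-linked component / which part of $Y$ we are in) gives the stated bound.

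The main obstacle I anticipate is making the second selection ($S_2$) precise: one must set up the right potential function / counting invariant so that the algorithm both (a) terminates after the claimed number of steps with a bound depending only on $\kappa$, $\phi$, $d$, and (b) produces, from the recorded fingerprint $S = S_1 \cup S_2$ alone (without knowledge of $A$), a well-defined set $F'$ that is guaranteed to satisfy both \eqref{eqn:phi_1} and \eqref{eqn:phi_2}. In particular, the decoder must be able to tell, using only $S$ and the ambient graph $\Sigma$, which vertices to place in $F'$ and then verify $N(F') \supseteq [A]$ — and for this one typically defines $F'$ as (a superset of) $G^\phi$ reconstructed from $S_1$, together with the auxiliary vertices recorded in $S_2$, and then argues that $[A] \subseteq N(\text{these vertices})$ because every $x \in [A]$ either has a neighbor in $G^\phi$ of high degree (hence recoverable from $S_1$) or is flagged through an escaping edge recorded in $S_2$. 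Getting the bookkeeping of ``covered'' versus ``uncovered'' consistent between the encoder and decoder, while keeping all three error terms simultaneously under control, is the crux; the rest is the routine iteration of the sunflower-free / bounded-codegree halving estimate already standard in container arguments. Once \Cref{lem:phi} is in hand, \Cref{lem:container.graph'} will follow by choosing $\phi$ to optimize the exponent (roughly $\phi \asymp \sqrt{g\,d/(g-a)}\cdot\mathrm{polylog}$, subject to $\phi \le \gd d - 1$) and summing over the remaining parameters.
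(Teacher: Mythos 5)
Your high-level plan is in the right spirit, but the proposal has a genuine gap exactly where you flag it, and your account of where the two $\kappa$-terms come from does not match a workable argument.

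The paper's proof (following Galvin, with $t$ replaced by $\kappa$) does not use a greedy selection at all. It first picks $T_0\subseteq G$ as a random $p$-subset with $p=\Theta(\Delta\log d/(\phi\delta d))$ and uses Markov's inequality to guarantee the simultaneous existence of $T_0$ with $|T_0|\le 3gp$, $|\nabla(T_0,X\setminus[A])|\le 3\kappa p$, and $|G^\phi\setminus N(N_{[A]}(T_0))|\le 3g/d^{10}$. It then sets $T_0'=G^\phi\setminus N(N_{[A]}(T_0))$, $\Omega=\nabla(T_0,X\setminus[A])$, $L=N(N_{[A]}(T_0))\cup T_0'$, and chooses $T_1\subseteq G\setminus L$ as a minimum cover of $[A]\setminus N(L)$ via the Lov\'asz--Stein lemma. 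Crucially, each $v\in G\setminus L\subseteq G\setminus G^\phi$ has $d_{[A]}(v)\le\phi$ and $d(v)\ge\delta d$, hence contributes at least $\delta d-\phi$ edges to $\nabla(G,X\setminus[A])$, so $|G\setminus L|\le\kappa/(\delta d-\phi)$ and $|T_1|=O(\kappa\log d/(d(\delta d-\phi)))$. The family $\cV$ consists of all $F'=L\cup T_1$ constructed from the $8$-linked set $T=T_0\cup T_0'\cup T_1$ (counted via the branching-tree lemma, giving the $|Y|$ factor and a $\log d$ overhead) together with $\Omega$, and it is $\Omega$ — not a second greedy phase — that produces the third term: since $\Omega\subseteq\nabla(G,X\setminus[A])$, a set of size $\kappa$, and $|\Omega|\le 3\kappa p$, one pays $\binom{\kappa}{\le O(\kappa\Delta\log d/(\phi\delta d))}\le 2^{O(\kappa\Delta\log^2 d/(\phi\delta d))}$.

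The gap in your proposal is twofold. First, your decoder cannot recover $N_{[A]}(T_0)$ from $T_0$ alone (it has no way to tell which neighbors of $T_0$ land in $[A]$), and you do not record the escaping edges $\Omega$ that supply exactly this information; without it $F'$ is not reconstructible. Second, your explanation for the two $\kappa$-terms — as two ``regimes'' of $d_{[A]}(v)$ in picks from $G\setminus G^\phi$ — does not correspond to the actual mechanism: vertices of $G\setminus G^\phi$ all have $d_{[A]}(v)\le\phi$, so there is no regime where $d_{[A]}(v)$ is a constant fraction of $\delta d-\phi$. In the paper one $\kappa$-term is the size bound on the Lov\'asz--Stein cover (driven by $|G\setminus G^\phi|\le\kappa/(\delta d-\phi)$) and the other is the cost of recording $\Omega$ (driven by $|\Omega|\le\kappa p$). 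Your greedy-selection framing might ultimately be salvageable, but you would need to specify the potential function, the decoder, and explicitly account for recording the escape structure, which is precisely the ``crux'' you acknowledge you have not resolved.
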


The proof of~\Cref{lem:phi} is almost identical to that for \cite[Lemma 5.4]{Galvin2019}; the only difference is that the role of $t$ in the proof of \cite[Lemma 5.4]{Galvin2019} is replaced with $\kappa$. We defer the details to \Cref{app:lem:phi}.

        \begin{definition}
        Given $A \sub X$, a $\psi$-approximation for $A$ is a pair $(S,F)\subseteq 2^X\times 2^Y$ satisfying 
        \begin{equation}\label{eqn:psi_1}
            F\subseteq G,\ S\supseteq [A];
        \end{equation}
        \begin{equation}\label{eqn:psi_2}
            d_F(u)\geq d(u)-\psi ~\ \forall u\in S; \text{ and}
        \end{equation}
        \begin{equation}\label{eqn:psi_3}
            d_{X\setminus S}(v)\geq d(v)-\psi ~\ \forall v \in Y\setminus F.
        \end{equation}
    \end{definition}

        \begin{lemma}\label{lem:psi} Given $\psi \in [1, \gd d-1]$ and $F' \sub Y$, write $\cG(a,g,\gk,F')$ for the collection of $A \in \cG(a,g,\gk)$ for which $F'$ is a $\varphi$-approximation. Then there exists a family $\cW=\cW(a, g, \gk, F') \sub 2^Y \times 2^X$ with
        \beq{psi_bd} |\cW|\leq {dg \choose \le \gk/((\gd d-\varphi)\psi)}{ d^2 g \choose \le \gk/((\delta d-\psi)\psi)} \enq
        such that every $A\in\cG(a,g,\gk, F')$ has a $\psi$-approximation in $\cW$.
    \end{lemma}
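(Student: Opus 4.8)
The plan is to follow the standard two-stage refinement of the graph-container method (as in \cite{Sapozhenko1989, Galvin2019, JMP1}), upgrading the coarse $\phi$-approximation $F'$ to a finer $\psi$-approximation $(S,F)$, with the parameter $\gk=\gk(A)$ playing the role that $t=|N(A)|-|[A]|$ plays in those works (cf.\ \Cref{prop:t.kappa}, which is the statement that, once the degree hypothesis \eqref{Sap.cond} is dropped, $\gk$ rather than $t$ is the controllable quantity). Fix $F'$ and let $A\in\cG(a,g,\gk,F')$; write $G=N(A)$, so that $G^\phi\sub F'\sub G$, $N(F')\supseteq[A]$, and (since $A\sub[A]$ and $[A]=\{w:N(w)\sub N(A)\}$) $N([A])=G$.

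First I would record the two quantitative consequences of $\gk(A)=\gk$ and of $F'$ being a $\phi$-approximation that drive everything. \emph{(i)} $|G\sm F'|\le|G\sm G^\phi|\le \gk/(\gd d-\phi)$: each $v\in G\sm G^\phi$ has $d_{\ov{[A]}}(v)=d(v)-d_{[A]}(v)\ge\gd d-\phi$, and all such edges are counted by $\gk=|\nabla(N(A),\ov{[A]})|$. \emph{(ii)} $|\nabla([A],G\sm F')|\le\phi\,|G\sm F'|\le\phi\gk/(\gd d-\phi)$: each $v\in G\sm F'\sub G\sm G^\phi$ has $d_{[A]}(v)\le\phi$. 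Together with $N([A])=G$ (so every $v\in Y\sm G$ has $d_{[A]}(v)=0$, making \eqref{eqn:psi_3} automatic away from $G$), these say that $F'$ already pins down $[A]$ and $G$ up to defect sets whose size \emph{and} whose edge-interaction with the rest of $\Sigma$ are controlled by $\gk/d$.

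Next I would construct $(S,F)$ by local corrections to the raw pair consisting of $\{u\in X:d_{F'}(u)\ge d(u)-\psi\}$ and $F'$. One must (a) enlarge $S$ to contain the few vertices of $[A]$ invisible from $F'$, and avoid vertices that would violate \eqref{eqn:psi_3} at some $v\notin G$; and (b) move into $F$ those vertices of $G\sm F'$ whose omission would violate \eqref{eqn:psi_2} on $S$, while leaving the remaining (by (i), most) of $G\sm F'$ outside $F$ so that \eqref{eqn:psi_3} survives at $v\in G\sm F$. Each correction is recorded by a fingerprint living in a ground set that depends only on $F'$: a subset of $N(F')$ (which has size at most $dg$) on the $S$-side, and a subset of the set of length-two walks starting in $F'$ (of size at most $d^2g$) on the $F$-side. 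A careful accounting using (i)–(ii) bounds the cardinalities of these fingerprints so that the number of pairs $(S,F)$ produced is at most the right-hand side of \eqref{psi_bd}. By construction each such $(S,F)$ satisfies \eqref{eqn:psi_1}–\eqref{eqn:psi_3}, i.e.\ is a $\psi$-approximation for $A$, and taking $\cW$ to be the family of all such pairs gives $|\cW|$ bounded by \eqref{psi_bd}, with every $A\in\cG(a,g,\gk,F')$ having a $\psi$-approximation in $\cW$.

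The hard part will be precisely this accounting. The defect sets are subsets of the \emph{unknown} sets $[A]$ and $G$, so encoding them as arbitrary small subsets of $[A]$ or of $G\sm F'$ is too expensive; one must instead exhibit fingerprints that (a) live in explicit neighbourhoods of $F'$, (b) have the small cardinalities forced by \eqref{psi_bd} — which requires recording only the ``heavy'' part of each defect (the vertices that actually cause a violation) and arguing the light part is automatically harmless — and (c) determine $(S,F)$ from $F'$ alone, while one also checks that $S$ stays above $[A]$ after any shrinking and $F$ stays below $G$ after any enlarging. This is the same computation carried out (with $t$ in place of $\gk$) in the proof of the analogous lemma in \cite{Galvin2019, JMP1}, so the plan is to defer the full details to the appendix, exactly as is done for \Cref{lem:phi}.
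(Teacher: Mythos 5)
Your plan is essentially the paper's: the paper proves \Cref{lem:psi} by the same Sapozhenko--Galvin fingerprinting algorithm (build $F''$ by iteratively adding $N(u)$ for each $u\in[A]$ with $d_{G\sm F'}(u)>\psi$, set $S''=\{u:d_{F''}(u)\ge d(u)-\psi\}$, then prune $S''$ by removing $N(v)$ for each $v\in Y\sm G$ with $d_{S''}(v)>\psi$), with $\gk$ replacing $t$ everywhere that Galvin or JMP use $t$; and the paper likewise defers the details to the appendix. One small correction: your parenthetical that $d_{[A]}(v)=0$ for $v\notin G$ ``makes \eqref{eqn:psi_3} automatic away from $G$'' is not right as stated, since \eqref{eqn:psi_3} involves $d_{S}(v)$ rather than $d_{[A]}(v)$ and $S\supsetneq[A]$ in general --- the whole point of the second pruning pass is precisely to enforce $d_S(v)\le\psi$ at $v\notin G$ (and the fact that $N(v)\cap[A]=\emptyset$ there is what guarantees $[A]\sub S$ survives the pruning). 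Also note that the first binomial's exponent $\gk/((\gd d-\phi)\psi)$ comes from the iterative fact that each addition of $N(u)$ removes at least $\psi$ vertices from $G\sm F'$ (size $\le\gk/(\gd d-\phi)$ by your (i)); your bound (ii) on $|\nabla([A],G\sm F')|$ would give a weaker count, off by a factor of $\phi$, so it isn't the right tool for that step.
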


We again defer the proof to \Cref{app:lem:psi}, as it is almost identical to that for \cite[Lemma 5.5]{Galvin2019} with the role of parameter $t$ replaced with $\gk$.

    \begin{proposition}\label{prop:SF.facts} Let $(S,F)$ be a $\psi$-approximation for some $A\in\cG(a,g, \gk)$, and suppose $\psi \ll d$. Then,
    \begin{enumerate}[(a)]
    \item $|S \setminus [A]|=O(\gk/d)$ and $|G \setminus F|=O(\gk/d)$;
\item $|\nabla (S, \ov F)| =O\left(\gk \psi/d\right)$;
\item $|S|\le |F|+O(\gk\psi/d^2)$.
    \end{enumerate}
\end{proposition}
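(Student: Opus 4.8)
The plan is to obtain all three bounds by elementary double counting of edges, using only the degree hypotheses \eqref{eq:deg}, \eqref{eq:dv.vs.dw} and the single structural observation that every edge joining a vertex of $\ov{[A]}$ to a vertex of $G=N(A)$ is one of the $\gk=|\nabla(N(A),\ov{[A]})|$ edges counted by $\gk$. For part (a): if $u\in S\setminus[A]$ then $u\in\ov{[A]}$ (since $[A]\subseteq S$), and \eqref{eqn:psi_2} together with \eqref{eq:deg} gives $d_F(u)\ge d(u)-\psi\ge\gd d-\psi$; as $F\subseteq G$, all these edges run from $u\in\ov{[A]}$ into $G=N(A)$, hence lie in $\nabla(N(A),\ov{[A]})$. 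Since edges at distinct vertices of $X$ are distinct, summing over $u$ gives $(\gd d-\psi)|S\setminus[A]|\le\gk$, i.e. $|S\setminus[A]|=O(\gk/d)$ because $\psi\ll d$. The bound $|G\setminus F|=O(\gk/d)$ is symmetric: for $v\in G\setminus F\subseteq Y\setminus F$, \eqref{eqn:psi_3} forces $d_{X\setminus S}(v)\ge\gd d-\psi$, and $X\setminus S\subseteq X\setminus[A]\subseteq\ov{[A]}$, so $v$ sends at least $\gd d-\psi$ edges of $\nabla(N(A),\ov{[A]})$, and summing over such $v$ finishes.

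For part (b), split $S=[A]\cup(S\setminus[A])$. For $u\in S\setminus[A]$, \eqref{eqn:psi_2} gives $d_{\ov F}(u)=d(u)-d_F(u)\le\psi$, whence $|\nabla(S\setminus[A],\ov F)|\le\psi|S\setminus[A]|=O(\gk\psi/d)$ by part (a). For $u\in[A]$ we have $N(u)\subseteq G$, so every neighbour of $u$ outside $F$ lies in $G\setminus F$, giving $|\nabla([A],\ov F)|=\sum_{v\in G\setminus F}d_{[A]}(v)$; and for $v\in G\setminus F\subseteq Y\setminus F$, \eqref{eqn:psi_3} gives $d_S(v)=d(v)-d_{X\setminus S}(v)\le\psi$, hence $d_{[A]}(v)\le\psi$ and $|\nabla([A],\ov F)|\le\psi|G\setminus F|=O(\gk\psi/d)$ by part (a). Adding the two contributions proves (b).

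For part (c), use the weighted identity $|S|=\sum_{u\in S}\sum_{y\sim u}1/d(u)$ and split the inner sum according to whether $y\in F$. The contribution of pairs with $y\notin F$ is $\sum_{u\in S}d_{\ov F}(u)/d(u)\le|\nabla(S,\ov F)|/(\gd d)=O(\gk\psi/d^2)$ by \eqref{eq:deg} and part (b). For pairs with $y\in F$, \eqref{eq:dv.vs.dw} gives $1/d(u)\le1/d(y)$ whenever $u\sim y$ with $u\in S\subseteq X$ and $y\in F\subseteq Y$, so this contribution is at most $\sum_{y\in F}d_S(y)/d(y)\le|F|$, exactly the manipulation used in the proof of \Cref{prop:t.kappa}. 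This yields $|S|\le|F|+O(\gk\psi/d^2)$.

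The only real subtlety is tracking the correct powers of $d$: bounding escaping edges crudely (say $|\nabla(S\setminus[A],\ov F)|\le d|S\setminus[A]|$) loses a factor of $d$ and produces only $O(\gk)$ in part (b), which would be too weak to feed into part (c). The device that avoids this, used in every part, is to exploit \eqref{eqn:psi_2}--\eqref{eqn:psi_3} to cap by $\psi$ — rather than by $d$ — the number of edges each relevant vertex sends across the approximation.
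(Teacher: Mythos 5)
Your proof is correct and matches the paper's argument essentially step for step: part (a) counts, via the $\psi$-approximation degree conditions, edges of $\nabla(N(A),\ov{[A]})$ charged to vertices of $S\setminus[A]$ (resp. $G\setminus F$), and part (c) is verbatim the paper's weighted double-count with the role of \eqref{eq:dv.vs.dw} identical. The only (cosmetic) divergence is in part (b): the paper derives $|\nabla(S,\ov F)|\le\psi(s-a+g-f)$ by a somewhat roundabout algebraic identity relating $\nabla(S,F)$ and $\nabla([A],G)$, whereas you obtain the same intermediate bound more transparently by decomposing $\nabla(S,\ov F)=\nabla([A],\ov F)\cup\nabla(S\setminus[A],\ov F)$ and capping each escaping degree by $\psi$ directly from \eqref{eqn:psi_2}--\eqref{eqn:psi_3}; both routes then finish with part (a).
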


\begin{proof} Write $s$ and $f$ for $|S|$ and $|F|$, respectively.

\nin (a) Note that
    $(s-a)(\gd d-\psi)\le |\nabla(S\setminus[A],F)| \le  |\nabla(G, \ov{[A]})|=\gk,$ so $s-a\le \gk/(\gd d-\psi)=O(\gk/d)$. Similarly, $(g-f)(\gd d-\psi) \le |\nabla(G \setminus F, \ov S)| \le |\nabla (G, \ov{[A]})|=\gk$.

\nin (b) We have $\nabla(S,F)- \nabla([A],G)=\nabla(S \setminus [A],F)-\nabla([A], S \setminus F)$, so
   \[\nabla(F)-\nabla(F,\ov S)-\nabla([A])=\nabla(S \setminus [A])-\nabla(S \setminus [A],\ov F)-\nabla([A], G \setminus F).\]
   Using $|\nabla(S \setminus [A],\ov F)| \le \psi(s-a)$ and $|\nabla([A], G \setminus F)| \le \psi(g-f)$,
   \[|\nabla(F)|-|\nabla(F, \ov S)| \ge |\nabla(S)|-\psi(s-a+g-f),\]
   from which
   \[|\nabla(S, \ov F)|=|\nabla(S)|-|\nabla(S,F)| \le |\nabla(F)|-|\nabla(F,\ov S)|-|\nabla(S,F)|+\psi(s-a+g-f) \le O\left(\frac{\gk\psi}{d}\right)\]
   where we use item (a) for the last inequality.

\nin (c) By item (b), it suffices to show that $|S|\le|F|+|\nabla(S,\ov{F})|/(\gd d).$
For $x \in S$, set $d'_x=|N(x) \setminus F|$. Write
\[s=\sum_{x \in S}\frac{d_x-d_x'}{d_x}+\sum_{x \in S} \frac{d_x'}{d_x}.\]
The first term of the rhs is
\[\sum_{x \in S}\sum_{y \in F, y \sim x}\frac{1}{d_x} \stackrel{\eqref{eq:dv.vs.dw}}{\le} \sum_{x \in S}\sum_{y \in F, y \sim x}\frac{1}{d_y} = \sum_{y \in F} \sum_{x \in S, x \sim y} \frac{1}{d_y}\le f; \]
and the second term of the rhs is at most $|\nabla(S, \ov{F})|/(\gd d)$.
\end{proof}

\subsection{Proof of \Cref{lem:container.graph'}}\label{subsec:lem4.2} In this section and the next, `cost' means the number of choices for the objects under consideration. Recall the elementary facts that
    \beq{incr} \text{the function $x\log(1/x)$ is monotone increasing on $(0, 1/e)$ {and has a global maximum at $1/e$}; and}\enq
    \beq{binom} {n \choose \le k} \le \exp\left\{k\log\left(\frac{en}{k}\right)\right\} \quad \text{for $k \le n$.}\enq

     Set $\phi=\gd d/2$ and $\psi=cd/\log d$ for a small constant $c=c(\delta, \Delta)$ to be chosen later. In the asymptotic notation below, the implicit constants depend only on $\delta$ and $\Delta$, and we will suppress the dependence. (They are some absolute constants in our applications.) The cost for the $\psi$-approximations $(F, S)$ for $A$'s in $\cG(a, g, \gk)$ produced via \Cref{lem:phi} and \Cref{lem:psi} is at most (using \eqref{binom} to simplify the cost in \Cref{lem:psi})
\beq{container.cost} |Y|\exp\left[O\left(\frac{g\log^2 d}{d^2}+\frac{\gk\log^2 d}{d^2}+\frac{\gk \log d}{cd^2}\log\left(\frac{dg}{\gk}\right)\right)\right].\enq
In the next paragraph, we show that \eqref{container.cost} is bounded above by $|Y|\exp(o(t/\log d))$.

The first two terms in the exponent in \eqref{container.cost} can be handled immediately. By the assumption that $t\gg \frac{g\log^3d}{d^2}$, the first term, $g\log^2 d/d^2$, is bounded above by $o(t/\log d)$; by \Cref{prop:t.kappa} (that is, $\kappa\leq td$), the second term, $\kappa \log^2 d/d^2$, is bounded above by $t\log^2 d/d =o( t/\log d)$. For the last term, we consider two cases.
If $t/g\leq 1/e$, then
\begin{align*}
    \frac{\gk\log d}{cd^2}\log\left(\frac{dg}{\gk}\right)=\frac{g\log d}{ cd}\cdot \frac{\gk}{dg}\log\left(\frac{dg}{\gk}\right)\stackrel{\eqref{incr}}{\leq}\frac{t\log d}{cd}\cdot \log\left(\frac{g}{t}\right)\leq \frac{t\log d}{ cd}\log\left(\frac{d^2}{\log^3d}\right) =O\left(\frac{t\log^2d}{cd}\right),
\end{align*}
where the last inequality uses $t\gg g\log^3 d/d^2$.
On the other hand, if $1/e <  t/g~(<1)$, then we use the global maximum at $1/e$ to get
\begin{align*}
    \frac{\gk\log d}{cd^2}\log\left(\frac{dg}{\gk}\right)=\frac{g\log d}{ cd}\cdot \frac{\gk}{dg}\log\left(\frac{dg}{\gk}\right)\stackrel{\eqref{incr}} {\leq}O\left(\frac{g\log d}{cd}\right)
    =O\left( \frac{t\log d}{ cd}\right).
\end{align*}
So in any case, the last term is at most $O(\frac{t\log^2 d}{cd})=o(t/\log d)$.
In sum, \eqref{container.cost} is at most
\beq{container.cost'} |Y|\cdot e^{o(t/\log d)}.\enq
Next we claim that, given a pair $(S,F)$, the number of $A$'s in $\cG(a,g,\gk)$ which has  $(S,F)$ as a $\psi$-approximation is at most $2^{g-\gO(t/\log d)}$. This claim, combined with \eqref{container.cost'}, yields the conclusion of \Cref{lem:container.graph'}.

To prove the claim, we consider the following two cases. Fix a small constant $\gd'$, say, $\gd'=1/100$.

\nin \textit{Case 1.} $g-f \le \gd' t/\log d$.

In this case, we first specify $G \setminus F$ as a subset of $N(S)$. Since $S \sub N(F) \sub N(G)$, we have $|N(S)|\le d^2g$. Therefore, the cost for $G \setminus F$ is at most
\[{d^2g \choose \le \gd' t/\log d}\stackrel{\eqref{binom}}{\le}\exp\left\{\frac{\gd' t}{\log d}\log\left(\frac{ed^2g\log d}{\gd' t}\right)\right\}\le \exp\left\{\frac{\gd' t}{\log d}\log\left(\frac{ed^4}{ \gd'\log^2 d}\right)\right\}\overset{(\dagger)}{\le} e^{10\gd' t}<2^{t/2},\]
where $(\dagger)$ holds for large enough $d$,
and the last inequality is true for our choice of $\gd'$.

Once we specify $G \setminus F$, this determines $G$, thus $[A]$. We then specify $A$ as a subset of $[A]$, which costs $2^a=2^{g-t}$. Therefore, the total specification cost is at most
\[2^{g-t+t/2}\le2^{g-t/2}.\]
\nin \textit{Case 2.} $g-f \ge \gd' t/\log d$

In this case, we specify $A$ as a subset of $S$. Note that, by \Cref{prop:SF.facts} (c) and our choice of $\psi$,
\beq{eq:sg}s\le f+O\left(\frac{c\gk}{d\log d}\right) \stackrel{\eqref{eq:t.kappa}}{\le} f+O\left(\frac{ct}{\log d}\right) \le g-\frac{\gd' t}{\log d}+O\left(\frac{ct}{\log d}\right)\enq
for large enough $d$. Taking $c=c(\delta, \Delta)$ (recall that the $O(\cdot)$ above depends on $\delta$ and $\Delta$) small enough implies the specification cost is at most
\[2^{g-\gd' t/(2\log d)}.\]

\subsection{Graph containers for three layers} 

In this section we prove a graph container lemma for three consecutive layers in $[3]^n$ (\Cref{lem:containers})
 For $i\in [n]$ and a set $A\subseteq L_i=L_i(3,n)$, we slightly abuse notation and denote the \emph{upwards closure} of $A$ by $[A]\coloneqq \{ v\in L_i: N^+(v)\subseteq N^+(A)\}$. 
Recall that we say $A\subseteq [3]^n$ is $2$-linked if it is $2$-linked in the Hasse diagram of $[3]^n$.
Set
\begin{center}
$\cH(a,b,g,h):=$ \\ $\{(A,B)\in 2^{L_{n-1}} \times 2^{L_{n-2}}: A \text{ $2$-linked}, |[A]|=a, |[B]|=b, |N^+(A)|=g, |N^+(B)|=h, N^+(B) \subseteq A\}$
\end{center}
and let $t=g-a$ and $t'=h-b$.
\begin{lemma}\label{lem:containers} For any integers $a, b, g$ and $h$ that satisfy $0 \le b \le h \le a \le g$ and $a \ge n^2$,
\beq{container.ineq} |\cH(a,b,g,h)| \leq 2^{g-\gO((t+t')/\log n)}.\enq
\end{lemma}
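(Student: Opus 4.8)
The plan is to reduce \Cref{lem:containers} to the two-layer container lemma (\Cref{lem:container.graph}) by counting the pair $(A,B)$ in two stages: first count the possible $A$ (together with its closure data), then, having fixed $A$, count the possible $B$ whose upward shadow $N^+(B)$ lands inside $A$. For the first stage, observe that $L_{n-1}\cup L_n$ is exactly the bipartite graph $\Sigma$ with $X=L_{n-1}$, $Y=L_n$ satisfying \eqref{eq:deg}--\eqref{eq:dv.vs.dw} with $\delta=1/2-o(1)$, $d=n$, $\Delta=1$, as noted after the statement of those hypotheses, and $[A]$ here is precisely the upward closure used in \Cref{lem:container.graph}. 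So for fixed $a,g$ with $t=g-a$, the number of $2$-linked $A\subseteq L_{n-1}$ with $|[A]|=a$, $|N^+(A)|=g$ is at most $|L_n|\cdot 2^{g-\gamma t/\log n}$ by \Cref{lem:container.graph}, provided the hypotheses $t\gg g\log^3 n/n^2$ and $t\gg \log^2 n$ hold; since $a\ge n^2$ and $g\ge a$, the factor $|L_n|=\Theta(3^n/\sqrt n)$ is absorbed into the $2^{g-\Omega(t/\log n)}$ (indeed $\log|L_n| = O(n) = o(a)$, but one must be slightly careful — see the obstacle below).

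\textbf{Second stage: bounding the choices for $B$.} Having fixed $A$ (hence $g$, $a$), we must count $B\subseteq L_{n-2}$ with $|[B]|=b$, $|N^+(B)|=h$ and $N^+(B)\subseteq A$. The key point is that $N^+(B)\subseteq A$ forces $N^+(B)$ to be an $h$-subset of the $a$-set $A$ (really of $[A]$, but $N^+(B)\subseteq A\subseteq [A]$), so the number of choices for the set $N^+(B)$ is at most $\binom{a}{h}\le \binom{a}{a-h}$. Now $a-h \ge a-t'$... more usefully, write $h = b + t'$ and note $b\le h\le a$. I would apply \Cref{lem:container.graph} again, this time to the bipartite graph $L_{n-2}\cup L_{n-1}$ with $X=L_{n-2}$, $Y=L_{n-1}$: for fixed $b,h$ with $t'=h-b$, the number of $2$-linked $B\subseteq L_{n-2}$ with $|[B]|=b$, $|N^+(B)|=h$ is at most $|L_{n-1}|\cdot 2^{h-\gamma t'/\log n}$. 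But this counts \emph{all} such $B$, ignoring the constraint $N^+(B)\subseteq A$; the constraint should buy us the savings that make the product telescope. The cleaner route is: the number of $B$ with a given value of the set $N^+(B)=:G'$ is controlled by running the container argument \emph{relative to} $G'$ — i.e.\ $B$ is determined up to $2^{|G'|-\gamma t'/\log n}=2^{h-\gamma t'/\log n}$ choices once $G'$ is fixed — and $G'$ ranges over subsets of $A$, costing $\binom{a}{h}$. Thus the total cost for $B$ given $A$ is at most $\binom{a}{h}\cdot 2^{h-\gamma t'/\log n}$, and since $\binom{a}{h}\le 2^{a}$ and $h\le a$ this is at most $2^{a+h-\gamma t'/\log n}$; combined with $h\le a\le g$ and $a=g-t$, a short computation gives $a+h-\gamma t'/\log n \le g - \Omega((t+t')/\log n)$ once we also use the stage-one savings $2^{-\gamma t/\log n}$ — the two savings add, while the ``wasteful'' bound $\binom{a}{h}\le 2^h$ is compensated because $a = g-t \le g$ and we only need the final bound $2^{g-\Omega((t+t')/\log n)}$.

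\textbf{Main obstacle.} The delicate point is the bookkeeping when $t$ or $t'$ is \emph{small} — say $t'\ll \log^2 n$, so that \Cref{lem:container.graph} does not directly apply to the $B$-count. In that regime one cannot extract a $2^{-\Omega(t'/\log n)}$ saving from the container lemma for $B$; instead the saving must come entirely from the constraint $N^+(B)\subseteq A$ being essentially rigid (few choices for $G'$, and few $B$ with that shadow). I would handle the small-$t'$ case separately: if $t' \le C\log^2 n$ then $b\ge h-C\log^2 n$, and using the isoperimetric expansion for $L_{n-2}$ from \Cref{prop:isoperim_up} (a ``square-root'' expansion $|N^+(B)|\ge \tfrac12|B|(n-|B|)$, or the multiplicative bound (b)) one shows the closure $[B]$ — hence $B$ — is determined by $G'$ up to a negligible factor $2^{O(t'\cdot\mathrm{poly}\log n)}$, which is still $2^{O(\log^4 n)}=2^{o(t/\log n)}$ since $t\ge t'$... wait, not if $t$ is also small. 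The genuinely cleanest fix is to mirror the structure of \Cref{lem:container.graph}'s own proof (the derivation from \Cref{lem:container.graph'}): prove a ``primed'' three-layer statement with the extra parameter $\kappa$ bounded via \Cref{prop:t.kappa}, handle $\kappa>0$ by the $\phi$/$\psi$-approximation machinery of \Cref{lem:phi}--\Cref{lem:psi} applied to \emph{both} $A$ and $B$ simultaneously (the approximation $(S,F)$ for $A$ restricts where $N^+(B)$ can live), and close up. I expect the write-up to follow \Cref{lem:container.graph}'s proof almost verbatim with $A\mapsto (A,B)$, $g\mapsto g$, $t\mapsto t+t'$, the one new ingredient being that $N^+(B)\subseteq A$ lets the $B$-part of the approximation be read off from the $A$-part at cost $\binom{g}{\le t+t'}$ rather than $\binom{|Y|}{\le\cdots}$, which is exactly what keeps the $|Y|$-type prefactor from appearing in \eqref{container.ineq} — note the target bound has \emph{no} $|Y|$ factor, unlike \Cref{lem:container.graph}, and this is affordable precisely because $a\ge n^2 \gg \log|L_n|$, so the prefactor $|L_n|$ from stage one is $2^{o(a)} = 2^{o(g)}$; but to get it inside $2^{g-\Omega((t+t')/\log n)}$ one needs $\log|L_n| = o((t+t')/\log n)$, which fails when $t+t'$ is tiny — so in fact the $a\ge n^2$ hypothesis must be used to argue that when $t+t'$ is below $\mathrm{poly}\log n$ the family $\cH(a,b,g,h)$ is controlled by the rigidity of closures rather than by the container lemma, and the $|L_n|$ factor is absorbed into $2^g$ directly since $g\ge a\ge n^2 \gg \log|L_n|$. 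This last absorption — distinguishing ``$t+t'$ large enough to run containers'' from ``$t+t'$ small, use isoperimetry + $g$ huge'' — is where I expect to spend the most care.
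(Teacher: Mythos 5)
Your proposal has a genuine gap at its core: the two‑stage count (all admissible $A$ first, then all $B$ with $N^+(B)\sub A$) cannot give the stated bound. Stage one already costs about $|L_n|\,2^{g-\gamma t/\log n}$ (the dominant $2^{a}$ part of this is the choice of $A$ inside $[A]$), and your stage two multiplies by $\binom{a}{h}2^{h-\gamma t'/\log n}$ (or $\binom{a}{h}2^{h-t'}$ if $N^+(B)$ is fixed exactly), so the product exceeds $2^{g}$ by roughly $\binom{a}{h}2^{h}$. Your ``short computation'' $a+h-\gamma t'/\log n\le g-\Omega((t+t')/\log n)$ would require $h\le t$ up to lower‑order terms, which is false in the typical regime ($h$ can be of order $a$, while $t$ can be as small as $\Theta(a/n)$ for large $a$); no bookkeeping of the two savings terms repairs this. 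The missing idea is the interleaved specification the paper uses: after the $\phi$/$\psi$‑approximation step for $A$ one does \emph{not} specify $A$ in full. One first specifies $B$ by a container lemma run \emph{relative to} $[A]$ or $S$ (\Cref{lem:bhs}, whose prefactor is $|S|$ rather than $|L_{n-1}|$, built on the relative approximation \Cref{lem:phi'}), and only then specifies $A\setminus N^+(B)$ inside $[A]\setminus N^+(B)$ (or $S\setminus N^+(B)$, using \Cref{prop:SF.facts}(c)), at cost $2^{a-h}$ (resp.\ $2^{s-h}$). The constraint $N^+(B)\sub A$ makes $A\cap N^+(B)$ free, and this $2^{-h}$ saving is exactly what cancels the $2^{h}$‑type cost of choosing $B$'s shadow/closure; your sketch never uses this cancellation, and the ``fix'' you gesture at (reading off the $B$‑part at cost $\binom{g}{\le t+t'}$) is unsubstantiated — counting $B$ genuinely needs a second container argument, including the $2^{b}=2^{h-t'}$ choice of $B$ within $[B]$, which is affordable only because of the $2^{a-h}$ specification of $A$.

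Your ``main obstacle'' (small $t$ or $t'$) is in fact vacuous here, which also resolves the prefactor worry: if $b=0$ the lemma is literally \Cref{lem:container.graph}, and if $b\ge1$ and $a\ge n^2$ then \Cref{prop:isoperim_up}(b),(c) force $t=\gO(n^3)$ and $t'=\gO(n)$ (expansion $\gO(a/n)$ for $a>n^4$ and $\gO(na)$ for $a\le n^4$, similarly for $b$), so $t\gg n\log n\ge \log|L_{n-1}|\log n$ and the $|L_{n-1}|$ (or $|L_n|$, $ng$, $a$) prefactors are absorbed into $2^{-\gO((t+t')/\log n)}$ directly; no separate ``rigidity'' case is needed, and $a\ge n^2$ is used precisely to guarantee $t$ is large, not to handle a small‑$(t+t')$ regime.
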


For $b, h \in \mathbb N$ and $S \sub L_{n-1}$, let
\[\cG_{n-2}(b,h,S):=\{B \sub L_{n-2}:|[B]|=b, |N(B)|=h, N(B) \sub S\}.\]
Note that, in the above definition, $B$ is not required to be 2-linked.

\begin{lemma}\label{lem:phi'}
    Let $S \sub L_{n-1}$, $1 \le \phi \le n/4$, and $b,h \in \mathbb N$. There is a family $\cV=\cV(b,h,S) \sub 2^{L_{n-1}}$ with
    \[|\cV|\le t'n|S|\exp\left[O\left(\frac{h\log^2 n}{n\phi}+\frac{t' \log^2 n}{n/2-\phi}+\frac{t'\log^2 n}{\phi }\right)\right]\]
    such that each $B \in \cG_{n-2}(b,h,S)$ has a $\phi$-approximation in $\cV$.
\end{lemma}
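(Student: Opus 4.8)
The plan is to prove \Cref{lem:phi'} by adapting the "fingerprint/approximation" argument behind \Cref{lem:phi} (i.e.\ \cite[Lemma 5.4]{Galvin2019}) to the three-layer setting, where instead of approximating a $2$-linked set $A$ in a bipartite graph $\Sigma$ with parts $X\cup Y$, we approximate a set $B\subseteq L_{n-2}$ whose upward neighbourhood is confined to a prescribed set $S\subseteq L_{n-1}$. Concretely, I would work in the bipartite graph $\Sigma'$ with parts $L_{n-2}$ and $L_{n-1}$ (Hasse edges), whose relevant degree parameters are $d=n$, $\delta=1/2-o(1)$, and $\Delta=1$ since every pair of distinct vertices in a layer of $[3]^n$ has at most one common neighbour in the adjacent layer. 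The key difference from \Cref{lem:phi} is that $B$ need not be $2$-linked, so rather than growing a single fingerprint by a BFS/greedy exploration of one $2$-linked cluster, I would pay an extra multiplicative factor to first record $N(B)\subseteq S$ approximately: the term $t'n|S|$ in the bound accounts for choosing an "anchor" set inside $S$ (of size $O(t'/n)$ by the analogue of \Cref{prop:SF.facts}(a)/\Cref{prop:t.kappa}, since $\kappa(B)\le t'n$) together with a choice of $t'$ and a vertex of $S$ to seed the process — mirroring the $|Y|$ factor in \Cref{lem:phi} but now localized to $S$.

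The main steps, in order, are: \textbf{(1)} Set up the $\phi$-approximation for $B$ exactly as in the two-layer definition, i.e.\ produce $F'\subseteq L_{n-1}$ with $G^\phi(B)\subseteq F'\subseteq N^+(B)$ and $N^-(F')\supseteq [B]$, where $G^\phi(B)=\{v\in N^+(B):d_{[B]}(v)>\phi\}$. \textbf{(2)} Bound $|\nabla(N^+(B),\ov{[B]})|$ — the three-layer analogue of $\kappa$ — by $t'n$ using the same averaging computation as in \Cref{prop:t.kappa} applied to $B\subseteq L_{n-2}$ and the degree condition \eqref{eq:dv.vs.dw} (which holds for $L_{n-2}\cup L_{n-1}$). \textbf{(3)} Run the Galvin-type algorithm that greedily builds $F'$: maintain a partial "approximation", repeatedly select a vertex of high $[B]$-degree not yet accounted for, and charge each selection either against $\kappa$-edges escaping the closure or against the $g\log^2 n/(n\phi)$-type budget; the three error terms in the exponent arise exactly as in \Cref{lem:phi}, with $h$ in place of $g$ and $t'$ in place of $t$ (equivalently $\kappa/d$), and with $n$ in place of $d$. \textbf{(4)} Collect the number of runs: each step has $\mathrm{poly}(n)$-many choices absorbed into the exponential, while the initial seed contributes the $t'n|S|$ prefactor. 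I would cite \cite[Lemma 5.4]{Galvin2019} and the proof of \Cref{lem:phi} for the routine parts and only spell out the modifications.

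The hard part will be correctly handling the fact that $B$ is \emph{not} $2$-linked: in the original argument the connectivity of $A$ (in $\Sigma^2$) is what lets one fingerprint $[A]$ with a small number of vertices via a connected exploration, so without it the naive bound would have a $\binom{|L_{n-2}|}{b}$-type blowup. The fix — and the step I expect to require the most care — is to exploit the hypothesis $N^+(B)\subseteq S$: since every vertex of $[B]$ has all its up-neighbours in $N^+(B)\subseteq S$, the set $[B]$ is determined (up to the $\phi$-approximation slack) by $F'\subseteq S$, so the exploration can be carried out \emph{within $S$} and seeded from $S$, giving the $|S|$ (rather than $|L_{n-2}|$) dependence. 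I would need to verify that the escaping-edge count $\kappa(B)$ still controls the number of exploration steps in this localized process, which follows because each "new" vertex of $F'$ that is not forced contributes $\Omega(\phi)$ or $\Omega(n/2-\phi)$ escaping edges, exactly as in \Cref{prop:SF.facts}. The remaining computations (combining the three exponential error terms, absorbing polynomial factors) are routine and parallel to \Cref{subsec:lem4.2}, so I would defer them, noting the proof is "almost identical to that of \Cref{lem:phi}" with the stated substitutions.
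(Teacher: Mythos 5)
Your high-level plan is mostly on target --- set up the $\phi$-approximation as in the two-layer case, bound the escaping-edge count $\kappa(B)\le t'n$ via the analogue of \Cref{prop:t.kappa}, and produce the same three error terms --- and you correctly identify the key obstacle, namely that $B$ is not $2$-linked so the $8$-linked fingerprint argument from \Cref{lem:phi} is unavailable. However, the fix you propose to this obstacle is wrong: you suggest recovering the fingerprint by a connected exploration ``seeded from a vertex of $S$,'' mirroring the $|Y|$ factor in \Cref{lem:phi}. That mechanism does not work here, because $B$ (and hence $N^+(B)$) may have arbitrarily many $2$-linked components, so no single seed vertex can reach the whole fingerprint. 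The $|S|$ factor in the claimed bound does \emph{not} arise from choosing a seed, and the $t'n$ factor does not arise from choosing an ``anchor set''; those explanations are not correct.

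What the paper actually does is drop the connected-exploration counting entirely. It constructs the same pieces $T_0,T_0',T_1,\Omega$ (via a $p$-random subset of $N(B)$ and a Lov\'asz--Stein minimal cover), notes that $T_0,T_0',T_1\subseteq N(B)\subseteq S$, and then counts each piece directly as a small subset of $S$ using binomial coefficients $\binom{s}{\le\,\cdot\,}$ (with $\Omega$ counted as a subset of $\nabla(T_0)$, of size at most $3hpn$). The factor $|S|$ then emerges from absorbing the $\log(sn/h)$ and $\log(sn/\kappa)$ terms that appear after applying~\eqref{binom}, and the factor $t'n$ comes from summing the resulting bound over all possible values of $\kappa\le t'n$ guaranteed by \Cref{prop:t.kappa}. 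In other words, the disconnectedness is handled not by extra seeding, but by replacing the linked-subtree count of \Cref{lem:linked_count} with raw binomial counting over the smaller universe $S$. Your proposal as written would need this replacement to be correct; as it stands, the ``seeded exploration within $S$'' step would fail for disconnected $B$, and the bookkeeping that is claimed to produce the $t'n|S|$ prefactor does not match where those factors come from.
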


    The proof of~\Cref{lem:phi'} is very similar to that for \cite[Lemma 8.6]{JMP1} and we defer the details to \ref{app:lem:phi'}.

\begin{lemma}\label{lem:bhs} If $b\ge 1$, then
    \[|\cG_{n-2}(b,h,S)|\le |S|\cdot 2^{h-\gO(t'/\log n)}.\]
\end{lemma}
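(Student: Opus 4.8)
The plan is to prove \Cref{lem:bhs} by combining the $\phi$-approximation step (\Cref{lem:phi'}) with a ``counting inside the approximation'' step, mirroring the structure of the proof of \Cref{lem:container.graph'} but adapted to the (simpler) setting where $B$ need not be $2$-linked and the host is two consecutive layers of $[3]^n$. First I would fix a threshold $\phi = \Theta(n/\log n)$ (concretely $\phi = c n/\log n$ for a small absolute constant $c$) and apply \Cref{lem:phi'} with this $\phi$ and the given $S$; this produces a family $\cV$ of approximations $F' \subseteq L_{n-1}$ with $|\cV| \le |S|\cdot n^{O(1)}\cdot\exp[O(t'\log^2 n/n)]$ (using $h \le |S|\cdot n$ and $h\le a\le g$, and that the three error terms are each $O(t'\log^2 n/n) = o(t'/\log n)$ once $\phi = \Theta(n/\log n)$; here I would also want the hypothesis $b\ge 1$ to guarantee $t' = h-b\ge 1$ and more importantly $t'\gg 1$ — I'd need to check whether the lemma as stated requires a lower bound like $t'\gg \log^2 n$, or whether $b\ge 1$ suffices because isoperimetry in $[3]^n$, via \Cref{prop:isoperim_up}, forces $h$ to exceed $b$ by a lot whenever $b\ge 1$). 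Modulo that check, the approximation cost is $|S|\cdot e^{o(t'/\log n)}$.

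The second step is to bound, for a fixed approximation $F' \in \cV$, the number of $B \in \cG_{n-2}(b,h,S)$ having $F'$ as a $\phi$-approximation, by $2^{h - \Omega(t'/\log n)}$. This is where I would reuse the machinery of \Cref{sec:containers}: the bipartite graph on $L_{n-2}\cup L_{n-1}$ satisfies \eqref{eq:deg}–\eqref{eq:dv.vs.dw} with $\delta = 1/2 - o(1)$, $d = n$, $\Delta = 1$ (as noted after \eqref{eq:deg}), with $X = L_{n-2}$ as the side of larger degree. From $F'$ I would pass to a $\psi$-approximation $(S_B, F)$ with $\psi = \Theta(n/\log n)$ via the analogue of \Cref{lem:psi} (the version producing $\psi$-approximations from a $\phi$-approximation); the cost of this refinement is controlled by \eqref{psi_bd} with $\kappa$ replaced by the relevant edge-escape parameter, which by the analogue of \Cref{prop:t.kappa} is $O(t' n)$, so the refinement cost is $e^{o(t'/\log n)}$ as in \eqref{container.cost}–\eqref{container.cost'}. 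Then, exactly as in Cases 1 and 2 of the proof of \Cref{lem:container.graph'}: either $h - |F|$ is small, in which case I specify $L_{n-1}\cap\text{stuff}\setminus F$ cheaply inside $N(S_B)$ (of size $\le n^2 h$) and then $B\subseteq [B]$ at cost $2^b = 2^{h-t'}$; or $h-|F|\ge \delta' t'/\log n$, in which case $|S_B|\le |F| + O(\psi t'/n^2)\le h - \delta' t'/\log n + O(t'/\log n)$ by \Cref{prop:SF.facts}(c), and specifying $B\subseteq S_B$ costs $2^{h-\delta' t'/(2\log n)}$.

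The main obstacle I anticipate is bookkeeping the degree parameters and the ``escaping edges'' parameter correctly in this restricted-ambient-set setting: here $N(B)\subseteq S$ with $S$ a fixed subset of $L_{n-1}$, and the closure $[B]$ is computed relative to upward-neighbourhoods, so I must make sure the analogues of \Cref{lem:phi}, \Cref{lem:psi}, \Cref{prop:t.kappa}, and \Cref{prop:SF.facts} are applied with $Y$-side and $X$-side correctly identified and with $|Y|$-type prefactors replaced by $|S|$ (which is why the bound is $|S|\cdot 2^{h-\Omega(t'/\log n)}$ rather than $|L_{n-1}|\cdot 2^{\cdots}$). A secondary technical point is confirming that the hypothesis set — in particular just $b\ge 1$, with no explicit lower bound on $t'$ or on $h/n^2$ — is actually enough to kill all the error terms; if not, I would either add the mild hypothesis $t'\gg \log^2 n$ (harmless for the later application, where this lemma is invoked only for $B$ of sufficiently large closure) or derive such a bound from the isoperimetric inequality \Cref{prop:isoperim_up}(c) applied to $L_{n-2}$, which gives $h = |N^+(B)| \ge c' n b /3 \gg b$ whenever $b\le n^4$, hence $t' = h-b = \Omega(nb) \gg \log^2 n$ as soon as $b\ge 1$, and handle the regime $b > n^4$ separately using \Cref{prop:isoperim_up}(b).
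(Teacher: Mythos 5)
Your proposal is correct and is essentially the argument the paper has in mind. The paper's proof of \Cref{lem:bhs} is a one-line sketch pointing to the proof of \Cref{lem:container.graph} with \Cref{lem:phi} replaced by \Cref{lem:phi'}, and you have carried out exactly that: apply \Cref{lem:phi'} (which supplies the $|S|$ prefactor in place of $|Y|$), pass to a $\psi$-approximation via \Cref{lem:psi} (valid since $L_{n-2}\cup L_{n-1}$ satisfies \eqref{eq:deg}--\eqref{eq:dv.vs.dw} with $X=L_{n-2}$), and run the two cases from the proof of \Cref{lem:container.graph'} using \Cref{prop:SF.facts}. You also correctly resolved your own worry about the hypothesis set: the paper's sketch likewise relies on the fact that $b\ge 1$ together with \Cref{prop:isoperim_up}(b),(c) forces $t'=h-b\gg\log^2 n$ and $t'\gg h\log^3 n/n^2$, which are the explicit hypotheses of \Cref{lem:container.graph}. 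The only cosmetic deviations from the paper are that you take $\phi=\Theta(n/\log n)$ rather than the paper's $\phi=\delta d/2=\Theta(n)$ (both work; with your choice the $\phi$-approximation exponent is $O(t'\log^3 n/n)$ rather than $O(t'\log^2 n/n)$, still $o(t'/\log n)$), and a couple of intermediate exponents are stated slightly inaccurately (e.g.\ $O(\psi t'/n^2)$ should be $O(\kappa\psi/n^2)=O(t'\psi/n)$ before substituting $\psi$), but these do not affect the conclusion.
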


\begin{proof}[Proof sketch]
    The proof is almost identical to the proof of \Cref{lem:container.graph}; the only difference is that we replace \Cref{lem:phi} by \Cref{lem:phi'} (which results in the replacement of $|Y|$ in \Cref{lem:container.graph} by the significantly smaller $|S|$). Note that we have $h-b\gg \frac{ h\log^3d}{d^2}$ and $h-b \gg \log^2 d$ by the isoperimetric properties guaranteed by \Cref{prop:isoperim_up} and the assumption that $b \ne 0$.
\end{proof}

\begin{proof}[Proof of \Cref{lem:containers}] If $b=0$, then \Cref{lem:containers} reduces to \Cref{lem:container.graph}, so we assume that $b \ge 1$. Set $\phi=n/4$ and $\psi=\sqrt n$.
    Set $\cG_{n-1}(a,g)=\{A \sub L_{n-1}: A \text{ 2-linked, } |[A]|=a, |N(A)|=g\}$ and let $(S,F)$ be a $\psi$-approximation of $A \in \cG_{n-1}(a,g)$. By \Cref{lem:phi} and \Cref{lem:psi}, doing a similar computation that led to \eqref{container.cost'}, the cost for the $\phi$- and the $\psi$- approximations for $\cG_{n-1}(a,g)$ is 
    \beq{eq:G_{n-1}}|L_{n-1}|\exp\left[O\left(\frac{t\log n}{\sqrt n}\right)\right].\enq
    Here we also used stronger isoperimetric conditions than the assumption in \Cref{lem:container.graph'} (i.e., $t \gg g\log^3 d/d^2$) due to \Cref{prop:isoperim_up}.

    Our goal is to show that the cost for specifying $(A,B) \in \cH(a,b,g,h)$ given a $\psi$-approximation $(S,F)$ for $A$ is at most $2^{g-\gO((t+t')/\log n)}$.
  
    We again apply different strategies for specifying $A$ and $B$ depending on the relative sizes of $F$ and $G$. Let $\gd'$ be a small constant, say $\gd'=1/100$. 

    \nin \textit{Case 1.} $g-f \le \gd' t/\log n$

In this case, we first specify $G \setminus F$ as a subset of $N(S)$. Since $S \sub N(F) \sub N(G)$, $|N(S)|\le n^2g$. Therefore, the cost for $G \setminus F$ is at most
\[{n^2g \choose \le \gd' t/\log n}\stackrel{\eqref{binom}}{\le}\exp\left\{\frac{\gd' t}{\log n}\log\left(\frac{en^2g\log n}{\gd' t}\right)\right\}\stackrel{(\dagger)}{\le} \exp\left\{\frac{\gd' t}{\log n}\log\left(\frac{Cn^3\log n}{ \gd'}\right)\right\}\le e^{10\gd' t}<2^{t/2},\]
{where $(\dagger)$ uses \Cref{prop:isoperim_up}(b) and $C$ is a constant.}
Once we specify $G \setminus F$, this determines $G$, thus $[A]$. We then specify $B \in \cG_{n-2}(b,h,[A])$, which costs, by \Cref{lem:bhs}, $a\cdot 2^{h-\gO(t'/\log n)}$. Finally, we specify $A\setminus N(B)$ as a subset of $[A] \setminus N(B)$, which costs $2^{a-h}=2^{g-t-h}.$ In sum, the total cost for specifying $(A,B)$ is at most $a \cdot 2^{t/2+(h-\gO(t'/\log n))+(g-t-h)}\le2^{g-\gO((t+t')/\log n)}$ (the inequality uses the fact that $t=\gO(g/n) \gg \log a \log n$) as desired.

\nin \textit{Case 2.} $g-f>\gd' t/\log n$

In this case, we first specify $B \in \cG_{n-2}(b,h,S)$ (recall that a $\psi$-approximation $(S,F)$ of $A$ is given), which costs $ng \cdot 2^{h-\gO(t'/\log n)}$ (by \Cref{lem:bhs} and the fact that $|S|\le ng$). Then we specify $A\setminus N(B)$ as a subset of $S \setminus N(B)$, which costs $2^{s-h}$. We claim that $2^{s-h} \le 2^{g-\gd' t/(2\log n)-h}$, which would follow once we show that $s \le g-\delta't/(2\log n)$; indeed, by \Cref{prop:SF.facts} (c) and our choice of $\psi$,
$$s\le f+O(\gk/(n\sqrt n)) \stackrel{\eqref{eq:t.kappa}}{\le} f+O(t/\sqrt n) \le g-\gd' t/(2\log n)$$
for large enough $n$. So the total cost for specifying $(A,B)$ in this case is again at most $ng \cdot 2^{g-\gO((t+t')/\log n)}\le2^{g-\gO((t+t')/\log n)}$ (the inequality uses the fact that $t\gg \log (ng) \log n$). \end{proof}

\section{Reduction to the central layers -- proof of \Cref{thm.reduction}}\label{sec:reduction}

We prove \Cref{thm.reduction} by iterating an argument which says that the number of antichains contained in three consecutive layers $L_{[i-2,i]}$ ($i \le n$) is very close to the number of antichains in $L_{[i-1,i]}$. Given $X, Y \sub [3]^n$, let $Y^X=\{v \in Y: v \not< w ~ \forall w \in X\}$.

    \begin{lemma}\label{lem:3to2}
        For $i\leq n$ and any $X\subseteq L_{[i+1,2n]}$,
        \begin{equation}\label{eqn:3to2}
            \alpha(L^X_{[i-2,i]})=\left(1+e^{-\Omega(n^2)}\right)\alpha(L^X_{[i-1,i]}).
        \end{equation}
    \end{lemma}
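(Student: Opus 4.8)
The plan is to show that an antichain $I \subseteq L^X_{[i-2,i]}$ with a nonempty ``defect'' part $I \cap L_{i-2}$ is a rare event, by encoding such antichains via the container lemma for three layers (\Cref{lem:containers}) and summing the resulting bound over all container parameters. First I would set up the dichotomy: write $I = I_{i} \cup I_{i-1} \cup I_{i-2}$ for the three layer-pieces, and observe that since $I$ is an antichain, $I_{i-2}$ and $I_{i-1}$ together with $I_i$ must be ``upward-incomparable'', so that (after restricting to the part of $L_{i-1}$ not blocked by $X$) the relevant combinatorial object is exactly a pair $(A,B)$ with $A \subseteq L_{n-1}$-type layer and $B \subseteq L_{n-2}$-type layer with $N^+(B) \subseteq A$ — i.e. a member of $\cH(a,b,g,h)$ after identifying $L_{i-1}, L_{i-2}$ with two adjacent layers of $[3]^n$ (the hypothesis $i \le n$ and $a \ge n^2$ is what makes the layers large enough; for $i$ in the relevant range $L_{i-1}$ plays the role of the ``$X$''-side and we only lose the blocked vertices, which is harmless). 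The key point is that each antichain $I$ with $I_{i-2}\neq\emptyset$ determines a container $(A,B)$ (its $2$-linked components give $B$, and $A$ is forced to contain $N^+(B)$), and then $I$ itself is chosen inside the container: $I_{i-2}$ is a subset of $[B]$, $I_{i-1}$ is a subset of $[A] \setminus N^+(I_{i-2})$ or similar, and $I_i$ is then essentially free among at most $\ell_n$ vertices, contributing the dominant $2^{\ell_n}$ factor.

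The counting then goes as follows. For fixed container parameters $(a,b,g,h)$ with $b \ge 1$, \Cref{lem:containers} gives at most $2^{g-\gO((t+t')/\log n)}$ containers, where $t=g-a$, $t'=h-b$; by \Cref{prop:isoperim_up}(a),(b) the nonemptiness of $B$ and the fact that $B$ lives below the middle layer force $t' \gg n^2$ (via $|N^+(B)| \ge \tfrac12 b(n-b)$ when $b$ is small, and $t'=\gO(b/n)\cdot b$ type bounds otherwise), so $(t+t')/\log n = \gO(n^2/\log n)$, in fact $\gO(n^2)$ after absorbing the log. Inside each container, the number of ways to choose $I$ is at most $2^{a} \cdot 2^{(\text{stuff in } L_i)}$; the total vertex budget is bounded by $2^{g} \cdot 2^{O(\ell_n)}$-type quantities, but the cleanest way is to compare directly with $\alpha(L^X_{[i-1,i]})$: every antichain in $L^X_{[i-1,i]}$ that is ``compatible'' with a given $A$ extends the count, and one shows $\sum_{(A,B)} (\text{completions}) \le e^{-\gO(n^2)} \alpha(L^X_{[i-1,i]})$ because the container bound's savings $2^{-\gO(n^2)}$ beats the at-most-polynomial number of parameter choices $(a,b,g,h)$ and the factor accounting for which vertices of $L_i$ are removed. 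Summing, $\alpha(L^X_{[i-2,i]}) - \alpha(L^X_{[i-1,i]}) \le e^{-\gO(n^2)}\alpha(L^X_{[i-1,i]})$, and the reverse inequality $\alpha(L^X_{[i-2,i]}) \ge \alpha(L^X_{[i-1,i]})$ is trivial since any antichain in the latter is one in the former (the empty $I_{i-2}$).

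The step I expect to be the main obstacle is making rigorous the reduction from ``antichain $I$ with defects in $L_{i-2}$'' to ``element of $\cH(a,b,g,h)$'', because the antichain condition couples all three layers and the set $X$ in the upper layers introduces the $(\cdot)^X$ truncation; one must check that truncating $L_{i-1}$ to $L_{i-1}^X$ does not destroy the degree/isoperimetric hypotheses needed to invoke \Cref{lem:containers} and \Cref{prop:isoperim_up}. The likely fix is that $|L_{i-1} \setminus L_{i-1}^X|$ is controlled (or we simply never need the full layer — the container argument only uses that the $2$-linked components of the defect set expand, which is a local statement unaffected by deleting far-away vertices), so one can either run the container lemma on the induced subgraph or absorb the deleted vertices into the ``free'' choices in $L_i$ at a cost of another $2^{O(\ell_n)}$ which is still crushed by $e^{-\gO(n^2)}$. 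A secondary technical point is bounding the number of choices of the parameters $(a,b,g,h)$ by a polynomial in $\ell_n$ (hence $2^{o(n^2)}$), which is immediate since each ranges over $\{0,\ldots,\ell_n\}$-sized sets, and $\ell_n = 2^{O(\log n)} \cdot 3^n / \sqrt n$ makes $\mathrm{poly}(\ell_n) = 2^{O(n)} = e^{o(n^2)}$.
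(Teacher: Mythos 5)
Your proposal is a genuinely different route from the paper's: you want to count defective antichains directly by running the three-layer container lemma (\Cref{lem:containers}) and summing over container parameters, whereas the paper first reduces \Cref{lem:3to2} to the middle-layer case $i=n$ via the Hamm--Kahn embedding trick ($\phi^i(w)=(w,2,\dots,2)$, \Cref{cl:cl}), then recasts the comparison as a ratio $\Xi_X/\Xi'_X$ of partition functions of two polymer models, and uses the Koteck\'y--Preiss tail bound \eqref{kpbounds} to show this ratio is $1+e^{-\Omega(n^2)}$. The container lemma is only invoked inside the verification of the KP condition, and only for very large polymers ($|A|>n^4$, Case 3 of \Cref{prop:KP_verif}); for moderate polymers the paper uses the cruder tree-count \Cref{lem:linked_count} and the isoperimetric inequalities directly.

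There are two real gaps in your argument. First, the claimed source of the $e^{-\Omega(n^2)}$ savings is wrong. You assert that $b\ge 1$ and \Cref{prop:isoperim_up}(a),(b) force $t'=h-b\gg n^2$, but for $B=\{v\}$ a single defect vertex we have $b=1$ and $h=|N^+(v)|\approx n$, so $t'=\Theta(n)$, not $\Omega(n^2)$. The correct mechanism is that a defect $v\in L_{i-2}$ forces the polymer $A$ (the component of $I_{i-1}\cup N^+(I_{i-2})$ containing $N^+(v)$) to have $|A|\ge |N^+(v)|\ge (n+2)/2$, and then \Cref{prop:isoperim_up}(c) gives $|N^+(A)|=\Omega(n|A|)=\Omega(n^2)$ vertices of $L_i$ that the antichain must avoid; the weight $w(A)=2^{|\mathrm{int}(A)|-|N^+(A)|}$ is then $2^{-\Omega(n^2)}$, and one must check the polymer count does not overwhelm this. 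Your exponent $g-\Omega((t+t')/\log n)$ from \Cref{lem:containers} can have $t+t'=\Omega(n^2)$ in this regime (driven by $t$, not $t'$), but the bound also carries the large factor $2^g\approx 2^{n^2}$, so it only gives savings \emph{relative to the weight $2^{-g}$}; without the polymer/weight bookkeeping it is not at all clear how your ``sum over containers'' beats $\alpha(L^X_{[i-1,i]})$, and you never make that comparison precise.

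Second, you do not actually handle general $i\le n$. \Cref{lem:containers} and the isoperimetric inputs \Cref{prop:isoperim_up} are stated for the layers $L_{n-2},L_{n-1},L_n$; for $i$ far from $n$ the layers are exponentially smaller and the hypothesis $a\ge n^2$ may even be unsatisfiable. You acknowledge this but propose to ``absorb the deleted vertices into the free choices in $L_i$,'' which does not address the issue (the container lemma's conclusion depends on the graph, and the relevant isoperimetric constants are layer-dependent). The paper's embedding trick is exactly what makes the general-$i$ statement follow cleanly from the $i=n$ case: $\phi^i$ sends $L(n)^X_{[i-2,i]}$ bijectively and order-preservingly onto a truncation of the three \emph{middle} layers of $[3]^{2n-i}$, so the middle-layer lemma applies verbatim. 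Without either proving a general-$i$ container lemma or using such an embedding, your argument does not establish \eqref{eqn:3to2}.
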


\Cref{lem:3to2} in fact reduces to the following special case, whose proof is the main contribution of the current section. The reduction uses a nice `embedding trick' from Hamm and Kahn \cite{embed}, which is later adapted in \cite{BalK22, JMP1}.

    \begin{lemma}\label{lem:3to2_mid}
        For any $X\subseteq L_{[n+1,2n]}$,
        \begin{equation}\label{eqn:3to2_mid}
            \alpha(L^X_{[n-2,n]})=\left(1+e^{-\Omega(n^2)}\right)\alpha(L^X_{[n-1,n]}).
        \end{equation}
    \end{lemma}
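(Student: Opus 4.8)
The plan is as follows. One direction, $\alpha(L^X_{[n-1,n]})\le\alpha(L^X_{[n-2,n]})$, is trivial since an antichain of the two top layers is an antichain of all three, so the content lies in the reverse inequality. I would partition the antichains $I\subseteq L^X_{[n-2,n]}$ according to their \emph{defect set} $B:=I\cap L_{n-2}$ (the part of $I$ in the layer being removed). Since $I$ is an antichain and $[3]^n$ is graded, $I\setminus B$ is precisely an antichain of $L^X_{[n-1,n]}$ that is disjoint from $N^+(B)\subseteq L_{n-1}$ and from $N^+(N^+(B))\subseteq L_n$; conversely, any such pair recombines to an antichain of $L^X_{[n-2,n]}$. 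Writing $\Sigma_B$ for the subgraph of $L^X_{[n-1,n]}$ obtained by deleting $N^+(B)\cup N^+(N^+(B))$, this gives
\[
\alpha(L^X_{[n-2,n]})=\sum_{B\subseteq L^X_{n-2}}\alpha(\Sigma_B),
\]
with the $B=\emptyset$ term equal to $\alpha(L^X_{[n-1,n]})$. It then remains to prove $\sum_{B\neq\emptyset}\alpha(\Sigma_B)\le e^{-\Omega(n^2)}\alpha(L^X_{[n-1,n]})$.

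The two ingredients are an isoperimetric lower bound and a container-type upper bound. First, for every nonempty $B$ one has $g_B:=|N^+(N^+(B))\cap L_n|=\Omega(n^2)$: already a single $v\in L_{n-2}$ with $k$ coordinates equal to $2$ satisfies $|N^+(N^+(v))\cap L_n|=\binom{n-k}{2}+|\{i:v_i=0\}|$, which is $\Omega(n^2)$ because $k\le(n-2)/2$; and iterating \Cref{prop:isoperim_up} shows that $g_B$ grows with $|B|$. Second, I would argue that deleting the $g_B$ top vertices $N^+(N^+(B))$ costs roughly a factor $2^{-g_B}$ in the antichain count: given an antichain $J$ of $\Sigma_B$, one may freely add to it any subset of the vertices of $N^+(N^+(B))\cap L_n$ not blocked by $J\cap L_{n-1}$, and this injects the pairs (antichain of $\Sigma_B$, such a subset) into the antichains of $L^X_{[n-1,n]}$; when $J\cap L_{n-1}$ is small almost all of $N^+(N^+(B))\cap L_n$ is unblocked, yielding $\alpha(\Sigma_B)\le 2^{-(1-o(1))g_B}\,\alpha(L^X_{[n-1,n]})$. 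The antichains with a large $L_{n-1}$-part form the error term here, and they — as well as, crucially, the number of defect sets $B$ realising a given value of $g_B$ — are controlled using the container lemma, namely \Cref{lem:containers} (equivalently, the container lemma for the highly irregular bipartite graph between $L_{n-2}$ and $L_n$, i.e.\ its square).

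I would then sum over $B$, splitting according to $|B|$. For $|B|$ up to a fixed power of $n$, iterating \Cref{prop:isoperim_up}(a),(c) gives $g_B=\Omega(\max\{n^2,n|B|\})$, while the number of such $B$ is at most $\binom{\ell_{n-2}}{|B|}=2^{O(n|B|)}$, so the exponential gain $2^{-g_B}$ dominates the count; for larger $B$ the crude count is too lossy and is replaced by the container bound, which caps the number of defect sets with a given $g_B$ by $2^{g_B-\Omega(g_B/\log n)}$. In either regime the contribution is summable, and since $g_B\ge\Omega(n^2)$ for every nonempty $B$, the total is $e^{-\Omega(n^2)}\alpha(L^X_{[n-1,n]})$, as required. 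The step I expect to be the main obstacle is precisely this summation in the intermediate range of $|B|$: there the number of plausible defect sets is comparable to $2^{g_B}$, so a crude union bound fails and the new container lemma for irregular graphs is genuinely needed; a secondary delicate point is verifying that the ``free vertices'' argument, and hence the factor $2^{-(1-o(1))g_B}$, persists uniformly in the arbitrary upper set $X$.
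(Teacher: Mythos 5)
Your decomposition by the defect set $B=I\cap L_{n-2}$, and the observation that even a single vertex of $L_{n-2}$ has $\Omega(n^2)$ second-up-neighbours in $L_n$, are sound and capture the same mechanism that drives the paper's proof. But as a proof the proposal has two genuine gaps, both sitting exactly where you locate "the main obstacle". First, the per-$B$ estimate $\alpha(\Sigma_B)\le 2^{-(1-o(1))g_B}\alpha(L^X_{[n-1,n]})$ is only justified for antichains whose $L_{n-1}$-part blocks few of the $g_B$ top vertices; the antichains covering a constant fraction of $N^+(N^+(B))\cap L_n$ are not a deferrable "error term" --- bounding their total weight is itself a Sapozhenko-type computation and is essentially the whole difficulty. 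Moreover the stated precision cannot close your own union bound: against the container count $2^{g_B-\Omega(g_B/\log n)}$ of defect sets realising a given $g_B$, an exponent loss of $o(1)\cdot g_B$ is useless unless that $o(1)$ is $o(1/\log n)$, and nothing in the free-vertex injection delivers that. Second, the container input you invoke is not available in the form you need: \Cref{lem:container.graph} assumes pairwise codegree at most a fixed constant $\Delta$ (and condition \eqref{eq:dv.vs.dw}), and this fails badly for the "square" bipartite graph between $L_{n-2}$ and $L_n$, where two elements of $L_{n-2}$ at Hasse-distance two already have $\Theta(n)$ common second-up-neighbours. The paper's three-layer lemma, \Cref{lem:containers}, is formulated for pairs $(A,B)$ with $N^+(B)\subseteq A\subseteq L_{n-1}$ precisely to avoid that graph, so using it forces one to carry the middle-layer trace along --- which your $B$-only parameterisation discards.

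For comparison, the paper organises the count around the middle layer rather than the bottom one: polymers are $2$-linked subsets $A\subseteq L^X_{n-1}$ with weights $2^{|\mathrm{int}(A)|-|N^+(A)|}$ (three layers) and $2^{-|N^+(A)|}$ (two layers), so both counts are exact partition-function identities with no $(1\pm o(1))$ slack. One verifies the Koteck\'y--Preiss condition (via \Cref{prop:isoperim_up}, and via \Cref{lem:containers} for $|A|>n^4$), and observes that the two weights agree on every polymer of size less than $n/2$, since a vertex of $L_{n-2}$ has up-degree larger than $n/2$ and hence $\mathrm{int}(A)=\emptyset$ for small $A$. The difference of the log-partition functions is then a cluster-expansion tail over clusters of size at least $n/2$, which the Koteck\'y--Preiss tail bound makes $e^{-\Omega(n^2)}$, uniformly in $X$ without any separate argument. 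If you wish to salvage your route, the natural fix is to reintroduce exactly this middle-layer bookkeeping, i.e.\ parameterise by the pair consisting of the $L_{n-1}$-trace and $B$, rather than by $B$ alone.
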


We first show how \Cref{lem:3to2} follows from \Cref{lem:3to2_mid} and then show how \Cref{thm.reduction} follows from \Cref{lem:3to2}. Finally we prove \Cref{lem:3to2_mid}.

    \begin{proof}[Proof of \Cref{lem:3to2} assuming \Cref{lem:3to2_mid}] Let $i<n$ and $X \sub L_{[i+1,2n]}$ be given. Define $\phi^i:[3]^n \rightarrow [3]^{2n-i}$ to be the operation of appending $n-i$ twos to the given $w \in [3]^n$, i.e., $\phi^i(w)=(w,2,\dots,2)\in [3]^{2n-i}$. For any set $W \sub [t]^n$, let $\phi^i(W)=\{\phi^i(w):w\in W\}$. Note that this operation maps $L_i(n)=L_i(3,n)$ into the middle layer of $[3]^{2n-i}$. 
    
    Let $Y=\{2\cdot\boldsymbol{1}-\mathbf{e}_j: j\geq n+1\} ~(\sub [3]^{2n-i})$
        where $2\cdot\boldsymbol{1} \in [3]^{2n-i}$ denotes the all $2$'s vector and $\mathbf{e}_j \in [t]^{2n-i}$ is the unit vector with its $j$-th coordinate 1. Finally, let $X'=\phi^i(X) \cup Y~(\sub [3]^{2n-i})$.

    \begin{claim}\label{cl:cl}
        For any $j<i$, $\phi^i$ is an order-preserving bijection between $L(n)^X_{[j,i]}$ and $L(2n-i)^{X'}_{[2n-2i+j,2n-i]}$.
    \end{claim}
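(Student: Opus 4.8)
\textbf{Proof plan for Claim~\ref{cl:cl}.} The plan is to verify directly that $\phi^i$ restricted to $L(n)^X_{[j,i]}$ lands in $L(2n-i)^{X'}_{[2n-2i+j,2n-i]}$, that it is injective with an explicit inverse (projection onto the first $n$ coordinates), and that it is surjective onto the target. The order-preserving property is essentially immediate since appending a fixed suffix of $2$'s to both sides of a comparison $v\le w$ in $[3]^n$ preserves the coordinatewise order, and conversely two vectors in $[3]^{2n-i}$ that agree (namely, are all equal to $2$) on the last $n-i$ coordinates are comparable iff their first-$n$-coordinate restrictions are.

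\textbf{Step 1: layers map to layers.} If $w\in L_k(n)$ then $\sum_{\ell}\phi^i(w)_\ell=k+2(n-i)$, so $\phi^i(w)\in L_{k+2(n-i)}(2n-i)$. Hence for $j\le k\le i$ we get $2n-2i+j\le k+2(n-i)\le 2n-i$, confirming $\phi^i$ maps $L(n)_{[j,i]}$ into $L(2n-i)_{[2n-2i+j,\,2n-i]}$. (In particular $L_i(n)$ maps into the top layer $L_{2n-i}$ of this range, which is the middle layer of $[3]^{2n-i}$ — this is the point of the rescaling.)

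\textbf{Step 2: the ``forbidden'' constraint is preserved in both directions.} I need to check that for $w\in L(n)_{[j,i]}$, the condition $w\notin L(n)^X_{[j,i]}$ (i.e. $w<x$ for some $x\in X$) is equivalent to $\phi^i(w)<x'$ for some $x'\in X'=\phi^i(X)\cup Y$. For the $\phi^i(X)$ part: $w<x$ with $x\in X\subseteq L_{[i+1,2n]}(n)$ iff $\phi^i(w)<\phi^i(x)$, since both sides append the same suffix of $2$'s and the strict inequality in some coordinate among the first $n$ is preserved; moreover $\phi^i(x)\in[3]^{2n-i}$ is a legitimate vector because each coordinate of $x$ is at most $2$. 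For the $Y$ part: the key observation is that $\phi^i(w)$ has all of its last $n-i$ coordinates equal to $2$, so $\phi^i(w)\le 2\cdot\boldsymbol 1-\mathbf e_{j_0}$ would force $\phi^i(w)_{j_0}\le 1$ for some $j_0\ge n+1$ — impossible. Thus no element of $\phi^i(W)$ for $W\subseteq[3]^n$ is ever below a vector in $Y$, so adjoining $Y$ to $X'$ does not remove any vectors of the form $\phi^i(w)$; the role of $Y$ is purely to block the \emph{other} vectors of the middle layer (those not of the form $\phi^i(w)$) from appearing, which matters for the later counting argument but not for this bijection. Putting these together, $\phi^i(w)\in L(2n-i)^{X'}_{[2n-2i+j,2n-i]}$ iff $w\in L(n)^X_{[j,i]}$.

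\textbf{Step 3: bijectivity.} Injectivity is clear: $\phi^i$ is injective on all of $[3]^n$ since it just appends a fixed suffix. For surjectivity onto the target, take any $v\in L(2n-i)^{X'}_{[2n-2i+j,2n-i]}$; I claim its last $n-i$ coordinates are all $2$. If not, $v_{j_0}\le 1$ for some $j_0\ge n+1$ (note $j_0\le 2n-i$ since $v\in[3]^{2n-i}$), and then $v\le 2\cdot\boldsymbol 1-\mathbf e_{j_0}$... but I need \emph{strict} inequality for the forbidden condition, and also $v$ could already equal such a vector. The cleaner route: observe that among vectors of $[3]^{2n-i}$ in the layers $[2n-2i+j,2n-i]$, the \emph{only} ones not strictly below some element of $Y$ are precisely those with all last $n-i$ coordinates equal to $2$ — indeed if some coordinate $j_0\ge n+1$ of $v$ is $0$ then $v<2\cdot\boldsymbol 1-\mathbf e_{j_0}$ (strictly, as they differ in the first $n$ coordinates too unless $v$ is itself nearly all $2$'s, which is ruled out by $v$ lying in a layer of index at most $2n-i<2(2n-i)-1$), and if some such coordinate is $1$ then $v=2\cdot\boldsymbol 1-\mathbf e_{j_0}-(\text{something})\le 2\cdot\boldsymbol 1 - \mathbf e_{j_0}$, and strictness again follows because $v$ is not maximal in its range of layers. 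Once all last $n-i$ coordinates of $v$ equal $2$, write $v=\phi^i(w)$ where $w\in[3]^n$ is the projection onto the first $n$ coordinates; Step~1 gives $w\in L(n)_{[j,i]}$ and Step~2 gives $w\in L(n)^X_{[j,i]}$. Hence $\phi^i$ is onto.

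\textbf{Main obstacle.} The delicate point is the surjectivity argument in Step~3, specifically pinning down which middle-layer vectors are killed by adjoining $Y$ and confirming the inequalities are \emph{strict} where needed. This requires being careful about boundary cases (vectors close to the top of the layer range) and using that $i\le n$ so that the relevant layers sit strictly below the top layer $L_{2(2n-i)}$. Everything else is a routine unwinding of the definition of $\phi^i$ and of the coordinatewise partial order, and I expect it to go through without surprises.
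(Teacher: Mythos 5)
Your proof is correct and follows essentially the same route as the paper: verify that $\phi^i$ maps layers to layers, check that the ``not below $X'$'' condition is equivalent in both directions to the ``not below $X$'' condition (using that $\phi^i(w)$ is never below $Y$ since its last $n-i$ coordinates are all $2$), and establish surjectivity by showing that the only vectors in the target range not strictly below an element of $Y$ are those of the form $\phi^i(w)$. Your caution about strictness in Step 3 is warranted but resolves cleanly, and in fact fills in a detail the paper passes over silently: the elements of $Y$ have coordinate sum $2(2n-i)-1$, which strictly exceeds $2n-i$ (the top of the layer range under consideration), so any $v$ in that range with some coordinate $v_{j_0}\le 1$ for $j_0>n$ satisfies $v\le 2\cdot\boldsymbol 1 - \mathbf{e}_{j_0}$ with $v\neq 2\cdot\boldsymbol 1 - \mathbf{e}_{j_0}$ purely on coordinate-sum grounds, so the comparison is automatically strict. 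Your parenthetical hedge (``unless $v$ is itself nearly all $2$'s\ldots'') is therefore unnecessary, but the conclusion is right; the paper's terser statement ``Since $y$ is not below $X'$, $y$ is not below $Y$. This means that i) the last $n-i$ coordinates of $y$ must all equal 2'' is relying on exactly this observation.
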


     Note that it follows from the above claim that 
    $\alpha(L(n)^X_{[i-2,i]})=\alpha(L(2n-i)^{X'}_{[2n-i-2,2n-i]})$ and $\alpha(L(n)^X_{[i-1,i]})=\alpha(L(2n-i)^{X'}_{[2n-i-1,2n-i]})$, so \Cref{lem:3to2_mid} implies \Cref{lem:3to2}.

    \begin{subproof}[Proof of \Cref{cl:cl}] It is immediate from its definition that $\phi^i$ is injective and order-preserving. Also observe that for any $k \in [j,i]$, $\phi^i$ maps $L(n)_k$ into $L(2n-i)_{2n-2i+k}$.

    To see that $\phi^i$ maps $L(n)^X_k$ into $L(2n-i)^{X'}_{2n-2i+k}$, suppose $x \in L(n)_k^X$ and let $y=\phi^i(x)$. We need to show that $y$ is "not below $X'$," meaning that there doesn't exist $x' \in X'$ such that $y \le x'$. For the sake of contradiction, suppose there is $x' \in X'~(=\phi^i(X) \cup Y)$ such that $y \le x'$. First observe that $x'$ cannot be in $Y$, since the last $n-i$ coordinates of $y$ are all twos while those of $x'$ contain a one. So it must be that $x' \in \phi^i(X)$. But then $z:=(\phi^i)^{-1}(x')$ is in $X$ and $z \ge x$, which contradicts that $x$ is not below $X$.

        To see the surjectivity of $\phi^i$, suppose $y \in L(2n-i)^{X'}_{2n-2i+k}$. Since $y$ is not below $X'$, $y$ is not below $Y$. This means that i) the last $n-i$ coordinates of $y$ must all equal 2; and so ii) the sum of the first $n$ coordinates of $y$ is $(2n-2i+k)-2(n-i)=k$. By i) and ii), $y$ is in $\phi^i(L(n)_k)$.
        To finish the proof, we assert that $x:=(\phi^i)^{-1}(y)~(\in L(n)_k)$ is not below $X$. This is true because, if $x$ was below $X$, then since $\phi^i$ is order-preserving, $y=\phi^i(x)$ would have been below $\phi^i(X)$. This contradicts the assumption that $y$ is not below $X'$.
    \end{subproof}
    \end{proof}

\begin{proof}[Proof of \Cref{thm.reduction} assuming \Cref{lem:3to2}] Let $\cA$ denote the collection of antichains of $[3]^n$. Note that, for any $k \in [1, n-1]$ and $\ell \ge k+2$,
\[\begin{split}\alpha(L_{[k-1,\ell]})=\sum_{\substack{X \sub L_{[k+2,\ell]}\\X \in \cA}}\alpha(L^X_{[k-1,k+1]})&\stackrel{\eqref{eqn:3to2}}{=}\left(1+e^{-\gO(n^2)}\right)\sum_{\substack{X \sub L_{[k+2,\ell]}\\X \in \cA}}\alpha(L^X_{[k,k+1]})\\&=\left(1+e^{-\gO(n^2)}\right)\alpha(L_{[k,\ell]}).
\end{split}\]
So by iterating the above argument for $\ell=2n$ and $k=1, \ldots, n-1$, we have
\[\alpha([3]^n)=\left(1+e^{-\gO(n^2)}\right)^{n-1}\alpha(L_{[n-1,2n]}).\]
Notice that, by symmetry between the top and the bottom half of $[t]^n$, $\alpha(L_{[n-1,2n]})=\alpha(L_{[0, n+1]})$. Then again, by iterating the above argument for $\ell=n+1$ and $k=1, \ldots, n-1$, we have
\[\alpha([3]^n)=\left(1+e^{-\gO(n^2)}\right)^{2n-2}\alpha(L_{[n-1,n+1]})=\left(1+e^{-\gO(n^2)}\right)\alpha(L_{[n-1, n+1]}). \qedhere\]
\end{proof}

\subsection{Set-up for the proof of \Cref{lem:3to2_mid}}

We will define two polymer models to express the number of antichains in (\ref{eqn:3to2_mid}) in terms of the corresponding partition functions.

   For $X\subseteq L_{[n+1,2n]}$, let $\cP^X$ be the set of all $2$-linked subsets of $L^X_{n-1}$. Define the compatibility relation on $\cP^X$ where $A_1\sim A_2$ if $A_1\cup A_2$ is \textit{not} $2$-linked. For $A\subseteq L_{n-1}$, we denote by $\textrm{int}(A)$ the set of all $v\in L_{n-2}$ such that $N^+(v)\subseteq A$.
   Define the weight $A\subseteq L_{n-1}$ to be
    \beq{def:polymer.w} w(A)=2^{|\textrm{int}(A)|-|N^+(A)|}\enq
    and let $\Xi_X:=\sum_{\Lam\in\Omega}\prod_{A\in\Lam}w(A)$ denote the corresponding partition function. Recall that $\ell_n$ is the size of the middle layer of $[3]^n$.

    \begin{lemma}\label{lem:Xi-alpha} For any $X\subseteq L_{[n+1,2n]}$,
        $$2^{|L^X_n|}\Xi_X=\al(L^X_{[n-2,n]}).$$
    \end{lemma}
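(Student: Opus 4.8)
The idea is to set up a bijection between antichains in $L^X_{[n-2,n]}$ and pairs (a polymer configuration $\Lam\in\Omega(\cP^X,\sim)$, a subset $J\subseteq L^X_n\setminus N^+(A(\Lam))$), where $A(\Lam):=\bigcup_{A\in\Lam}A$ is the union of polymers in $\Lam$, and then check that the weights match. First I would unpack what an antichain $I\subseteq L^X_{[n-2,n]}$ looks like. Since the three layers in question are consecutive, $I$ decomposes as $I=I_{n-2}\sqcup I_{n-1}\sqcup I_n$ with $I_k\subseteq L_k$. The antichain condition \emph{within} $L_{[n-1,n]}$ forces: no $v\in I_{n-1}$ lies below any $w\in I_n$, i.e. $I_n\cap N^+(I_{n-1})=\emptyset$; and similarly between $L_{n-2}$ and $L_{n-1}$ we need $N^+(I_{n-2})\cap I_{n-1}=\emptyset$ — equivalently $I_{n-2}\subseteq L_{n-2}\setminus \mathrm{int}^c$ in a suitable sense — and between $L_{n-2}$ and $L_n$ we need that no $v\in I_{n-2}$ is below any $w\in I_n$, which (since a covering relation is length one) is automatic unless there is an intermediate element; I would make precise that the only constraint linking $I_{n-2}$ and $I_n$ is through the middle layer. (The $X$-superscript just additionally forbids any element of $I$ from lying below $X$, but since $L^X_{[n-2,n]}$ already excludes such elements, this is handled by working inside $L^X$ throughout.)

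Next, given the antichain $I$, I would let $A:=I_{n-1}$, viewed as a subset of $L^X_{n-1}$, and decompose $A$ into its $2$-linked components $A_1,\dots,A_r$; these are pairwise compatible in the sense of $\cP^X$ (two distinct $2$-linked components cannot be $2$-linked together, by maximality), so $\Lam:=\{A_1,\dots,A_r\}\in\Omega$, and $A(\Lam)=A$. The key point is that the remaining freedom in $I$ — i.e. the choice of $I_{n-2}$ and $I_n$ given $A$ — factors exactly as the weight $w(A)=\prod_i w(A_i)$ times $2^{|L^X_n\setminus N^+(A)|}$. Indeed: $I_n$ may be any subset of $L^X_n$ disjoint from $N^+(A)=N^+(I_{n-1})$ (the antichain conditions between $L_{n-1}$–$L_n$ and $L_{n-2}$–$L_n$ impose nothing further once we also restrict $I_{n-2}$), contributing the factor $2^{|L^X_n|-|N^+(A)|}$; and $I_{n-2}$ may be any subset of $\mathrm{int}(A)=\{v\in L_{n-2}:N^+(v)\subseteq A\}$ — because $v\in I_{n-2}$ must have all its up-neighbours outside $I_{n-1}=A$? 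No: rather, $v\notin\mathrm{int}(A)$ would mean $v$ has an up-neighbour in $A$, violating the antichain condition, while $v\in\mathrm{int}(A)$ is safe since then $v$'s up-neighbours all lie in $A\subseteq I_{n-1}$, none of which is in $I_{n-1}\setminus\{$itself$\}$... I need to double-check the direction here: if $v\in L_{n-2}$ and $N^+(v)\subseteq A=I_{n-1}$, is $v$ comparable to anything in $I$? Its up-neighbours are in $I_{n-1}$, so $v<w$ for $w\in I_{n-1}$ — that would violate the antichain property. So in fact the correct statement must be the reverse: $I_{n-2}$ may be any subset of $L_{n-2}^X$ with $N^+(I_{n-2})\cap I_{n-1}=\emptyset$, and separately disjoint-from-below-$I_n$. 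I would reconcile this by noting that $\mathrm{int}(A)$ as defined is used with weight $2^{|\mathrm{int}(A)|}$ counting a different object — I expect the resolution is that one passes to \emph{complements} or that $A$ in the polymer model plays the role of $N^+(I_{n-1})^c$-neighbourhoods; I would carefully track which layer is "on top" and restate \eqref{def:polymer.w} consistently with the Hasse orientation, then the factorization drops out.

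\textbf{Main obstacle.} The crux is precisely this bookkeeping: verifying that the weight $w(A)=2^{|\mathrm{int}(A)|-|N^+(A)|}$ is exactly the number of ways to extend a polymer configuration with union $A$ to a full antichain, summed/factored correctly over the $n$, $n-2$ layers, and that the factorization over $2$-linked components is genuine (no interaction between components through the $L_{n-2}$ or $L_n$ layers — this uses that $\mathrm{int}(A_i)$ for distinct components are disjoint, which follows because a vertex $v\in L_{n-2}$ with $N^+(v)\subseteq A$ has $N^+(v)$ contained in a single $2$-linked component of $A$, as $N^+(v)$ is itself $2$-linked). Once the per-component weight and the global $2^{|L^X_n|}$ factor are pinned down, summing over all $\Lam\in\Omega$ gives $2^{|L^X_n|}\Xi_X=\sum_\Lam 2^{|L^X_n|}\prod_{A\in\Lam}w(A)=\al(L^X_{[n-2,n]})$, which is the claim. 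I would therefore organize the write-up as: (1) the decomposition $I\leftrightarrow(\Lam,I_{n-2},I_n)$; (2) the disjointness/independence lemma for $\mathrm{int}$ across components; (3) the weight computation; (4) summation.
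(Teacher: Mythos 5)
Your proposal starts along the right lines (decompose an antichain by layers, factor the weight over $2$-linked components), but it founders on exactly the point you flag as troublesome, and the fix you gesture at (``pass to complements'' or ``restate \eqref{def:polymer.w} with the opposite Hasse orientation'') will not work, because the issue is not a sign or orientation error.

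Concretely, you set $A:=I_{n-1}$ and try to identify $I_{n-2}$ with a subset of $\textrm{int}(A)=\{v\in L_{n-2}:N^+(v)\subseteq A\}$. As you yourself observe, this is backwards: if $v\in\textrm{int}(I_{n-1})$ then every up-neighbour of $v$ lies in $I_{n-1}$, so $v$ is comparable to elements of $I_{n-1}$ and cannot be in the antichain. Moreover, even after repairing that, the set of admissible $I_n$ depends on $I_{n-2}$, not just on $I_{n-1}$: $I_n$ must avoid $N^+(I_{n-1})$ and also $N^+(N^+(I_{n-2}))$. Under your decomposition $I\leftrightarrow(\Lam,I_{n-2},I_n)$ with $A(\Lam)=I_{n-1}$, these two entanglements prevent the product formula $\sum_A 2^{|\textrm{int}(A)|}\cdot 2^{|L^X_n|-|N^+(A)|}$ from holding.

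The missing idea (which is what the paper does) is that one should \emph{not} take $A=I_{n-1}$. Instead take the auxiliary set $A:=I_{n-1}\cup N^+(I_{n-2})\subseteq L^X_{n-1}$. Then $T:=I_{n-2}$ genuinely satisfies $T\subseteq\textrm{int}(A)$ (since $N^+(T)\subseteq A$ by construction), the middle layer is recovered as $I_{n-1}=A\setminus N^+(T)$, and the top layer $S:=I_n$ is constrained precisely by $S\subseteq L^X_n\setminus N^+(A)$ --- the $L_{n-2}$-to-$L_n$ comparability constraint is absorbed automatically since $N^+(N^+(I_{n-2}))\subseteq N^+(A)$. This gives a clean bijection $I\leftrightarrow(A,T,S)$ with $A\subseteq L^X_{n-1}$ arbitrary, $T\subseteq\textrm{int}(A)$ arbitrary, and $S\subseteq L^X_n\setminus N^+(A)$ arbitrary, so $\alpha(L^X_{[n-2,n]})=\sum_{A\subseteq L^X_{n-1}}2^{|\textrm{int}(A)|}\,2^{|L^X_n|-|N^+(A)|}=2^{|L^X_n|}\Xi_X$. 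Your observation that the sets $\textrm{int}(A_i)$ are disjoint across the $2$-linked components $A_i$ of $A$ is correct and is exactly what makes $w$ multiplicative; that part of your plan is sound. The gap is the choice of $A$: the polymer configuration associated to an antichain is the set of $2$-linked components of $I_{n-1}\cup N^+(I_{n-2})$, not of $I_{n-1}$.
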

    \begin{proof}
First observe that if $A\subseteq L_{n-1}^X$ has $2$-linked components $A_1,\ldots, A_k$ (i.e.\ $A_1,\ldots, A_k$ is a collection of compatible polymers and $A=A_1\cup\ldots\cup A_k$) then
\begin{align}\label{eq:weightfactor}
w(A)=\prod_{i=1}^kw(A_i)\, .
\end{align}
        Now, every antichain in $L_{[n-2,n]}^X$ can be uniquely constructed as follows: pick any subset $A$ of $L_{n-1}^X$, any $T\subseteq \textrm{int(A)}$ and add $T\cup (A\setminus N^+(T))$ to the antichain. Then, we may add any $S\subseteq L_n^X\setminus N^+(A)$. Hence,
            $$\al(L^X_{[n-2,n]})=\sum_{A\subseteq L^X_{n-1}}2^{|\textrm{int}(A)|}\cdot 2^{|L^X_n|-|N^+(A)|}=2^{|L^X_n|}\Xi_X,$$
        where the last equality follows from~\eqref{eq:weightfactor} and the natural bijection between subsets of $L^X_{n-1}$ and collections of compatible polymers. 
    \end{proof}

        The second polymer model has the same set of polymers and compatibility relation, while the weight is defined as $w'(A):=2^{-|N^+(A)|}$. Let $\Xi'_X$ be its partition function. It is straightforward to verify that, in analogy to~\Cref{lem:Xi-alpha}, we have
    $$2^{|L^X_n|}\Xi'_X=\al(L^X_{[n-1,n]}).$$
    To conclude the proof of Lemma \ref{lem:3to2_mid}, it remains to prove that
    \begin{equation}\label{eqn:ratio}
    \frac{\Xi_X}{\Xi'_X}=1+e^{-\Omega(n^2)}.
    \end{equation}

\begin{observation}
\label{remark}
For all polymers $A\in\cP^X$ we have
   $0<w'(A)\leq w(A)$. Therefore, for any cluster $\Gamma\in \cC(\cP^X,\sim)$ we have that $w(\Gamma)$ and $w'(\Gamma)$ (defined in~\eqref{eq:wclusterdef}) have the same sign and that $|w'(\Gamma)|\leq |w(\Gamma)|.$

\end{observation}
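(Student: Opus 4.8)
The plan is to exploit the fact that the two polymer models $(\cP^X,\sim,w)$ and $(\cP^X,\sim,w')$ are built on the \emph{same} polymer set and the \emph{same} compatibility relation, and that their weight functions differ only by a single multiplicative factor; everything will then follow from a pointwise comparison of the two weights.

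First I would record that for every $A\in\cP^X$, directly from the definitions,
\[
w(A)=2^{|\textrm{int}(A)|-|N^+(A)|}=2^{|\textrm{int}(A)|}\cdot 2^{-|N^+(A)|}=2^{|\textrm{int}(A)|}\,w'(A).
\]
Since $|\textrm{int}(A)|$ and $|N^+(A)|$ are nonnegative integers, $w'(A)=2^{-|N^+(A)|}$ is a strictly positive real and $2^{|\textrm{int}(A)|}\ge 1$; hence $0<w'(A)\le w(A)$, which is the first assertion. Next I would transfer this to clusters. Fix a cluster $\Gamma=(\gamma_1,\dots,\gamma_k)\in\cC(\cP^X,\sim)$. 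Its incompatibility graph $G_\Gamma$ is defined purely in terms of the sequence $\Gamma$ and the relation $\sim$, which is common to both models; hence the Ursell value $\phi(G_\Gamma)$ (a fixed real number) is the same for both. Setting $P(\Gamma):=\prod_{i=1}^k w(\gamma_i)$ and $P'(\Gamma):=\prod_{i=1}^k w'(\gamma_i)$, the previous paragraph shows these are products of strictly positive reals with $0<P'(\Gamma)\le P(\Gamma)$. By the definition~\eqref{eq:wclusterdef} of cluster weights, $w(\Gamma)=\phi(G_\Gamma)\,P(\Gamma)$ and $w'(\Gamma)=\phi(G_\Gamma)\,P'(\Gamma)$, so both $w(\Gamma)$ and $w'(\Gamma)$ equal $\phi(G_\Gamma)$ times a positive number. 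Thus they are simultaneously zero, both positive, or both negative according to the sign of $\phi(G_\Gamma)$ — in every case they share the same sign — and $|w'(\Gamma)|=|\phi(G_\Gamma)|\,P'(\Gamma)\le|\phi(G_\Gamma)|\,P(\Gamma)=|w(\Gamma)|$, which is the second assertion.

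I do not expect any genuine obstacle here: the statement is essentially immediate. The only point worth isolating is that, because the two polymer models share the same compatibility structure, the combinatorial factor $\phi(G_\Gamma)$ is identical for both and so drops out of the sign and magnitude comparison, reducing everything to the trivial pointwise inequality $w'(A)\le w(A)$ together with positivity of the weight products $\prod_i w(\gamma_i)$ and $\prod_i w'(\gamma_i)$.
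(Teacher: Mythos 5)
Your argument is correct and matches the paper's (implicit) reasoning: the paper states this as an observation without proof precisely because, as you note, $w(A)=2^{|\mathrm{int}(A)|}w'(A)$ gives $0<w'(A)\le w(A)$, and since both models share the same compatibility relation the Ursell factor $\phi(G_\Gamma)$ is common to both cluster weights, so the sign and magnitude comparison follows immediately. Nothing is missing.
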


\subsection{Proof of \Cref{lem:3to2_mid}} In this section, we verify \eqref{eqn:ratio}.
Our main task is to establish the convergence of the cluster expansion of $\ln \Xi_X$, for which we now verify the Kotetck\'y-Preiss condition \eqref{KPbound1} for $w(A)=2^{|{\rm int}(A)|-|N^+(A)|}$. Note that the convergence of the cluster expansion of $\ln \Xi'_X$ will follow from that of $\ln \Xi_X$ by \Cref{remark}.

We first recall the following lemma, which follows from the fact (see e.g. \cite[p. 396, Ex. 11]{knuth1968art}) that the infinite $\Delta$-branching rooted tree contains at most $(e\Delta)^{n-1}$ rooted subtrees with $n$ vertices.

    \begin{lemma}\label{lem:linked_count}
        Let $G$ be a graph with maximum degree $\Delta$. The number of $k$-linked subsets of $G$ of size $t$ that contain a fixed vertex $v$ is at most $(e\Delta^k)^{t-1}$.
    \end{lemma}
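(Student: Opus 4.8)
```latex
The plan is to estimate the number of $k$-linked subsets of $G$ of size $t$ containing a fixed vertex $v$ by first associating to each such subset a spanning rooted tree, and then invoking the quoted bound on rooted subtrees of the infinite branching tree. Since $A$ is $k$-linked in $G$, it is connected in $G^k$, and $G^k$ has maximum degree at most $\Delta + \Delta(\Delta-1) + \cdots \leq \Delta^k$ (a crude but sufficient bound: a vertex has at most $\Delta^j$ vertices at distance exactly $j$, so at most $\sum_{j=1}^k \Delta^j \le \Delta^k \cdot \frac{\Delta}{\Delta-1}$ — in fact one can be slightly more careful, but $\Delta^k$ suffices if we absorb constants, or we note the standard bound that $G^k$ has max degree $\le \Delta^k$ holds when we count each vertex once; I would just state $\Delta(G^k) \le \Delta^k$ as the working bound, which is what the exponent in the lemma statement reflects).

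First I would fix the vertex $v$ and consider a $k$-linked set $A \ni v$ with $|A| = t$. Since $G^k[A]$ is connected, it contains a spanning tree $T_A$ rooted at $v$; choosing $T_A$ canonically (say, a breadth-first search tree from $v$, breaking ties by a fixed vertex ordering) makes $A \mapsto (A, T_A)$ injective on $A$ — actually injective simply because $A$ is recovered as the vertex set of $T_A$. The map $A \mapsto T_A$ is therefore an injection from the family we wish to count into the set of rooted trees on $t$ vertices, rooted at $v$, all of whose edges are edges of $G^k$, i.e.\ into the set of rooted subtrees of size $t$ containing $v$ in the graph $G^k$.

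Next I would bound the number of such rooted subtrees. Embedding $G^k$ into the infinite $\Delta^k$-branching rooted tree $\mathcal{T}$ with root $v$ (explore $G^k$ from $v$, assigning to each vertex at most $\Delta^k$ children, possibly with repetition where $G^k$ has cycles — this only over-counts), every rooted subtree of $G^k$ of size $t$ containing $v$ maps to a rooted subtree of $\mathcal{T}$ with $t$ vertices, and distinct subtrees of $G^k$ with their explicit tree structure map to distinct subtrees of $\mathcal{T}$. By the cited fact (\cite[p.~396, Ex.~11]{knuth1968art}), $\mathcal{T}$ has at most $(e\Delta^k)^{t-1}$ rooted subtrees with $t$ vertices. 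Composing the two injections gives the desired bound $(e\Delta^k)^{t-1}$.

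The only mildly delicate point — and the main thing to get right rather than a genuine obstacle — is the interface between ``number of $k$-linked \emph{sets}'' and ``number of rooted \emph{trees}'': one must make sure the counting argument is not thrown off by the fact that a given set $A$ may have many spanning trees. This is handled by the canonical choice of $T_A$ (any deterministic rule works), so the map set-to-tree is well-defined and injective. Everything else is the standard $\mathcal{T}$-embedding argument, and the degree bookkeeping ($\Delta(G^k)\le \Delta^k$) is routine.
```
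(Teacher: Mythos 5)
Your proof is correct and follows essentially the same route the paper intends (the paper proves nothing beyond citing the Knuth fact about the infinite $\Delta$-branching tree): pass to $G^k$, take a canonical rooted spanning tree of $G^k[A]$, embed into the infinite $\Delta(G^k)$-branching tree, and invoke the subtree count. One small point worth tightening: your quick estimate $\sum_{j=1}^k \Delta^j$ actually exceeds $\Delta^k$, so it does not by itself give $\Delta(G^k)\le\Delta^k$; the clean way to see the needed bound is that a vertex has at most $\Delta(\Delta-1)^{j-1}$ vertices at distance exactly $j$, and $\sum_{j=1}^k \Delta(\Delta-1)^{j-1}\le\Delta^k$ by an easy induction, which is the ``standard bound'' you allude to and which is required to land on exactly $(e\Delta^k)^{t-1}$.
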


    \begin{lemma}\label{prop:KP_verif}
        The polymer model $(\cP^X, \sim, w)$ satisfies the Koteck\'y-Preiss condition \eqref{KPbound1} with
        \beq{g.def} f(A)=\frac{|A|}{n^2}\ln 2 \quad \text{and} \quad g(A)=\begin{cases}
         \left(\frac{n-2}{2}|A|-|A|^2\right)\ln2-10|A|\ln n & |A|\leq n/10\\
        \frac{9C}{10}n|A|\ln2 & n/10<|A|\leq n^4\\
        \frac{|A|}{n^2}\ln 2 & |A|>n^4
    \end{cases}.\enq
    with $C:=\min\{c/9,c'/3\}$ where $c,c'$ are the constants in \Cref{prop:isoperim_up}.
    \end{lemma}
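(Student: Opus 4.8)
\textbf{Proof proposal for \Cref{prop:KP_verif}.}

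The plan is to verify the Koteck\'y--Preiss condition \eqref{KPbound1} directly: for a fixed polymer $A$, we must bound $\sum_{A' \nsim A} |w(A')| e^{f(A') + g(A')}$ from above by $f(A) = \frac{|A|}{n^2}\ln 2$. The first reduction is standard: if $A' \nsim A$, then $A \cup A'$ is $2$-linked, so every $A'$ incompatible with $A$ contains a vertex within Hasse-distance $2$ of some vertex of $A$; there are at most $O(n^2 |A|)$ such ``anchor'' vertices (each vertex of $L_{n-1}$ has $O(n)$ neighbours and $O(n^2)$ vertices at distance $2$). Grouping the sum over $A'$ by its size $t' = |A'|$ and by a choice of anchor vertex, and invoking \Cref{lem:linked_count} with $k=2$, $\Delta = O(n)$ to bound the number of $2$-linked sets of size $t'$ through a fixed vertex by $(eO(n^2))^{t'-1} = e^{O(t' \log n)}$, it suffices to show
\[
\sum_{t' \ge 1} O(n^2|A|) \cdot e^{O(t'\log n)} \cdot \sup_{|A'| = t'}\left(|w(A')| e^{f(A') + g(A')}\right) \le \frac{|A|\ln 2}{n^2}.
\]
Since $f(A') = O(t'/n^2) = O(t')$ is negligible, the whole game is to show that $|w(A')| e^{g(A')} = 2^{|{\rm int}(A')| - |N^+(A')|} e^{g(A')}$ decays fast enough in $t' = |A'|$ to absorb the $e^{O(t'\log n)}$ factor and the $O(n^2|A|)$ prefactor, leaving room to spare for the target $\frac{|A|}{n^2}$.

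The heart of the matter is therefore the bound $|{\rm int}(A')| - |N^+(A')| + g(A')/\ln 2 \le -\Omega(t' \log n)$ (with a large enough implied constant), handled in the three size regimes matching the definition of $g$. In all regimes we use $|{\rm int}(A')| \le |N^+(A')|$ trivially (each $v \in {\rm int}(A')$ injects into $N^+(A')$ by picking one of its up-neighbours, using $\Delta=1$), so $|w(A')| \le 1$; the real content is a lower bound on $|N^+(A')|$, i.e.\ the isoperimetric inequalities of \Cref{prop:isoperim_up}. For $t' \le n/10$: \Cref{prop:isoperim_up}(a) gives $|N^+(A')| \ge \frac12 t'(n - t')$, and $g(A')/\ln 2 = \frac{n-2}{2}t' - t'^2 - 10 t'\log n / \ln 2$ is (up to lower order terms) designed to cancel exactly against $-|N^+(A')|$ — more precisely $\frac12 t'(n-t') - \frac{n-2}{2}t' + t'^2 = \frac12 t'^2 + t' \ge 0$ — leaving the surplus $-10 t' \ln n$, which dominates the $e^{O(t'\log n)}$ term provided the constant $10$ beats the absolute constant hidden in \Cref{lem:linked_count}; one should double-check $10$ is large enough, or simply note the constant in $g$ can be taken as large as needed here. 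For $n/10 < t' \le n^4$: \Cref{prop:isoperim_up}(c) gives $|N^+(A')| \ge c' n t'/3$ (recall $t=3$), while $g(A')/\ln 2 = \frac{9C}{10} n t'$ with $C \le c'/3$, so $g(A')/\ln 2 - |N^+(A')| \le -\frac{1}{10}\cdot\frac{c'}{3} n t' = -\Omega(n t')$, which crushes $O(t'\log n)$ with room to spare. For $t' > n^4$: \Cref{prop:isoperim_up}(b) gives $|N^+(A')| \ge (1 + \Omega(1/n^2)) t'$, i.e.\ $|N^+(A')| - t' = \Omega(t'/n^2) \ge \Omega(n^2)$ since $t' > n^4$; meanwhile $g(A')/\ln 2 = t'/n^2$, so the net exponent is $\le -\Omega(t'/n^2) + t'/n^2$, which still needs to be negative and beat $O(t'\log n)$ — here one uses that the $\Omega(1/n^2)$ gap, compared with $t'/n^2$, and more importantly that for $t' > n^4$ even a gain of $\Omega(t'/n^2) \gg t' \log n$? — no: $t'/n^2$ versus $t'\log n$ goes the wrong way. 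So in this last regime one must instead use \Cref{prop:isoperim_up}(a) once more or a cruder bound showing $|N^+(A')| - |{\rm int}(A')|$ is genuinely of order $t'$ (e.g.\ via the normalized matching / expansion giving a constant-factor blow-up is false, but iterating, or using that a $2$-linked set of size $t' > n^4$ in a layer has neighbourhood larger by a constant factor — this needs care). I would patch this by choosing the threshold $n^4$ and the cruder fact that a $2$-linked set that large cannot have $|N^+(A')|$ within $o(t' \log n)$ of $|{\rm int}(A')|$, or alternatively redefine $g$ on $t' > n^4$ with a larger constant; the cleanest fix is to replace $\frac{|A|}{n^2}\ln 2$ by $\frac{|A|}{2}\ln 2$ there, which is still summable against the other KP ingredient and makes the inequality comfortable.

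Once the per-polymer bound $\sum_{A' \nsim A}|w(A')|e^{f(A')+g(A')} \le f(A)$ is established, the convergence of the cluster expansion of $\ln \Xi_X$ follows from \Cref{thm.KP}, and by \Cref{remark} the expansion of $\ln\Xi_X'$ converges as well with the same $f, g$. \textbf{The main obstacle} I anticipate is exactly the bookkeeping of constants so that the isoperimetric gain in each regime strictly exceeds the $O(t'\log n)$ entropy cost from counting $2$-linked sets (Lemma \ref{lem:linked_count}) with a further $\ln n$ of slack to kill the $O(n^2|A|)$ prefactor and land below $\frac{|A|}{n^2}\ln 2$; the small-$t'$ regime $t' \le n/10$ is tight (the isoperimetric term and the quadratic part of $g$ cancel to leading order), so the $-10|A|\ln n$ correction term in $g$ is doing essential work and its coefficient must be verified to dominate. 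The large-$t'$ regime needs the mild repair indicated above. Everything else — the anchor-vertex count, the application of \Cref{lem:linked_count}, and summing the geometric-type series in $t'$ — is routine.
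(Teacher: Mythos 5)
You have the right three-regime strategy, and Cases 1 and 2 are essentially sound (though note: the blanket ``$|w(A')|\le 1$ by injecting $\mathrm{int}(A')$ into $N^+(A')$'' is not how you actually close Case 1 — there you must observe, as the paper does, that $\mathrm{int}(A')=\emptyset$ outright for $|A'|\le n/10$, since any $v\in L_{n-2}$ has $|N^+(v)|>n/2$; in Case 2 the paper also uses the sharper $|\mathrm{int}(A)|\le |A|/(Cn)$ from \Cref{prop:isoperim_up}(b), but your cruder $|w|\le 1$ does in fact suffice there given the size of $g$).

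The gap you anticipate in the $|A'|>n^4$ regime is real and your proposed patches do not repair it. The problem is structural: against a $2$-linked-set entropy cost of $e^{O(t'\log n)}$ (from \Cref{lem:linked_count}), the only isoperimetric leverage available for generic large sets is \Cref{prop:isoperim_up}(b), which gives a net exponent of at most $-\Omega(t'/n)$ in $w(A')$, and no redefinition of $g$ (which enters only through the positive factor $e^{g(A')}$ and is constrained from above by the very convergence condition you are trying to verify) can bridge an $\Omega(t'\log n)$ versus $O(t'/n)$ mismatch. This is not a bookkeeping issue — it is precisely the reason the graph-container machinery of \Cref{sec:containers} exists. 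The paper's Case 3 abandons $2$-linked-set counting entirely: it rewrites
\[
w(A)=2^{|\mathrm{int}(A)|-|N^+(A)|}=\sum_{B\subseteq \mathrm{int}(A)}2^{-|N^+(A)|},
\]
reparametrizes the sum over pairs $(A,B)$ by $a=|[A]|$, $b=|[B]|$, $g=|N^+(A)|$, $h=|N^+(B)|$, and invokes \Cref{lem:containers} to bound $|\cH(a,b,g,h)|\le 2^{g-\Omega((t+t')/\log n)}$ (where $t=g-a$, $t'=h-b$). The factor $2^{-g}$ then cancels, and the remaining $2^{-\Omega((g-a)/\log n)}\le 2^{-\Omega(a/(n\log n))}$ (again from \Cref{prop:isoperim_up}(b)) comfortably beats $2^{2a/n^2}$. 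So the container lemma replaces the naive entropy count $e^{O(t'\log n)}$ with a count that is already of the form $2^{|N^+(A)|}\cdot e^{-\Omega(\cdot)}$, which is exactly what pairs against the weight. You cannot recover this by working harder with \Cref{lem:linked_count}; you need \Cref{lem:containers}, and that is the main missing idea in your sketch.

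One further caution: your reduction to anchor vertices (any $A'\nsim A$ contains a vertex in the second neighbourhood of $A$, of size $O(n^2|A|)$) is also how the paper finishes, but in the paper the per-vertex bound proved in the three cases is $1/(2n^5)$, and it is the comparison $n^2|A|\cdot 1/(2n^5)<f(A)/2=|A|\ln 2/(2n^2)$ that closes the argument. You should make explicit that the per-vertex constant you win in each case has two powers of $n$ to spare, not just that the series converges.
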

\begin{proof}
We first show that for any $u \in L_{n-1}^X$,
\begin{align}\label{eq:case1-3}
     \sum\limits_{A \in \mathcal P^X, A\ni u} w(A)e^{f(A)+g(A)} \leq 1/(2n^5)
\end{align}
by considering the following three cases. 
In each of the case analyses below, the sum ranges over all 2-linked sets $A \sub L_{n-1}$, and this gives an upper bound on the corresponding sum taken over $A \in \mathcal P^X$.

\nin \textit{Case 1.} $|A|\leq n/10$
    
Observe that $\textrm{int}(A)=\emptyset$ in this case. Also, by Proposition \ref{prop:isoperim_up} (a), $|N^+(A)|\geq \frac{n}{2}|A|-|A|^2$.  Therefore, using \Cref{lem:linked_count} for the second inequality (to bound the number of 2-linked $A$'s of size $k$),
    \begin{align*}
        \sum_{\substack{A\ni u, |A|\leq n/10 \\ A\ 2\text{-linked}}} w(A)e^{f(A)+g(A)}&\leq \sum_{\substack{A\ni u, |A|\leq n/10 \\ A\ 2\text{-linked}}} 2^{-\frac{n}{2}|A|+|A|^2}2^{|A|/n^2}2^{\frac{n-2}{2}|A|-|A|^2}n^{-10|A|}\\
        &{\leq} \sum_{k\geq1} \left(en^22^{-1+1/n^2}n^{-10}\right)^k \leq 1/(6n^5).
    \end{align*}

    \nin \textit{Case 2.} $n/10< |A|\leq n^4$
    
    By \Cref{prop:isoperim_up}(c), $|N^+(A)|\geq Cn|A|$ and $|\textrm{int}(A)|\leq |A|/Cn$. Therefore, again using \Cref{lem:linked_count} for the second inequality,
    \begin{align*}
        \sum_{\substack{A\ni u, n/10 < |A|\leq n^4 \\ A\ 2\text{-linked}}}w(A)e^{f(A)+g(A)}&\leq \sum_{\substack{A\ni u, |A|\leq n^4 \\ A\ 2\text{-linked}}} 2^{\frac{|A|}{Cn}-Cn|A|}2^{|A|/n^2}2^{\frac{9C}{10}n|A|}\\
        &\leq\sum_{k\geq1}\left(en^2 2^{\frac{1}{Cn}-\frac{C}{10}n+1/n^2}\right)^k \leq1/(6n^5).
    \end{align*}

    \nin \textit{Case 3.} $|A|>n^4$

    By \Cref{prop:isoperim_up}(b), for any $B \sub L_{n-1}$ or $B \sub L_{n-2}$, we have $|N^+(B)|\ge (1+C/n)|B|$. Noting that $w(A)=2^{-|N^+(A)|+|{\rm int}(A)|}=\sum_{B \sub {\rm int}(A)}2^{-|N^+(A)|}$,
    \begin{align*}
        \sum_{\substack{A\ni u, |A|> n^4 \\ A\ 2\text{-linked}}}w(A)e^{f(A)+g(A)}&=\sum_{\substack{A\ni u, |A|> n^4 \\ A\ 2\text{-linked}}}\sum_{B \sub {\rm int}(A)}2^{-|N^+(A)|}e^{f(A)+g(A)}\\
        &\le \sum_{\substack{a >n^4, ~g \ge (1+C/n)a, \\ 0 \le b \le a, (1+C/n)b \le h \le a}}2^{-g}2^{2a/n^2} |\cH(a,b,g,h)|\\
        &\le \sum_{\substack{a >n^4, ~g \ge (1+C/n)a}}a^22^{2a/n^2}2^{-\Omega((g-a)/\log n)}\\
        &\le \sum_{a >n^4} a^22^{2a/n^2}2^{-\gO(a/(n\log n))} \le 1/(6n^5)
    \end{align*}
where the second inequality uses \Cref{lem:containers} (and the fact that $b, h \le a$); there is plenty room in the last inequality since $a>n^4$. 

Combining the three cases above, we conclude that \eqref{eq:case1-3} holds. 

Now we verify \eqref{KPbound1}. For any polymer $A$, set $N^2(A)\coloneqq N^-(N^+(A))$ and note that $|N^2(A)|\leq n^2|A|$. Using~\eqref{eq:case1-3}, we have
\begin{equation}
\label{KP}
    \sum\limits_{A'\nsim A} w(A')e^{f(A')+g(A')} \leq \sum\limits_{u\in N^2(A)} \sum\limits_{A' \ni u} w(A')e^{f(A')+g(A')} \leq n^2|A| /(2n^5) < f(A)/2\, .
\end{equation}
Therefore, by \Cref{thm.KP} and \Cref{remark}, the cluster expansions of $\ln\Xi$ and $\ln\Xi'$ converge absolutely.
\end{proof}

Finally, to show \eqref{eqn:ratio}, observe that for a cluster $\Gamma \in \cC=\cC(\cP^X,\sim)$ with $\|\Gamma \|<n/2$, we have $w'(\Gamma)=w(\Gamma)$ since the interior of any polymer $A \in \Gamma$ is empty. Therefore, recalling \Cref{remark}, we have
\[|\ln\Xi-\ln\Xi'|\le\sum_{\|\Gamma\|\ge n/2}|w(\Gamma)|.\]
Now we bound the right-hand side of the above. By \eqref{kpbounds}, for any $v\in L_{n-1}$,
\[
    \sum_{\substack{\Gamma \in \cC \\ \Gamma \nsim \{v\}}} |w(\Gamma)|e^{g(\Gamma)}\leq f(\{v\})\le 1/n^2.
\]
Note that if $\lVert\Gamma\rVert \geq n/2$, then $g(\Gamma) = \gO(n^2)$. Therefore, 
\[\begin{split}
\sum_{\lVert\Gamma\rVert \geq n/2}|w(\Gamma)| &\le \sum_{v \in L_{n-1}}\sum_{\substack{\lVert\Gamma\rVert \ge n/2 \\ \Gamma \nsim \{v\}}}|w(\Gamma)|e^{g(\Gamma)}e^{-g(\Gamma)} \\
&\le \sum_{v \in L_{n-1}}e^{-\gO(n^2)}\sum_{\substack{\lVert\Gamma\rVert \ge n/2 \\ \Gamma \nsim \{v\}}}|w(\Gamma)|e^{g(\Gamma)}\le 3^n\cdot e^{-\gO(n^2)}\cdot 1/n^2=e^{-\gO(n^2)}.
\end{split}\]
This yields \eqref{eqn:ratio}.

\section{Antichains in three central layers -- proof of Theorems~\ref{thm:asymp_1} and \ref{thm.central}} \label{sec:central}

In this section, we study the number and structure of antichains in the central layers $L_{[n-1, n+1]}$ and deduce Theorems~\ref{thm:asymp_1} and \ref{thm.central}.  We will again rely on an appropriate polymer model and the cluster expansion.

    Let $G_\ell=L_{n-1}\cup L_n$ and $G_u=L_n\cup L_{n+1}$ ($\ell,u$ for lower and upper). The set of polymers $\cP_C$ consists of all sets $A\subseteq L_{n-1}$ or $A\subseteq L_{n+1}$ that are $2$-linked. The weight of a polymer is $w_C(A):=2^{-|\partial(A)|}$, where we write $\partial(A)$ for the set of vertices in $L_n$ that are comparable to at least one element in $A$ (i.e., the neighborhood of $A$ intersected with $L_n$). Define the compatibility relation on $\cP_C$ where $A_1\sim A_2$ if $\partial(A_1)\cap \partial(A_2)=\emptyset$. Let $\Xi_C$ be the partition function of the corresponding polymer model. We first prove this model accurately captures the number of antichains in the three central layers.

    \begin{lemma}\label{lem:Xi_C}        $$2^{\ell_n}\Xi_C=\alpha(L_{[n-1,n+1]}).$$
    \end{lemma}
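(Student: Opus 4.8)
The plan is to mimic the bijective/counting argument that established \Cref{lem:Xi-alpha}, now adapted to the symmetric three-layer situation where $L_n$ is the central layer and both $L_{n-1}$ and $L_{n+1}$ sit adjacent to it. First I would observe the structural fact that every antichain $I\subseteq L_{[n-1,n+1]}$ decomposes uniquely as $I = S \,\cup\, I_{n-1}\,\cup\, I_{n+1}$, where $I_{n-1} = I\cap L_{n-1}$, $I_{n+1} = I\cap L_{n+1}$, and $S = I\cap L_n$; the only constraint coming from the antichain condition is that no element of $S$ is comparable to an element of $I_{n-1}\cup I_{n+1}$, i.e.\ $S\subseteq L_n\setminus\big(\partial(I_{n-1})\cup \partial(I_{n+1})\big)$ (there is no comparability between $L_{n-1}$ and $L_{n+1}$ directly, since they differ by two levels, so no further constraint is imposed). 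Crucially, $I_{n-1}$ and $I_{n+1}$ may be chosen completely freely as arbitrary subsets of $L_{n-1}$ and $L_{n+1}$ respectively.

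Next I would reorganize the count by first summing over the choice of $A := I_{n-1}\cup I_{n+1}$, a subset of $L_{n-1}\cup L_{n+1}$. For a fixed such $A$, the number of valid choices for $S$ is $2^{|L_n| - |\partial(A)|} = 2^{\ell_n - |\partial(A)|}$, where $\partial(A)$ denotes the set of vertices of $L_n$ comparable to some element of $A$; here I use that $\partial(A) = \partial(I_{n-1})\cup\partial(I_{n+1})$ and that $\ell_n = |L_n|$ is the size of the middle layer. Hence
\[
\alpha(L_{[n-1,n+1]}) \;=\; \sum_{A\subseteq L_{n-1}\cup L_{n+1}} 2^{\ell_n - |\partial(A)|} \;=\; 2^{\ell_n}\sum_{A\subseteq L_{n-1}\cup L_{n+1}} 2^{-|\partial(A)|}.
\]

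Finally I would identify the remaining sum with $\Xi_C$. Decompose an arbitrary $A\subseteq L_{n-1}\cup L_{n+1}$ into its $2$-linked components (in the Hasse diagram of $[3]^n$); each component lies entirely within $L_{n-1}$ or entirely within $L_{n+1}$ (a set meeting both layers cannot be $2$-linked, as those layers are at Hasse-distance $2$ but through $L_n$, so actually one must check distance-$2$ adjacency carefully — two vertices, one in $L_{n-1}$ and one in $L_{n+1}$, are at distance $2$ iff they have a common neighbor in $L_n$, so a priori a $2$-linked set could meet both layers). The key multiplicativity observation is that for $A$ with $2$-linked components $A_1,\dots,A_k$, we have $|\partial(A)| = \sum_i |\partial(A_i)|$ — this uses the definition of the compatibility relation $A_i\sim A_j \iff \partial(A_i)\cap\partial(A_j)=\emptyset$, which forces the $\partial(A_i)$ to be pairwise disjoint. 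This is exactly the analogue of \eqref{eq:weightfactor}: $w_C(A) = \prod_i w_C(A_i)$. Therefore the sum over all $A$ equals the sum over all collections of pairwise compatible polymers of the product of their weights, which is precisely $\Xi_C$, giving $2^{\ell_n}\Xi_C = \alpha(L_{[n-1,n+1]})$.

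The main subtlety — and the step I would be most careful about — is the precise statement of how $2$-linked components behave across the two outer layers and the verification that the polymer decomposition genuinely matches set-decomposition into $2$-linked components (i.e.\ that $A\mapsto$ its component multiset is the natural bijection between subsets of $L_{n-1}\cup L_{n+1}$ and configurations of compatible polymers in $\cP_C$). One needs that two polymers $A_1, A_2$ with $A_1\cup A_2$ $2$-linked indeed satisfy $\partial(A_1)\cap\partial(A_2)\neq\emptyset$ (so incompatible polymers really cannot both appear as distinct components), and conversely that the components of any $A$ are pairwise compatible; both follow from unwinding the definitions of $2$-linkedness and of $\partial$, but this is where the argument must be stated with care rather than by analogy alone. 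Everything else is the straightforward two-line computation above.
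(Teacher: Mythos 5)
Your first step contains a genuine error that propagates through the whole argument. You claim that $I_{n-1}$ and $I_{n+1}$ ``may be chosen completely freely as arbitrary subsets,'' asserting that ``there is no comparability between $L_{n-1}$ and $L_{n+1}$ directly, since they differ by two levels.'' This is false: two levels apart is Hasse-distance $2$, not incomparability. Concretely, $u\in L_{n-1}$ and $v\in L_{n+1}$ satisfy $u<v$ exactly when they have a common neighbour $w\in L_n$ (pick $w$ with $u<w<v$), i.e.\ exactly when $\partial(\{u\})\cap\partial(\{v\})\neq\emptyset$. You even state the distance-$2$ criterion correctly in your own parenthetical, but do not notice that distance $2$ between $L_{n-1}$ and $L_{n+1}$ \emph{is} comparability. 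So the antichain condition imposes the constraint $\partial(I_{n-1})\cap\partial(I_{n+1})=\emptyset$, and your displayed formula $\alpha(L_{[n-1,n+1]})=2^{\ell_n}\sum_{A\subseteq L_{n-1}\cup L_{n+1}}2^{-|\partial(A)|}$ overcounts (for $n=2$, for instance, every element of $L_1$ lies below every element of $L_3$, so only $7$ of the $16$ subsets of $L_1\cup L_3$ are antichains).

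The second half of your argument has a matching gap, and in fact the two errors are two faces of the same coin. You flag, correctly, that ``a priori a $2$-linked set could meet both layers,'' and this is precisely what happens for the extra sets $A$ you incorrectly included: a $2$-linked component of $A$ meets both $L_{n-1}$ and $L_{n+1}$ exactly when some $u\in A\cap L_{n-1}$ and $v\in A\cap L_{n+1}$ share an $L_n$-neighbour, i.e.\ exactly when $A$ is not an antichain. For such $A$ the components are not elements of $\cP_C$ (polymers must live in a single outer layer), so $\sum_{A\subseteq L_{n-1}\cup L_{n+1}}2^{-|\partial(A)|}\neq\Xi_C$. Once you restrict, as the paper does, to pairs $A_u\subseteq L_{n+1}$, $A_\ell\subseteq L_{n-1}$ with $\partial(A_u)\cap\partial(A_\ell)=\emptyset$, the $2$-linked components of $A_u\cup A_\ell$ do lie in a single layer, are pairwise compatible, and give exactly the polymer-configuration bijection; the resulting sum is then $\Xi_C$ and the lemma follows. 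The rest of your outline (the weight factorization $|\partial(A)|=\sum_i|\partial(A_i)|$ for pairwise-compatible components, and the counting of $S\subseteq L_n\setminus\partial(A)$ as $2^{\ell_n-|\partial(A)|}$) is correct and matches the paper, but you must first impose the missing antichain constraint between the two outer layers.
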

    \begin{proof}
        Every antichain in $L_{[n-1,n+1]}$ can be uniquely constructed as follows: pick any pair of subsets 
        \beq{eq:Aud} \text{$A_u\subseteq L_{n+1}, A_\ell\subseteq L_{n-1}$ with disjoint neighborhoods in $L_n$,}\enq which guarantees that $A=A_u\cup A_\ell$ is an antichain. Then, we may add any $S\subseteq L_n\setminus \partial(A)$. Hence
        $$\alpha(L_{[n-1,n+1]})=\sum_{A_u,A_\ell}2^{|L_n|-|\partial(A_u\cup A_\ell)|}=2^{|L_n|}\Xi_C,$$
        where the sum is over all pairs $A_u,A_\ell$ that satisfy \eqref{eq:Aud}. The last equality follows from the natural bijection between collections of compatible polymers and sets $A_u\cup A_\ell$ where $A_u,A_\ell$ satisfy \eqref{eq:Aud}.
    \end{proof}

    \begin{lemma}\label{lem:central.converge}
        The polymer model $(\cP_C, \sim, w_C)$ satisfies the Kotetck\'y-Preiss condition \eqref{KPbound1} with $f,g$ as in \eqref{g.def}. Thus the cluster expansion of $\ln\Xi_C$ converges absolutely, and
        \beq{eq.wc} \sum_{\Gam\in \cC}|w_C(\Gam)|e^{g(\Gam)}\leq 3^n/n^2.\enq
    \end{lemma}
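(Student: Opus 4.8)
The plan is to verify the Koteck\'y--Preiss condition \eqref{KPbound1} for the polymer model $(\cP_C,\sim,w_C)$ directly, reusing almost verbatim the case analysis from the proof of \Cref{prop:KP_verif}. The key point is that the two polymer models are structurally very similar: in both, polymers are $2$-linked subsets of a layer adjacent to the middle layer, and the weight is a negative power of $2$ of (essentially) the size of an upward/downward neighborhood. Here $w_C(A)=2^{-|\partial(A)|}$ where $\partial(A)$ is the neighborhood of $A$ in $L_n$, so $|\partial(A)|=|N^+(A)|$ if $A\subseteq L_{n-1}$ and $|\partial(A)|=|N^-(A)|$ if $A\subseteq L_{n+1}$; by the top-bottom symmetry of $[3]^n$ it suffices to treat the case $A\subseteq L_{n-1}$. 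Note $w_C(A)\le w(A)=2^{|{\rm int}(A)|-|N^+(A)|}$ (the weight from the earlier model), since $|{\rm int}(A)|\ge 0$; so termwise, $\sum_{A\ni u}w_C(A)e^{f(A)+g(A)}\le \sum_{A\ni u}w(A)e^{f(A)+g(A)}\le 1/(2n^5)$ by exactly \eqref{eq:case1-3}, with no recomputation needed.

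From there, the verification of \eqref{KPbound1} follows the same route as \eqref{KP}: a polymer $A'$ incompatible with $A$ must satisfy $\partial(A')\cap\partial(A)\neq\emptyset$, so $A'$ contains a vertex in $N(\partial(A))$, a set of size at most $n^2|A|$ (each vertex of $\partial(A)\subseteq L_n$ has at most $n$ neighbors in $L_{n-1}\cup L_{n+1}$, and $|\partial(A)|\le n|A|$). Hence
\[
\sum_{A'\nsim A} w_C(A')e^{f(A')+g(A')}\le \sum_{u\in N(\partial(A))}\sum_{A'\ni u}w_C(A')e^{f(A')+g(A')}\le n^2|A|\cdot\frac{1}{2n^5}<f(A),
\]
recalling $f(A)=\frac{|A|}{n^2}\ln 2$. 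This establishes the Koteck\'y--Preiss condition, and \Cref{thm.KP} then gives absolute convergence of the cluster expansion of $\ln\Xi_C$.

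For the final bound \eqref{eq.wc}, I apply the conclusion \eqref{kpbounds} of \Cref{thm.KP} with the trivial polymer $\gam=\{v\}$ for each $v\in L_{n-1}\cup L_{n+1}$: for each such $v$, $\sum_{\Gam\nsim\{v\}}|w_C(\Gam)|e^{g(\Gam)}\le f(\{v\})=\frac{\ln 2}{n^2}\le 1/n^2$. Summing over all $v\in L_{n-1}\cup L_{n+1}$ (which has at most $2\cdot 3^n$ elements, and crudely at most $3^n$ after absorbing constants, or one simply notes $|L_{n-1}|+|L_{n+1}|\le 3^n$ for large $n$ since each is $\Theta(3^n/\sqrt n)$) and using that every cluster $\Gam$ is incompatible with $\{v\}$ for some $v$ (namely any $v$ in any polymer of $\Gam$), we get $\sum_{\Gam\in\cC}|w_C(\Gam)|e^{g(\Gam)}\le 3^n/n^2$ as claimed.

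There is essentially no real obstacle here — the lemma is designed so that the earlier work does all the heavy lifting. The only mild subtlety to get right is the bookkeeping in the incompatibility bound: one must note that the relevant ``blow-up'' of a polymer $A$ under the compatibility relation $\partial(A_1)\cap\partial(A_2)=\emptyset$ is $N(\partial(A))$ rather than the $N^2(A)$ used before, but these have the same order ($\le n^2|A|$), so the constant $1/(2n^5)$ in \eqref{eq:case1-3} still leaves a factor-$2$ of room. One should also double check that the top-bottom symmetry genuinely reduces the $A\subseteq L_{n+1}$ case to the $A\subseteq L_{n-1}$ case, which is immediate since the map $(x_1,\dots,x_n)\mapsto(2-x_1,\dots,2-x_n)$ is an automorphism of $[3]^n$ swapping $L_{n-1}$ and $L_{n+1}$ and fixing $L_n$ setwise, hence preserving $w_C$ and $\sim$.
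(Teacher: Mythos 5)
Your proof is correct and takes essentially the same route as the paper: it reduces the Koteck\'y--Preiss check to the bound \eqref{eq:case1-3} already established in the proof of \Cref{prop:KP_verif}, using the observations that $w_C(A)\le w(A)$, that the polymers in $\cP_C$ on $L_{n-1}$ coincide with those in $\cP^\emptyset$, and that the $(x_i)\mapsto(2-x_i)$ symmetry handles $L_{n+1}$; the tail bound \eqref{eq.wc} is then obtained exactly as in the paper by summing \eqref{kpbounds} over singletons. One small quantitative slip: a vertex $v\in L_n$ has $n+\#\{i:v_i=1\}$ neighbors in $L_{n-1}\cup L_{n+1}$, which can be as large as $2n$, not $n$; so $|N(\partial(A))|\le 2n^2|A|$ rather than $n^2|A|$. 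This is the bound the paper actually uses ($|N^2(A)|\le 2n^2|A|$), and the extra factor of $2$ is harmless since $2n^2|A|/(2n^5)=|A|/n^3 < |A|\ln 2/n^2=f(A)$ still leaves room. (Incidentally, your choice of $N(\partial(A))$ in place of the paper's $N^2(A)$ is slightly cleaner, as it automatically covers the degenerate case $A'=A$ with $|A|=1$, which the paper's phrasing about ``Hamming distance exactly 2'' technically misses.)
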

    \begin{proof}
        We want to show that for any $A\in \cP_C$ we have
        $$\sum_{B\not\sim A}w_C(B)e^{f(B)+g(B)}\leq f(A).$$
        Note that any polymer that is not compatible with $A$ must contain a vertex which has Hamming distance exactly 2 from $A$.
        Hence, with $N^2(A)$ denoting the set of vertices that have Hamming distance 2 from $A$ (i.e., the second neighborhood of $A$ in the Hasse diagram of $L_{[n-1, n+1]}$),
        $$\sum_{B\not\sim A}w_C(B)e^{f(B)+g(B)}\leq \sum_{v\in N^2(A)}\sum_{\substack{B \in \cP_C \\ B\ni v}}w_C(B)e^{f(B)+g(B)}.$$
        If $v \in N^2(A)$ in the outer sum is in $L_{n-1}$, then $B\subseteq L_{n-1}$, and so in this case
        $$\sum_{\substack{B\in \cP_C \\ B\ni v}}w_C(B)e^{f(B)+g(B)}\leq \sum_{\substack{B\in \cP^{\emptyset} \\ B\ni v}}w(B)e^{f(B)+g(B)}\stackrel{\eqref{eq:case1-3}}{\leq} \frac{1}{2n^5}.$$
        The first inequality holds because $\{B \in \mathcal P_C:B \ni v\}=\{B \in \mathcal P^\emptyset: B \ni v\}$, and for $B$ in this set (recalling \eqref{def:polymer.w}) $w_C(B) \le w(B)$.
        We have the same conclusion for $v\in L_{n+1}$ using the symmetry via the map $u\mapsto 2\cdot\boldsymbol{1}-u$. Now, noting that $|N^2(A)| \le 2n^2|A|$ (counting second neighbors on the same layer and two layers above),
        $$\sum_{\substack{B\nsim A}}w_C(B)e^{f(B)+g(B)}\le \sum_{v\in  N^2(A)}\sum_{\substack{B \in \cP_C \\ B \ni v}} w_C(B)e^{f(B)+g(B)} \le 2n^2|A|/(2n^5)\le f(A).$$
        Since the Koteck\'y-Preiss condition is satisfied, the cluster expansion of $\ln \Xi_C$ converges absolutely.

        To obtain \eqref{eq.wc}, we apply \eqref{kpbounds} with $\gamma=\{v\}$ for any $v$, from which we have
        $$\sum_{\substack{\Gam\in\cC \\ \Gam\not\sim\{v\}}}|w_C(\Gam)|e^{g(\Gam)}\leq f(\{v\})\le 1/n^2,$$
        and sum over all $v\in L_{n-1}\cup L_{n+1}$.
    \end{proof}

{Having established convergence of the cluster expansion of $\ln \Xi_C$, it will be useful to record the following facts about the contribution to the expansion from small clusters. 
\begin{proposition}\label{prop:cluster_1} We have
\begin{enumerate}[(i)]
   \item \label{eq:cluster_1} \[
        \sum_{\|\Gamma\|=1}w_C(\Gamma)=2\sum_{0 \le k < n/2}\binom{n}{2k+1}\binom{2k+1}{k}2^{-(n-k)}~(=T_1)\, ,
   \]
   \item \label{eq:cluster_2}
\[
    \sum_{\substack{\Gam\in\cC \\ \|\Gam\|=2}}w_C(\Gam)=\sum_{0 \le k<n/2}\binom{n}{2k+1}\binom{2k+1}{k}\left(n^2-3(k+1)n+\frac{k(9k+17)}{4}\right)2^{-2(n-k)}(=T_2)\, ,\]
\item \label{eq:cluster_3} \[
  \sum_{\|\Gam\|=3}|w_C(\Gam)|=e^{-\Omega(n)}\, ,
\]
\item \label{eq:cluster_4}\[\sum_{\|\Gam\|=2}w_C(\Gam)=o\left(\sum_{\|\Gam\|=1}w_C(\Gam)\right)\, .\]
\end{enumerate}
\end{proposition}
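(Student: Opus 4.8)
The plan is to enumerate, for each $m\in\{1,2,3\}$, the clusters $\Gamma$ with $\|\Gamma\|=m$ according to their shape, evaluate the Ursell weight $\phi(G_\Gamma)$ of each shape, and reduce what remains to weighted counts of vertices of $L_{n-1}\cup L_{n+1}$ organised by their number of $2$'s. Two structural facts drive everything. First, every polymer lies entirely in one of $L_{n-1}$, $L_{n+1}$; hence a size-$2$ polymer is a $2$-linked pair \emph{within a single layer}, while a ``cross-layer pair'' can only occur as two incompatible singleton polymers. Second, for $v\in L_{n-1}$ with exactly $k$ twos one has $|\partial(\{v\})|=|N^+(v)|=n-k$, and the number of such $v$ is $\binom{n}{2k+1}\binom{2k+1}{k}$ (choose the $2k+1$ non-one coordinates, then the $k$ twos among them; the $n-1-2k$ remaining coordinates are ones, forcing $0\le k<n/2$). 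The anti-automorphism $u\mapsto 2\cdot\boldsymbol 1-u$ swaps $L_{n-1}\leftrightarrow L_{n+1}$ and $N^+\leftrightarrow N^-$ and preserves $w_C$, so I always reduce to $L_{n-1}$. Part (i) is then immediate: a size-$1$ cluster is a singleton $\{v\}$ with $\phi=1$ and $w_C(\Gamma)=2^{-|\partial(\{v\})|}$, and summing over $v\in L_{n-1}\cup L_{n+1}$ and grouping by $k$ gives exactly $T_1$.

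For Part (ii), a cluster with $\|\Gamma\|=2$ is either a single $2$-linked same-layer pair $A$ (Ursell weight $1$) or an ordered pair $(\{u\},\{v\})$ of incompatible singletons (incompatibility graph $K_2$, Ursell weight $\phi(K_2)=-\tfrac12$; this includes $u=v$). Thus $\sum_{\|\Gamma\|=2}w_C(\Gamma)=S_a-\tfrac12 S_b$, where $S_a$ runs over same-layer $2$-linked pairs $A$ with weight $2^{-|\partial(A)|}$ and $S_b$ runs over ordered pairs with $\partial(\{u\})\cap\partial(\{v\})\ne\emptyset$ and weight $2^{-|\partial(\{u\})|-|\partial(\{v\})|}$. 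I split $S_b$ into the diagonal $u=v$, the same-layer off-diagonal part, and the cross-layer part. Two elementary observations make the count finite: a same-layer pair $\{u,v\}$ in $L_{n-1}$ satisfies $\partial(\{u\})\cap\partial(\{v\})\ne\emptyset$ iff $u,v$ differ in exactly two coordinates by $\pm1$ (equivalently they share the $L_n$-neighbour $u\vee v$), and since $\Delta=1$ this forces $|N^+(u)\cap N^+(v)|=1$; while $x\in L_{n-1}$, $y\in L_{n+1}$ have overlapping $\partial$ iff $x<y$, in which case $y=x+\mathbf e_i+\mathbf e_j$ and $|N^+(x)\cap N^-(y)|$ equals $2$ (if $i\ne j$) or $1$ (if $y=x+2\mathbf e_i$). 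For a fixed $u$ with $k$ twos, every partner obtained as $v=u-\mathbf e_i+\mathbf e_j$ (same layer) or $y=x+\mathbf e_i+\mathbf e_j$ (opposite layer) has number of twos/zeros in $\{k-1,k,k+1\}$, determined by $u_i,u_j$; summing $2^{-|\partial|}$ over these $O(1)$ coordinate-types yields a degree-$\le2$ polynomial in $n$ and $k$ times $2^{-2(n-k)}$. Adding the three pieces, the $n^2$-coefficients combine as $2-0-1=1$, the $n$-coefficients collapse to $-3(k+1)$, and the constant-in-$n$ term simplifies to $\tfrac{9k^2+17k}{4}=\tfrac{k(9k+17)}{4}$; summing over $k$ gives $T_2$.

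For Part (iii), the key point is that in any cluster with $\|\Gamma\|=3$ all the (at most three) vertices occurring lie within Hamming distance $O(1)$ of one another: each polymer has diameter $\le4$ in the Hasse diagram, incompatible polymers are within Hamming distance $2$, and the incompatibility graph is connected. Together with $|\phi(G_\Gamma)|\le1$ and $|N^+(v)\cap N^+(v')|\le1$, a short inclusion--exclusion gives $\sum_i|\partial(A_i)|\ge 3|N^+(v)|-O(1)$ for any vertex $v$ in $\Gamma$, hence $|w_C(\Gamma)|\le 2^{O(1)}\,2^{-3|N^+(v)|}$; and by \Cref{lem:linked_count} the number of size-$3$ clusters meeting a fixed $v$ is $\mathrm{poly}(n)$. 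Therefore $\sum_{\|\Gamma\|=3}|w_C(\Gamma)|\le \mathrm{poly}(n)\sum_{v\in L_{n-1}}8^{-|N^+(v)|}$, and using the identity $\sum_{v\in L_{n-1}}x^{|N^+(v)|}=[y^{n-1}](x+xy+y^2)^n$ together with $[y^{n-1}]f^n\le y\,\bigl(f(y)/y\bigr)^n$ at $x=\tfrac18$ (where $\min_{y>0}(\tfrac1{8y}+\tfrac18+y)=\tfrac{\sqrt2}{2}+\tfrac18<1$) this is $e^{-\gO(n)}$. Part (iv) follows from a crude version of the same estimate: since $|\partial(\{w\})|\ge(n+1)/2$ for every $w\in L_{n-1}\cup L_{n+1}$ (as $w$ has at most $\lfloor(n-1)/2\rfloor$ twos, resp.\ zeros) and overlaps are $\le2$, every $\|\Gamma\|=2$ weight is at most $2^{-(n+1)/2+O(1)}$ times a singleton weight, and each vertex lies in $O(n^2)$ $2$-linked pairs and $O(n^2)$ incompatible singletons, so $\bigl|\sum_{\|\Gamma\|=2}w_C(\Gamma)\bigr|\le O(n^2 2^{-n/2})\sum_v 2^{-|\partial(\{v\})|}=O(n^2 2^{-n/2})\,T_1=o(T_1)$. (Alternatively, Part (iv) is immediate from the asymptotics of $T_1$ and $T_2$ computed in \Cref{sec:computations}.)

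The main obstacle is the exact bookkeeping in Part (ii): one must keep track of the precise overlap sizes allowed by $\Delta=1$, correctly combine the $\phi=+1$ contribution of the size-$2$ polymers with the $\phi=-\tfrac12$ contribution of the incompatible singleton pairs (noting that their ``partner sets'' coincide, which is what produces the clean coefficients), and then verify that summing the three pieces collapses the $n^2$ and $n$ coefficients and leaves exactly $\tfrac{k(9k+17)}{4}$. Parts (i), (iii), and (iv) are comparatively routine once the generating-function identity for $\sum_v x^{|N^+(v)|}$ and \Cref{lem:linked_count} are available.
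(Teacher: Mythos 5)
Your proposal is correct and follows essentially the same route as the paper: the decomposition of size-$1$ and size-$2$ clusters into singleton, same-layer-pair, and cross-layer-pair types (with Ursell weights $1$ and $-\tfrac12$), the grouping of vertices by their number of twos, and the polynomial-times-$2^{-2(n-k)}$ bookkeeping for Part (ii) are all identical to the paper's $A_1-A_2-A_3$ computation, as is the "all vertices in a small cluster are $O(1)$-close, so degrees agree up to $O(1)$" argument for Parts (iii)–(iv). The one genuine difference is the final estimate in Part (iii): where the paper rewrites $\sum_{v\in L_{n-1}}2^{-3d_v}$ as a Gauss hypergeometric function and invokes the asymptotic expansion~\eqref{eq:hyper_asymp} with $z=32$, you instead use the elementary generating-function identity $\sum_{v\in L_{n-1}}x^{|N^+(v)|}=[y^{n-1}](x+xy+y^2)^n$ and the coefficient bound $[y^{n-1}]f(y)^n\le y\bigl(f(y)/y\bigr)^n$ optimised at $y=1/(2\sqrt2)$, yielding $\bigl(\tfrac18+\tfrac{\sqrt2}{2}\bigr)^n$. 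These give the same rate, since $\tfrac18+\tfrac{\sqrt2}{2}=(1+4\sqrt2)/8$, which is exactly the ratio the paper extracts from the hypergeometric asymptotics; your version is more self-contained and avoids an appeal to \cite{cvitkovic2017asymptotic}. One small caution on your ``alternative'' for Part (iv): deducing it from the asymptotics of $T_1$ and $T_2$ would be circular, since the paper never computes the asymptotics of $T_2$ directly but instead infers $T_2=o(T_1)$ from Part (iv) itself; your primary argument (bounding each size-$2$ weight by $2^{-n/2+O(1)}$ times a singleton weight) is the right one.
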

The proof of~\Cref{prop:cluster_1} requires some detailed calculations which we defer to~\Cref{sec:computations}.}

We now define a new weight function $w'_C$ on $\cP_C$ as  follows: $w'_C(A)=w_C(A)$ if $|A| \le 2$, and $w'_C(A)=0$ otherwise. Let $\Xi_C'$ be the corresponding partition function. In analogy with~\Cref{lem:Xi_C}, it is straightforward to verify that
\[
2^{|L_n|}\Xi_C'=|\cA_0|
\]
where $\cA_0$ is defined as in~\Cref{thm.central}.
The first conclusion of \Cref{thm.central} now follows from~\Cref{lem:Xi_C} and the lemma below.

\begin{lemma}
    \[\Xi_C=\left(1+e^{-\gO(n)}\right)\Xi'_C.\]
\end{lemma}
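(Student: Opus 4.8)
The plan is to bound the difference $|\Xi_C - \Xi_C'|$ via the cluster expansions of their logarithms. Since $w_C'$ agrees with $w_C$ on all polymers of size at most $2$ and vanishes otherwise, the polymer model $(\cP_C, \sim, w_C')$ also satisfies the Koteck\'y--Preiss condition (the verification is dominated by that for $w_C$, exactly as in \Cref{remark} and \Cref{lem:central.converge}), so $\ln \Xi_C'$ also has an absolutely convergent cluster expansion. Consequently
\[
\ln \Xi_C - \ln \Xi_C' = \sum_{\Gam \in \cC}\bigl(w_C(\Gam) - w_C'(\Gam)\bigr),
\]
and a cluster $\Gam$ contributes to this sum only if some polymer appearing in $\Gam$ has size at least $3$; in particular $\|\Gam\| \ge 3$ for every such cluster. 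Our goal is therefore to show that $\sum_{\|\Gam\| \ge 3}|w_C(\Gam)| = e^{-\gO(n)}$ (and the same bound for $w_C'$, which is immediate since $|w_C'(\Gam)| \le |w_C(\Gam)|$), which gives $\ln \Xi_C - \ln \Xi_C' = e^{-\gO(n)}$ and hence $\Xi_C = (1+e^{-\gO(n)})\Xi_C'$.

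The main step is thus the tail bound $\sum_{\|\Gam\| \ge 3}|w_C(\Gam)| = e^{-\gO(n)}$. I would split this as $\sum_{\|\Gam\|=3}|w_C(\Gam)| + \sum_{\|\Gam\| \ge 4}|w_C(\Gam)|$. The first term is exactly $e^{-\gO(n)}$ by \Cref{prop:cluster_1}\eqref{eq:cluster_3}. For the second, I would use the Koteck\'y--Preiss tail bound \eqref{kpbounds}: for any fixed vertex $v \in L_{n-1} \cup L_{n+1}$,
\[
\sum_{\substack{\Gam \in \cC \\ \Gam \nsim \{v\}}} |w_C(\Gam)| e^{g(\Gam)} \le f(\{v\}) \le 1/n^2.
\]
Every cluster $\Gam$ with $\|\Gam\| \ge 4$ contains a polymer $A$ with $\|\Gam\| \le n^2|A| \cdot (\text{something})$... more simply, summing over $v$ in the relevant layers covers each cluster at least once (indeed $\|\Gam\|$ times), so $\sum_{\|\Gam\| \ge 4}|w_C(\Gam)|e^{g(\Gam)} \le 3^n/n^2$. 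The point is that for $\|\Gam\| \ge 4$ we have $g(\Gam) = \gO(n)$: by the definition of $g$ in \eqref{g.def}, for a polymer $A$ of size $1 \le |A| \le n/10$ we have $g(A) \ge \bigl(\tfrac{n-2}{2}|A| - |A|^2\bigr)\ln 2 - 10|A|\ln n$, which for $|A|=1$ is $\tfrac{n-2}{2}\ln 2 - 10\ln n = \gO(n)$, and $g$ is superadditive enough over the polymers of a cluster that $g(\Gam) = \gO(n)$ whenever $\|\Gam\| \ge 4$ (one has to check the regime where a single polymer is large, using the $n^4$ and $>n^4$ cases of \eqref{g.def}, each of which also gives $g(A) = \gO(n)$ for $|A| \ge 1$). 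Hence
\[
\sum_{\|\Gam\| \ge 4}|w_C(\Gam)| \le e^{-\gO(n)}\sum_{\|\Gam\| \ge 4}|w_C(\Gam)|e^{g(\Gam)} \le e^{-\gO(n)} \cdot 3^n/n^2 = e^{-\gO(n)}.
\]

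Combining, $\sum_{\|\Gam\| \ge 3}|w_C(\Gam)| = e^{-\gO(n)}$, and the same holds with $w_C$ replaced by $w_C'$. Then
\[
|\ln \Xi_C - \ln \Xi_C'| \le \sum_{\|\Gam\| \ge 3}\bigl(|w_C(\Gam)| + |w_C'(\Gam)|\bigr) \le 2\sum_{\|\Gam\| \ge 3}|w_C(\Gam)| = e^{-\gO(n)},
\]
and exponentiating gives $\Xi_C = e^{e^{-\gO(n)}}\Xi_C' = (1 + e^{-\gO(n)})\Xi_C'$, as desired. The main obstacle I anticipate is the bookkeeping in the previous paragraph: carefully checking that $g(\Gam) = \gO(n)$ for \emph{every} cluster with $\|\Gam\| \ge 4$ — in particular handling clusters built from a single polymer of intermediate or large size via the piecewise definition of $g$ — and making sure the $3^n$ factor from summing \eqref{kpbounds} over all base vertices is genuinely killed by the $e^{-\gO(n)}$ factor (i.e.\ that the implied constant in $g(\Gam) = \gO(n)$ beats $\ln 3$, which it does with room to spare for $\|\Gam\| \ge 4$). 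Everything else is a routine application of the cluster expansion machinery already set up in \Cref{lem:central.converge}.
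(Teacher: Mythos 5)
Your proposal is correct and follows essentially the same route as the paper: identify the difference of cluster expansions as a sum over clusters with $\|\Gam\|\ge 3$, dispatch $\|\Gam\|=3$ via \Cref{prop:cluster_1}\eqref{eq:cluster_3}, and dispatch $\|\Gam\|\ge 4$ by the Koteck\'y--Preiss tail bound together with the observation that $g(\Gam)$ comfortably beats $n\ln 3$ once $\|\Gam\|\ge 4$ (the paper instantiates this as the bound $\sum_{\|\Gam\|\ge\ell}|w_C(\Gam)|\le 3^n n^{10\ell-2}2^{-(\frac{n-2}{2}\ell-\ell^2)}$, applied at $\ell=4$). You also correctly flag the one point requiring care — that $2\ln 2>\ln 3$ is what makes the $\ell=4$ tail vanish while the $\ell=3$ tail would not, which is exactly why the $\|\Gam\|=3$ term must be handled separately via the explicit computation.
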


\begin{proof}
    Note that if a cluster $\Gamma \in \cC(\cP_C, \sim)$ only contains polymers of size at most 2, then $w'_C(\Gamma)=w_C(\Gamma)$; otherwise, $w_C'(\Gamma)=0$. Therefore,
    \beq{eq:xi.vs.xi'}\ln\Xi_C-\ln\Xi'_C\le\sum_{\|\Gamma\|=3}|w_C(\Gamma)|+ \sum_{\|\Gamma\|\ge 4}|w_C(\Gamma)|.\enq

    The first term of the right-hand side of \eqref{eq:xi.vs.xi'} is $e^{-\Omega(n)}$ by  \Cref{prop:cluster_1}~\eqref{eq:cluster_3}. For the second term, we show that, for any fixed $\ell \ge 1$,
    \beq{eq.gamma.4} \sum_{\Gamma \in \cC: \|\Gamma \| \ge \ell} |w_C(\Gamma)| \le \frac{3^n n^{10\ell}}{n^2 2^{\frac{n-2}{2}\ell-\ell^2}},\enq
    which will show that $\sum_{\|\Gamma \|\ge 4}|w_C(\Gamma)|=e^{-\gO(n)}.$ To this end, we analyze the cluster expansion of $\ln \Xi_C$, which is convergent by \Cref{lem:central.converge}. Note that with the choice of function $g$ as in \eqref{g.def}, since $g(x)$ is increasing for $x <(1-o(1))n/4$, we have that $\|\Gamma \| \ge \ell$ implies $g(\Gamma) \ge g(\ell)$. Combining this with \eqref{eq.wc},
        \[e^{g(\ell)}\sum_{\Gamma \in \cC:\|\Gamma\|\ge \ell}|w_C(\Gamma)|\le \sum_{\Gamma \in \cC:\|\Gamma\|\ge \ell}|w_C(\Gamma)|e^{g(\Gamma)}\le 3^n/n^2,\]
        and \eqref{eq.gamma.4} follows by plugging in $ g(\ell)=\left(\frac{n-2}{2}\ell-\ell^2\right)\ln 2-10\ell \ln n$ on the left-hand side.
\end{proof}

To prove the second part of \Cref{thm.central}, in view of \Cref{lem:Xi_C}, it remains to show the following.

\begin{lemma}\label{lem:XiCasym}
    $$\Xi_C=(1+e^{-\Omega(n)})\exp{(T_1+T_2)}.$$
\end{lemma}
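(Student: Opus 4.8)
The plan is to use the convergent cluster expansion of $\ln \Xi_C$ established in \Cref{lem:central.converge} and to show that the contribution from clusters of size at least $3$ is negligible. By \eqref{eqclusterexp} we may write
\[
\ln \Xi_C = \sum_{\Gamma \in \cC} w_C(\Gamma) = \sum_{\|\Gamma\|=1} w_C(\Gamma) + \sum_{\|\Gamma\|=2} w_C(\Gamma) + \sum_{\|\Gamma\| \ge 3} w_C(\Gamma).
\]
By \Cref{prop:cluster_1}\eqref{eq:cluster_1} and \eqref{eq:cluster_2}, the first two sums equal $T_1$ and $T_2$ respectively. For the tail, I would invoke the bound \eqref{eq.gamma.4} (established in the proof of the previous lemma) with $\ell = 3$, which gives $\sum_{\|\Gamma\| \ge 3} |w_C(\Gamma)| \le 3^n n^{30}/(n^2 2^{3(n-2)/2 - 9}) = e^{-\gO(n)}$; alternatively one can split off $\|\Gamma\|=3$ (which is $e^{-\gO(n)}$ by \Cref{prop:cluster_1}\eqref{eq:cluster_3}) and bound $\|\Gamma\| \ge 4$ by \eqref{eq.gamma.4}. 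Hence $\ln \Xi_C = T_1 + T_2 + e^{-\gO(n)}$.

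It then remains to pass from the additive error in $\ln \Xi_C$ to the multiplicative form claimed. For this I would note that $T_1 + T_2 \ge 0$, so $\exp(T_1+T_2) \ge 1$, and therefore
\[
\Xi_C = \exp(T_1+T_2) \cdot \exp\!\big(e^{-\gO(n)}\big) = \big(1 + e^{-\gO(n)}\big)\exp(T_1+T_2),
\]
using that $\exp(x) = 1 + O(x)$ for $x \to 0$ and that multiplying a $1+o(1)$ factor against $\exp(T_1+T_2) \ge 1$ only enlarges the relative error by a bounded amount. (One should be slightly careful that $T_1, T_2$ themselves are $e^{-\gO(n)}$ — indeed $T_1$ is roughly $\binom{n}{1}\binom{1}{0}2^{-n} + \cdots$, which is exponentially small — so in fact $\exp(T_1+T_2) = 1 + e^{-\gO(n)}$ as well; but the statement as written does not require this and the displayed identity holds regardless.)

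The main obstacle is really located upstream, in \Cref{prop:cluster_1}: identifying the size-$1$ and size-$2$ cluster contributions with the explicit expressions $T_1$ and $T_2$, and showing the size-$3$ contribution is $e^{-\gO(n)}$. These are the detailed calculations deferred to \Cref{sec:computations}. Granting those, the proof of \Cref{lem:XiCasym} itself is a short bookkeeping argument: collect the first two terms of the cluster expansion as $T_1+T_2$, absorb everything of order $\|\Gamma\| \ge 3$ into an $e^{-\gO(n)}$ additive error via the already-proven tail bound \eqref{eq.gamma.4}, and exponentiate. No new isoperimetric or container input is needed beyond what has already been used to establish convergence.
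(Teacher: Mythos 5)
Your proposal follows the same high-level approach as the paper, but the ``preferred'' route you give first contains a genuine error that the paper is careful to avoid. You claim that \eqref{eq.gamma.4} applied with $\ell=3$ already gives $\sum_{\|\Gamma\|\ge3}|w_C(\Gamma)|=e^{-\Omega(n)}$. Check the exponent: the bound is
\[
\sum_{\|\Gamma\|\ge 3}|w_C(\Gamma)|\;\le\;\frac{3^n n^{30}}{n^2\,2^{\frac{3(n-2)}{2}-9}} \;=\;\Theta\!\left(n^{28}\left(\frac{3}{2^{3/2}}\right)^n\right),
\]
and $3/2^{3/2}=3/(2\sqrt 2)\approx 1.06>1$, so this quantity is exponentially \emph{large}, not small. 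The bound from \eqref{eq.gamma.4} only becomes exponentially small starting from $\ell=4$, where $3^n/2^{2n}=(3/4)^n$. This is precisely why the paper handles $\|\Gamma\|=3$ separately via \Cref{prop:cluster_1}\eqref{eq:cluster_3} (a direct computation) and reserves \eqref{eq.gamma.4} for $\|\Gamma\|\ge 4$. Your ``alternatively'' sentence is in fact the only viable route here, and it is exactly what the paper does.

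One further slip, though it does not affect validity: your parenthetical claims $T_1$ is itself $e^{-\Omega(n)}$. It is not --- \Cref{thm:asymp_2} shows $T_1=\Theta\!\big(n^{-1/2}\big(\tfrac{1+2\sqrt2}{2}\big)^n\big)$, which is exponentially \emph{large}. Fortunately, as you yourself note, the exponentiation step $\Xi_C=\exp(T_1+T_2)\exp(e^{-\Omega(n)})=(1+e^{-\Omega(n)})\exp(T_1+T_2)$ is an identity that holds regardless of the size of $T_1+T_2$, so the conclusion stands once the tail bound is handled correctly.
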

\begin{proof}
    We know by \Cref{lem:central.converge} that the cluster expansion of $\ln \Xi_C$ converges. Hence, it suffices to show that
    $$\sum_{\|\Gamma \|=1}w_C(\Gamma)=T_1, \sum_{\|\Gamma \|=2}w_C(\Gamma)=T_2, \sum_{\|\Gamma \|=3}|w_C(\Gamma)|=e^{-\Omega(n)}\text{ and } \sum_{\|\Gamma \|\geq4}w_C(\Gamma)=e^{-\Omega(n)}.$$
    The last one follows from \eqref{eq.gamma.4}, while the first three follow from \Cref{prop:cluster_1}.
\end{proof}

Finally we note that \Cref{thm:asymp_1} follows by combining~\Cref{lem:XiCasym}, \Cref{lem:Xi_C} and Theorem~\ref{thm.reduction}.

\section{Proof of \Cref{thm:CLT}}\label{sec:CLT}
Recall that, given an antichain $I \sub [3]^n$, we call the vertices of $I\backslash L_n$ the defect vertices of $I$ and we let $X_n$ denote the number of defect vertices in a uniformly randomly chosen antichain in $[3]^n$. Our goal is to prove a CLT for $X_n$. To this end, it will be more convenient to work with the random variable $Y_n$ which counts the number of defect vertices in a uniformly randomly chosen antichain in the middle three layers $L_{[n-1,n+1]}$. We first show that $Y_n$ satisfies a CLT from which the CLT for $X_n$ will follow easily. We therefore turn our attention to proving the following proposition.

\begin{proposition}\label{prop:CLT}
     The random variable $\tilde Y_n := (Y_n-\E(Y_n))/\sqrt{\textup{Var} (Y_n)}$
    converges in distribution to the standard normal $N(0,1)$ as $n \rightarrow \infty.$ Furthermore, 
    $$\E(Y_n)=(1+o(1)){T_1} \hspace{1cm} \text{and} \hspace{1cm} \textup{Var} (Y_n)=(1+o(1)){T_1}.$$
\end{proposition}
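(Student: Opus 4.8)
The plan is to derive a limit law for the additive observable $Y_n$ of the polymer model $(\cP_C,\sim,w_C)$ via the cluster expansion, following the blueprint of the central limit theorems in \cite{hypercube, JMP1, jenssen2022independent}. Under the bijection of \Cref{lem:Xi_C}, an antichain in $L_{[n-1,n+1]}$ corresponds to a compatible configuration $\{A_1,\dots,A_k\}$ of polymers together with a free subset of $L_n$, and the number of defect vertices equals $\sum_i|A_i|$. So, introducing for $\lam$ near $0$ the perturbed weight $w_C^{(\lam)}(A):=e^{\lam|A|}w_C(A)$ and writing $\Xi_C(\lam)$ for the corresponding partition function, we get $\E[e^{\lam Y_n}]=\Xi_C(\lam)/\Xi_C(0)$, whence the cumulant generating function of $Y_n$ is $h_{Y_n}(\lam)=\ln\Xi_C(\lam)-\ln\Xi_C(0)$. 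Since $w_C^{(\lam)}(\Gam)=e^{\lam\|\Gam\|}w_C(\Gam)$ for every cluster $\Gam$, the cluster expansion \eqref{eqclusterexp} gives $h_{Y_n}(\lam)=\sum_{\Gam\in\cC}(e^{\lam\|\Gam\|}-1)w_C(\Gam)$, and, once termwise differentiation at $\lam=0$ is justified,
\[
\kap_\ell(Y_n)=\sum_{\Gam\in\cC}\|\Gam\|^\ell\, w_C(\Gam)\qquad(\ell\ge1).
\]

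Granting this, I would estimate the cumulants by grouping clusters by $\|\Gam\|$. The clusters of size $1$ contribute $\sum_{\|\Gam\|=1}w_C(\Gam)=T_1$ (\Cref{prop:cluster_1}~\eqref{eq:cluster_1}); those of size $2$ contribute $2^\ell\sum_{\|\Gam\|=2}w_C(\Gam)=2^\ell T_2=o(T_1)$ (\Cref{prop:cluster_1}~\eqref{eq:cluster_4}); and the remaining tail satisfies $\sum_{\|\Gam\|\ge3}\|\Gam\|^\ell|w_C(\Gam)|=e^{-\gO(n)}$, using \Cref{prop:cluster_1}~\eqref{eq:cluster_3} for $\|\Gam\|=3$ and the bound \eqref{eq.gamma.4} (combined with \eqref{eq.wc}, which also lets one absorb the polynomial factor $\|\Gam\|^\ell$) for $\|\Gam\|\ge4$. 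Since $T_1\to\infty$ (in fact $T_1$ is exponentially large in $n$, cf.\ \Cref{thm:asymp_2}), this gives $\kap_\ell(Y_n)=(1+o(1))T_1$ for every fixed $\ell\ge1$; in particular $\E(Y_n)=\kap_1(Y_n)=(1+o(1))T_1$ and $\textup{Var}(Y_n)=\kap_2(Y_n)=(1+o(1))T_1$, which is the second assertion of the proposition.

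With the moments in hand, the CLT follows from the cumulant criterion \Cref{cumulantfact}. By \eqref{cumlin}, $\kap_1(\tilde Y_n)=0$, $\kap_2(\tilde Y_n)=1$, and for each fixed $\ell\ge3$
\[
\kap_\ell(\tilde Y_n)=\frac{\kap_\ell(Y_n)}{\textup{Var}(Y_n)^{\ell/2}}=(1+o(1))\,T_1^{\,1-\ell/2}\longrightarrow 0,
\]
since $1-\ell/2<0$ and $T_1\to\infty$. Hence $\tilde Y_n$ converges in distribution to $N(0,1)$, proving \Cref{prop:CLT} (the CLT for $X_n$ in \Cref{thm:CLT} will then follow easily, as the distributions of $X_n$ and $Y_n$ differ negligibly by the reduction of \Cref{sec:reduction}).

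I expect the main obstacle to be the justification of termwise differentiation at $\lam=0$, i.e.\ the absolute, locally uniform convergence of $\sum_\Gam\|\Gam\|^\ell e^{\lam\|\Gam\|}w_C(\Gam)$ near $\lam=0$. For clusters made of small or medium polymers this is immediate from \Cref{lem:central.converge}, where $g(\Gam)$ comfortably dominates $2\|\Gam\|$; the delicate case is a cluster containing a very large polymer, for which $g$ is only linear with coefficient $\gO(1/n^2)$. This should be handled exactly as in Case~3 of \Cref{prop:KP_verif}: one checks that the perturbed weights $w_C^{(\lam)}$ still satisfy the Koteck\'y--Preiss condition \eqref{KPbound1} (with $g$ replaced by $g-|\lam|\,|{\cdot}|$) for all $|\lam|$ below a small, possibly $n$-dependent, threshold, invoking the container estimate of \Cref{lem:containers}; since a convergent cluster expansion depends analytically on its parameters, the cumulants can then be recovered as derivatives at $\lam=0$. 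The only other input, \Cref{prop:cluster_1}, is already available, so beyond this technical point the argument is bookkeeping.
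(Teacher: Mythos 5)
Your proposal follows essentially the same route as the paper: tilting the polymer weights by $e^{\lambda|A|}$ (the paper's $\tilde w_t$), identifying the cumulant generating function of $Y_n$ with the difference of log-partition functions, recovering $\kappa_\ell(Y_n)=\sum_\Gamma\|\Gamma\|^\ell w_C(\Gamma)$ via termwise differentiation, grouping clusters by size and using \Cref{prop:cluster_1} together with the Koteck\'y--Preiss tail bounds to show every fixed cumulant equals $(1+o(1))T_1$, and then invoking \Cref{cumulantfact}. The technical point you flag (justifying termwise differentiation by re-verifying Koteck\'y--Preiss for the tilted weights on a small $n$-dependent window, then absorbing the polynomial factor $\|\Gamma\|^\ell$ via the $g(\Gamma)$ decay) is exactly what the paper's \Cref{prop:polym_aux} and \Cref{lem:unif_conv} carry out, so the argument is sound and matches the paper's.
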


 By \Cref{cumulantfact}, in order to derive the first conclusion of \Cref{prop:CLT}, it suffices to show that $\gk_1(\tilde Y_n)\rightarrow 0$, $\gk_2(\tilde Y_n)\rightarrow 1$, and $\gk_k(\tilde Y_n)\rightarrow 0$ for all $k \ge 3$ as $n \rightarrow \infty$. Note that, by the definition of $\tilde Y_n$  and \eqref{cumlin}, we have that, for all $n$, $\gk_1(\tilde Y_n)= 0$, $\gk_2(\tilde Y_n)=1$, and $\gk_k(\tilde Y_n)=(\text{Var}(Y_n))^{-k/2}\gk_k(Y_n)$ for $k\ge 3$.

We define a family of auxiliary polymer models that will be useful for the rest of the section. Recall the definition of the polymer model $(\cP_C,\sim,w_C)$ from the previous section. For $t\in \mathbb{R}$, we define the `tilted' weight $\tilde w_t(S)=w_C(S)e^{t|S|}$. We let $\tilde\Xi_t$ denote the partition function of $(\cP_C,\sim,\tilde w_t)$. Note that \textit{as formal power series}
\begin{equation}\label{eqn:aux_ce}
\ln\tilde\Xi_t=\sum_{\Gam\in \cC}\tilde w_t(\Gam)=\sum_{\Gam\in\cC}w_C(\Gam)e^{t\|\Gam\|}
\end{equation}
and
\begin{align}\label{eqn:aux_deriv}
    \frac{\partial^k\ln \tilde \Xi_t}{\partial t^k}=\sum_{\Gam\in\cC}w_C(\Gam)\|\Gam\|^ke^{t\|\Gam\|}.
\end{align}

We record the following lemma which establishes~\eqref{eqn:aux_ce} around $t=0$. Its proof is straightforward by repeating the proofs of \Cref{prop:KP_verif} and \Cref{lem:central.converge}, and the derivation of \eqref{eq.gamma.4}, so we omit it.

\begin{lemma}\label{prop:polym_aux}
For positive integer $n$, let $t_0=t_0(n):=1/n^3$. There exists an $n_0$ for which the following holds for any $n \ge n_0$: the polymer model $(\cP_C,\sim,\tilde w_{t_0})$ satisfies the Koteck\'y-Preiss condition with the same functions $f$ and $g$ as in \eqref{g.def}. Since $\tilde w_t$ is increasing in $t$, this implies the same conclusion holds for all $t \le t_0$. Hence, the cluster expansion of $\ln\tilde\Xi_t$ converges absolutely in $(-\infty, t_0]$.
\end{lemma}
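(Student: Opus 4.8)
The plan is to reduce the whole statement to the $t=0$ case, which has already been handled in \Cref{prop:KP_verif} and \Cref{lem:central.converge}. Write the tilted weight as $\tilde w_{t_0}(A)=w_C(A)\,e^{|A|/n^3}$. Since every polymer has $|A|\ge 1$ and $w_C(A)=2^{-|\partial(A)|}\ge 0$, for any $t\le t_0$ we have $\tilde w_t(A)\le \tilde w_{t_0}(A)$ pointwise, so the left-hand side of the Koteck\'y--Preiss inequality \eqref{KPbound1} for $(\cP_C,\sim,\tilde w_t)$ is at most the corresponding quantity for $(\cP_C,\sim,\tilde w_{t_0})$. Hence it suffices to verify \eqref{KPbound1} at $t=t_0$; once that is done, \Cref{thm.KP} yields absolute convergence of the cluster expansion of $\ln\tilde\Xi_t$ for every $t\le t_0$, i.e.\ on $(-\infty,t_0]$, which is the last assertion.

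To verify \eqref{KPbound1} at $t_0$, I would repeat the proof of \Cref{lem:central.converge} essentially verbatim. Using the symmetry $u\mapsto 2\cdot\boldsymbol{1}-u$ between $L_{n-1}$ and $L_{n+1}$, together with the inequality $w_C(B)\le w(B)$ for $B\subseteq L_{n-1}$, the problem reduces to the tilted analogue of \eqref{eq:case1-3}: for every relevant vertex $u$,
\[
\sum_{A\in\cP^{\emptyset},\,A\ni u} w(A)\,e^{|A|/n^3}\,e^{f(A)+g(A)}=O(n^{-5}).
\]
This I would in turn establish exactly as in \Cref{prop:KP_verif}, splitting the sum according to $|A|\le n/10$, $n/10<|A|\le n^4$, and $|A|>n^4$. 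In the first two ranges the only effect of the tilt is to multiply the base of the geometric series appearing in that proof by $e^{1/n^3}=1+O(n^{-3})$, which keeps it well below $1$; in the third range it multiplies the summand by $e^{a/n^3}$, still swamped by the $2^{-\Omega(a/(n\log n))}$ decay supplied by \Cref{lem:containers} through \Cref{prop:isoperim_up}. Thus each of the three sums is $O(n^{-5})$ for $n$ large, and combining this with $|N^2(A)|\le 2n^2|A|$ gives $\sum_{B\nsim A}\tilde w_{t_0}(B)\,e^{f(B)+g(B)}=O(|A|/n^3)\le f(A)$ for $n\ge n_0$, which is precisely \eqref{KPbound1}.

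Finally, for later use I would observe that these same computations produce the analogues of \eqref{eq.wc} and \eqref{eq.gamma.4} with $\tilde\Xi_{t_0}$ in place of $\Xi_C$; the tail-sum argument there only uses that $g$ is unchanged and that the Koteck\'y--Preiss condition holds with the same $f$ and $g$.

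As for the main obstacle, there is essentially none of substance. The entire point is that $t_0=n^{-3}$ is of strictly lower order than the two ``gaps'' driving the original estimates: the $n^{-2}$-scale slack built into $f(A)=|A|\ln 2/n^2$, and the $\Omega(n^{-1})$-scale isoperimetric expansion used when $|A|>n^4$. Consequently every geometric series and every tail bound in \Cref{prop:KP_verif}, \Cref{lem:central.converge}, and the derivation of \eqref{eq.gamma.4} survives the perturbation with only slightly worse constants. The only thing one genuinely has to check is that this order-of-magnitude bookkeeping is uniform in $n$, which is why the proof can honestly be described as a routine repetition of the earlier arguments.
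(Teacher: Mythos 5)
Your proposal is correct and follows the same route the paper intends: the paper omits this proof, stating it is "straightforward by repeating the proofs of Proposition \ref{prop:KP_verif} and Lemma \ref{lem:central.converge}, and the derivation of \eqref{eq.gamma.4}," which is exactly what you do, including the monotonicity reduction to $t=t_0$ and the observation that the tilt $e^{|A|/n^3}$ is subordinate to the $n^{-2}$-scale slack in $f$ and the $\Omega(n^{-1})$-scale isoperimetric decay in each of the three size regimes.
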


For future reference, we record that \Cref{prop:polym_aux} implies bound \eqref{eq.wc}, and thus \eqref{eq.gamma.4}, holds for the weights $\tilde w_t$ when $n\geq n_0$ and $t\leq 1/n^3$.

Next we record the following lemma which establishes~\eqref{eqn:aux_deriv} around $t=0$.

\begin{lemma}\label{lem:unif_conv}
    Let $n_0$ be as in \Cref{prop:polym_aux}. For all $n \ge n_0$, the following holds. Let $k\geq1$ be a fixed integer. Then
    $$\sum_{\Gam\in\cC}\tilde{w}_t(\Gam)\|\Gam\|^k$$
    converges uniformly for $t\in [-1/n^3,1/n^3]$. Furthermore,
    \begin{equation}\label{eq:swap_deriv}
    \frac{\partial^k}{\partial t^k}\sum_{\Gamma \in \cC}\tilde w_t(\Gamma)=\sum_{\Gam\in\cC}w_C(\Gam)\|\Gam\|^ke^{t\|\Gam\|}
    \end{equation}
    and 
    \begin{equation}\label{eq:cumulant_tail}
        \sum_{\|\Gam\|\geq4}|\tilde w_t(\Gam)|\|\Gam\|^k=e^{-\Omega(n)}\, .
    \end{equation}
    for $t\in [-1/n^3,1/n^3]$.
\end{lemma}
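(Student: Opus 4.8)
The plan is to prove \Cref{lem:unif_conv} by reducing everything to the cluster expansion tail bound \eqref{eq.gamma.4} (applied to the tilted weights $\tilde w_t$), since a convergent series with a uniform geometric-type tail bound can be differentiated term by term. First I would observe that by \Cref{prop:polym_aux}, the cluster expansion of $\ln\tilde\Xi_t$ converges absolutely for all $t\le t_0=1/n^3$, and that the Kotecký--Preiss bound \eqref{kpbounds} holds with the \emph{same} $g$ as in \eqref{g.def}; hence, exactly as recorded after \Cref{prop:polym_aux}, the analogue of \eqref{eq.wc} holds for $\tilde w_t$, namely $\sum_{\Gam\in\cC}|\tilde w_t(\Gam)|e^{g(\Gam)}\le 3^n/n^2$ for all $t\le t_0$. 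The key point is that this bound is \emph{uniform} in $t\in[-1/n^3,1/n^3]$.

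Next I would handle the polynomial factor $\|\Gam\|^k$. Since $g(x)=\left(\frac{n-2}{2}x-x^2\right)\ln 2-10x\ln n$ for $x\le n/10$ (and is at least linear in $x$ for larger $x$), for any fixed $k$ there is a constant $c_k>0$ so that $m^k\le c_k\, e^{g(m)/2}$ for all integers $m\ge 1$ (for $m$ up to $n/10$ this is because $e^{g(m)/2}$ already dominates any fixed power of $m$ once $m$ is bounded away from $0$, and one absorbs the finitely many small $m$ into the constant; for $m>n/10$ it is immediate). Therefore, uniformly for $t\in[-1/n^3,1/n^3]$,
\[
\sum_{\Gam\in\cC}|\tilde w_t(\Gam)|\,\|\Gam\|^k
\le c_k\sum_{\Gam\in\cC}|\tilde w_t(\Gam)|\,e^{g(\Gam)/2}
\le c_k\sum_{\Gam\in\cC}|\tilde w_t(\Gam)|\,e^{g(\Gam)}
\le \frac{c_k 3^n}{n^2}.
\]
More usefully, splitting by the value of $\|\Gam\|$ and using that $g$ is increasing on the relevant range gives, for each fixed $\ell\ge 1$,
\[
\sum_{\Gam\in\cC:\|\Gam\|\ge \ell}|\tilde w_t(\Gam)|\,\|\Gam\|^k
\le c_k\, e^{-g(\ell)/2}\sum_{\Gam\in\cC}|\tilde w_t(\Gam)|\,e^{g(\Gam)}
\le \frac{c_k 3^n}{n^2}\, e^{-g(\ell)/2},
\]
which tends to $0$ as $\ell\to\infty$ at a rate independent of $t\in[-1/n^3,1/n^3]$. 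This establishes uniform convergence of $\sum_{\Gam\in\cC}\tilde w_t(\Gam)\|\Gam\|^k$ on $[-1/n^3,1/n^3]$ (it is the uniform limit of its partial sums indexed by $\|\Gam\|\le \ell$, each of which is a finite sum of the continuous functions $t\mapsto w_C(\Gam)\|\Gam\|^k e^{t\|\Gam\|}$). Taking $\ell=4$ in the displayed tail bound gives \eqref{eq:cumulant_tail} directly.

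It remains to justify the term-by-term differentiation \eqref{eq:swap_deriv}. I would argue by induction on $k$: the $k=0$ case is \eqref{eqn:aux_ce} (the content of \Cref{prop:polym_aux}). Assuming $\frac{\partial^{k-1}}{\partial t^{k-1}}\sum_{\Gamma}\tilde w_t(\Gamma)=\sum_{\Gamma}w_C(\Gamma)\|\Gamma\|^{k-1}e^{t\|\Gamma\|}$, the term-wise $t$-derivative of the right-hand side is $\sum_{\Gamma}w_C(\Gamma)\|\Gamma\|^{k}e^{t\|\Gamma\|}$, which we have just shown converges uniformly on a neighborhood of any $t\in(-\infty,1/n^3)$ (indeed uniformly on $[-1/n^3,1/n^3]$, and the same argument with $t_0$ replaced by any $t_1<1/n^3$ covers $(-\infty,t_1]$); by the standard theorem on differentiating a uniformly convergent series of $C^1$ functions, the differentiation is legitimate and \eqref{eq:swap_deriv} follows. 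The main obstacle is really just bookkeeping: one must check that the Kotecký--Preiss bound for $\tilde w_{t_0}$ (hence for all $t\le t_0$) genuinely delivers a $t$-\emph{uniform} bound of the form $\sum_{\Gam}|\tilde w_t(\Gam)|e^{g(\Gam)}\le 3^n/n^2$ with the \emph{same} $g$ — this is exactly what the monotonicity of $\tilde w_t$ in $t$ (noted in \Cref{prop:polym_aux}) provides, since $|\tilde w_t(\Gam)|\le \tilde w_{t_0}(\Gam)$ termwise — and that the absorption of $\|\Gam\|^k$ into $e^{g(\Gam)/2}$ only costs a constant depending on $k$ (and possibly on $n$ through the finitely many small clusters, but any such $n$-dependence is dominated by the $e^{-g(\ell)/2}$ gain and does not affect the $e^{-\Omega(n)}$ conclusion in \eqref{eq:cumulant_tail}).
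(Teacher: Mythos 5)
Your uniform-convergence argument is sound, and your induction plan for \eqref{eq:swap_deriv} matches the paper's use of \Cref{prop:swap_deriv}. However, the derivation of the tail bound \eqref{eq:cumulant_tail} has a genuine gap: absorbing $\|\Gamma\|^k$ into $e^{g(\Gamma)/2}$ globally is far too lossy for clusters of moderate size. Concretely, taking $\ell=4$ as you propose, your chain of inequalities yields
\[
\sum_{\|\Gamma\|\ge 4}|\tilde w_t(\Gamma)|\,\|\Gamma\|^k \;\le\; c_k\, e^{-g(4)/2}\cdot \frac{3^n}{n^2}\,,
\]
and since $g(4)=\bigl(2(n-2)-16\bigr)\ln 2 - 40\ln n$, we have $e^{-g(4)/2}=2^{-(n-10)}n^{20}$, so the right-hand side is $c_k\,2^{10}n^{18}(3/2)^n$ --- exponentially \emph{large}, not $e^{-\Omega(n)}$. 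The source of the loss is that for a fixed cluster size $m$, the quantity $g(m)$ grows \emph{linearly} in $n$ while $m^k$ is a constant, so paying $e^{g(m)/2}$ to cover $m^k$ wastes a factor of roughly $2^{nm/4}$; this destroys the exponential decay coming from the $2^{-\frac{n-2}{2}\ell}$ in \eqref{eq.gammat.4}.

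The paper avoids this by splitting the range of $\|\Gamma\|$. For $4\le\|\Gamma\|\le n^4$ it uses the cheap bound $\|\Gamma\|^k\le n^{4k}$ (only a polynomial overhead) and retains the \emph{full} factor $e^{-g(4)}$ from \eqref{eq.gammat.4}, giving $n^{4k+38}\,2^{20}\,(3/4)^n=e^{-\Omega(n)}$. For $\|\Gamma\|> n^4$ the absorption $\|\Gamma\|^k\le e^{g(\Gamma)/2}$ is affordable because there $g(\Gamma)\ge n^2\ln 2$, so even after spending half of $g(\Gamma)$ one keeps a factor $2^{-n^2/2}$, which dominates $3^n$. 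You need to replicate this two-regime split. A secondary, minor issue: your step $e^{g(\Gamma)/2}\le e^{-g(\ell)/2}e^{g(\Gamma)}$ requires $g(\Gamma)\ge g(\ell)$ whenever $\|\Gamma\|\ge\ell$; as defined, $g$ is not globally monotone (it can drop at the piece boundaries $x=n/10$ and $x=n^4$), so for your uniform-convergence statement with general $\ell\to\infty$ you should replace $g(\ell)$ by $\min_{x\ge\ell}g(x)$, which still tends to infinity. This does not affect the case $\ell=4$, but the main issue above does.
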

We defer the proof of~\Cref{lem:unif_conv} to later in the section. For now, we record the following consequence.

\begin{proposition}\label{prop:cumulant} For any fixed integer $k \ge 1$,
    \begin{align}\label{eq:propcumulant}
    \gk_k(Y_n)=\sum_{\Gamma \in \cC} w_C(\Gamma)\|\Gamma\|^k=(1+o(1))T_1\, ,
    \end{align}
    as $n\to\infty$.
\end{proposition}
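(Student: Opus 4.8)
The plan is to establish \eqref{eq:propcumulant} by differentiating the (convergent) cluster expansion of $\ln\tilde\Xi_t$ and evaluating at $t=0$. First I would recall that $Y_n$ is the number of defect vertices in a uniformly random antichain of $L_{[n-1,n+1]}$; by \Cref{lem:Xi_C}, each antichain corresponds to a choice of compatible polymer configuration together with a free choice of the unclaimed part of $L_n$, and the number of defects equals $\sum_{A\in\Lambda}|A|$ over the chosen configuration $\Lambda$. Hence the moment generating function of $Y_n$ is exactly $\E[e^{tY_n}] = \tilde\Xi_t/\Xi_C$, so the cumulant generating function is $h_{Y_n}(t)=\ln\tilde\Xi_t-\ln\Xi_C$, and therefore $\kappa_k(Y_n)=\left.\frac{\partial^k}{\partial t^k}\ln\tilde\Xi_t\right|_{t=0}$ for $k\ge 1$.

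Next I would invoke \Cref{prop:polym_aux} and \Cref{lem:unif_conv}: the cluster expansion $\ln\tilde\Xi_t=\sum_{\Gamma\in\cC}\tilde w_t(\Gamma)$ converges absolutely for $t\le t_0=1/n^3$, and by \eqref{eq:swap_deriv} we may differentiate term-by-term $k$ times (the series of derivatives converges uniformly on $[-1/n^3,1/n^3]$ by \Cref{lem:unif_conv}) to get, at $t=0$,
\[\kappa_k(Y_n)=\left.\frac{\partial^k}{\partial t^k}\sum_{\Gamma\in\cC}\tilde w_t(\Gamma)\right|_{t=0}=\sum_{\Gamma\in\cC}w_C(\Gamma)\|\Gamma\|^k,\]
which is the first equality in \eqref{eq:propcumulant}.

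For the second equality I would split the sum according to $\|\Gamma\|$. The $\|\Gamma\|=1$ clusters are single polymers that are singletons, contributing $\sum_{\|\Gamma\|=1}w_C(\Gamma)\cdot 1^k = T_1$ by \Cref{prop:cluster_1}\eqref{eq:cluster_1}. The $\|\Gamma\|=2$ terms contribute $2^k\sum_{\|\Gamma\|=2}w_C(\Gamma)=2^kT_2=o(T_1)$ by \Cref{prop:cluster_1}\eqref{eq:cluster_4} (here $k$ is fixed so the factor $2^k$ is harmless). The $\|\Gamma\|=3$ terms are bounded by $3^k\sum_{\|\Gamma\|=3}|w_C(\Gamma)|=e^{-\Omega(n)}$ via \Cref{prop:cluster_1}\eqref{eq:cluster_3}, and the tail $\|\Gamma\|\ge 4$ is bounded by $\sum_{\|\Gamma\|\ge 4}|\tilde w_0(\Gamma)|\|\Gamma\|^k=e^{-\Omega(n)}$ using \eqref{eq:cumulant_tail} at $t=0$. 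Since $T_1=\Theta(n^{-1/2}((1+2\sqrt2)/2)^n)$ grows much faster than any $e^{-\Omega(n)}$ error (and dominates $T_2$), summing these contributions gives $\kappa_k(Y_n)=(1+o(1))T_1$.

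The main obstacle is purely bookkeeping rather than conceptual: one must be careful that the identity $\E[e^{tY_n}]=\tilde\Xi_t/\Xi_C$ holds as an honest analytic identity (not merely as formal power series) on a neighborhood of $t=0$, so that term-by-term differentiation of the cluster expansion legitimately computes the cumulants; this is exactly what \Cref{prop:polym_aux} and \Cref{lem:unif_conv} are set up to guarantee, so with those in hand the argument is routine. The one point requiring a line of justification is that the $\|\Gamma\|=1$ clusters contribute with weight $1^k=1$ regardless of $k$, so that every cumulant has the \emph{same} leading term $T_1$ — this is what ultimately forces, after the normalization in \Cref{prop:CLT}, all cumulants of order $\ge 3$ of $\tilde Y_n$ to vanish in the limit.
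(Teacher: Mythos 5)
Your proposal is correct and follows essentially the same route as the paper: identify $\E[e^{tY_n}]=\tilde\Xi_t/\Xi_C$, differentiate the cluster expansion at $t=0$ using \Cref{prop:polym_aux} and \Cref{lem:unif_conv}, and split the resulting sum by $\|\Gamma\|$ so that the size-$1$ clusters give exactly $T_1$ (independently of $k$) while sizes $2,3$ contribute $o(T_1)$ via \Cref{prop:cluster_1} and sizes $\ge 4$ contribute $e^{-\Omega(n)}$ via \eqref{eq:cumulant_tail} at $t=0$. The only cosmetic difference is that you invoke \Cref{prop:cluster_1}\eqref{eq:cluster_4} explicitly for the size-$2$ clusters, whereas the paper lumps sizes $2$ and $3$ together with the observation $\|\Gamma\|^k=O(1)$; both amount to the same estimate.
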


\begin{proof} We begin by justifying the first equality. For a polymer configuration $\Lam$, set $\nu(\Lam):=\prod_{S \in \Lam}w_C(S)/\Xi_C$.
    Then, since each polymer configuration $\Lam$ occurs with probability $\nu(\Lam)$ as the defects of a uniformly randomly chosen antichain from $L_{[n-1,n+1]}$,
    \[\begin{split}\E e^{tX_n}=\sum_{\Lam \in \gO} \nu(\Lam)e^{t\sum_{S \in \Lam}|S|}=\frac{1}{\Xi_C}\sum_{\Lam \in \gO}\prod_{S \in \Lam} \left(w_C(S)e^{t |S|}\right)=\frac{\tilde \Xi_t}{\Xi_C}.\end{split}\]  
Thus, 
\[\gk_k(Y_n)=\left.\frac{\partial^k}{\partial t^k}\ln \frac{\tilde \Xi_t}{\Xi_C}\right|_{t=0}=\left.\frac{\partial^k\ln \tilde \Xi_t}{\partial t^k}\right|_{t=0}=\left.\frac{\partial^k}{\partial t^k}\sum_{\Gamma \in \cC}\tilde w_t(\Gamma)\right|_{t=0}=\sum_{\Gamma \in \cC}w_C(\Gamma)\|\Gamma\|^k, \]
where for the final two equalities we used~\Cref{prop:polym_aux} and ~\Cref{lem:unif_conv}.

We now turn to the second equality in~\eqref{eq:propcumulant}.
    If we consider only the clusters of size 1, we get a contribution to the sum in ~\eqref{eq:propcumulant} of exactly $T_1$ (which is exponential in $n$) by \Cref{prop:cluster_1} \eqref{eq:cluster_1}. Hence, it remains to show that the contribution of all other terms are dwarfed by this term. First, for fixed $k$ and for clusters $\Gamma$ of size $2$ or $3$, $\|\Gam\|^k=O(1)$, so \Cref{prop:cluster_1}
     yields
    \[\sum_{\Gamma \in \cC, \|\Gamma\|=2,3}w_C(\Gamma)\|\Gamma\|^k=o(T_1). \]
    For the remaining terms, we note that the second conclusion of \Cref{lem:unif_conv} for $t=0$ gives
    \[\sum_{\Gamma \in \cC, \|\Gamma\|\geq4}|w_C(\Gamma)|\|\Gamma\|^k=o(1)=o(T_1). \qedhere\]
\end{proof}

We are now ready to prove \Cref{prop:CLT} (assuming~\Cref{lem:unif_conv}).

\begin{proof}[Proof of \Cref{prop:CLT}]
By \Cref{prop:cumulant}  we obtain  \[\E(Y_n)=\kappa_1(Y_n)=(1+o(1))T_1, \quad \textup{Var}(Y_n)=\kappa_2(Y_n)=(1+o(1))T_1\]
and
\[\gk_k(Y_n)=(1+o(1))\textup{Var}(Y_n) \quad \text{for all $k\ge 3$}.\]
It follows that for any $k \ge 3$ we have $\gk_k(\tilde Y_n)=\kappa_k(Y_n)(\textup{Var}(Y_n))^{-k/2}=O((\textup{Var}(Y_n))^{1-k/2})$, which tends to 0 since $\textup{Var}(Y_n) \rightarrow \infty$ as $n \rightarrow \infty$.
The result now follows from \Cref{cumulantfact}.
\end{proof}

It remains to prove~\Cref{lem:unif_conv}. For this, we use the following elementary fact (see, e.g., \cite[Chapter 24, Corollary of Theorem 3]{spivak2008calculus}).
\begin{lemma}\label{prop:swap_deriv}
    Let $\{f_n\}_{n\geq1}$ be a sequence of real-valued functions defined on a closed interval $[a,b]$.
    If $\sum_{n\geq1} f_n$ converges pointwise on $f$ on $[a,b]$, each $f_n$ has an integrable derivative $f_n'$ and $\sum_{n\geq1}f_n'$ converges uniformly on $[a,b]$, then $f'(t)=\sum_{n\geq1}f'_n(t)$, i.e.
    $$\frac{\partial}{\partial t}\sum_{n\geq 1}f_n(t)=\sum_{n\geq1}\frac{\partial}{\partial t}f_n(t),$$
    for all $t\in[a,b]$.
\end{lemma}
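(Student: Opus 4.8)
The plan is to reduce to the standard term-by-term differentiation theorem via partial sums and then run the classical Mean Value Theorem argument (as in, e.g., Rudin's \emph{Principles of Mathematical Analysis}, Theorem 7.17), which in fact delivers the conclusion without even using the integrability hypothesis. Set $s_N := \sum_{n=1}^N f_n$. By hypothesis $s_N \to f$ pointwise on $[a,b]$, and $s_N' = \sum_{n=1}^N f_n'$ converges uniformly on $[a,b]$ to a function which we call $g$. It suffices to show that for every $x \in [a,b]$ the function $f$ is differentiable at $x$ with $f'(x) = g(x)$; once this is done, $x$ being arbitrary gives the lemma.

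Fix $x \in [a,b]$ and, for $t \in [a,b] \setminus \{x\}$, define the difference quotients $\phi_N(t) := \bigl(s_N(t) - s_N(x)\bigr)/(t-x)$ and $\phi(t) := \bigl(f(t) - f(x)\bigr)/(t-x)$. First I would show that $\{\phi_N\}_{N \ge 1}$ converges uniformly to $\phi$ on $[a,b] \setminus \{x\}$. Since $s_N'$ converges uniformly it is uniformly Cauchy, so given $\varepsilon > 0$ there is $N_0$ with $\sup_{[a,b]} |s_M' - s_N'| < \varepsilon$ whenever $M, N \ge N_0$. Applying the Mean Value Theorem to the differentiable function $s_M - s_N$ on the interval between $x$ and $t$ produces a point $\xi$ (strictly between $x$ and $t$, hence in $[a,b]$) with $\phi_M(t) - \phi_N(t) = s_M'(\xi) - s_N'(\xi)$, so $|\phi_M(t) - \phi_N(t)| < \varepsilon$ for all $t \ne x$ and all $M, N \ge N_0$. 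Thus $\{\phi_N\}$ is uniformly Cauchy on $[a,b] \setminus \{x\}$, and since it converges pointwise to $\phi$ (because $s_N \to f$ pointwise), the uniform limit is $\phi$.

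Finally I would invoke the theorem on interchange of iterated limits for uniformly convergent sequences of functions: each $\phi_N$ has a limit as $t \to x$, namely $s_N'(x)$, because $s_N$ is differentiable at $x$; combined with the uniform convergence $\phi_N \to \phi$, this forces $\lim_{t \to x} \phi(t)$ to exist and to equal $\lim_N s_N'(x) = g(x)$. That is precisely the statement that $f$ is differentiable at $x$ with $f'(x) = g(x) = \sum_{n \ge 1} f_n'(x)$, completing the proof. (Alternatively, one can exploit the integrability hypothesis directly: uniform convergence permits term-by-term integration of $\sum f_n'$ against $\int_a^t(\cdot)$, yielding $\int_a^t g = f(t) - f(a)$, after which one differentiates; but this route requires extra care at points where $g$ may fail to be continuous, so the Mean Value Theorem argument is the cleaner one.) The only step deserving attention — though it is entirely routine — is the justification of the double-limit interchange in the last paragraph, which is exactly why uniform, rather than merely pointwise, convergence of the derivative series is indispensable.
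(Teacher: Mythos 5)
Your argument is correct: the partial-sum reduction, the Mean Value Theorem estimate showing the difference quotients $\phi_N$ are uniformly Cauchy on $[a,b]\setminus\{x\}$, and the Moore--Osgood interchange of the limits $N\to\infty$ and $t\to x$ together give $f'(x)=g(x)$ at every $x$, which is exactly the claim. It is, however, a genuinely different route from the paper's: the paper offers no proof at all, only a citation to Spivak (Chapter 24, Corollary of Theorem 3), and the proof behind that citation is the other route you mention in passing --- write $f_n(t)=f_n(a)+\int_a^t f_n'$, use uniform convergence to integrate the derivative series term by term, and then differentiate via the Fundamental Theorem of Calculus; that argument is what the integrability hypothesis in the statement is for, and in Spivak's formulation it also leans on continuity of the limit of the derivative series. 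Your MVT argument (essentially Rudin's Theorem 7.17) buys something concrete here: it needs neither the integrability of the $f_n'$ nor any continuity of $\sum f_n'$, so it proves the lemma exactly as stated in the paper, whereas the citation route implicitly uses a continuity hypothesis that the paper's statement does not include (harmless in the paper's application, where the terms $\tilde w_t(\Gamma)$ are analytic in $t$, but worth noting). The one step you flag as needing care --- the double-limit interchange --- is indeed the crux, and your uniform convergence of the $\phi_N$ justifies it completely.
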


\begin{proof}[Proof of~\Cref{lem:unif_conv}]
 For the first conclusion, as each term $\tilde{w}_t(\Gam)\|\Gam\|^k$ is increasing in $t$ in absolute value, by the Weierstrass $M$-test, it suffices to show the series absolutely converges for $t=t_0:=1/n^3$, i.e.
$$\sum_{\Gam\in\cC}|\tilde w_{t_0}(\Gam)|\|\Gam\|^k<\infty.$$
To show this, it will suffice to establish \eqref{eq:cumulant_tail} for $t=t_0$, since the the contribution to the sum from clusters $\Gamma\in \cC$ with $\|\Gamma\| \le 3$ is finite. Recall from the remark below \Cref{prop:polym_aux} that \eqref{eq.wc} and \eqref{eq.gamma.4} hold for the weight {function} $\tilde w_{t_0}$, that is,
\begin{equation}\label{eq.wt}
    \sum_{\Gam\in \cC}|\tilde w_{t_0}(\Gam)|e^{g(\Gam)}\leq 3^n/n^2
\end{equation}
and
\begin{equation}\label{eq.gammat.4}
    \sum_{\|\Gam\|\geq\ell}|\tilde w_{t_0}(\Gam)|\leq \frac{3^n n^{10\ell}}{n^2 2^{\frac{n-2}{2}\ell-\ell^2}}.
\end{equation}
For $4\leq\|\Gam\|\leq n^4$, 
$$\sum_{4\leq\|\Gam\|\leq n^4}|\tilde w_{t_0}(\Gam)|\|\Gam\|^k\leq n^{4k}\sum_{ 4\leq\|\Gam\|\leq n^4}|\tilde w_{t_0}(\Gam)|\leq n^{4k}\sum_{\|\Gam\|\geq 4}|\tilde w_{t_0}(\Gam)|\overset{\eqref{eq.gammat.4}}{=}e^{-\Omega(n)}.
$$
If $\|\Gam\|> n^4$ then $g(\Gam)= \frac{\|\Gam\|}{n^2}\ln2\geq 2\ln\|\Gam\|^k$, so
$$\sum_{\|\Gam\|> n^4}|\tilde w_{t_0}(\Gam)|\|\Gam\|^k\leq \sum_{\|\Gam\|> n^4}|\tilde w_{t_0}(\Gam)|e^{g(\Gam)/2}.$$
On the other hand, $g(\Gam)= \frac{\|\Gam\|}{n^2}\ln2\geq n^2\ln 2$, so
$$\sum_{\|\Gam\|> n^4}|\tilde w_{t_0}(\Gam)|e^{g(\Gam)/2}\le 2^{-n^2/2}\sum_{\|\Gam\|> n^4}|\tilde w_{t_0}(\Gam)|e^{g(\Gam)} \stackrel{\eqref{eq.wt}}{\leq} 2^{-n^2/2}\frac{3^n}{n^2}=e^{-\Omega(n)}.$$
Hence,
\[\sum_{\|\Gam\|\geq4}|\tilde w_{t_{0}}(\Gam)|\|\Gam\|^k=e^{-\Omega(n)} \]
as desired.

Finally, \eqref{eq:swap_deriv} now follows from \Cref{prop:swap_deriv}.
\end{proof}

Finally we deduce~\Cref{thm:CLT}. Let $I$ be an antichain in $[3]^n$ sampled uniformly at random and let $\mathcal{E}_n$ denote the event that $I \subseteq L_{[n-1,n+1]}$.
Note that $\mathbb{P}(Y_n=i)=\mathbb{P}(X_n=i \mid \mathcal{E}_n)$ for all $i$. Moreover, by~\Cref{thm.reduction}
\[
\mathbb{P}(\mathcal{E}_n)=1-e^{-\gO(n^2)}\, .
\]
Thus, for all $i$,
\begin{align}\label{eq:TVdist}
\mathbb{P}(X_n=i)
=\mathbb{P}(X_n=i \mid \mathcal{E}_n)\mathbb{P}(\mathcal{E}_n) + \mathbb{P}(X_n=i \mid \mathcal{E}^c_n)\mathbb{P}(\mathcal{E}^c_n)
= \mathbb{P}(Y_n=i) + O(e^{-\gO(n^2)})\, .
\end{align}
For brevity, let $\nu_n=\E(Y_n), \tau^2_n=\text{Var}(Y_n)$ and recall that we let $\mu_n=\E(X_n), \sigma^2_n=\text{Var}(X_n)$. Since $X_n,Y_n\leq 3^n$, we have
\[
|\mu_n-\nu_n|\leq \sum_{i=0}^{3^n}i\cdot |\mathbb{P}(X_n=i)-\mathbb{P}(Y_n=i)|\overset{\eqref{eq:TVdist}}{=} o(1)\, .
\]
We similarly have $|\E(Y^2_n)-\E(X^2_n)|=o(1)$ and so $|\tau^2_n-\sigma^2_n|=o(1)$. It follows that
\begin{align}\label{eq:TV2}
\sigma_n/\tau_n-1=o(1) \quad\text{and}\quad (\mu_n-\nu_n)/\tau_n=o(1)
\end{align}
since $\tau_n\to \infty$ by~\Cref{prop:CLT}.

We conclude that for $a\in \mathbb{R}$,
\[
\mathbb{P}\left(\frac{X_n-\mu_n}{\sigma_n}\leq a \right) \overset{\eqref{eq:TVdist}}{=} \mathbb{P}\left(\frac{Y_n-\mu_n}{\sigma_n}\leq a \right)+o(1)\overset{\eqref{eq:TV2}}{=} \mathbb{P}\left(\frac{Y_n-\nu_n}{\tau_n}\leq a +o(1) \right) +o(1)\, .
\]
\Cref{thm:CLT} now follows from \Cref{prop:CLT}.

\section{Contribution from small clusters to the expansion of $\ln \Xi_C$}\label{sec:computations}
In this section we prove~\Cref{thm:asymp_2} and \Cref{prop:cluster_1}. We begin with the proof of ~\Cref{thm:asymp_2} (assuming \Cref{prop:cluster_1}) and then conclude with the proof of \Cref{prop:cluster_1}.

\begin{proof}[Proof of~\Cref{thm:asymp_2}]
By~\Cref{thm:asymp_1} and \Cref{prop:cluster_1} it suffices to show that 
   \begin{equation}\label{eq:cluster_1_asympt}
    T_1=(1+o(1))\sqrt{\frac{1+2\sqrt{2}}{2\sqrt{2}\pi}}n^{-1/2}\left(\frac{1+2\sqrt{2}}{2}\right)^n.
    \end{equation}
 To this end, note that 
    $$T_1=2^{1-n}\sum_{0 \le k<n/2}\binom{n}{2k+1}\binom{2k+1}{k}2^{k}\, ,$$
    and so
    $$T_1=n2^{1-n} \cdot {}_2F_1\left(\frac{1}{2}-\frac{n}{2},1-\frac{n}{2},2;8\right)$$
    where ${}_2F_1\left(a,b,c;z\right)$ is the \textit{Gauss hypergeometric function}. Using \cite[equation (39)]{cvitkovic2017asymptotic}\footnote{\cite{cvitkovic2017asymptotic}  generally assumes $\varepsilon\neq 1$ where $\varepsilon$ is the parameter in \cite[equation (39)]{cvitkovic2017asymptotic}, but the proof (in Sections 2.3.1 and 2.3.2 of \cite{cvitkovic2017asymptotic}) of the formula in \eqref{eq:hyper_asymp}  carries out in the case $\varepsilon=1$.}, we obtain the asymptotic formula
    \begin{equation}\label{eq:hyper_asymp}
    _2F_1\left(a-\lam,b-\lam,c;z\right)=(1+o(1))\Gamma(c)\frac{\sqrt[4]{z}}{2\sqrt{\pi}}\lam^{\frac{1}{2}-c}\cdot\frac{\left(1+\frac{1}{\sqrt{z}}\right)^{c-a+\lam}}{\left(\frac{1}{\sqrt{z}}\right)^{\lam-a}(1+\sqrt{z})^{b-\lam}}\, ,\qquad \lam\rightarrow\infty.
    \end{equation}
    Plugging in $a=1/2$, $b=1$, $c=2$, $z=8$, and $\lam=n/2$,
    $$_2F_1\left(\frac{1}{2}-\frac{n}{2},1-\frac{n}{2},2;8\right)=(1+o(1))\sqrt{\frac{1+2\sqrt{2}}{8\sqrt{2}\pi}}\frac{(1+2\sqrt{2})^n}{n^{3/2}}.$$
   \Cref{eq:cluster_1_asympt} now follows, completing the proof. 
\end{proof}

We now turn our attention to proving \Cref{prop:cluster_1}.
Henceforth, $v$ will be a vertex in $L_{n-1} \cup L_{n+1}$, and $d_v$ will denote the number of neighbors (in the Hasse diagram of $[3]^n$) of $v$ in $L_n$ (that is, the ``up-degree" of $v\in L_{n-1}$ and the ``down-degree" of $v\in L_{n+1}$). The layer in which $v$ lies, will always be clear from the context. For a vertex $v\in L_{n-1}$, we set $N^2_{n+1}(v):=N^+(N^+(v))$, $N^2_{n-1}(v):=N^-(N^+(v))\setminus\{v\}$ and $\hat{N}^2_{n-1}(v):=N^-(N^+(v))$.

\begin{proof}[Proof of \Cref{prop:cluster_1}~\eqref{eq:cluster_1}]   
    We have
    \begin{equation}\label{eq:cluster_1_deg}
    \sum_{\|\Gamma\|=1}w_C(\Gamma)=2\sum_{v\in L_{n-1}} 2^{-d_v},    
    \end{equation}
    where the factor of $2$ accounts for the symmetric case where the vertex is in $L_{n+1}$.
    
 Note that the coordinates of $v \in L_{n-1}$ consist of $k+1$ zeros, $n-(2k+1)$ ones, and $k$ twos for some integer $k \in [0, (n-1)/2]$. Also, if $v$ has $k$ twos, then $d_v=n-k$. Hence, we may rewrite the sum on the right hand side of~\eqref{eq:cluster_1_deg} as
   \[\sum_{0 \le k < n/2}\binom{n}{k}\binom{n-k}{k+1}2^{-(n-k)}=\sum_{0 \le k < n/2}\binom{n}{2k+1}\binom{2k+1}{k}2^{-(n-k)},\]
   as desired.
\end{proof}

\begin{proof}[Proof of \Cref{prop:cluster_1}~\eqref{eq:cluster_2}] 

Clusters $\Gamma$ of size $2$ either consist of two incompatible polymers of size $1$ or a single polymer of size $2$. Recall from \eqref{eq:wclusterdef} the definition of the weight of a cluster.

\nin \textit{Case 1:} $\Gam=(\{u\},\{v\})$ with $u,v\in L_{n-1}$ and $N^+(u)\cap N^+(v)\not=\emptyset$ (and possibly $u=v$). 

The Ursell function $\phi(G_\Gamma)$ is $-1/2$ and $\prod\limits_{{\gamma} \in \Gamma} w_C(\gamma)=2^{-d_u-d_v}$. These clusters contribute
    $$-\frac{1}{2}\sum_{u\in L_{n-1}}\sum_{v\in \hat{N}_{n-1}^2(u)}2^{-d_u-d_v}$$
    to the sum in~\eqref{eq:cluster_2}
    (there is no double counting because the pair $(\{u\},\{v\})$ is ordered).

\nin \textit{Case 2:} $\Gam=(\{u\},\{v\})$ with $u\in L_{n-1},\ v\in L_{n+1}$ and $N^+(u)\cap N^-(v)\not=\emptyset$. 

Again $\phi(G_\Gamma)=-1/2$ and $\prod\limits_{{\gamma} \in \Gamma} w_C(\gamma)=2^{-d_u-d_v}$. These clusters contribute
    $$-\frac{1}{2}\sum_{u\in L_{n-1}}\sum_{v\in N_{n+1}^2(u)}2^{-d_u-d_v}.$$

\nin \textit{Case 3:} $\Gam=(\{u,v\})$ with $\{u,v\}$ a $2$-linked subset of $L_{n-1}$.

This again implies that $N^+(u)\cap N^+(v)\not=\emptyset$, but excludes the case $u=v$. The Ursell function is $1$ and $w_C(\{u,v\})=2^{-|N^+(\{u,v\})|}=2^{-d_u-d_v+1}$ (since in the graphs induced by any two consecutive layers of $[3]^n$, any two vertices have at most one common neighbor). This contributes
    $$\frac{1}{2}\sum_{u\in L_{n-1}}\sum_{v\in N_{n-1}^2(u)}2^{-d_u-d_v+1}=\sum_{u\in L_{n-1}}\sum_{v\in N^2_{n-1}(u)}2^{-d_u-d_v}$$
    (the $1/2$ in front this time is due to double-counting $\{u,v\}$ and $\{v,u\}$).

    In all cases, we have an identical contribution from those clusters where the role of $L_{n-1}, L_{n+1}$ are swapped. Putting everything together, we therefore have
    \begin{align*}
        \sum_{\substack{\Gam\in\cC \\ \|\Gam\|=2}}w_C(\Gam)&=2\sum_{u\in L_{n-1}}\sum_{v\in N^2_{n-1}(u)}2^{-d_u-d_v}-\sum_{u\in L_{n-1}}\sum_{v\in \hat{N}_{n-1}^2(u)}2^{-d_u-d_v}-\sum_{u\in L_{n-1}}\sum_{v\in N_{n+1}^2(u)}2^{-d_u-d_v}\\
        &=\underbrace{\sum_{u\in L_{n-1}}\sum_{v\in N^2_{n-1}(u)}2^{-d_u-d_v}}_{A_1}-\underbrace{\sum_{u\in L_{n-1}}2^{-2d_u}}_{A_2}-\underbrace{\sum_{u\in L_{n-1}}\sum_{v\in N_{n+1}^2(u)}2^{-d_u-d_v}}_{A_3}.
    \end{align*}
Rewrite
$$
A_1=\sum_{0\le k<n/2}\left[\binom{n}{2k+1}\binom{2k+1}{k}2^{-(n-k)}\sum_{v\in N^2_{n-1}(u)}2^{-d_v}\right],
$$
where we grouped vertices $u$ according to the number of twos they have; recall that when $u$ has $k$ twos, it has $n-2k-1$ ones and $k+1$ zeros. For brevity, we will say that the \textit{type} of $u$ is $(k+1,n-2k-1,k)$, and if a vertex is of type $(a,b,c)$ then its down-degree is $n-a$ and its up-degree is $n-c$. For any such $u$, we have the following cases for how $v\in N^2_{n-1}(u)$ is obtained from $u$:
\begin{itemize}
    \item We added $1$ to a zero coordinate and subtracted $1$ from a two coordinate of $u$. There are $k(k+1)$ choices for the coordinates; then $v$ is of type $(k,n-2k+1,k-1)$ and $d_v=n-k+1$.
    \item We added $1$ to a zero coordinate and subtracted $1$ from a one coordinate of $u$. There are $(k+1)(n-2k-1)$ choices for the coordinates; then $v$ is of type $(k+1,n-2k-1,k)$ and $d_v=n-k$.
    \item We added $1$ to a one coordinate and subtracted $1$ from a two coordinate of $u$. There are $k(n-2k-1)$ choices for the coordinates; then $v$ is of type $(k+1,n-2k-1,k)$ and $d_v=n-k$.
    \item We added $1$ to a one coordinate and subtracted $1$ from a one coordinate of $u$. There are $(n-2k-1)(n-2k-2)$ choices for the coordinates; then $v$ is of type $(k+2,n-2k-3,k+1)$ and $d_v=n-k-1$.
\end{itemize}
Thus,
\begin{equation}\label{eq:cluster_2_1}
A_1=\sum_{0 \le k<n/2}\binom{n}{2k+1}\binom{2k+1}{k}2^{-2(n-k)}\left[\frac{k(k+1)}{2}+(n-2k-1)(2k+1)+2(n-2k-1)(n-2k-2)\right].
\end{equation}

For $A_2$ we have directly
\begin{equation}\label{eq:cluster_2_2}
    A_2=\sum_{0 \le k<n/2}\binom{n}{2k+1}\binom{2k+1}{k}2^{-2(n-k)}.
\end{equation}

For $A_3$, rewrite
$$A_3= \sum_{0 \le k<n/2}\left[\binom{n}{2k+1}\binom{2k+1}{k}2^{-(n-k)}\sum_{v\in N^2_{n+1}(u)}2^{-d_v}\right]$$
Similarly to the computation of $A_1$, we distinguish the following cases on how $v$ is obtained from $u$ of type $(k+1,n-2k-1,k)$:

\begin{itemize}
    \item We added $2$ to a zero coordinate of $u$. There are $(k+1)$ choices for the coordinate; then $v$ is of type $(k,n-2k-1,k+1)$ and $d_v=n-k$.
    \item We added $1$ to two zero coordinates of $u$. There are $k(k+1)/2$ choices for the coordinates; then $v$ is of type $(k-1,n-2k+1,k)$ and $d_v=n-k+1$.
    \item We added $1$ to a zero and to a one coordinate of $u$. There are $(k+1)(n-2k-1)$ choices for the coordinates; then $v$ is of type $(k,n-2k-1,k+1)$ and $d_v=n-k$.
    \item We added $1$ to two one coordinates of $u$. There are $(n-2k-1)(n-2k-2)/2$ choices for the coordinates; then $v$ is of type $(k+1,n-2k-3,k+2)$ and $d_v=n-k-1$.
\end{itemize}

Thus,
\begin{equation}\label{eq:cluster_2_3}
A_3=\sum_{0 \le k<n/2}\binom{n}{2k+1}\binom{2k+1}{k}2^{-2(n-k)}\left[\frac{k(k+1)}{4}+(n-2k)(k+1)+(n-2k-1)(n-2k-2)\right].
\end{equation}

Combining equations \eqref{eq:cluster_2_1}, \eqref{eq:cluster_2_2}, and \eqref{eq:cluster_2_3}, completes the proof.
\end{proof}

\begin{proof}[Proof of \Cref{prop:cluster_1}~\eqref{eq:cluster_3}] 
There are finitely many types of clusters of size $3$, and the Ursell function is a constant, hence
$$|w_C(\Gam)|=O(1)\prod_{S\in \Gam}|w_C(S)|=O(1)\prod_{S\in \Gam}2^{-|\partial S|}.$$
Note that a polymer $S \in \Gamma$ contains at most $3$ vertices, and since each pair of vertices has at most one common neighbor, $|\partial S| = \sum_{u\in S}d_u-O(1)$. Hence
$$|w_C(\Gam)|= O(1)\prod_{\substack{u\in \Gam}}2^{-d_u}=O(1)2^{-\sum_{u\in \Gam}d_u},$$
where summing/multiplying over $u\in \Gam$ means we sum/multiply over all vertices $u\in S$ with $S\in \Gam$ for some $S$ with repetition if they appear in multiple polymers.

For $v\in L_{n-1}$, consider all clusters $\Gamma$ of size $3$ where $v$ is contained in some polymer $S\in \Gamma$. The set $V(\Gamma):=\bigcup_{S\in \Gamma}S$ is 2-linked with size at most $3$, and contains $v$, so the number of such sets is $n^{O(1)}$. Note also that given a set $U\subseteq L_{n-1} \cup L_{n+1}$ with $|U|\leq 3$, there are a constant number of clusters $\Gamma$ of size $3$ with $V(\Gamma)=U$.
Thus, there are at most $n^{O(1)}$ such clusters. 

Summing over all vertices in $L_{n-1}$ captures all clusters containing a vertex in $L_{n-1}$. By symmetry, this sum is equal to the same sum over vertices in $L_{n+1}$, and every cluster is captured by at least one of them. Consequently, (a justification for the third equality will follow) 
\begin{align*}
    \sum_{\|\Gam\|=3}|w_C(\Gam)|\leq 2\sum_{v\in L_{n-1}}\sum_{\substack{\Gam\ni v\\ \|\Gam\|=3}}|w_C(\Gam)|= O(1)\sum_{v\in L_{n-1}}\sum_{\substack{\Gam\ni v\\ {\|\Gam\|=3}}}2^{-\sum_{u\in \Gam}d_u}&\leq O(1)\sum_{v\in L_{n-1}}\sum_{\substack{\Gam\ni v\\ {\|\Gam\|=3}}}2^{-3d_v}\\
    &\leq n^{O(1)}\sum_{v\in L_{n-1}}2^{-3d_v},
\end{align*}
where summing over $\Gamma\ni v$ means we sum over all clusters $\Gamma$ with $v\in V(\Gamma)$. The third inequality comes from the fact that $\sum_{u\in \Gam}d_u=3d_v-O(1)$ for any $v\in \Gam$, since $\|\Gamma\|=3$ and each $u$ satisfies $|d_u-d_v|=O(1)$: $V(\Gamma)$ forms a $2$-linked graph, so $\dist(u,v)\leq 4$, and  vertices of distance $O(1)$ differ in $O(1)$ coordinates and thus their degrees differ by $O(1)$. 

Working as in the proof of \Cref{prop:cluster_1}~\eqref{eq:cluster_1}, we may rewrite the last expression as
\[n^{O(1)}2^{-3n}\sum_{0 \le k<n/2}\binom{n}{2k+1}\binom{2k+1}{k}2^{3k}=n^{O(1)}2^{-3n}\ {}_2F_1\left(\frac{1}{2}-\frac{n}{2},1-\frac{n}{2},2;32\right)=e^{-\Omega(n)}\, .
\]
{For the last equality, note that for our values of constants $a,b,c$ and for $\lam=n/2$, the dominating exponential term of the hypergeometric function is, by \eqref{eq:hyper_asymp},
$$\frac{\left(1+\frac{1}{\sqrt{z}}\right)^{\lam}}{\left(\frac{1}{\sqrt{z}}\right)^{\lam}(1+\sqrt{z})^{-\lam}}=(1+\sqrt{z})^{2\lam}=(1+4\sqrt{2})^n,$$
which is exponentially smaller than $2^{3n}$.} 
\end{proof}

\begin{proof}[Proof of \Cref{prop:cluster_1}~\eqref{eq:cluster_4}] 
With the same reasoning as in part \eqref{eq:cluster_3} we have
\[\sum_{\|\Gamma\|=2}|w_C(\Gamma)|\leq n^{O(1)}\cdot \sum_{u \in L_{n-1}}2^{-2d_u}\le n^{O(1)}\cdot 2^{-n/2}\sum_{u \in L_{n-1}}2^{-d_u} \overset{\eqref{eq:cluster_1_deg}}{=} o\left(\sum_{\|\Gamma\|=1}w_C(\Gamma)\right). \qedhere\]
\end{proof}

\section*{Acknowledgements}
MJ is supported by a UK Research and Innovation Future Leaders Fellowship MR/W007320/2. JP was supported by NSF Grant DMS-2324978, NSF CAREER Grant DMS-2443706 and a Sloan Fellowship.
MS was funded in part by the Austrian Science Fund (FWF) [10.55776/1002] and by the Bodossaki Foundation. For open access purposes, the authors have applied a CC BY public copyright license to any author accepted manuscript version arising from this submission.

\bibliographystyle{abbrv}
\bibliography{references}

@article{harper1974morphology,
  title={The morphology of partially ordered sets},
  author={Harper, LH},
  journal={Journal of Combinatorial Theory, Series A},
  volume={17},
  number={1},
  pages={44--58},
  year={1974},
  publisher={Academic Press}
}

@article{korshunov,
    title={The number of monotone {B}oolean functions (in {R}ussian)},
    author={Korshunov, A. D.},
    journal={Problemy Kibernetiki},
    year={1981},
    volume={31},
    pages={5-108}
}

@article{kleitman1982number,
  title={On the number of graphs without 4-cycles},
  author={Kleitman, Daniel J and Winston, Kenneth J},
  journal={Discrete Mathematics},
  volume={41},
  number={2},
  pages={167--172},
  year={1982},
  publisher={Elsevier}
}

@article{embed,
    author={Hamm, A. and Kahn, J.},
    title={On {E}rd{\H{o}}s-{K}o-{R}ado for random hypergraphs {II}},
    journal={Combinatorics, Probabability and Computing},
    volume={28},
    number={1},
    year={2019},
    pages={61-80}
}

@article{balogh2025maximal,
  title={Maximal independent sets in the middle two layers of the {B}oolean lattice},
  author={Balogh, J{\'o}zsef and Chen, Ce and Garcia, Ramon I},
  journal={arXiv preprint arXiv:2505.00132},
  year={2025}
}

@article{krueger2024lipschitz,
  title={Lipschitz functions on weak expanders},
  author={Krueger, Robert A and Li, Lina and Park, Jinyoung},
  journal={arXiv preprint arXiv:2408.14702},
  year={2024}
}

@article{Lov75,
    title = {On the ratio of optimal integral and fractional covers},
    author={Lov\'asz, L.},
    journal = {Discrete Mathematics},
    volume = {13},
    number = {4},
    pages = {383-390},
    year = {1975}
}

@article{Esseen_1945, title={Fourier analysis of distribution functions. A mathematical study of the Laplace-Gaussian law}, volume={77}, DOI={10.1007/bf02392223}, number={0}, journal={Acta Mathematica}, publisher={International Press of Boston}, author={Esseen, Carl-Gustav}, year={1945}, pages={1–125} }

@article{Ste74,
  title={Two Combinatorial Covering Theorems},
  author={Stein, S. K.},
  journal={Journal of Combinatorial Theory, Series A},
  year={1974},
  volume={16},
  pages={391-397}
}

@article{janson1988normal,
  title={Normal convergence by higher semiinvariants with applications to sums of dependent random variables and random graphs},
  author={Janson, S.},
  journal={The Annals of Probability},
  pages={305--312},
  year={1988},
  publisher={JSTOR}
}

@article{bezrukov1985minimization,
  title={On minimization of the surrounding of subsets in Hamming space},
  author={Bezrukov, Sergei L},
  journal={Algebraicheskie i kombinatornye metody v prikladnoi matematike},
  pages={45--48},
  year={1985}
}

@article{jenssen2024refined,
  title={A refined graph container lemma and applications to the hard-core model on bipartite expanders},
  author={Jenssen, Matthew and Malekshahian, Alexandru and Park, Jinyoung},
  journal={arXiv:2411.03393 preprint},
  year={2024}
}

@article{li2023number,
  title={The number of colorings of the middle layers of the {H}amming cube},
  author={Li, Lina and McKinley, Gweneth and Park, Jinyoung},
  journal={Combinatorica},
  year={2025}
}

@article{BalK22,
  title={A sharp threshold for a random version of {S}perner's {T}heorem},
  author={Balogh, J. and Krueger, R. A.},
  journal={Random Structures \& Algorithms},
  year={to appear}
}

@article{balogh2021independent,
  title={Independent sets in the middle two layers of {B}oolean lattice},
  author={Balogh, J{\'o}zsef and Garcia, Ramon I and Li, Lina},
  journal={Journal of Combinatorial Theory, Series A},
  volume={178},
  pages={105341},
  year={2021},
  publisher={Elsevier}
}

@book{petrov_1975,
title={Sums of Independent Random Variables}, ISBN={9783112573006},
DOI={10.1515/9783112573006},
publisher={De Gruyter},
author={Petrov, V. V.},
year={1975},
month=dec
}

@book{billingsley_1995,
title={Probability and measure},
ISBN={9788126517718},
publisher={Wiley},
author={Billingsley, P.},
year={1995}
}

@article{jenssen2023evolution,
  title={On the evolution of structure in triangle-free graphs},
  author={Jenssen, Matthew and Perkins, Will and Potukuchi, Aditya},
  journal={arXiv:2312.09202 preprint},
  year={2023}
}

@article{potukuchi2021enumerating,
  title={Enumerating independent sets in {A}belian {C}ayley graphs},
  author={Potukuchi, Aditya and Yepremyan, Liana},
  journal={arXiv:2109.06152 preprint},
  year={2021}
}

@article{kronenberg2022independent,
  title={Independent sets in random subgraphs of the hypercube},
  author={Kronenberg, Gal and Spinka, Yinon},
  journal={arXiv preprint arXiv:2201.06127},
  year={2022}
}

@article{geisler2025counting,
  title={Counting independent sets in percolated graphs via the Ising model},
  author={Geisler, Anna and Kang, Mihyun and Sarantis, Michail and Wdowinski, Ronen},
  journal={arXiv preprint arXiv:2504.08715},
  year={2025}
}

@article{davies2021proof,
  title={A proof of the upper matching conjecture for large graphs},
  author={Davies, Ewan and Jenssen, Matthew and Perkins, Will},
  journal={Journal of Combinatorial Theory, Series B},
  volume={151},
  pages={393--416},
  year={2021},
  publisher={Elsevier}
}

@InProceedings{Sapozhenko_proceedings,
author="Sapozhenko, Alexander",
editor="Lupanov, Oleg B.
and Kasim-Zade, Oktay M.
and Chaskin, Alexander V.
and Steinh{\"o}fel, Kathleen",
title="Systems of Containers and Enumeration Problems",
booktitle="Stochastic Algorithms: Foundations and Applications",
year="2005",
publisher="Springer Berlin Heidelberg",
address="Berlin, Heidelberg",
pages="1--13"
}

@article{hypercube,
  title={Independent sets in the hypercube revisited},
  author={Jenssen, M. and Perkins, W.},
  journal={Journal of the London Mathematical Society},
  volume={102},
  pages={645-669},
  year={2020},
  publisher={Wiley Online Library}
}

@article{galvin2006slow,
  title={Slow mixing of {G}lauber dynamics for the hard-core model on regular bipartite graphs},
  author={Galvin, David and Tetali, Prasad},
  journal={Random Structures \& Algorithms},
  volume={28},
  number={4},
  pages={427--443},
  year={2006},
  publisher={Wiley Online Library}
}

@article{cvitkovic2017asymptotic,
  title={Asymptotic expansions of the hypergeometric function with two large parameters—application to the partition function of a lattice gas in a field of traps},
  author={Cvitkovi{\'c}, Mislav and Smith, Ana-Sun{\v{c}}ana and Pande, Jayant},
  journal={Journal of Physics A: Mathematical and Theoretical},
  volume={50},
  number={26},
  pages={265206},
  year={2017},
  publisher={IOP Publishing}
}

@article{Tsai2019,
  title={A Simple Upper Bound on the Number of Antichains in $[t]^n$},
  author={Shen-Fu Tsai},
  journal={Order},
  year={2019},
  volume={36},
  pages={507-510}
}

@article{Kleitman1975OnDP,
  title={On {D}edekind’s problem: the number of isotone {B}oolean functions. {II}},
  author={Daniel J. Kleitman and George Markowsky},
  journal={Transactions of the American Mathematical Society},
  year={1975},
  volume={213},
  pages={373-390}
}

@article{Kleitman1976OnJCP,
  title={Strong versions of {S}perner's Theorem},
  author={Curtis Greene and Daniel J. Kleitman},
  journal = {Journal of Combinatorial Theory, Series A},
  volume = {20},
  number = {1},
  pages = {80-88},
  year = {1976}
}

@article{Pohoata2021,
title={On the number of high-dimensional partitions},
  author={Cosmin Pohoata and Dmitriy Zakharov},
  journal={arxiv preprint arXiv:2112.02436},
  year={2021}
}

@article{JMP1,
title={On {D}edekind's problem, a sparse version of Sperner's theorem, and antichains of a given size in the {B}oolean lattice},
author={Matthew Jenssen and Jinyoung Park and Alexandru Malekshahian},
journal={arXiv preprint arXiv:2411.03400},
year={2024}
}

@inproceedings{Kleitman1969OnDP,
  title={On {D}edekind’s problem: The number of monotone {B}oolean functions},
  author={Daniel J. Kleitman},
  journal = {Proceedings of the American Mathematical Society},
  number = {3},
  pages = {677--682},
  volume = {21},
  year = {1969}
}

@article{Kahn2002Entropy,
 ISSN = {00029939, 10886826},
 URL = {http://www.jstor.org/stable/2699631},
 author = {Jeff Kahn},
 journal = {Proceedings of the American Mathematical Society},
 number = {2},
 pages = {371--378},
 publisher = {American Mathematical Society},
 title = {Entropy, Independent Sets and Antichains: A New Approach to {D}edekind's Problem},
 urldate = {2023-03-16},
 volume = {130},
 year = {2002}
}

@article{jenssen2023approximately,
  title={Approximately counting independent sets in bipartite graphs via graph containers},
  author={Jenssen, Matthew and Perkins, Will and Potukuchi, Aditya},
  journal={Random Structures \& Algorithms},
  volume={63},
  number={1},
  pages={215--241},
  year={2023},
  publisher={Wiley Online Library}
}

@article{collares2025counting,
  title={Counting independent sets in expanding bipartite regular graphs},
  author={Collares, Maur{\'\i}cio and Erde, Joshua and Geisler, Anna and Kang, Mihyun},
  journal={arXiv preprint arXiv:2503.22255},
  year={2025}
}

@article{balogh2024intersecting,
  title={Intersecting families of sets are typically trivial},
  author={Balogh, J{\'o}zsef and Garcia, Ramon I and Li, Lina and Wagner, Adam Zsolt},
  journal={Journal of Combinatorial Theory, Series B},
  volume={164},
  pages={44--67},
  year={2024},
  publisher={Elsevier}
}

@article{kahn2022number,
  title={The Number of Maximal Independent Sets in the {H}amming Cube},
  author={Kahn, Jeff and Park, Jinyoung},
  journal={Combinatorica},
  volume={42},
  number={6},
  pages={853--880},
  year={2022},
  publisher={Springer}
}

@article{park2022note,
  title={Note on the Number of Balanced Independent Sets in the {H}amming Cube},
  author={Park, Jinyoung},
  journal={The Electronic Journal of Combinatorics},
  pages={P2--34},
  year={2022}
}

@article{jenssen2022independent,
  title={Independent sets of a given size and structure in the hypercube},
  author={Jenssen, Matthew and Perkins, Will and Potukuchi, Aditya},
  journal={Combinatorics, Probability and Computing},
  volume={31},
  number={4},
  pages={702--720},
  year={2022},
  publisher={Cambridge University Press}
}

@article{Galvin2019,
title={Independent sets in the discrete hypercube},
  author={David Galvin},
  journal={arxiv preprint arXiv:1901.01991},
  year={2019}
}

@article{Pippenger1999EntropyAE,
  title={Entropy and enumeration of {B}oolean functions},
  author={Nicholas Pippenger},
  journal={IEEE Trans. Inf. Theory},
  year={1999},
  volume={45},
  pages={2096-2100}
}

@inproceedings{Carroll2012CountingAA,
  title={Counting Antichains and Linear Extensions in Generalizations of the {B}oolean Lattice},
  author={Teena Carroll and Joshua N. Cooper and Prasad Tetali},
  year={2012}
}

@article{anderson1968divisors,
  title={On the divisors of a number},
  author={Anderson, I},
  journal={Journal of the London Mathematical Society},
  volume={1},
  number={1},
  pages={410--418},
  year={1968},
  publisher={Wiley Online Library}
}

@incollection{katona,
    author={Katona, Gy. O. H.},
    title={A theorem of finite sets},
    booktitle={Theory of Graphs: proceedings of the colloquim held at Tihany, Hungary, September 1966},
    editor={Erd\H{o}s, P. and Katona, Gy. O. H.},
    year={1966},
    pages={187-207},
    publisher={Akademiai Kiado}
}

@incollection{kruskal,
    author={Kruskal, J. B.},
    title={The number of simplices in a complex},
    booktitle={Mathematical optimization techniques},
    editor={Bellman, R.},
    year={1963},
    pages={251-278},
    publisher={ Univ. of California Press, Berkeley, California}
}

@article{dobrushin,
    title={Estimates of semi-invariants for the {I}sing model at low temperatures},
    author={Dobrushin, R. L.},
    journal={American Mathematical Society Translations: Series 2},
    year={1996},
    pages={59-82},
    volume={177}
}

@article{ScoS05,
    author={Scott, A. D. and Sokal, A. D.},
    title={The repulsive lattice gas, the independent-set polynomial, and the {L}ov\'{a}sz local lemma},
    journal={Journal of Statistical Physics},
    year={2005},
    pages={1151-1261},
    volume={118},
    number={5-6}
}

@article{kp,
    title={Cluster expansion for abstract polymer models},
    author={Koteck\'y, R. and Preiss, D.},
    journal={Communications in Mathematical Physics},
    year={1986},
    volume={103},
    pages={491-498}
}

@article{Dedekind,
title	= {{\"U}ber {Z}erlegungen von {Z}ahlen durch ihre gr{\"o}{\ss}ten gemeinsamen {T}eiler},
author	= {Richard Dedekind},
year	= {1897},
journal	= {Festschrift Hoch. Braunschweig u. ges. Werke, II},
volume = {},
pages = {103-148}
}

@article{NSS,
title	= {Supersaturation in posets and applications
involving the container method},
author	= {Jonathan Noel and Alex Scott and Benny Sudakov},
year	= {2018},
journal	= {Journal of Combinatorial Theory, Series A},
volume = {154},
pages = {247-284}
}

@article{Sapozhenko1989,
title	= {The number of antichains in multilayered ranked sets ({R}ussian)},
author	= {Alexander Antonovich Sapozhenko},
year	= {1989},
journal	= {Diskret.
Mat.},
volume = {1(1)},
pages = {110-128}
}

@article{Moshkovitz2012RamseyTI,
  title={Ramsey Theory, integer partitions and a new proof of the {E}rdős–{S}zekeres Theorem},
  author={Guy Moshkovitz and Asaf Shapira},
  journal={Advances in Mathematics},
  year={2012},
  volume={262},
  pages={1107-1129}
}

@article{Fox2011ErdsSzekerestypeTF,
  title={Erdős–{S}zekeres‐type theorems for monotone paths and convex bodies},
  author={Jacob Fox and J{\'a}nos Pach and Benny Sudakov and Andrew Suk},
  journal={Proceedings of the London Mathematical Society},
  year={2011},
  volume={105}
}

@article{Anderson1967Primitive,
author = {Anderson, I.},
title = {On Primitive Sequences},
journal = {Journal of the London Mathematical Society},
volume = {s1-42},
number = {1},
pages = {137-148},
year = {1967}
}

@article{falgasravry2023dedekinds,
  title={{D}edekind's problem in the hypergrid},
  author={Falgas-Ravry, Victor and R{\"a}ty, Eero and Tomon, Istv{\'a}n},
  journal={arXiv preprint arXiv:2310.12946},
  year={2023}
}

@misc{park2023notenumberantichainsgeneralizations,
      title={Note on the number of antichains in generalizations of the {B}oolean lattice}, 
      author={Jinyoung Park and Michail Sarantis and Prasad Tetali},
      year={2023},
      eprint={2305.16520},
      archivePrefix={arXiv},
      primaryClass={math.CO},
      url={https://arxiv.org/abs/2305.16520}, 
}

@article{clements1969generalization,
  title={A generalization of a combinatorial theorem of \rm{M}acaulay},
  author={Clements, George F and Lindstr{\"o}m, Bernt},
  journal={Journal of combinatorial theory},
  volume={7},
  number={3},
  pages={230--238},
  year={1969},
  publisher={Elsevier}
}

@book{spivak2008calculus,
  title={Calculus},
  author={Spivak, Michael},
  year={2008},
  edition={4},
  publisher={Publish or Perish},
  isbn={0914098918}}

@article{knuth1968art,
  title={The art of computer programming, vol 1: Fundamental},
  author={Knuth, Donald E},
  journal={Algorithms. Reading, MA: Addison-Wesley},
  year={1968}
}

\appendix

\section{Isoperimetry for $[t]^n$}\label{sec:app_isop}

We will use the following strong version of the Local Limit Theorem, \Cref{thm:llt}, due to Esseen \cite{Esseen_1945}, for random variables that have \textit{lattice distribution}. The notation and statement follows the book of Petrov~\cite{petrov_1975}.
A random variable $X$ is said to have lattice distribution if there exist real constants $a$ and $h$ with $h>0$ such that $\PP(X\in a+h\mathbb{Z})=1$. The largest $h$ for which this equality holds is called the \textit{step} of the distribution. Let 
\beq{eq:H_k} H_k(x):=(-1)^ke^{\frac{x^2}{2}}\frac{d^k}{dx^k}e^{-\frac{x^2}{2}}\enq
be the $k$-th Hermite polynomial. For a random variable $X$ with cumulants $\kap_1,\kap_2,\dots$ and variance $\sigma^2$, let
\beq{eq:q_i} q_i(x):=\frac{1}{\sqrt{2\pi}}e^{-\frac{x^2}{2}}{\sum} H_{i+2s}(x)\prod_{m=1}^i\frac{1}{k_m!}\left(\frac{\kappa_{m+2}}{(m+2)!\sigma^{m+2}}\right)^{k_m},\enq
where the sum is over all nonnegative integers $k_1,\dots, k_i$ such that $k_1+2k_2+\dots+ik_i=i$, and $s:=k_1+\dots+k_i$ (we assume $0^0=1$). We now state the Local Limit Theorem that we will use (\cite[Part II, Theorem 3]{Esseen_1945}, \cite[Chapter VII, Theorem 13]{petrov_1975}).
\begin{theorem} \label{thm:llt}
    Let $(X_n)_{n\geq1}$ be a sequence of i.i.d. random variables with mean $\mu$ and variance $\sigma^2$, and $k$ a positive integer for which $\E|X_1|^k<\infty$. Assume $X_1$ has lattice distribution with step $1$ and let $S_n:=X_1+\dots+X_n$. Then,
    $$\sigma\sqrt{n}\cdot \PP(S_n=j)=\frac{1}{\sqrt{2\pi}}e^{-x_j^2/2}+\sum_{i=1}^{k-2}\frac{q_i(x_j)}{n^{i/2}}+o\left(\frac{1}{n^{(k-2)/2}}\right)$$
    for $x_j=\frac{j-n\mu}{\sigma\sqrt{n}}$, where the error term is uniform in $j$.
\end{theorem}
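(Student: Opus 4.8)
The statement is Esseen's classical local limit theorem with an Edgeworth-type correction, so the plan is to carry out the standard Fourier-inversion argument. First I would pass to the centered variable $X_1-\mu$, write $\varphi(t)=\E e^{it(X_1-\mu)}$ for its characteristic function (so that $|\varphi|$ is $2\pi$-periodic after the step-$1$ normalization and $\E e^{itS_n}=e^{itn\mu}\varphi(t)^n$), and invoke the exact Fourier inversion formula for integer-valued random variables. After the substitution $t=u/(\sigma\sqrt n)$ and recalling $x_j=(j-n\mu)/(\sigma\sqrt n)$, this reads
\[
\sigma\sqrt n\cdot\PP(S_n=j)=\frac{1}{2\pi}\int_{-\pi\sigma\sqrt n}^{\pi\sigma\sqrt n}e^{-iux_j}\,\varphi\Bigl(\frac{u}{\sigma\sqrt n}\Bigr)^{n}\,du.
\]
The goal is to show this integral equals the Fourier transform of the Edgeworth expansion $\frac{1}{\sqrt{2\pi}}e^{-x_j^2/2}+\sum_{i=1}^{k-2}q_i(x_j)n^{-i/2}$ up to an error $o(n^{-(k-2)/2})$, uniformly in $j$.

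Next I would split the integration domain into three regions. On the \emph{central region} $|u|\le\varepsilon\sigma\sqrt n$ (with $\varepsilon$ a small fixed constant) I would Taylor-expand $\log\varphi(s)=\sum_{m=2}^{k}\frac{\kappa_m}{m!}(is)^m+o(s^k)$, which is legitimate to order $k$ since $\E|X_1|^k<\infty$, then exponentiate and collect terms by powers of $n^{-1/2}$ to obtain $\varphi(u/(\sigma\sqrt n))^n=e^{-u^2/2}\bigl(1+\sum_{i=1}^{k-2}n^{-i/2}P_i(iu)+\text{error}\bigr)$ for explicit polynomials $P_i$ whose coefficients are the cumulant combinations occurring in \eqref{eq:q_i}. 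Integrating the main terms over all of $\mathbb R$ against $e^{-iux_j}$ and using the Hermite identity $\frac{1}{2\pi}\int_{\mathbb R}e^{-iux}e^{-u^2/2}(iu)^m\,du=\frac{1}{\sqrt{2\pi}}e^{-x^2/2}H_m(x)$, with $H_m$ as in \eqref{eq:H_k}, reproduces exactly the $q_i$; extending the range from $|u|\le\varepsilon\sigma\sqrt n$ to all of $\mathbb R$ costs only a Gaussian tail. On the \emph{intermediate region} I would use the quadratic bound $|\varphi(t)|\le e^{-ct^2}$ valid near $0$. On the \emph{outer region} $\varepsilon\le|t|\le\pi$ I would use that the step is exactly $1$, so $|\varphi(t)|<1$ there and $\rho:=\sup_{\varepsilon\le|t|\le\pi}|\varphi(t)|<1$ by compactness and continuity; hence this part contributes $O(\sqrt n\,\rho^n)=o(n^{-(k-2)/2})$.

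The main obstacle is the bookkeeping in the central region: one must verify that collecting the exponentiated truncated Taylor series by powers of $n^{-1/2}$ produces precisely the polynomials whose Fourier transforms are the $q_i$ of \eqref{eq:q_i}, and -- crucially -- that every discarded remainder (from truncating $\log\varphi$ and from the exponential) is genuinely $o(n^{-(k-2)/2})$ \emph{uniformly in $j$}. Uniformity itself is the easy part once the estimates are written out, since the only $j$-dependence enters through the unimodular factor $e^{-iux_j}$ together with the Gaussian weight; the delicate work is controlling those error terms through the algebra, which is exactly where the moment hypothesis $\E|X_1|^k<\infty$ is used in full strength.
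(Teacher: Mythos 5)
The paper does not prove \Cref{thm:llt}; it is cited verbatim from Esseen~\cite{Esseen_1945} (also \cite[Chapter VII, Theorem 13]{petrov_1975}), so there is no in-paper proof to compare against. Your outline is the standard Fourier-inversion / Edgeworth-expansion argument that those references use: exact inversion for a step-$1$ lattice variable, a three-way split of the integration range, a $k$-term Taylor expansion of $\log\varphi$ and re-exponentiation to extract the polynomials $P_i(iu)$, the Hermite identity $\tfrac{1}{2\pi}\int_{\mathbb R}e^{-iux}e^{-u^2/2}(iu)^m\,du=\tfrac{1}{\sqrt{2\pi}}e^{-x^2/2}H_m(x)$ to recover the $q_i$ of \eqref{eq:q_i}, a quadratic bound $|\varphi(t)|\le e^{-ct^2}$ on the intermediate range, and $\sup_{\varepsilon\le|t|\le\pi}|\varphi(t)|<1$ (here using that the step is exactly $1$) to kill the outer range. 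This is correct as a plan and matches the textbook proof; the only real work you flag but do not carry out is the bookkeeping that the re-exponentiated series matches the partition-indexed product formula \eqref{eq:q_i} and that the remainder is $o(n^{-(k-2)/2})$ uniformly, which in Petrov's treatment takes a few pages but introduces no new ideas.
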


A straightforward calculation (using \eqref{eq:q_i}) yields
\[q_1(x)=\frac{1}{\sqrt{2\pi}}e^{-\frac{x^2}{2}}H_3(x)\frac{\kappa_3}{6\sigma^3}; \quad \text{ and } \quad q_2(x)=\frac{1}{\sqrt{2\pi}}e^{-\frac{x^2}{2}}\left(H_6(x)\frac{\kappa_3^2}{72\sigma^6}+H_4(x)\frac{\kappa_4}{24\sigma^4}\right).\]
Applying \Cref{thm:llt} for $k=4$, we get the following corollary.
\begin{corollary}\label{lem:llt_skew}
    Let $(X_n)_{n\geq1}$ be a sequence of i.i.d. random variables with mean $\mu$, variance $\sigma^2$, $\E|X_1|^4<\infty$ and $\kappa_3(X_1)=0$. Assume $X_1$ has lattice distribution with step $1$ and let $S_n:=X_1+\dots+X_n$. Then,
    \begin{equation}\label{eqn:llt}
    \sigma\sqrt{n}\cdot\PP(S_n=j)=\frac{1}{\sqrt{2\pi}}e^{-x_j^2/2}\left(1+\frac{1}{n}H_4(x_j)\frac{\kappa_4}{24\sigma^4}\right)+o\left(\frac{1}{n}\right),
    \end{equation}
    for $x_j=\frac{j-n\mu}{\sigma\sqrt{n}}$, where the error term is uniform in $j$.
\end{corollary}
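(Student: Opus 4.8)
The plan is to obtain the statement as the $k=4$ specialization of \Cref{thm:llt}. The moment hypothesis $\E|X_1|^4<\infty$ is exactly what that case requires, and the lattice-with-step-$1$ assumption is inherited verbatim; since $o(1/n^{(k-2)/2})=o(1/n)$ when $k=4$, \Cref{thm:llt} gives
\[
\sigma\sqrt n\cdot\PP(S_n=j)=\frac{1}{\sqrt{2\pi}}e^{-x_j^2/2}+\frac{q_1(x_j)}{\sqrt n}+\frac{q_2(x_j)}{n}+o\!\left(\tfrac1n\right)
\]
uniformly in $j$. Everything then reduces to evaluating $q_1$ and $q_2$ from \eqref{eq:q_i} and using $\kappa_3(X_1)=0$.

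Next I would compute $q_1$ and $q_2$ explicitly. For $i=1$ the only admissible tuple is $k_1=1$, which has $s=1$, Hermite index $i+2s=3$, and normalizing factor $\kappa_3/(3!\,\sigma^3)$, so $q_1(x)=\frac{1}{\sqrt{2\pi}}e^{-x^2/2}H_3(x)\,\kappa_3/(6\sigma^3)$. For $i=2$ there are two admissible tuples, $(k_1,k_2)=(2,0)$ (with $s=2$, Hermite index $6$) and $(k_1,k_2)=(0,1)$ (with $s=1$, Hermite index $4$), giving $q_2(x)=\frac{1}{\sqrt{2\pi}}e^{-x^2/2}\bigl(H_6(x)\,\kappa_3^2/(72\sigma^6)+H_4(x)\,\kappa_4/(24\sigma^4)\bigr)$; these are precisely the two expressions recorded immediately before the statement. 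Invoking $\kappa_3(X_1)=0$ then annihilates $q_1$ entirely and removes the $H_6$ term from $q_2$, leaving $q_2(x)=\frac{1}{\sqrt{2\pi}}e^{-x^2/2}H_4(x)\,\kappa_4/(24\sigma^4)$. Substituting into the displayed expansion and pulling $\frac{1}{\sqrt{2\pi}}e^{-x_j^2/2}$ out of both the leading term and the surviving $q_2(x_j)/n$ term yields exactly \eqref{eqn:llt}.

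The whole argument is bookkeeping rather than analysis; the one place that demands a little care is the evaluation of \eqref{eq:q_i} — correctly enumerating the nonnegative integer solutions of $k_1+2k_2+\dots+ik_i=i$ for $i=1,2$, and carefully tracking the Hermite index $i+2s$ together with the factorials $k_m!$ and $(m+2)!$ and the exponents on $\sigma$. It is also worth recording, for the claimed uniformity of the error, that the law of $X_1$ (and hence $\sigma,\kappa_3,\kappa_4$) is fixed and independent of $n$, so $q_1$ and $q_2$ are genuine bounded polynomials in $x_j$ and the $o(1/n)$ error term provided by \Cref{thm:llt} survives unchanged.
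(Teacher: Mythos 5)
Your proposal is correct and matches the paper's own approach exactly: the paper also obtains the corollary by specializing \Cref{thm:llt} to $k=4$, records $q_1$ and $q_2$ precisely as you derive them from \eqref{eq:q_i}, and then uses $\kappa_3=0$ to discard the $q_1$ term and the $H_6$ contribution to $q_2$ before factoring out $\frac{1}{\sqrt{2\pi}}e^{-x_j^2/2}$. Your bookkeeping of the tuples $(k_1,k_2)$, the Hermite indices $i+2s$, and the factorial/$\sigma$-power normalizations is all accurate.
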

\nin A direct computation using \eqref{eq:H_k} gives that 
\beq{eq:H_4} H_4(x)=x^4-6x^2+3.\enq

Recall that $m:=\lfloor\frac{(t-1)n}{2}\rfloor$, so that $L_m$ is the (lower) middle layer of $[t]^n$. We will now finish the proof of \Cref{lem:layer_exp}, which we restate here for convenience.

\begin{lemma}\label{lem:layer_exp_app} 
There is a number $c>0$ for which the following holds: for any integer $t \ge 2$ and any $j \in [1,m]$,
$$\frac{\ell_j(t,n)}{\ell_{j-1}(t,n)}\geq 1+\frac{c}{t^2n}.$$
\end{lemma}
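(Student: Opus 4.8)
The plan is to prove \Cref{lem:layer_exp_app} by reducing the statement, as was done in the combinatorial proof for $t=3$, to the single ratio $\ell_m/\ell_{m-1}$ and then to produce a lower bound of the form $\ell_m \geq \ell_{m-1} + \Omega(\ell_m/(t^2 n))$. First I would invoke \Cref{lem:logconcave}: since the sequence $(\ell_j)$ is log-concave, the ratios $\ell_j/\ell_{j-1}$ are non-increasing in $j$, so it suffices to establish the bound for $j = m$, the (lower) middle layer. For this, the natural model is the random variable $S_n = X_1 + \dots + X_n$ where the $X_i$ are i.i.d.\ uniform on $\{0, 1, \dots, t-1\}$: then $\ell_j(t,n) = t^n \cdot \PP(S_n = j)$, so $\ell_m/\ell_{m-1} = \PP(S_n = m)/\PP(S_n = m-1)$ and I need a lower bound on $\PP(S_n=m) - \PP(S_n=m-1)$ relative to $\PP(S_n=m-1)$.

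The key analytic input is \Cref{lem:llt_skew}: the summands $X_i$ are bounded (hence have finite fourth moment), have lattice distribution with step $1$, and by symmetry of the uniform distribution on $\{0,\dots,t-1\}$ about its mean $\mu = (t-1)/2$, the third cumulant $\kappa_3(X_1)$ vanishes. So \eqref{eqn:llt} applies with $\sigma^2 = (t^2-1)/12$. The point $j=m = \lfloor (t-1)n/2 \rfloor$ corresponds to $x_m = (m - n\mu)/(\sigma\sqrt n)$, which is $O(1/\sqrt n)$ (it is exactly $0$ when $(t-1)n$ is even, and $O(1/\sqrt n)$ otherwise), and $j = m-1$ corresponds to $x_{m-1} = x_m - 1/(\sigma\sqrt n)$. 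Plugging these into \eqref{eqn:llt} and using $H_4(x) = x^4 - 6x^2 + 3$ from \eqref{eq:H_4} together with the Taylor expansion of $e^{-x^2/2}$ around $x \approx 0$, I would compute
\[
\sigma\sqrt n\,\bigl(\PP(S_n = m) - \PP(S_n = m-1)\bigr) = \frac{1}{\sqrt{2\pi}}\Bigl(e^{-x_m^2/2} - e^{-x_{m-1}^2/2}\Bigr) + \frac{1}{n}\cdot\frac{\kappa_4}{24\sigma^4}\cdot O(1) + o(1/n).
\]
Since $x_m^2 = O(1/n)$ and $x_{m-1}^2 = 1/(\sigma^2 n) + O(1/n^{3/2})$, the leading difference is $\tfrac{1}{\sqrt{2\pi}}(1 - e^{-1/(2\sigma^2 n)}) + O(1/n^{3/2}) = \tfrac{1}{2\sqrt{2\pi}\sigma^2 n} + O(1/n^{3/2})$; the $\kappa_4$ term is $O(1/n)$ but with a coefficient that is $O(1/\sigma^4)$ and hence (since $\sigma^2 = \Theta(t^2)$) much smaller than $1/(\sigma^2 n)$ once $t$ is large — and for small $t$ it is an $O(1/n)$ term that one checks does not overwhelm the $\Theta(1/(\sigma^2 n))$ main term. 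This gives $\PP(S_n = m) - \PP(S_n = m-1) \geq \tfrac{c'}{\sigma^2 n^{3/2}} \geq \tfrac{c''}{t^2 n^{3/2}}$ for a universal constant, while $\PP(S_n = m-1) \leq \PP(S_n = m) = \Theta(1/(\sigma\sqrt n)) = \Theta(1/(t\sqrt n))$ by the LLT (or directly by \eqref{eqn:anderson}). Taking the ratio yields $\ell_m/\ell_{m-1} - 1 \geq \tfrac{c''/(t^2 n^{3/2})}{O(1/(t\sqrt n))} = \Omega(1/(tn))$, which is even a little stronger than claimed; in any case $\geq 1 + c/(t^2 n)$ for a suitable absolute $c > 0$ and all $n$ large enough.

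The main obstacle is bookkeeping the error terms of \Cref{lem:llt_skew} \emph{uniformly in $t$}: the corollary as stated gives an $o(1/n)$ error with an implied constant that may depend on the distribution of $X_1$ (hence on $t$), so I must be careful that the final constant $c$ in the lemma is genuinely independent of $t$. There are two ways to handle this. One is to observe that $\E|X_1|^4 \leq t^4$ and that Esseen's error bound is effective, so the $o(1/n)$ term is really $O(t^{4}/n^{3/2})$ or similar, which is dominated by the $\Theta(1/(t^2 n^{3/2}))$ main term only once $n \geq n_t$ for some threshold $n_t$ depending on $t$ — and this is exactly the form of the statement in \Cref{lem:layer_exp} (with $n \geq n_t$), so this is acceptable for the version used in the main text. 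The second, cleaner route, which matches the phrasing ``for any integer $t\geq 2$ and any $j\in[1,m]$'' in \Cref{lem:layer_exp_app}, is to rescale: apply the LLT to the normalized variable $X_1/1$ but track the dependence on $\sigma$ explicitly, noting that after dividing through by $\sigma\sqrt n$ the ``effective'' number of summands controlling convergence is $n$ while all $t$-dependence is isolated in explicit powers of $\sigma$; then the main term beats the error precisely when $n \gtrsim t^{O(1)}$, and for the finitely many pairs $(t,n)$ with $n$ below that threshold one checks the inequality $\ell_j/\ell_{j-1} \geq 1 + c/(t^2 n)$ directly — e.g.\ by noting $\ell_j > \ell_{j-1}$ strictly for $j \leq m$ (a consequence of the normalized matching property, since $N^+$ is injective on $L_{j-1}$ into $L_j$ but not surjective when $j \leq m < (t-1)n$) and bounding the gap crudely. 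Either way the conclusion follows; I would present the $n \geq n_t$ version in line with \Cref{lem:layer_exp} and remark that uniformity can be obtained by the rescaling argument.
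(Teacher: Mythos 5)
Your approach matches the paper's: reduce to $j=m$ by log-concavity, write $\ell_j(t,n)=t^n\,\PP(S_n=j)$ for $S_n$ a sum of $n$ i.i.d.\ uniforms on $\{0,\dots,t-1\}$ (so $\kappa_3=0$), apply \Cref{lem:llt_skew}, and compare the two central point masses. There is one arithmetic slip worth flagging: from $\sigma\sqrt n\,\bigl(\PP(S_n=m)-\PP(S_n=m-1)\bigr)=\Theta(1/(\sigma^2 n))$ you should get $\PP(S_n=m)-\PP(S_n=m-1)=\Theta\!\left(1/(\sigma^3 n^{3/2})\right)$, not $\Theta(1/(\sigma^2 n^{3/2}))$; dividing by $\PP(S_n=m-1)=\Theta(1/(\sigma\sqrt n))$ then gives $\Theta(1/(\sigma^2 n))=\Theta(1/(t^2 n))$, so your claimed strengthening to $\Omega(1/(tn))$ is incorrect — you recover exactly the stated bound, no more. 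The slip is harmless here since the conclusion is what the lemma asserts, but the claim of a stronger estimate should be deleted. Your discussion of uniformity in $t$ is a fair point: as stated, \Cref{lem:layer_exp_app} drops the $n\geq n_t$ caveat of \Cref{lem:layer_exp}, yet the $o(1/n)$ term in \Cref{lem:llt_skew} does depend on the law of $X_1$ and hence on $t$, so the appendix statement should implicitly carry the same $n\geq n_t$ qualifier (or one uses an effective Edgeworth-type error as you suggest); the paper's proof is the same as yours in this respect and relies on that caveat.
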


\begin{proof}
By \Cref{lem:logconcave}, it suffices to prove the claim for $j=m$.
    Let $X_1,X_2,\dots,X_n$ be i.i.d. random variables with $\PP(X_1=i)=\frac{1}{t}$ for all $i\in\{0,1,\dots,t-1\}$. Then $\mu=\frac{t-1}{2}$, $\sigma^2=\frac{t^2-1}{12}$, and (since $\kappa_3=\mathbb E[(X-\mathbb EX)^3]$) $\kappa_3=0$. The random vector $(X_1,\dots,X_n)$ samples elements of $[t]^n$ uniformly at random, thus
    $$\PP(S_n=j)=\frac{\ell_j(t,n)}{t^n}.$$
    So it remains to show that
    $$
    \frac{\PP(S_n=m)}{\PP(S_n=m-1)}\geq1+\frac{c}{t^2n},
    $$
 and by \Cref{lem:llt_skew},
    $$\frac{\PP(S_n=m)}{\PP(S_n=m-1)}=\frac{e^{-x_m^2/2}\left(1+\frac{1}{n}H_4(x_m)\frac{\kappa_4}{24\sigma^4}\right)+o(1/n)}{e^{-x_{m-1}^2/2}\left(1+\frac{1}{n}H_4(x_{m-1})\frac{\kappa_4}{24\sigma^4}\right)+o(1/n)}.$$
    To simplify the right-hand side of the above display, note that
    \[(x_{m-1}, x_m)=\begin{cases}(-3/(2\sigma\sqrt n),-1/(2\sigma \sqrt n)) & \text{ if $(t-1)n$ is odd;} \\ (-1/(\sigma \sqrt n), 0) & \text{ if $(t-1)n$ is even.}\end{cases}\]
    By \eqref{eq:H_4} and the fact that $x_m, x_{m-1}=o_n(1)$, $H_4(x_m), H_4(x_{m-1})=3+o(1)$. Furthermore, $\frac{\kappa_4}{24\sigma^4}$ is independent of $n$, and $H_4(x_m)\geq H_4(X_{m-1})$ as $H_4(x)$ is increasing in $[-\sqrt{3},0]$. The following observation will help us simplify the calculations.

    \begin{observation}
        If $A(n),B(n)=\Theta(1)$ then $$\frac{A(n)+o(1/n)}{B(n)+o(1/n)}=\frac{A(n)}{B(n)}+o(1/n)$$
        (where $o(1/n)$ may be negative).
    \end{observation}
\begin{proof}[Justification.]
    \[\frac{A(n)+o(1/n)}{B(n)+o(1/n)}=\frac{A(n)(1+o(1/n))}{B(n)(1+o(1/n))}=\frac{A(n)}{B(n)}(1+o(1/n)).\qedhere\]
\end{proof}
    Now, by the aforementioned facts and the observation, we have
    \begin{align*}
        \frac{\PP(S_n=m)}{\PP(S_n=m-1)}&=\frac{e^{-(x_m^2-x_{m-1}^2)/2}\left(1+\frac{1}{n}H_4(x_m)\frac{\kappa_4}{24\sigma^4}\right)}{1+\frac{1}{n}H_4(x_{m-1})\frac{\kappa_4}{24\sigma^4}}+o(1/n)\\
        &\geq e^{\frac{c}{\sigma^2n}}+o(1/n) \geq 1+\frac{c}{t^2n},
    \end{align*}
    where $c$ is an absolute constant. This yields the conclusion.
\end{proof}
When $(t-1)n$ is odd (i.e., $t$ is even and $n$ is odd), $[t]^n$ has two middle layers. If one wishes to extend the methods in this paper to $[t]^n$ for general $t$, we anticipate that one would need isoperimetric inequalities between the two middle layers as in \Cref{prop:isoperim_mid} below. We note that this lemma doesn't follow from \Cref{lem:layer_exp} or its proof argument, so it requires a separate proof. 

Recall that $m:=\lfloor\frac{(t-1)n}{2}\rfloor$, so when $(t-1)n$ is odd, $L_{m}$ and $L_{m+1}$ are the two middle layers of $[t]^n$.
\begin{lemma}\label{prop:isoperim_mid}
    There is a number $c>0$ for which the following holds: for any integers $t \ge 4$ and $n \ge 1$ such that $(t-1)n$ is odd, for every $S\subseteq L_{m+1}(t,n)$ with $|S|\leq \frac{1}{2}\ell_{m+1}(t,n)$, $|N^-(S)|\geq \left(1+\frac{c}{t^2n}\right)|S|$.
\end{lemma}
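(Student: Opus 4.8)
The plan is to first dualize. The antipodal map $\rho\colon u\mapsto (t-1)\boldsymbol 1-u$ is an anti-automorphism of $[t]^n$ carrying $L_{m+1}$ onto $L_m$ and the operation $N^-$ on $L_{m+1}$ onto $N^+$ on $L_m$, so it suffices to prove that every $T\subseteq L_m(t,n)$ with $|T|\le \tfrac12\ell_m(t,n)$ satisfies $|N^+(T)|\ge (1+c/(t^2n))|T|$. By the $L(\cdot)$ half of \Cref{prop:CL} (applied to $N^+$) we have $|N^+(T)|\ge |N^+(L(T))|$, so we may assume $T=L(T)$ is a final segment of $L_m$ in the lexicographic order; one also checks that $N^+(T)$ is then a final segment of $L_{m+1}$.

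Next I would split off the first coordinate. Writing $T^j$ for the elements of $T$ with first coordinate $j$ and identifying $T^j$ with a subset of the layer of $[t]^{n-1}$ that sits $\tfrac t2-1-j$ above its (unique) middle layer, a short case analysis on how a cover relation can change the first coordinate gives $N^+(T)^j=N^+(T^j)\cup T^{j-1}$ (with $N^+$ now taken in $[t]^{n-1}$ and $T^{-1}:=\emptyset$), and these sets are disjoint for distinct $j$, so $|N^+(T)|=\sum_j|N^+(T^j)\cup T^{j-1}|$. Since $T$ is a lexicographic final segment, $T^j$ is the full layer $L_{m-j}(t,n-1)$ for every $j>j_0:=\min\{j\colon T^j\neq\emptyset\}$; and a direct count, using the symmetry of the layer sizes of $[t]^{n-1}$ about its middle, shows that $|T|\le\tfrac12\ell_m(t,n)$ forces $j_0\ge \tfrac t2-1$. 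If $j_0=t-1$ then $T$ is a final segment of a single layer of $[t]^{n-1}$ lying strictly below the middle, and the claim is immediate from \Cref{prop:isoperim_up}(b). Otherwise, substituting the above description of the slices into $|N^+(T)|=\sum_j|N^+(T^j)\cup T^{j-1}|$ and telescoping yields the key identity
\[
|N^+(T)|-|T|=\bigl(|N^+(T^{j_0})|-|T^{j_0}|\bigr)+\bigl(\ell_{m-j_0}(t,n-1)-\ell_{m-(t-1)}(t,n-1)\bigr).
\]

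If $j_0\ge t/2$, every nonempty slice $T^j$ sits strictly below the middle layer of $[t]^{n-1}$, so the first bracket is at least $\tfrac{c}{t^2n}|T^{j_0}|$ by \Cref{prop:isoperim_up}(b) (or is handled directly when $T^{j_0}$ is the whole slice), while iterating \Cref{lem:layer_exp} bounds the second bracket below by $\Omega\!\bigl((t-1-j_0)/(t^2n)\bigr)\cdot\ell_{m-(t-1)}(t,n-1)$; since $|T|$ is at most the sum of the $t-j_0$ sizes $\ell_{m-j}(t,n-1)$ with $j_0\le j\le t-1$, all of which lie within $t/2$ of the middle and hence are within a constant factor of one another, dividing gives the required $\Omega(1/(t^2n))$ lower bound for $(|N^+(T)|-|T|)/|T|$.

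The remaining case $j_0=t/2-1$ is the crux. Here the one partial slice $T^{t/2-1}$ lives in the unique middle layer of $[t]^{n-1}$, where passing to $N^+$ need not expand (the ratio $|N^+(\cdot)|/|\cdot|$ can be as small as $\ell_{\mathrm{mid}+1}(t,n-1)/\ell_{\mathrm{mid}}(t,n-1)<1$), so the first bracket above may be slightly negative. The point is that $|T|\le\tfrac12\ell_m(t,n)$ now forces $|T^{t/2-1}|\le\tfrac12\bigl(\ell_{\mathrm{mid}}(t,n-1)-\ell_{\mathrm{mid}-t/2}(t,n-1)\bigr)$, which by the local limit estimates of \Cref{sec:app_isop} (\Cref{lem:llt_skew}) is only an $O(1/n)$ fraction of the middle layer; the normalized matching property then bounds the first bracket in magnitude by $O\!\bigl(\ell_{\mathrm{mid}}(t,n-1)/n^2\bigr)$, whereas the second bracket equals exactly $\ell_{\mathrm{mid}}(t,n-1)-\ell_{\mathrm{mid}-t/2}(t,n-1)=\Omega\!\bigl(\ell_{\mathrm{mid}}(t,n-1)/n\bigr)$ by \Cref{lem:layer_exp}, so it dominates; as $|T|=O\!\bigl(t\,\ell_{\mathrm{mid}}(t,n-1)\bigr)$, dividing gives a bound of the form $c/(t^2n)$, completing the proof. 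The main obstacle is precisely this last case: arranging the bookkeeping so that the two near-vanishing quantities $|N^+(T^{t/2-1})|-|T^{t/2-1}|$ and $\ell_{\mathrm{mid}}(t,n-1)-\ell_{\mathrm{mid}-t/2}(t,n-1)$ are controlled against each other, which is where one needs two-sided control of near-middle layer ratios extracted from the appendix's local limit theorem.
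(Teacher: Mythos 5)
Your proof is correct, and it takes a genuinely different (though closely related) route from the paper's. The paper works directly with $N^-$ on a compressed lex-initial segment $S$ of $L_{m+1}$, expresses $|N^-(S)|/|S|$ as a ratio of two sums, and in the critical case $k=t/2$ (your $j_0=t/2-1$) applies the elementary monotonicity \Cref{claim:frac} to replace the pair $(|N^-(Y)|,|Y|)$ by the full-middle-layer worst case $\bigl(\ell_{\mathrm{mid}-1},\ell_{\mathrm{mid}}\bigr)$; in this way the hypothesis $|S|\le\frac12\ell_{m+1}$ is consumed only once, via \Cref{claim:half_coord}, to secure $k\le t/2$. You instead dualize to $N^+$ on a lex-final segment $T$ of $L_m$, derive the telescoping identity
$|N^+(T)|-|T|=\bigl(|N^+(T^{j_0})|-|T^{j_0}|\bigr)+\bigl(\ell_{m-j_0}(n-1)-\ell_{m-(t-1)}(n-1)\bigr)$,
and in the crux case bound the possibly negative first bracket by invoking the size hypothesis a second time, to force $|T^{t/2-1}|\le\frac12\bigl(\ell_{\mathrm{mid}}(n-1)-\ell_{\mathrm{mid}-t/2}(n-1)\bigr)$. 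Both arguments ultimately rest on the same ingredients (\Cref{prop:CL}, normalized matching, \Cref{lem:layer_exp}, \Cref{prop:isoperim_up}, and the local-limit estimate \Cref{lem:llt_skew} for near-middle layer ratios); what differs is the bookkeeping, and your difference identity makes the structure of the extremal configuration quite transparent. Two minor remarks. First, the parenthetical claim that ``$N^+(T)$ is then a final segment of $L_{m+1}$'' does not follow from \Cref{prop:CL} (which gives only the containment $N^+(L(S))\subseteq L(N^+(S))$), but it is never used, so this is harmless. Second, your crux-case estimate can be streamlined: since $0<\ell_{\mathrm{mid}+1}(n-1)/\ell_{\mathrm{mid}}(n-1)<1$, the normalized matching bound already gives that the first bracket is at least $-\bigl(1-\ell_{\mathrm{mid}+1}/\ell_{\mathrm{mid}}\bigr)|T^{t/2-1}|\ge -\frac12\bigl(\ell_{\mathrm{mid}}(n-1)-\ell_{\mathrm{mid}-t/2}(n-1)\bigr)$, i.e., at least minus half the second bracket, so the sum is at least half the second bracket without needing the two-sided LLT control $1-\ell_{\mathrm{mid}+1}/\ell_{\mathrm{mid}}=O(1/n)$; you still need the local-limit estimate, but only to show that the second bracket itself is $\Omega\bigl(\ell_{\mathrm{mid}}(n-1)/(tn)\bigr)$.
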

The following trivial observation will be used frequently in the rest of the section. Recall from \Cref{sec:isoper} that for $S \sub [t]^n$, $S^j$ denotes the set of elements of $S$ whose first coordinate is $j$; $P(S)$ is the image of $S$ under the mapping $P$ where $P:[t]^n \rightarrow [t]^{n-1}$ is the operation of removing the first coordinate of the given element of $[t]^n$. In the notation below, we will often suppress dependency on $t$.
\begin{observation}\label{obs:bij}
Let $S\subseteq [t]^n$.
\begin{enumerate}[(a)]
    \item For every $S\subseteq L_k^j(n)$, we have $|S|=|P(S)|$. In particular, $|L_k^j(n)|=|L_{k-j}(n-1)|=\ell_{k-j}(n-1)$.
    \item If $S=S^0$, then $|N^-(S)|=|N^-(P(S))|$; if $S=S^{t-1}$, then $|N^+(S)|=|N^+(P(S))|$.
\end{enumerate}
\end{observation}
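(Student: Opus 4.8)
The plan is to verify both statements directly from the product structure of $[t]^n$, using only the standard description of its Hasse diagram: a cover relation $u\lessdot v$ holds iff $v-u=\mathbf e_i$ for some $1\le i\le n$ (where $\mathbf e_i$ is the $i$-th standard basis vector), so that $N^-(v)=\{v-\mathbf e_i:v_i\ge 1\}$ and $N^+(v)=\{v+\mathbf e_i:v_i\le t-2\}$. I will also use repeatedly that the coordinate-deletion map $P:[t]^n\to[t]^{n-1}$ is injective on any collection of elements whose first coordinates all agree, since such an element is recovered from its image by prepending that common first coordinate.

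For part (a), I would note that every $v\in L_k^j(n)$ is of the form $v=(j,P(v))$ with $P(v)\in[t]^{n-1}$ and $\sum_{i}P(v)_i=k-j$, i.e.\ $P(v)\in L_{k-j}(n-1)$; conversely every $w\in L_{k-j}(n-1)$ equals $P((j,w))$ with $(j,w)\in L_k^j(n)$. Hence $P$ restricts to a bijection $L_k^j(n)\to L_{k-j}(n-1)$ (both sides empty when $k-j\notin[0,(t-1)(n-1)]$). In particular $P$ is injective on $L_k^j(n)$, so $|S|=|P(S)|$ for every $S\subseteq L_k^j(n)$, and taking $S=L_k^j(n)$ gives $|L_k^j(n)|=|L_{k-j}(n-1)|=\ell_{k-j}(n-1)$.

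For part (b), suppose first that $S=S^0$, so $v_1=0$ for every $v\in S$. For such $v$ one cannot decrement the first coordinate, so $N^-(v)=\{v-\mathbf e_i: i\ge 2,\ v_i\ge 1\}$, every element of which again has first coordinate $0$; moreover $P(v-\mathbf e_i)=P(v)-\mathbf e_{i-1}$ for $i\ge 2$, and these are precisely the down-neighbours of $P(v)$ in $[t]^{n-1}$. Thus $P(N^-(v))=N^-(P(v))$ for each $v\in S$, hence $P(N^-(S))=N^-(P(S))$. Since $N^-(S)\subseteq\{u\in[t]^n:u_1=0\}$ and $P$ is injective on this set, $|N^-(S)|=|P(N^-(S))|=|N^-(P(S))|$. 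The case $S=S^{t-1}$ is entirely symmetric: when $v_1=t-1$ all neighbours in $N^+(v)$ are obtained by incrementing a coordinate $i\ge 2$, all have first coordinate $t-1$, and $P$ intertwines the up-neighbour operation on $\{u\in[t]^n:u_1=t-1\}$ with that on $[t]^{n-1}$, giving $|N^+(S)|=|N^+(P(S))|$.

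There is no real obstacle here; the statement is pure bookkeeping. The only point worth isolating is the explicit identification of Hasse-diagram neighbours in $[t]^n$ with single-coordinate increments and decrements, which is immediate from the definition of the componentwise order, together with the elementary injectivity of $P$ on elements sharing a first coordinate.
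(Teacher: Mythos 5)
Your proof is correct, and it is exactly the argument the paper leaves implicit: the statement appears in the appendix as a "trivial observation" with no written proof, the intended justification being precisely that Hasse-diagram covers in $[t]^n$ are single-coordinate increments/decrements and that $P$ is injective on any set of elements sharing the same first coordinate. Your write-up (bijection $L_k^j(n)\to L_{k-j}(n-1)$ for (a), and $P$ intertwining $N^\pm$ on the slices with first coordinate $0$ or $t-1$ for (b)) fills in that bookkeeping correctly.
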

We first prove some supporting claims assuming $(t-1)n$ is odd. Recall \Cref{def:compression}. We say $S$ is compressed if $S=C(S)$.
\begin{claim}\label{claim:half_coord}
    Let $S\subseteq L_{m+1}(n)$ be a compressed set with $|S|\leq \frac{1}{2}\ell_{m+1}(n)$. Then $S\subseteq \bigcup_{j=0}^{t/2}L_{m+1}^j(n)$.    
\end{claim}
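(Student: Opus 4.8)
The plan is to run a pure counting argument that exploits the fact that a compressed $S$ is an \emph{initial segment} of the lexicographic order on $L_{m+1}(n)$. In the lexicographic order, every element whose first coordinate equals $j$ precedes every element whose first coordinate exceeds $j$; hence $L^0_{m+1}(n)$ entirely precedes $L^1_{m+1}(n)$, which entirely precedes $L^2_{m+1}(n)$, and so on. Consequently, to conclude $S\subseteq \bigcup_{j=0}^{t/2}L^j_{m+1}(n)$ it is enough to verify the size bound
\[
|S|\le \sum_{j=0}^{t/2}|L^j_{m+1}(n)|.
\]
Since $|S|\le \tfrac12\ell_{m+1}(n)$ and $L_{m+1}(n)$ is partitioned by first coordinate into the sets $L^0_{m+1}(n),\dots,L^{t-1}_{m+1}(n)$, this reduces to showing
\[
\sum_{j=t/2+1}^{t-1}|L^j_{m+1}(n)|\ \le\ \sum_{j=0}^{t/2}|L^j_{m+1}(n)|.
\]

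To prove this last inequality I would apply \Cref{obs:bij}(a), which gives $|L^j_{m+1}(n)|=\ell_{m+1-j}(n-1)$, with the convention that $\ell_k(n-1)=0$ for $k$ outside $[0,(t-1)(n-1)]$ (this takes care of boundary layers automatically). The key structural point is that, because $(t-1)n$ is odd, $t$ is even and $n$ is odd, so $(t-1)(n-1)$ is even; thus $[t]^{n-1}$ has a \emph{unique} middle layer $L_{k^*}(n-1)$ with $k^*:=(t-1)(n-1)/2=m-t/2+1$, and the sequence $(\ell_k(n-1))_k$ is symmetric about $k^*$, i.e.\ $\ell_k(n-1)=\ell_{2k^*-k}(n-1)$. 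Introducing the signed distance $s=j-t/2$ from the middle (so that $m+1-j=k^*-s$): the left-hand sum runs over $j\in\{t/2+1,\dots,t-1\}$, i.e.\ $s\in\{1,\dots,t/2-1\}$, hence equals $\sum_{s=1}^{t/2-1}\ell_{k^*-s}(n-1)$; the right-hand sum runs over $j\in\{0,\dots,t/2\}$, i.e.\ $s\in\{-t/2,\dots,0\}$, and folding the $s<0$ terms onto their mirror images via the symmetry gives $\ell_{k^*}(n-1)+\sum_{s=1}^{t/2}\ell_{k^*-s}(n-1)$. Subtracting, the right-hand side minus the left-hand side equals $\ell_{k^*}(n-1)+\ell_{k^*-t/2}(n-1)\ge 0$, which is exactly what is needed (and in fact leaves slack).

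I do not expect a serious obstacle here: the claim collapses to an elementary comparison of layer sizes. The only delicate point is the index bookkeeping — correctly locating the middle layer of $[t]^{n-1}$ at $k^*=m-t/2+1$ (using that $\lfloor(t-1)n/2\rfloor=((t-1)n-1)/2$ when $(t-1)n$ is odd), checking that the ranges of the shift parameter $s$ line up as claimed, and handling the out-of-range layers via the zero convention. Conceptually, splitting off the first coordinate expresses the middle layer of $[t]^n$ as a symmetric convolution of layers of $[t]^{n-1}$, and an initial lexicographic segment comprising at most half of it cannot extend past first coordinate $t/2$.
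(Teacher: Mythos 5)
Your argument is correct and follows the paper's own proof essentially verbatim: both proofs reduce the claim to showing $\sum_{j=0}^{t/2}|L^j_{m+1}(n)|\ge\frac12\ell_{m+1}(n)$, then use \Cref{obs:bij}(a) to translate the $L^j_{m+1}(n)$ into layer sizes of $[t]^{n-1}$ and exploit the symmetry of these sizes about the unique middle layer $L_{m-t/2+1}(n-1)$, finding that the excess over one half is $\ell^0_{m+1}(n)+\ell^{t/2}_{m+1}(n)\ge0$ (your $\ell_{k^*}(n-1)+\ell_{k^*-t/2}(n-1)$ equals this same quantity after applying the symmetry once more). The only cosmetic difference is your signed-shift parametrization $s=j-t/2$ versus the paper's direct pairing $\ell^j_{m+1}(n)=\ell^{t-j}_{m+1}(n)$.
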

\begin{proof}
Since $S$ is compressed, $S\subseteq \bigcup_{j=0}^{k}L_{m+1}^j(n)$ for some $k$. We prove that $k\leq t/2$. To this end, setting $\ell_{m+1}^j(n):=|L_{m+1}^j(n)|$, it suffices to show that
$$\sum_{j=0}^{t/2}\ell_{m+1}^j(n)\geq\frac{1}{2}\ell_{m+1}(n).$$
Note that $\ell^0_{m+1}(n)=\ell_{m+1}(n-1)$, and for every $1\leq j\leq t/2-1$,
\[\ell_{m+1}^j(n)=\ell_{m+1-j}(n-1)=\ell_{m+1-(t-j)}(n-1)=\ell_{m+1}^{t-j}(n)\]
where we use \Cref{obs:bij} (a) for the first and the last equality; for the second equality, observe that the (unique) middle layer of $[t]^{n-1}$ is $L_{m-\frac{t}{2}+1}(n-1)$ while $L_{m+1-j}(n-1)$ and $L_{m+1-(t-j)}(n-1)$ are symmetric about the middle layer. Hence,
\[    2\sum_{j=0}^{t/2}\ell_{m+1}^j(n)=\ell_{m+1}^0(n)+\ell_{m+1}^{t/2}(n)+\sum_{j=0}^{t-1}\ell_{m+1}^j(n) =\ell_{m+1}^0(n)+\ell_{m+1}^{t/2}(n)+\ell_{m+1}(n) \ge \ell_{m+1}(n).\qedhere
\]
\end{proof}

\begin{claim}\label{claim:frac}
    Let $a,b,c,d,x,y\in \mathbb{R}^+$ such that $\frac{a}{b}\geq\frac{c}{d}$, $\frac{x}{y}\geq\frac{c}{d}$, $x\leq c$ and $y\leq d$. Then
    \beq{ineq:ax}\frac{a+x}{b+y}\geq\frac{a+c}{b+d}.\enq
\end{claim}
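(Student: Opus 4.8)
The plan is to prove the elementary inequality \eqref{ineq:ax} directly by clearing denominators and using the four hypotheses. Concretely, I would start from the equivalent form $(a+x)(b+d)\geq (a+c)(b+y)$ (valid since all quantities are positive, so the denominators $b+y$ and $b+d$ are positive), expand both sides, and reduce to checking
\[
ad + xb + xd \geq ac + cb + cy .
\]
The terms $ad\geq cb\cdot\tfrac{a}{b}\cdot\tfrac{d}{d}$ — more precisely, $\tfrac{a}{b}\geq\tfrac{c}{d}$ gives $ad\geq bc$; and $\tfrac{x}{y}\geq\tfrac{c}{d}$ gives $xd\geq cy$; and $x\leq c$ together with $\tfrac{a}{b}\geq \tfrac{c}{d}\geq \tfrac{x}{y}$... hmm, I need $xb$ versus $ac$. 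Let me regroup: from $ad\geq bc$ and $xd\geq cy$ we get $ad+xd\geq bc+cy$, so it remains to show $xb\geq ac$? That is false in general. So the naive grouping does not work, and the genuine content of the claim is to find the right pairing.

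The correct approach: rewrite the desired inequality as
\[
\frac{a+x}{b+y}\geq \frac{a+c}{b+d}
\iff (a+x)(b+d)\geq (a+c)(b+y)
\iff ad+xd+xb - cy - ay - cb \geq 0 .
\]
Now group as $(ad - cb) + (xd - cy) + (xb - ay)$. The first bracket is $\geq 0$ by $\tfrac{a}{b}\geq\tfrac{c}{d}$, the second is $\geq 0$ by $\tfrac{x}{y}\geq\tfrac{c}{d}$. The third bracket $xb-ay$ can be negative, so this still is not enough. Instead, I would split $x = c - (c-x)$ and $y = d - (d-y)$ with $c-x\geq 0$, $d-y\geq 0$, and write the left-hand side as a convex-combination-type estimate: observe
\[
\frac{a+x}{b+y} = \frac{(a+c) - (c-x)}{(b+d)-(d-y)} .
\]
Then the claim is that decreasing numerator by $c-x$ and denominator by $d-y$ keeps the ratio at least $\tfrac{a+c}{b+d}$; this holds precisely when $\tfrac{c-x}{d-y}\leq \tfrac{a+c}{b+d}$ (the mediant/Stern–Brocot property: removing a fraction that is $\leq$ the whole). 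So it suffices to verify $\tfrac{c-x}{d-y}\leq\tfrac{a+c}{b+d}$, which in turn follows if $\tfrac{c-x}{d-y}\leq \tfrac{c}{d}\leq\tfrac{a+c}{b+d}$; the second inequality is again the mediant property applied to $\tfrac{a}{b}\geq\tfrac{c}{d}$, and the first, $d(c-x)\leq c(d-y)$, i.e.\ $cy\leq dx$, i.e.\ $\tfrac{x}{y}\geq\tfrac{c}{d}$, is exactly a hypothesis. (One must handle the degenerate case $d=y$ separately, where $x=c$ is forced and the inequality reduces to $\tfrac{a}{b}\geq\tfrac{c}{d}$.)

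I expect the only real obstacle is bookkeeping: making sure every denominator that appears ($b+y$, $b+d$, $d-y$ when nonzero) is positive so that cross-multiplication is valid, and cleanly disposing of the boundary case $y=d$. Everything else is a one-line manipulation once the ``mediant'' decomposition $\frac{a+x}{b+y}=\frac{(a+c)-(c-x)}{(b+d)-(d-y)}$ is written down. I would present the argument as: (i) reduce to $cy\le dx$ and $\tfrac{c}{d}\le\tfrac{a+c}{b+d}$ via the elementary fact that for positive reals $\tfrac{p}{q}\ge \tfrac{r}{s}$ implies $\tfrac{p}{q}\ge\tfrac{p-r}{q-s}\ge \tfrac{p+r}{q+s}$ wherever the relevant denominators are positive; (ii) note $cy\le dx$ is the hypothesis $\tfrac{x}{y}\ge\tfrac{c}{d}$; (iii) note $\tfrac{c}{d}\le\tfrac{a+c}{b+d}$ is immediate from $\tfrac{a}{b}\ge\tfrac{c}{d}$; (iv) dispatch $y=d$.
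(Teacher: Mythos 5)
Your proof is correct and, despite the different packaging, is essentially the same argument as the paper's: both cross-multiply to the same inequality $ad+xb+xd\ge ay+cb+cy$, both isolate $y=d$ as a degenerate case, and both hinge on passing through the intermediate fraction $c/d$, specifically on the inequality $\tfrac{c-x}{d-y}\le\tfrac{c}{d}$, which is exactly $cy\le dx$. The paper splits the cross-multiplied form as $[a(d-y)-b(c-x)]+[xd-cy]\ge 0$ and verifies each bracket, whereas you phrase it once as $\tfrac{c-x}{d-y}\le\tfrac{c}{d}\le\tfrac{a+c}{b+d}$; your ``mediant'' framing is a mildly slicker organization of the same computation.
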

\begin{proof}
    If $y=d$, then $x=c$ and the claim trivially holds, so assume $d-y>0$. 
    
    Note that \eqref{ineq:ax} is equivalent to $a(d-y)-b(c-x)+xd-cy\geq0$. Since $\frac{x}{y}\geq\frac{c}{d}$, we have $xd-cy\geq0$. Also, given $d-y>0$, $a(d-y)\geq b(c-x)$ is equivalent to $\frac{a}{b}\geq\frac{c-x}{d-y}$.    
    But it is easy to see that $\frac{c}{d}\geq \frac{c-x}{d-y}$ when $\frac{x}{y}\geq\frac{c}{d}$. Since $\frac{a}{b}\geq\frac{c}{d}$, the claim follows. 
\end{proof}

\begin{proof}[Proof of \Cref{prop:isoperim_mid}]
By \Cref{prop:CL}, we may assume that $S$ is compressed. Then $S=\bigcup_{j=0}^{k-1}L_{m+1}^j(n) \cup X$ for some $X\subseteq L_{m+1}^k(n)$, and thus $N^-(S)=\bigcup_{j=0}^{k-1}L_m^j(n) \cup (N^-(X)\cap L_m^k(n))$.
By \Cref{claim:half_coord}, $k\leq \frac{t}{2}$. 

By \Cref{obs:bij} (a), $\ell^s_j(n)=\ell_{j-s}(n-1)$ for all $j,s$. Since $X\subseteq L_{m+1}^k(n)$, the same observation implies $|X|=|P(X)|$, and, with $Y:=P(X)~(\sub L_{m-k+1}(n-1))$, $|N^-(X)\cap L_m^k(n)|=|N^-(Y)|$.
Then again by \Cref{obs:bij}, now $|N^-(S)|/|S|\ge 1+\frac{c}{t^2n}$ is equivalent to
\begin{equation}\label{eqn:isop_mid}
    \frac{\ell_{m}(n-1)+\ell_{m-1}(n-1)+\dots+\ell_{m-k+1}(n-1)+|N^-(Y)|}{\ell_{m+1}(n-1)+\ell_{m}(n-1)+\dots+\ell_{m-k+2}(n-1)+|Y|}\geq1+\frac{c}{t^2n}.
\end{equation}

Note that for every $0\leq s\leq k-1~(\le t/2-1)$,
$$
    \frac{\ell_{m-s}(n-1)}{\ell_{m-s+1}(n-1)}\geq 1+\frac{c}{t^2n}
$$
since $L_{m-s}(n-1)$ is above or equal to $L_{m-\frac{t}{2}+1}(n-1)$, the middle layer of $[t]^{n-1}$ (we use \Cref{lem:layer_exp} and the symmetry along the middle layer). If $k\le t/2-1$, then $L_{m-k+1}(n-1)$ is strictly above the middle layer, so the normalized matching property \eqref{eq:nmp} gives
$$\frac{|N^-(Y)|}{|Y|}\geq \frac{\ell_{m-k}(n-1)}{\ell_{m-k+1}(n-1)}\geq1+\frac{c}{t^2n},$$
which proves \eqref{eqn:isop_mid}. 

Now the only remaining case is $k=t/2$. We rewrite the left-hand side of \eqref{eqn:isop_mid} as $(A+|N^-(Y)|)/(B+|Y|)$ by setting $A:=\sum_{j={m-k+1}}^m\ell_j(n-1)$ and $B:=\sum_{j=m-k+2}^{m+1}\ell_j(n-1)$. We now apply \Cref{claim:frac} for $a:=A$, $b:=B$, $x:=|N^-(Y)|$, $y:=|Y|$, $c:=\ell_{m-\frac{t}{2}}(n-1)$ and $d:=\ell_{m-\frac{t}{2}+1}(n-1)$. (To see the assumptions of \Cref{claim:frac} hold: we have $\frac{a}{b}> 1 > \frac{c}{d}$ and $\frac{x}{y}\geq \frac{c}{d}$ by the normalized matching property; $x\leq c$ and  $y\leq d$ are trivial.)  Then, the left-hand side of \eqref{eqn:isop_mid} is at least
$$\frac{A+\ell_{m-\frac{t}{2}}(n-1)}{B+\ell_{m-\frac{t}{2}+1}(n-1)}\geq 1+\frac{\ell_{m-\frac{t}{2}}(n-1)-\ell_{m+1}(n-1)}{t\ell_{m-\frac{t}{2}+1}(n-1)}=:1+E(n).$$
We now apply \Cref{lem:llt_skew} to estimate $E(n)$. Note that the layers involved with $E(n)$ are in $[t]^{n-1}$, so (in applying \Cref{lem:llt_skew}) $x_j=\frac{j-(n-1)\mu}{\sigma\sqrt{n-1}}$ where $\mu=\frac{t-1}{2}$ and $\sigma^2=\frac{t^2-1}{12}.$ A straightforward calculation yields
$$x_{m-\frac{t}{2}}=-\frac{1}{\sigma\sqrt{n-1}}, \hspace{1cm} x_{m+1}=\frac{t}{2\sigma\sqrt{n-1}}, \hspace{1cm} x_{m-\frac{t}{2}+1}=0.$$
Note that $H_4(x)$ is an even function (see \eqref{eq:H_4}) and it's decreasing in $[0,\sqrt{3}]$, thus $H_4(x_{m+1})\leq H_4(x_{m-\frac{t}{2}})$. This yields
\begin{equation}
    E(n) \geq \frac{\left(1+\frac{1}{n}H_4(\frac{1}{\sigma\sqrt{n-1}})\frac{\kappa_4}{24\sigma^4}\right)\left(e^{-\frac{1}{2\sigma^2(n-1)}}-e^{-\frac{t^2}{8\sigma^2(n-1)}}\right)+o(1/n)}{t\left(1+\frac{3}{n}\frac{\kappa_4}{24\sigma^4}+o(1/n)\right)}.
\end{equation}
Using that $e^{-x}=1-x+o(x)$ when $x\rightarrow 0$  and $\sigma^2=\Theta(t^2)$ yields
$$E(n)\geq\frac{c}{tn} \geq\frac{c}{t^2n}$$
and the proof is concluded.
\end{proof}

\section{Proofs for containers}

\subsection{Proof of \Cref{lem:phi}} \label{app:lem:phi}

For a bipartite graph with bipartition $U \cup W$ we say that $W'\sub W$ \emph{covers} $U$ if each $u \in U$ has a neighbor in $W'$.

    \begin{lemma}[Lov\'asz \cite{Lov75}, Stein \cite{Ste74}]\label{lem:cover}
        Let $ \Sigma$ be a bipartite graph with bipartition $U \cup W$, where $|N(u)|\ge x$ for each $u \in U$ and $|N(w)|\le y$ for each $w \in W$. Then there exists some $W' \sub W$ that covers $U$ and satisfies
\[|W'|\le\frac{|W|}{ x}\cdot(1+\ln y).\]
    \end{lemma}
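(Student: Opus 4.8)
The plan is to establish this by the classical greedy covering algorithm together with a charging (double-counting) argument; this is exactly Lov\'asz's set-cover bound transcribed into the bipartite language. If $U=\emptyset$ there is nothing to prove, so assume $U\neq\emptyset$. Set $U_0:=U$, and for $i\ge 0$, as long as $U_i\neq\emptyset$, choose $w_{i+1}\in W$ maximising $|N(w_{i+1})\cap U_i|$ and put $U_{i+1}:=U_i\setminus N(w_{i+1})$. Since every $u\in U_i$ has a neighbour in $W$, each step covers at least one vertex of $U_i$, so the process terminates after some $T$ steps with $U_T=\emptyset$; moreover the $w_i$ are pairwise distinct (a previously chosen $w_k$ satisfies $N(w_k)\cap U_i=\emptyset$, hence is never reselected while $U_i\neq\emptyset$), so $W':=\{w_1,\dots,w_T\}$ covers $U$ and $|W'|=T$. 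It therefore remains to prove $T\le \frac{|W|}{x}(1+\ln y)$.

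For the analysis I would assign to each $u\in U$ the charge $c(u):=1/|N(w_{i+1})\cap U_i|$, where $i+1$ is the step at which $u$ is first covered (i.e.\ $u\in U_i\setminus U_{i+1}$); then $\sum_{u\in U}c(u)=T$, since at step $i+1$ the vertices newly covered are exactly those of $N(w_{i+1})\cap U_i$, each receiving charge $1/|N(w_{i+1})\cap U_i|$. The key estimate is that for every fixed $w\in W$,
\[
\sum_{u\in N(w)\cap U}c(u)\le 1+\ln y.
\]
To see this, write $N(w)\cap U=\{u_1,\dots,u_r\}$ with $r=|N(w)\cap U|\le |N(w)|\le y$, ordered so that $u_1$ is covered no later than $u_2$, no later than $u_3$, and so on. If $u_j$ is covered at step $i+1$, then $u_j,u_{j+1},\dots,u_r$ all still lie in $U_i$, so $|N(w)\cap U_i|\ge r-j+1$; combined with the maximality of the greedy choice, $|N(w_{i+1})\cap U_i|\ge |N(w)\cap U_i|$, this gives $c(u_j)\le 1/(r-j+1)$. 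Summing over $j=1,\dots,r$ yields $\sum_{u\in N(w)\cap U}c(u)\le\sum_{m=1}^{r}\frac1m\le 1+\ln r\le 1+\ln y$.

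Finally I would double-count incidences between $U$ and $W$:
\[
x\,T = x\sum_{u\in U}c(u)\le \sum_{u\in U}|N(u)|\,c(u)=\sum_{w\in W}\sum_{u\in N(w)\cap U}c(u)\le |W|(1+\ln y),
\]
where the first inequality uses $|N(u)|\ge x$ and $c(u)\ge 0$, the middle equality regroups the sum by $w\in W$, and the last inequality is the per-$w$ bound just proved. Dividing by $x$ gives $|W'|=T\le \frac{|W|}{x}(1+\ln y)$, as required. I do not anticipate a genuine obstacle; the only place that needs care is the ordering argument for a fixed $w$ — in particular checking that the greedy maximality really bounds each individual charge $c(u_j)$ by $1/(r-j+1)$, and that ties in the covering order are harmless because every neighbour of $w$ covered at step $i+1$ or later is still in $U_i$.
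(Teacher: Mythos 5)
The paper cites this lemma from Lov\'asz and Stein without giving a proof, so there is no in-paper argument to compare against. Your proof is correct: it is the standard greedy set-cover analysis, phrased in bipartite language. The charging $c(u_j)\le 1/(r-j+1)$ is justified exactly as you say, since at the step covering $u_j$ all of $u_j,\dots,u_r$ still lie in the current uncovered set and the greedy choice dominates $w$; the harmonic sum then gives $1+\ln r\le 1+\ln y$, and the final double-count closes the argument. The only (harmless) implicit assumption is $x\ge 1$ (equivalently, $U$ has no isolated vertices), which is needed for any cover to exist at all and for the bound to be meaningful.
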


Set $p=\frac{10\Delta \log d}{\phi \delta d}$.

\begin{lemma}\label{lem:phi.approx.premier}
For any $A \in \cG(a,g,\gk)$, there is a set $T_0\subseteq N(A)~(=G)$ such that
        \beq{eqn:lem_aux_1} |T_0|\leq 3gp; \enq
        \beq{eqn:lem_aux_2} |\nabla(T_0,X\setminus[A])|\leq 3\ka p; \text{ and} \enq
        \beq{eqn:lem_aux_3} |G^\phi\setminus N(N_{[A]}(T_0))|\leq 3g/d^{10}. \enq
    \end{lemma}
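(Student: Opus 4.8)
The plan is to construct $T_0$ by a random choice: include each vertex of $G=N(A)$ in $T_0$ independently with probability $p=\frac{10\Delta\log d}{\phi\delta d}$, and then verify that each of \eqref{eqn:lem_aux_1}, \eqref{eqn:lem_aux_2}, \eqref{eqn:lem_aux_3} fails with probability strictly below $1/3$ (with a little slack, e.g.\ replacing the constant $3$ by a slightly larger one, or invoking concentration for the first bound), so that a union bound yields a single outcome $T_0$ satisfying all three simultaneously. This mirrors the random-selection step in the container arguments of \cite{Galvin2019, JMP1}, adapted to the present degree hypotheses.

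Conditions \eqref{eqn:lem_aux_1} and \eqref{eqn:lem_aux_2} will be handled by immediate first-moment estimates. Since $\E|T_0|=gp$, Markov's inequality gives $\PP(|T_0|>3gp)<1/3$ (and $|T_0|$ is in fact a sum of independent indicators, so concentrates much better). For \eqref{eqn:lem_aux_2}, an edge of $\nabla(G,X\setminus[A])$ lands in $\nabla(T_0,X\setminus[A])$ exactly when its endpoint in $G$ is selected, an event of probability $p$; and because $G=N(A)\subseteq Y$ has no neighbours in $Y$, we have $|\nabla(G,X\setminus[A])|=|\nabla(N(A),\ov{[A]})|=\gk$. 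Hence $\E|\nabla(T_0,X\setminus[A])|=\gk p$, and Markov again gives $\PP(|\nabla(T_0,X\setminus[A])|>3\gk p)<1/3$.

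The substantive point will be \eqref{eqn:lem_aux_3}. The key observation is that $v\notin N(N_{[A]}(T_0))$ holds precisely when $T_0$ avoids the set $N(N_{[A]}(v))$ of vertices joined to $v$ by a walk of length two through $[A]$, an event of probability $(1-p)^{|N(N_{[A]}(v))|}$. For $v\in G^\phi$ one has $|N_{[A]}(v)|>\phi$, each of these neighbours has degree at least $\delta d$ with neighbourhood contained in $N(A)$, and any two distinct vertices have at most $\Delta$ common neighbours; a Corr\'adi-type inequality then yields
\[
|N(N_{[A]}(v))|\ \ge\ \tfrac{\delta d}{2}\min\!\big(\phi,\,\delta d/\Delta\big)\ =\ \Omega\big(d\cdot\min(\phi,d)\big),
\]
the bounded pairwise intersections being exactly what keeps these more than $\phi$ neighbourhoods from overlapping too much. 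With the chosen $p$ this forces $(1-p)^{|N(N_{[A]}(v))|}\le e^{-p|N(N_{[A]}(v))|}\le d^{-10}$, so $\E|G^\phi\setminus N(N_{[A]}(T_0))|\le g\,d^{-10}$ and \eqref{eqn:lem_aux_3} fails with probability below $1/3$ by Markov. A union bound over the three failure events then completes the proof.

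The step I expect to require the most care is the Corr\'adi-type estimate feeding into \eqref{eqn:lem_aux_3}: one must check that the lower bound on $|N(N_{[A]}(v))|$ is genuinely of order $d\cdot\min(\phi,d)$ across the whole range $\phi\in[1,\delta d-1]$, and that the exponent it produces beats $d^{10}$ — this is where the constant in $p$ has to be calibrated (it is comfortable for the values $\Delta=1$, $\phi=\Theta(\delta d)$ that occur in our application). Everything else is routine bookkeeping with Markov's inequality and a union bound, and no new idea beyond those used for the corresponding lemmas in \cite{Galvin2019, JMP1} is needed.
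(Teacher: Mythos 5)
Your proposal takes the same random-subset + first-moment + union-bound route that the paper uses, so the strategy is essentially identical. The one place where you diverge is in lower-bounding $|N(N_{[A]}(v))|$ for $v\in G^\phi$: you invoke a Corr\'adi-type inequality applied to the neighbourhoods $\{N(w):w\in N_{[A]}(v)\}$ (each of size $\ge \delta d$ with pairwise intersections $\le\Delta$), which yields $|N(N_{[A]}(v))|\ge \tfrac{\delta d}{2}\min(\phi,\delta d/\Delta)$. The paper instead uses a one-line bipartite double-counting that exploits a sharper structural fact: since $N_{[A]}(v)\subseteq N(v)$ and any $y\in Y\setminus\{v\}$ shares at most $\Delta$ neighbours with $v$, every such $y$ has at most $\Delta$ neighbours in $N_{[A]}(v)$; counting edges between $N_{[A]}(v)$ and $N(N_{[A]}(v))$ then gives $|N(N_{[A]}(v))|\gtrsim \tfrac{\phi\delta d}{\Delta}$ directly, with no $\min$ and no factor of $\tfrac12$. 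Your Corr\'adi route is correct but lossier — with the given $p$ it yields an exponent of order $5\Delta\log d$ rather than $10\log d$, so the failure probability is only $d^{-\Theta(1)}$ rather than $d^{-10}$. As you observe, this is cosmetic: any bound of the form $g/d^{C}$ for fixed $C\ge1$ still gets absorbed in the later accounting in Lemma~\ref{lem:phi}, since $|T_0'|$ is then negligible against $|T_0|=O(g\log d/(\phi d))$. So the proof goes through after adjusting the constant in $p$ or in the exponent, but the paper's double-counting is the cleaner move and avoids the calibration issue you flag.
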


    \begin{proof}
Let $\bT \sub G$ be a random subset that takes each element of $G$ independently at random with probability $p$. Then we have $\E|\bT|=gp$ and $\E(|\nabla(\bT, X \setminus [A])|)=\gk p.$ Furthermore, for any $u\in G^\phi$ we have $|N(N_{[A]}(u))|\geq \frac{\phi \gd d}{\Delta}$, which implies
        \begin{align*}
            \E\left(|G^\phi\setminus N(N_{[A]}(\bT))|\right)&=\sum_{u\in G^\phi} \PP(u\not\in N(N_{[A]}(\bT)))=\sum_{u\in G^\phi}\PP(N(N_{[A]}(u))\cap \bT=\emptyset)\\
            &\leq g(1-p)^{\frac{\phi \gd d}{\Delta}}\leq g/d^{10}.
        \end{align*}
        Hence, by Markov's inequality, there exists a set $T_0 \sub G$ satisfying the desired properties.
    \end{proof}

    Set 
    \[T_0'=G^\phi\setminus N(N_{[A]}(T_0)), \quad \Omega=\nabla(T_0,X\setminus[A]), \quad L=N(N_{[A]}(T_0))\cup T_0'.\]
    Let $T_1\subseteq G\setminus L$ be a cover of minimum size of $[A]\setminus N(L)$ in the graph induced by $(G\setminus L)\cup ([A]\setminus N(L))$. Then $F':=L\cup T_1$ is a $\phi$-approximation of $A$. Our goal now is to bound the number of sets $F'$ produced this way.

    Note that (\ref{eqn:lem_aux_1}), (\ref{eqn:lem_aux_2}) and (\ref{eqn:lem_aux_3}), respectively, yield
    \[|T_0|\leq O\left(\frac{g\Delta\log d}{\phi \gd d}\right), \quad |\Omega|\leq O\left(\frac{\kap \Delta \log d}{\phi \delta d}\right), \quad |T_0'|\leq O\left(\frac{g}{d^{10}}\right).\]
    As $T_1$ is a minimum cover of $[A] \setminus N(L)$, we can use Lemma \ref{lem:cover} to bound its size. First we need to bound $|G\setminus L|$. Each vertex $u$ of $G\setminus L$ is in $G\setminus G^\phi$, so $u$ contributes at least $d(u)-\phi \ge \delta d-\phi$ edges in $\nabla(G,X\setminus [A])$; but this set has size $\kappa$, hence $|G\setminus L|\leq \frac{\kap}{\delta d-\phi}$. Furthermore, by our degree assumption on $\Sigma$, $d_{[A]\setminus N(L)}(u)\le d$ for each $u\in G\setminus L$ and $d_{G\setminus L}(v)\ge \delta d$ for each $v\in[A]\setminus N(L)$. Hence, Lemma \ref{lem:cover} implies 
    $$|T_1|\leq \frac{\kap}{\delta d(\delta d-\phi)}(1+\ln d)=O\left(\frac{\kappa\log d}{\delta d(\delta d-\phi)}\right).$$
Let $T:=T_0 \cup T_0' \cup T_1$.
\begin{claim}[\cite{balogh2024intersecting}, Claim 2]
    $T$ is an 8-linked subset of $Y$.
\end{claim}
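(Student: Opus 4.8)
The plan is to prove the stronger statement that any two vertices $y,y'\in T$ are joined by a walk in $\Sigma^8[T]$; since each of $T_0,T_0',T_1$ is contained in $G=N(A)\subseteq Y$, this says precisely that $T$ is an $8$-linked subset of $Y$. Two preliminary facts will be needed. First, since $A$ is $2$-linked and $A\subseteq[A]$, the closure $[A]$ is $2$-linked as well: every $v\in[A]$ has a neighbour (by \eqref{eq:deg}), and any neighbour $z$ of $v$ lies in $N(v)\subseteq N(A)$, hence $z\sim a$ for some $a\in A$, so $v$ is adjacent to $a$ in $\Sigma^2$; as $\Sigma^2[A]$ is connected, so is $\Sigma^2[[A]]$. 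Second, $N([A])=N(A)=G$ (immediate from the definition of the closure), so $T\subseteq G$ implies that every $y\in T$ has a neighbour $v_y\in[A]$.

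The heart of the argument is the claim that \emph{every $u\in[A]$ lies within distance $3$ of $T$ in $\Sigma$}. To see this, recall that $F'=L\cup T_1$ is a $\phi$-approximation, so $[A]\subseteq N(F')=N(L)\cup N(T_1)$; thus $u$ has a neighbour in $L$ or in $T_1$. If $u$ has a neighbour in $T_1\subseteq T$ we are done with room to spare, and likewise if $u$ has a neighbour in $T_0'\subseteq L\cap T$. The only remaining possibility is that $u\sim\ell$ with $\ell\in N(N_{[A]}(T_0))$, in which case $\ell\sim w$ for some $w\in N_{[A]}(T_0)\subseteq[A]$ and $w\sim t$ for some $t\in T_0\subseteq T$, giving a path $u-\ell-w-t$ of length $3$. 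I expect this case to be the only real (if modest) obstacle: one has to unwind the definition of $L$ and track this length-$3$ detour, which is exactly what forces the linkedness constant up to $8$.

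With the claim in hand, fix $y\ne y'$ in $T$ and neighbours $v_y,v_{y'}\in[A]$. If $v_y=v_{y'}$ then $\dist_\Sigma(y,y')\le2$ and there is nothing more to do, so assume $v_y\ne v_{y'}$ and choose a walk $v_y=u_0,u_1,\dots,u_k=v_{y'}$ in $\Sigma^2[[A]]$, so that $\dist_\Sigma(u_i,u_{i+1})\le2$ for every $i$. Put $t_0=y$, $t_k=y'$, and for $0<i<k$ choose $t_i\in T$ with $\dist_\Sigma(u_i,t_i)\le3$ using the claim (note $\dist_\Sigma(u_0,t_0)=\dist_\Sigma(v_y,y)=1$, and similarly at the far end). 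The triangle inequality then gives
\[
\dist_\Sigma(t_i,t_{i+1})\le\dist_\Sigma(t_i,u_i)+\dist_\Sigma(u_i,u_{i+1})+\dist_\Sigma(u_{i+1},t_{i+1})\le 3+2+3=8
\]
for every $i$, so consecutive $t_i$ are equal or adjacent in $\Sigma^8$. Since all $t_i$ lie in $T$, the walk $t_0,t_1,\dots,t_k$ lives in $\Sigma^8[T]$ and connects $y$ to $y'$, which completes the proof. I note that this argument uses only the combinatorial form of the construction of $T$ (not the size bounds \eqref{eqn:lem_aux_1}--\eqref{eqn:lem_aux_3}), and mirrors \cite[Claim 2]{balogh2024intersecting}.
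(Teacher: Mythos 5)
The paper does not give its own proof of this claim---it simply cites \cite[Claim~2]{balogh2024intersecting}---and your argument correctly reproduces the standard route: every $u\in[A]$ lies within distance $3$ of $T$ (distance $1$ if it has a neighbour in $T_1$ or $T_0'$, and otherwise via a length-$3$ path $u\to\ell\to w\to t$ with $\ell\in N(N_{[A]}(T_0))$, $w\in N_{[A]}(T_0)$, $t\in T_0$), and stitching this to a $\Sigma^2$-walk in the $2$-linked set $[A]$ gives consecutive $T$-anchors at distance at most $3+2+3=8$. Your two preliminary observations, that $[A]$ is $2$-linked and that $N([A])=N(A)$ so every vertex of $T\subseteq G$ has a neighbour in $[A]$, together with the endpoint and degenerate-case bookkeeping, are all in order.
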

Now, since $N(N_{[A]}(T_0))$ is determined by $T_0$ and $\gO$, it follows that $F'$ is determined by $T_0, \gO, T'_0,$ and $T_1$. Let $\cV$ be the collection of all sets $F'$ that is constructed from some tuple $(T_0, T_0', T_1, \gO)$.

Note that $|T|=O\left(\frac{g\Delta\log d}{\phi \gd d}+\frac{\gk\log d}{\gd d(\gd d-\phi)}\right)$, so by \Cref{lem:linked_count} there are at most
\[|Y|\exp\left[O\left(\frac{g\Delta \log^2 d}{\phi\gd d}+\frac{\gk\Delta\log^2d}{\gd d(\gd d-\phi)}\right)\right]\]
choices for $T$. Trivially, the choices for $T_0, T_0', T_1$ are bounded by $2^{|T|}$. Finally, as $\gO\sub \nabla(G, X \setminus {[A]})$ which is a set of size $\gk$, the number of choices for $\gO$ is at most
$$\binom{\kap}{\leq O\left(\frac{\kap\Delta \log d}{\phi \gd d}\right)}\leq 2^{O\left(\frac{\kap \Delta \log^2d}{\phi\gd d}\right)}.$$
In conclusion, the total number of choices for $(T_0, T_0', T_1, \gO)$ is at most
\[|Y|\exp\left[\frac{\Delta}{\gd}O\left(\frac{g\log^2 d}{\phi d}+\frac{\gk\log^2d}{d(\gd d-\phi)}+\frac{\gk\log^2 d}{\phi d}\right)\right].\]

\subsection{Proof of \Cref{lem:psi}} \label{app:lem:psi}

Let $A\in \cG(a,g, \kappa)$ and $F'$ be a $\phi$-approximation as produced in \Cref{lem:phi}. We will use the algorithm below to produce a $\psi$-approximation $(S,F)$ for $A$ given an input $(A,F')$, and then bound the possible outputs of this algorithm. Fix a total order $\prec$ on $V(\Sigma)$.

\begin{quote}
    \nin \textbf{Input.} $A \in \cG(a,g, \kappa)$ and its $\varphi$-approximation $F' \in \cV(a,g,\kappa, \varphi)$

    \nin \textbf{Step 1.}  If $\{u \in [A]:d_{G\setminus F'}(u)>\psi\} \ne \emptyset,$ pick the smallest $u$ (in $\prec$) in this set and update $F'$ by $F' \leftarrow F' \cup N(u).$ Repeat this until $\{u \in [A]:d_{G \setminus F'}(u)>\psi\}=\emptyset.$ Then set $F''=F'$ and $S''=\{u \in X:d_{F''}(u) \ge d(u)-\psi\}$ and go to Step 2.

    \nin \textbf{Step 2.} If $\{v \in Y \setminus G: d_{S''}(v)>\psi\} \ne \emptyset$, pick the smallest $v$ (in $\prec$) in this set and update $S''$ by $S'' \leftarrow S'' \setminus N(v).$ Repeat this until $\{v \in Y \setminus G:d_{S''}(v)>\psi\}=\emptyset.$ Set $S=S''$ and $F=F'' \cup \{v \in Y:d_{S}(v)>\psi\}$ and stop.

    \nin \textbf{Output.} $(F, S)$
\end{quote}

    It is easy to see that the resulting $(F, S)$ is a $\psi$-approximation for $A \in \cG(a,g)$. (See, e.g., the second paragraph of the proof of \cite[Lemma 3.1]{jenssen2024refined}.) We show that the number of outputs for each $F'$ is at most the right-hand side of \eqref{psi_bd}. 

    \nin \textit{Cost for Step 1.} Initially, $|G \setminus F'| \le \gk/(\delta d-\varphi)$ (each vertex $w \in G \setminus F'$ is in $G \setminus G^\varphi$ and so contributes at least $d(w)-\varphi \ge \delta d-\varphi$ edges to $\nabla(G, X \setminus [A])$, a set of size $\gk$). Each iteration in Step 1 removes at least $\psi$ vertices from $G \setminus F'$ and so there can be at most $\gk/((\delta d-\varphi)\psi)$ iterations. The $u$'s in Step 1 are all drawn from $[A]$ and hence $N(F')$, a set of size at most $dg$. So the total number of outputs for Step 1 is at most \[{dg \choose \le \gk/((\gd d-\varphi)\psi)}.\]

    \nin \textit{Cost for Step 2.} Each $u \in S'' \setminus [A]$ contributes more than $d(u)-\psi \ge \delta d-\psi$ edges to $\nabla(G, X \setminus [A])$, so initially $|S'' \setminus [A]| \le \gk/(\delta d-\psi).$ Each $v$ used in Step 2 reduces this by at least $\psi,$ so there are at most $\gk/((\delta d-\psi)\psi)$ iterations. Each $v$ is drawn from $N(S''),$ a set which is contained in the second neighborhood of $F'$ (since $S'' \sub N(F'')$ by definition) and so has size at most $d^2 g.$
    So the total number of outputs for Step 2 is at most
    \[{d^2 g \choose \le \gk/((\delta d-\psi)\psi)}. \qedhere \]

\subsection{Proof of \Cref{lem:phi'}}\label{app:lem:phi'}

Write $\ov{[B]}$ for $L_{n-2} \setminus [B]$ and define 
\[\cG_{n-2}(b,h, \gk, S):=\{B \sub L_{n-2}:|[B]|=b, |N(B)|=h, |\nabla(N(B), \ov{[B]})|=\gk, N(B) \sub S\}.\]

By repeating the proof of \Cref{lem:phi.approx.premier} with $p=20\log n/\phi n$ (recall that $\Delta=1$ and $\gd=1/2-o(1)$), one can show that for any $B \in \cG_{n-2}(b,h,\gk,S)$, there is a set $T_0 \sub N(B)$ such that
\[|T_0|\le 3hp; \quad e(T_0, \ov{[B]}) \le 3\kappa p; \quad \text{and} \quad  |N(B)^\varphi \setminus N(N_{[B]}(T_0))| \le 3h/n^{10}.\]
Let
\[T_0'=N(B)^\varphi \setminus N(N_{[B]}(T_0)), \quad L=T_0' \cup N(N_{[B]}(T_0)), \quad \Omega=\nabla(T_0, \ov{[B]}).\]
Let $T_1 \sub N(B) \setminus L$ be a minimal set that covers $[B] \setminus N(L)$ in the subgraph induced by $[B] \setminus N(L) \cup N(B) \setminus L$. Let $F':=L \cup T_1$. Then $F'$ is a $\phi$-approximation of $B$. Also, since $N(N_{[B]}(T_0))$ is determined by $T_0$ and $\Omega$, $F'$ is determined by $T_0$, $\Omega$, $T_0'$, and $T_1$. Note that
\[|N(B) \setminus L|(n/2-\phi) \le e(N(B), \ov{[B]})=\gk,\]
so by Lemma \ref{lem:cover} (with $U=[B] \setminus N(L)$, $x= n/2$, $W=N(B) \setminus L$ and $ y = n$), we have
\[|T_1| \le \frac{|N(B) \setminus L|}{n/2}(1+\ln n) \le \frac{3\gk\ln n}{n(n/2-\phi)}.\]
Finally, using the fact that $T_0, T_0', T_1$ are subsets of $N(A) \sub S$ and $\Omega$ is a subset of $\nabla(T_0)$, the number of choices for $T_0, T_0', T_1$ and $\Omega$ is at most
\[\begin{split}
    &\binom{s}{\le 3hp}\binom{s}{\le 3h/n^{10}}\binom{s}{\le \frac{3\gk\ln n}{n(n/2-\phi)}}\binom{3hpn}{\le 3\gk p}\\
    &=\exp\left[O\left(\frac{h\log n}{n\phi}\log\left(\frac{sn}{h}\right)+\frac{\kappa \log n}{n(n/2-\phi)}\log\left(\frac{sn}{\gk}\right)+\frac{\gk\log n}{\phi n}\log\left(\frac{hn}{\gk}\right)\right)\right]\\
    &\le \exp\left[O\left(\frac{h\log n}{n\phi}\log\left(\frac{sn}{h}\right)+\frac{\kappa \log n}{n(n/2-\phi)}\log\left(\frac{sn}{\gk}\right)+\frac{\gk\log^2 n}{\phi n}\right)\right]\\
    &\le |S|\exp\left[O\left(\frac{h\log^2 n}{n\phi}+\frac{\kappa \log^2 n}{n(n/2-\phi)}+\frac{\gk\log^2 n}{\phi n}\right)\right]\end{split}
\]
where the last inequality uses \eqref{incr} and the facts that $\gk \le t'n$ (\Cref{prop:t.kappa}) and $t'=\gO(h/n)$ (\Cref{prop:isoperim_up}(b)).

Finally, by applying the above construction for all $\gk\le t'n$, we obtain the desired $\cV(b,h,S)$, and it satisfies
\[\begin{split}|\cV|&\le \sum_{\gk\le t'n}|S|\exp\left[O\left(\frac{h\log^2 n}{n\phi}+\frac{\kappa \log^2 n}{n(n/2-\phi)}+\frac{\gk\log^2 n}{\phi n}\right)\right]\\
&\le t'n|S|\exp\left[O\left(\frac{h\log^2 n}{n\phi}+\frac{t' \log^2 n}{n/2-\phi}+\frac{t'\log^2 n}{\phi }\right)\right].\end{split}\]

\end{document}